\newcommand*{\rom}[1]{\expandafter\@slowromancap\romannumeral #1@}
\DeclareRobustCommand{\stirling}{\genfrac\{\}{0pt}{}}
\newtheoremstyle{plain}
  {.3\baselineskip\@plus.2\baselineskip\@minus.2\baselineskip}% Space above
  {.3\baselineskip\@plus.2\baselineskip\@minus.2\baselineskip}% Space below
  {\itshape}  % BODYFONT
  {0pt}       % INDENT (empty value is the same as 0pt)
  {\bfseries} % HEADFONT
  {.}         % HEADPUNCT
  {5pt plus 1pt minus 1pt} % HEADSPACE
  {}  
\theoremstyle{plain}
\newtheorem{thm}{Theorem}[section]
\newtheorem{lemma}[thm]{Lemma}
\newtheorem{prop}[thm]{Proposition}
\newtheorem{cor}[thm]{Corollary}
\newcommand{\Prob}{\mathbbm{P}}
\newcommand{\E}{\mathbb{E}}
\newcommand\numberthis{\addtocounter{equation}{1}\tag{\theequation}}
\DeclarePairedDelimiter\ceil{\lceil}{\rceil}
\DeclarePairedDelimiter\floor{\lfloor}{\rfloor}
\newtheoremstyle{definition}
  {.3\baselineskip\@plus.2\baselineskip\@minus.2\baselineskip}% Space above
  {.3\baselineskip\@plus.2\baselineskip\@minus.2\baselineskip}% Space below
  {}  % BODYFONT
  {0pt}       % INDENT (empty value is the same as 0pt)
  {\bfseries} % HEADFONT
  {.}         % HEADPUNCT
  {5pt plus 1pt minus 1pt} % HEADSPACE
  {}  
\theoremstyle{definition}
\newtheorem{defn}[thm]{Definition} % definition numbers are dependent on theorem numbers
\newtheorem{remark}[thm]{Remark}
\patchcmd{\@maketitle}{\LARGE \@title}{\fontsize{16}{19.2}\selectfont\@title}{}{}
\title{Weak local limit of preferential attachment random trees with additive fitness\vspace{-1ex}}
\author[]{Tiffany Y. Y. Lo\thanks{School of Mathematics and Statistics, University of Melbourne.\\ Email address: tiffanyloyinyuan@gmail.com }\vspace{-1ex}}
\affil[]{University of Melbourne\vspace{-1ex}}
\begin{document}

\maketitle

\begin{abstract}
We consider linear preferential attachment random trees with additive fitness, where fitness is defined as the random initial vertex attractiveness. We show that when the fitness distribution has positive bounded support, the weak local limit of this family can be constructed using a sequence of mixed Poisson point processes. We also provide a rate of convergence of the total variation distance between the $r$-neighbourhood of the uniformly chosen vertex in the preferential attachment tree and that of the root vertex of its weak local limit. We apply the theorem to obtain the limiting degree distributions of the uniformly chosen vertex and its ancestors, that is, the vertices that are on the path between the uniformly chosen vertex and the initial vertex. Rates of convergence in the total variation distance are established for these results. 
\end{abstract}
%Preferential attachment random graph; additive fitness; local weak limit; urn representation.

\section{Introduction}\label{sintro}
There has been considerable interest in studying the preferential attachment random graphs since they were used by \textcite{barabasi1999emergence} to explain the observed power law degree distribution in some real networks such as the World Wide Web. The primary feature of the stochastic mechanism consists of adding vertices sequentially over time with some number of edges attached to them, and then connecting these edges to the existing graph in such a way that vertices with higher degrees are more likely to receive them. A general overview of preferential attachment random graphs can be found in the books \cite{van2009random} and \cite{vanderhofstad2}. 

In the basic models, vertices are born with the same `weight' as their initial vertex attractiveness. To relax this assumption, \textcite{ergun2002growing} introduced a class of preferential attachment graphs with additive fitness (referred to as Model A in \cite{ergun2002growing}), where \textit{fitness} is defined as the \textit{random} initial attractiveness. This family is the subject of recent works such as \textcite{iyer2020degree}, \textcite{lodewijks2020phase} and \textcite{delphin2019}, whose results we discuss in Section \ref{recentw}. In this paper, we study the weak local limit of this family, and provide a rate of convergence of the total variation distance. Our weak local limit theorem extends the result of \textcite{Berger12asymptoticbehavior}, which considered preferential attachment graphs whose vertices are born with the same initial attractiveness. As a by-product of our analysis, we obtain limiting degree distributions of the uniformly chosen vertex and its ancestors, and establish rates of convergence for these results. Another objective of this article is to present the arguments of \cite{Berger12asymptoticbehavior} in more detail, which is the main reason why we consider the preferential attachment tree instead of the case where multiple edges are possible.

Before defining our model, note that we view the edges as directed, where a newly-born vertex always sends a single outgoing edge to an existing vertex in the graph. We define the \textit{weight} of a vertex as its in-degree plus the fitness, and each time the vertex receives an edge from another vertex, its weight increases by one. The random rules behind the evolution of this model are described below.

\textbf{The $(\mathbf{x},n)$-sequential model.} Given a positive integer $n$ and the sequence $\mathbf{x}:=(x_i,i\geq 1)$, where $x_1>-1$ and $x_i>0$, $i\geq 2$, we construct the sequence of random trees $(G_{i}, 1\leq i\leq n)$ as follows. The seed graph $G_{1}$ is a vertex labelled as 1 with \textit{initial attractiveness} $x_1$. Given $G_{m-1}$ and $\mathbf{x}$, $G_m$ is constructed by attaching one edge between vertex $m$ and vertex $k\in \{1,...,m-1\}$, and the edge is directed towards vertex $k$ with probability
\begin{equation*}
    \frac{W_{k,m-1}+x_k}{m-2+\sum^{m-1}_{j=1}x_j}\quad\text{for $1\leq k\leq m-1$,}
\end{equation*}
where $W_{j,l}$ denotes the in-degree of vertex $j$ in $G_l$, and $W_{j,l}=0$ whenever $j\geq l$. The $m$th attachment step is completed by assigning vertex~$m$ with the initial attractiveness $x_m$. We call the resulting graph $G_n$ an \textit{$(\mathbf{x},n)$-sequential model}, and its law is denoted by Seq$(\mathbf{x})_n$.

Let $\mathbf{X}:=(X_i,i\geq 1)$ be a fitness sequence such that $x_1:=X_1$ is deterministic, and $(X_i,i\geq 2)$ are i.i.d.\ positive variables with distribution $\pi$. We shall assume that $\mathbf{x}$ is a realisation of $\mathbf{X}$. Thus, we obtain the distribution of $G_n$ by mixing the conditional distribution of $G_n$ with the distribution of $(X_i,i\geq 2)$, and we denote this unconditional law by PA$(\pi)_n$. 

\subsection{Weak local limit}\label{localweak}
Prior to stating the main result, we introduce the concept of the weak local limit, and we refer to \textcite{benjamini2001}, \textcite{aldoussteele} and \textcite[Chapter 2]{vanderhofstad2} for more detail. Informally, we explore some random graph $G_n$ from vertex $o_n$, chosen uniformly at random from $G_n$, and study the distributional limit of the neighbourhoods of radius $r$ rooted at $o_n$ for each $r<\infty$. To precisely define the weak local limit of random trees, we need a few definitions.

A rooted graph is a pair $(G,o)$, where $G=(V(G), E(G))$ is a graph with vertex set $V(G)$ and edge set $E(G)$, and $o\in V(G)$ is the designated root in $G$. Next, let $r$ be a finite, positive integer. For $(G,o)$, denote by $B_r(G,o)$ the neighbourhood of radius $r$ around $o$. More formally, $B_r(G,o)=(V(B_r(G,o))), E(B_r(G,o)))$, where in the case of trees,
\begin{align*}
    &V(B_r(G,o))=\{u\in V(G): \text{the distance between $o$ and $u$ is less than or equal to $r$ edges} \},\\
    &E(B_r(G,o))=\{\{u,v\}\in E(G): u,v\in V(B_r(G,o))\}.
\end{align*}
We refer to $B_r(G,o)$ as the \textit{$r$-neighbourhood} of vertex $o$, or simply as the \textit{local neighbourhood} of $o$ when reference to $r$ is not required. The last ingredient is the following.

\begin{defn}[Isomorphism of rooted graphs]
We say that two rooted graphs $(G,o)$ and $(H,o')$ are isomorphic, if there is a bijection $\psi:V(G)\to V(H)$ such that $\psi(o)=o'$ and $\{u,v\}\in E(G)$ if and only if $\{\psi(u), \psi(v)\}\in E(H)$. If $(G,o)$ and $(H,o')$ are isomorphic, then we write $(G,o) \cong (H,o')$.
\end{defn}

Following \textcite{benjamini2001}, we define the weak local limit of a sequence of finite, random tree $(G_{n},n\geq 1)$ as follows.

\begin{defn}[Local weak limit]\label{bs}
Let $(G_{n},n\geq 1)$ be a sequence of finite random trees, and $(G_{n}, o_n)$ be the rooted tree obtained by choosing $o_n\in V(G_{n})$ uniformly at random. We say that $(G,o)$ is the weak local limit of $(G_{n},o_n)$, if for all finite rooted graphs $(H,v)$ and all finite $r$, 
\begin{equation*}
    \Prob((B_r(G_{n},o_n),o_n)\cong (H,v))\overset{n\to\infty}{\longrightarrow}\Prob((B_r(G,o),o)\cong (H,v)).
\end{equation*}
\end{defn}

\subsection{Statement of result}\label{UHN}
\normalsize
To state our main theorem, we first construct the weak local limit of the graph with law PA($\pi)_n$. This is a rooted random tree that generalises the P\'olya-point tree introduced in \textcite{Berger12asymptoticbehavior}, so we refer to it as a $\pi$-P\'olya point tree, with $\pi$ being the fitness distribution of the preferential attachment tree. Moreover, from now on we assume that $\pi$ has a finite mean: $\mu:=\E X_2<\infty$, and define
\begin{equation*}
  \chi:=\frac{\mu}{\mu+1}.
\end{equation*}

Before defining the $\pi$-P\'olya point tree, we explain the variables and notations appearing in its construction. Vertex $0$ is the root of of the $\pi$-P\'olya point tree, and we denote the random tree by $(\mathcal{T},0)$. Using the Ulam-Harris labelling of trees, let $(0,i)$, $i=1,2,...$ be the vertices that are connected to vertex $0$. Recursively, if $\bar v:=(0,v_1,...,v_r)$, where $v_i$ are positive integers and $r$ is the distance from vertex $\bar v$ to vertex $0$, we label the vertices connected to $\bar v$ that are at distance $r+1$ from the root as $(\bar v,j)$, $j\in \mathbbm{N}:=\{1,2,3,...\}$. In other words, $(\bar v,i)\in \partial B_{r+1}:= V(B_{r+1}(\mathcal{T},0))\setminus V(B_{r}(\mathcal{T},0))$, as illustrated in Figure~\ref{graphlabelling} below.

\begin{figure}
    \centering
    \begin{tikzpicture}
    % create nodes and labelling
     \tikzstyle{every state}=[
            fill = black,
            shape = circle,
            scale = 0.2
        ]
    \node[state, label={0}] (0) at (0,0) {};
    \node[state, label={{(0,1)}}] (01) at (1.05,1.05) {}; 
    \node[state, label={{(0,2)}}] (02) at (-1.5,0) {};
    \node[state, label={{(0,3)}}] (03) at (0,-1.5) {};
    \node[state, label=right:{{(0,4)}}] (04) at (1.05,-1.05) {};
    \node[state, label=right:{{(0,1,1)}}] (011) at (2.8,1.05) {};
    \node[state, label=right:{{(0,1,2)}}] (012) at (3,0) {};
    \node[state, label=left:{{(0,2,1)}}] (021) at (-2.6,1.5) {};
    \node[state, label=left:{{(0,2,2)}}] (022) at (-3,0) {};
    \node[state, label=left:{{(0,2,3)}}] (023) at (-2.6,-1.5) {};
    \node[state, label=below:{{(0,3,1)}}] (031) at (-1.5,-2.6) {};
    \node[state, label=below:{{(0,3,2)}}] (032) at (0,-3) {};
    \node[state, label=below:{{(0,3,3)}}] (033) at (1.5,-2.6) {};
    % draw dashed circle line to depict ball of radius 1 and radius 2
    \draw[dashed] (0,0) circle [radius=1.5cm] node[above] at (0,1.5) {$\partial B_1$};
    \draw[dashed] (0,0) circle [radius=3cm] node[above] at (0,3) {$\partial B_2$};
    % draw edges between nodes
    \draw[-] (0) -- (01);
    \draw[-] (0) -- (02);
    \draw[-] (0) -- (03);
    \draw[-] (0) -- (04);
    \draw[-] (01) -- (011);
    \draw[-] (01) -- (012);
    \draw[-] (02) -- (021);
    \draw[-] (02) -- (022);
    \draw[-] (02) -- (023);
    \draw[-] (03) -- (031);
    \draw[-] (03) -- (032);
    \draw[-] (03) -- (033);
\end{tikzpicture}
    \caption{\small An example of $B_2(\mathcal{T},0)$. Vertices are represented by black dots, and the labels are $0,(0,1),...,(0,3,3)$. The dashed circles are $\partial B_1$ and $\partial B_2$. Vertex $\bar u$ is in $\partial B_s$ if the distance between $\bar u$ and $0$ is exactly $s$ edges. Vertex 0 is the root, (0,1) and (0,1,1) are type L vertices, and the rest belong to type R. Note that $\tau_0=3$, $\tau_{0,1}=1$, $\tau_{0,2}=3$, $\tau_{0,3}=3$ and $\tau_{0,4}=0$.}
    \label{graphlabelling}
\end{figure}
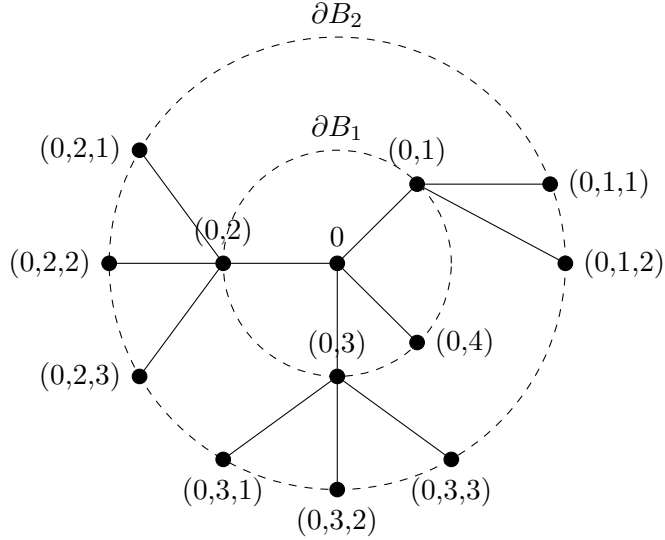

Furthermore, each vertex $\bar v\in V((\mathcal{T},0))$ has a fitness $X_{\bar v}$ and a random \textit{age} $a_{\bar v}$, where $0< a_{\bar v}\leq 1$. We write $a_{\bar v,i}:=a_{(\bar v,i)}$ for convenience. Apart from the root vertex 0, there are two types of vertices, namely, type L (for left) and R (for right). Vertex $\bar v$ belongs to type~L if $a_{\bar v,i}<a_{\bar v}$ for some $i\geq 1$; and $\bar v$ belongs to type R if $a_{\bar v,i}\geq a_{\bar v}$ for all $i\geq 1$. There is exactly one type~L vertex in $\partial B_r$ for all $r\geq 1$, and $(0,1,1,...,1)$ are designated as these vertices. Regardless of the vertex type of $\bar v \in \partial B_r$, there is a random number of type R vertices in $\partial B_{r+1}$ that are connected to $\bar v$, and we denote this number by~$\tau_{\bar v}$. We label the type R vertices in increasing order of their ages, so that if $\bar v$ is the root or belongs to type L, then $a_{\bar v,1}\leq a_{\bar v}\leq a_{\bar v,2}\leq ...\leq a_{\bar v,1+\tau_{\bar v}}$; and if $\bar v$ belongs to type~R, then $a_{\bar v}\leq a_{\bar v,1}\leq ...\leq a_{\bar v,\tau_{\bar v}}$, as shown in Figure \ref{graphlabelling} and \ref{pipolyaex}. 

The distributions of $X_{\bar v}$, $\tau_{\bar v}$ and $a_{\bar v}$ will be made precise when we define $(\mathcal{T},0)$ below, but to understand these variables and the types, consider the breadth-first exploration of the $r$-neighbourhood of a uniform vertex $k_0\in \mathbbm{N}$ in $G_n\sim\mathrm{PA}(\pi)_n$: $B_r(G_n,k_0)$. Let $\partial \mathcal{B}_{r}:= V(B_{r}(G_n,k_0))\setminus V(B_{r-1}(G_n,k_0))$ and $\partial \mathcal{B}_{0}:=k_0$. We define an Ulam-Harris labelling for the vertices in $B_r(G_n,k_0)$ to better relate $B_r(G_n,k_0)$ and $B_r(\mathcal{T},0)$. The Ulam-Harris labels now appear as subscripts. Starting from $k_0$, the labels are generated recursively as follows. If $\bar v=(0,v_1,...,v_r)$, where $v_i\in \mathbbm{N}$ and $r$ is the distance from vertex $k_{\bar v}\in \mathbbm{N}$ to vertex $k_0$, we label the vertices in $\partial \mathcal{B}_{r+1}$ that are connected to $k_{\bar v}$ as $k_{\bar v,j}$, and such that $k_{\bar v,1}<k_{\bar v,2}<k_{\bar v,3}<...$. 

Next, we explain the significance of the vertex types. Apart from vertex $k_0$, a vertex in $\partial \mathcal{B}_k$ is either discovered through (\rom{1}) the incoming edge that it has received from a vertex in $\partial \mathcal{B}_{k-1}$; or (\rom{2}) the outgoing edge it has sent to a vertex in $\partial \mathcal{B}_{k-1}$. The probability that $B_r(G_n,k_0)$ contains vertex 1 is $o(1)$ as $n\to\infty$; and outside this event, a moment's thought shows that there is exactly one vertex of type (\rom{1}) at each $\partial \mathcal{B}_j$, $1\leq j\leq r$, which we designate as $k_{0,1,...,1}$. It follows that for all~$1\leq j \leq r$, $k_{0,1,1,...,1} \in \partial \mathcal{B}_{j}$ is the vertex that receives the incoming edge from $k_{0,1,1,...,1} \in \partial \mathcal{B}_{j-1}$ ($k_0$ if $j=1$). Vertices of type (\rom{1}) and (\rom{2}) correspond to vertices of type L and R in $(\mathcal{T},0)$, so from now on we refer to (\rom{1}) and (\rom{2}) simply as L and R.

Finally, we elucidate the roles of $X_{\bar v}$, $a_{\bar v}$ and $\tau_{\bar v}$. In particular, we shall couple $(G_n,k_0)$ and $(\mathcal{T},0)$ such that $(B_r(\mathcal{T},0),0)\cong (B_r(G_n,k_0),k_0)$ with high probability, where we match vertex $\bar v\in B_r(\mathcal{T},0)$ and vertex $k_{\bar v}\in B_r(G_n,k_0)$. The fitness $X_{\bar v}$ corresponds to the fitness of vertex $k_{\bar v}$. Furthermore, vertex $k_{\bar v}\in \partial \mathcal{B}_{r-1}$ has a random number of type R neighbours in $\partial \mathcal{B}_r$, which we denote by $\theta_{\bar v}$. In the graph coupling, we couple $\theta_{\bar v}$ and $\tau_{\bar v}$ such that $\theta_{\bar v}=\tau_{\bar v}$ with high probability. The ages in $B_r(\mathcal{T},0)$ are continuous analogs of the vertex labels $k_{\bar v}$ in $B_r(G_n,k_0)$; and with high probability, $a_{\bar v}$ can be closely coupled with the scaled vertex label $(k_{\bar v}/n)^\chi$. We stress that the random variables $k_{\bar v}$ and $\theta_{\bar v}$ depend on $n$, but we drop $n$ from the notation in favour of simpler expressions.

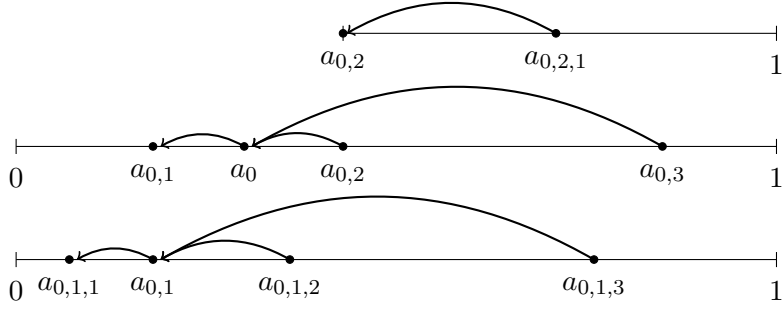
\begin{figure}
    \centering
    \begin{tikzpicture}
% a_0
% line
    \draw (0,0) -- (10,0);
    \draw (0,0.1) -- (0,-0.1)  node[below=0.5mm] {$0$};  
    \draw (10,0.1) -- (10,-0.1)  node[below=0.5mm] {$1$};
% vertices  
    \draw [fill] (1.8,0) circle [radius=0.05] node[below=1mm] {$a_{0,1}$};
    \draw [fill] (3,0) circle [radius=0.05] node[below=1mm] {$a_{0}$};
    \draw [fill] (4.3,0) circle [radius=0.05] node[below=1mm] {$a_{0,2}$};
    \draw [fill] (8.5,0) circle [radius=0.05] node[below=1mm] {$a_{0,3}$};
% the edges 
     \draw [->, thick] (3,0) to[bend right] (1.9,0);
     \draw [->, thick] (4.3,0) to[bend right] (3.1,0);
     \draw [->, thick] (8.5,0) to[bend right] (3.1,0);
% a_{0,2}
% line
    \draw (4.3,1.5) -- (10,1.5);
    \draw (4.3,1.6) -- (4.3,1.4) node[below=0.5mm]{} ;
    \draw (10,1.6) -- (10,1.4) node[below=0.5mm] {$1$} ;
% vertices
    \draw [fill] (7.1,1.5) circle [radius=0.05] node[below=1mm] {$a_{0,2,1}$};
    \draw [fill] (4.3,1.5) circle [radius=0.05] node[below=1mm] {$a_{0,2}$};
% edge
     \draw [->, thick] (7.1,1.5) to[bend right] (4.35,1.5);
% a_{0,1}
% line 
    \draw (0,-1.5) -- (10,-1.5);
    \draw (0,-1.4) -- (0,-1.6) node[below=0.5mm] {$0$};
    \draw (10,-1.4) -- (10,-1.6) node[below=0.5mm] {$1$};
% vertices
    \draw [fill] (0.7,-1.5) circle [radius=0.05] node[below=1mm] {$a_{0,1,1}$};
    \draw [fill] (1.8,-1.5) circle [radius=0.05] node[below=1mm] {$a_{0,1}$};
    \draw [fill] (3.6,-1.5) circle [radius=0.05] node[below=1mm] {$a_{0,1,2}$};
    \draw [fill] (7.6,-1.5) circle [radius=0.05] node[below=1mm] {$a_{0,1,3}$};
% the edges
    \draw [->, thick] (1.8,-1.5) to[bend right] (0.8,-1.5);
    \draw [->, thick] (3.6,-1.5) to[bend right] (1.9,-1.5);
    \draw [->, thick] (7.6,-1.5) to[bend right] (1.9,-1.5);
\end{tikzpicture}
    \caption{\small A pictorial example of the ages in the $\pi$-P\'olya point tree. Vertices $0$, $(0,1)$ and $(0,2)$ are respectively, the root, of type L and of type R. Here we draw an edge between the two ages if the corresponding vertices are connected, and we think of the edges as directed such that if $\bar v$ is the root or a type~L vertex, then it sends an outgoing edge to $(\bar v,1)$, and receives incoming edges from $({\bar v,i})$, $2\leq i\leq 1+\tau_{\bar v}$. If $\bar v$ belongs to type R, then it receives incoming edges from $(\bar v,i)$, $1\leq i\leq \tau_{\bar v}$. }
    \label{pipolyaex}
\end{figure} 

\begin{defn}[$\pi$-P\'olya point tree]\label{pipolyapointgraph}
A $\pi$-P\'olya point tree $(\mathcal{T},0)$ is defined recursively as follows. The root $0$ has an age $a_0=U_0^{\chi}$, where $U_0\sim \mathrm{U}[0,1]$. Assuming that $\bar v=(0,v_1,...,v_{r})\in \partial B_r$ and $a_{\bar v}$ have been generated, we define $(\bar v, j)\in \partial B_{r+1}$ for $j=1,2,...$ as follows. Independently of all random variables generated before, let $X_{\bar v}\sim \pi$ and
\begin{align*}
    Z_{\bar v}\sim\begin{cases}
    \mathrm{Gamma}(X_{\bar v}, 1),\quad\text{if $\bar v$ is the root or of type R.}\\
    \mathrm{Gamma}(X_{\bar v}+1, 1),\quad\text{if $\bar v$ is of type L.}
    \end{cases}
\end{align*}
If $\bar v$ is the root or of type L, choose $a_{\bar v, 1}$ uniformly at random from $[0,a_{\bar v}]$; and $(a_{\bar v, i},2\leq i\leq 1+\tau_{\bar v})$ as points of a mixed Poisson point process on $(a_{\bar v}, 1]$ with intensity
\begin{equation*}
    \lambda_{\bar v}(y)dy:=\frac{Z_{\bar v}}{\mu a^{1/\mu}_{\bar v}}y^{1/\mu-1}dy. 
\end{equation*}
If $\bar v$ is of type R, then $(a_{\bar v, i},1\leq i\leq \tau_{\bar v})$ are sampled as points of a mixed Poisson process on $(a_{\bar v}, 1]$ with intensity $\lambda_{\bar v}$. We obtain $(\mathcal{T},0)$ by continuing this process ad infinitum.
\end{defn}

\begin{remark}
For any $1\leq r<\infty$, the gamma variable of vertex $(0,1,...,1)\in \partial B_{r}$ is size-biased by the outgoing edge from $k_{0,1,...1}\in\partial \mathcal{B}_{r-1}$ to $k_{0,1,...,1}\in \partial \mathcal{B}_r$, therefore the shape parameter increases by one. 
\end{remark}

Before stating the theorem, we also define the total variation distance of two probability distributions $\nu_1$ and $\nu_2$ on the same countable probability space $\Omega$ as 
\begin{align*}
    d_{\mathrm{TV}}\left(\nu_1,\nu_2\right)
    &=\sup_{A\subseteq \Omega} |\nu_1(A)-\nu_2(A)|\numberthis \label{tvddef}\\
    %&=\frac{1}{2}\sum_{x\in \Omega}|\nu_1(x)-\nu_2(x)|\\
    &=\inf\{\Prob(V\not = W):\text{$(V,W)$ is a coupling of $\nu_1$ and $\nu_2$}\}.\numberthis \label{coupinter}
\end{align*}
Let $\mathcal{G}$ be the set of connected, rooted finite graphs, then $(B_r(G_n,k_0),k_0)$ and $(B_r(\mathcal{T},0),0)$ are random elements of $\mathcal{G}$. So taking $\Omega=\mathcal{G}$, we may consider the total variation distance between $(B_r(G_n,k_0),k_0)$ and $(B_r(\mathcal{T},0),0)$. Definition (\ref{coupinter}) is useful because our main tools are coupling techniques. Denoting the distributional law of any random element by $\mathcal{L}(\cdot)$, we are ready to state our weak local limit result. We emphasize that the local weak convergence does not take into account the ages and fitness of the $\pi$-P\'olya point tree, but they are important for the graph construction and are used for the graph couplings later. 

\begin{thm}\label{bigthm}
Assume that $\pi$ is a fitness distribution supported on $(0,\kappa]$ for some $\kappa<\infty$. Let $G_n\sim \mathrm{PA}(\pi)_n$, $k_0$ be the uniformly chosen vertex of $G_n$ and $(\mathcal{T},0)$ be a $\pi$-P\'olya point tree. Then given $r<\infty$ and $n\gg r$, there is a positive constant $C:=C(X_1,\mu,r,\kappa)$ such that 
\begin{align}\label{maindtv}
    d_{\mathrm{TV}}\left(\mathcal{L}((B_r(G_n,k_0),k_0)),\mathcal{L}((B_r(\mathcal{T},0),0) \right)\leq C (\log \log n)^{-\chi}.
\end{align}
In particular, this implies the weak local limit of $(G_n,k_0)$ is the $\pi$-P\'olya point tree.
\end{thm}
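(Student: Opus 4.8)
The plan is to exhibit an explicit coupling of $(B_r(G_n,k_0),k_0)$ and $(B_r(\mathcal{T},0),0)$ and to bound the probability that they fail to be isomorphic by the right-hand side of (\ref{maindtv}); the coupling characterisation (\ref{coupinter}) of the total variation distance then gives the stated bound, and the final sentence of the theorem follows at once from Definition \ref{bs}, since total variation convergence of $(B_r(G_n,k_0),k_0)$ to $(B_r(\mathcal{T},0),0)$ forces $\Prob((B_r(G_n,k_0),k_0)\cong (H,v))\to \Prob((B_r(\mathcal{T},0),0)\cong (H,v))$ for every finite rooted graph $(H,v)$ and every finite $r$.

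The first step is to replace the sequential description of $\mathrm{PA}(\pi)_n$ by a static, Rubin-type embedding: attach independent $\mathrm{Gamma}$ (equivalently $\mathrm{Beta}$) variables to the vertices so that, conditionally on them, the destinations of the outgoing edges become independent and the limiting in-degree proportions of the vertices -- the discrete analogues of the ages, of order $(k/n)^{\chi}$ up to a fitness-dependent random factor -- have explicit laws. This is the additive-fitness version of the P\'olya-urn representation underlying \cite{Berger12asymptoticbehavior}; its role is to make the breadth-first exploration of a neighbourhood essentially Markovian, with the ``fresh'' randomness at each uncovered vertex being a $\mathrm{Gamma}$ weight and a Poissonised set of incoming edges. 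Next I would truncate: condition on the event that $k_0 \ge \ell_n$ for a slowly growing $\ell_n$ and that, throughout the exploration, the smallest label reached -- governed by the type-L ancestor chain, whose label is $k_0$ times a product of order $r$ independent factors in $(0,1)$ -- stays above a second threshold $m_n$; on the complement the coupling is simply declared to fail. Balancing the probability of this truncation event against the approximation errors below, and against the $o(1)$ chance that vertex $1$ enters $B_r(G_n,k_0)$, is what is meant to produce the $(\log\log n)^{-\chi}$ rate.

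With this in hand, explore $B_r(G_n,k_0)$ breadth-first and, each time a vertex $k_{\bar v}$ is uncovered, couple in turn: its fitness with $X_{\bar v}$ (exactly, both $\sim \pi$); its scaled label $(k_{\bar v}/n)^{\chi}$ with the age $a_{\bar v}$, using concentration of the P\'olya-urn proportion of $k_{\bar v}$ around $(k_{\bar v}/n)^{\chi}$ with an explicit rate; the out-neighbour of $k_{\bar v}$, i.e.\ its type-L parent, with a uniform point of $[0,a_{\bar v}]$ identified with $(\bar v,1)$; and the set of later vertices attaching to $k_{\bar v}$, i.e.\ its type-R children, with the points of the mixed Poisson process of intensity $\lambda_{\bar v}$, via a Stein--Chen/Poisson approximation whose error is controlled by the intensity (of order $(n/k_{\bar v})^{\chi}$) times the largest single-edge attachment probability (of order $1/k_{\bar v}$). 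The shift from shape $X_{\bar v}$ to $X_{\bar v}+1$ in the $\mathrm{Gamma}$ variable of a type-L vertex comes out of the urn weights and the size-biasing by its outgoing edge, as in the Remark after Definition \ref{pipolyapointgraph}. One must also check that the explored subgraph is a tree -- distinct branches do not meet, a birthday-type estimate -- and that the number of explored vertices is $O(1)$ with high probability, so that a union bound over the $O(r)$ levels and the vertices in each level is affordable; summing the per-vertex errors, adding the truncation and cycle probabilities, and optimising $\ell_n$ and $m_n$ yields (\ref{maindtv}).

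I expect the main obstacle to be the rate-uniform form of the last step: obtaining the age coupling and the Poisson approximation with error bounds valid for \emph{every} vertex the exploration can reach -- in particular the deep type-L ancestors, whose labels are only of order $m_n$ and whose in-degrees are correspondingly large -- and then controlling the accumulation of these errors through the $r$ levels. Essentially all of the quantitative work, together with the concentration estimates for the urn proportions, lives there, and it is this interplay between the minimal attainable label and the per-level error that is optimised to the rate $(\log\log n)^{-\chi}$.
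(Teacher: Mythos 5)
Your blueprint --- P\'olya-urn representation, truncation so that $k_0$ and the ancestor chain stay away from $1$, breadth-first exploration with a per-vertex Bernoulli--Poisson coupling, size-biasing the gamma weight of each type-L vertex, and a union bound over the $r$ levels --- is the same strategy the paper uses. But there is a genuine gap in the claim that the urn representation makes the exploration ``essentially Markovian, with fresh randomness at each uncovered vertex.'' Conditioning on the edges discovered so far does \emph{not} leave the remaining attachment structure in the same form as before: the weights, the normalising constant, and the set of available recipients all change, and in particular the one active type-L vertex has already been conditioned to have received an incoming edge, which size-biases its weight. The paper spends all of Section~\ref{embmodel} establishing the ``embellished'' $(\mathbf{x},\mathcal{I},n)$-P\'olya urn tree (Theorem~\ref{condpolyaurnrep}) and Lemmas~\ref{condurnconseq}--\ref{tilted01}, which give the explicit conditional law of an explored vertex's type-R and type-L neighbours as a Bernoulli point process and a modified line-breaking step, with parameters that depend on the set of discovered edges. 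Without this (or an equivalent characterisation of the conditional law given the exploration history), the per-vertex Poisson approximation cannot be carried out beyond the root, so the induction over levels never gets started.

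A second, smaller structural point: the paper does not couple $(G_n,k_0)$ directly to $(\mathcal{T},0)$, but instead introduces an intermediate object $(\mathcal{T}_{\mathbf{x},n},0)$ (Definition~\ref{condpttree}), couples $(G_n,k_0)$ to it \emph{conditionally} on $\mathbf{X}=\mathbf{x}$, and then separately bounds $d_{\mathrm{TV}}(\mathcal{L}(B_r(\mathcal{T}_{\mathbf{X},n},0)),\mathcal{L}(B_r(\mathcal{T},0)))$. This second step is what reconciles the fact that in the intermediate tree the fitnesses are $x_{\hat k_{\bar v}}$ drawn from the single sequence $\mathbf{x}$, while in $(\mathcal{T},0)$ they are genuinely i.i.d.\ $\pi$; it works because the PA labels $\hat k_{\bar v}$ are distinct with high probability. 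Your ``birthday-type estimate'' is in essence this observation, but you attach it to checking that the explored subgraph is a tree --- which is automatic, since $G_n$ is a tree --- rather than to the distinctness of PA labels in the intermediate point tree, which is where it is actually needed.
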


\begin{remark}
The bound on the total variation distance follows from the fact that $k_0\geq n(\log\log n)^{-1}$ with probability at least $(\log \log n)^{-1}$, and on this event, we can couple the two graphs such that $(B_r(G_n,k_0),k_0)\cong (B_r(\mathcal{T},0),0)$ with high probability. It is likely possible to improve the rate of convergence by optimising this and similar choices of thresholds, but with much added technicality.
\end{remark}

\begin{remark}\label{loop}
Theorem \ref{bigthm} can be generalised for the model with multiple edges by modifying the proofs here. Furthermore, the theorem should hold for fitness distributions with exponentially decaying tails, but the assumption of bounded fitness greatly simplifies our proof.

When $X_1=0$ and $X_i=1$ almost surely for all $i\geq 2$, \textcite[Theorem 1]{bubeck2015influence} established that the choice of the seed graph $G_1$ has no effect on the weak local limit; and by simply replacing $G_1$ in our proof, we can prove that Theorem \ref{bigthm} holds for more general seed graphs. Moreover, consider the following model that allows for self-loops. Given that $\mathbf{X}=\mathbf{x}$ and $G'_1$ is a single vertex with initial attractiveness $X_1:=x_1$, for $2\leq m\leq n$, we construct the graph $G'_m$ from $G'_{m-1}$ by attaching vertex $m$ to vertex $k\in \{1,...,m\}$ with probability proportional to its in-degree plus $x_k$. Let $k'_0$ be the uniformly chosen vertex in $G'_n$. With some straightforward adjustments to our proofs, we can show that when the fitness is bounded, the $\pi$-P\'olya point tree is the weak local limit of $(G'_n,k'_0)$, and the bound on the total variation distance is of order at most $(\log\log n)^{-\chi}$. However, we again work in the simplified settings to streamline the arguments. 
\end{remark}

\subsection{Applications of Theorem \ref{bigthm} to some degree statistics}\label{subsecapp}
Using Theorem \ref{bigthm}, we can obtain the limiting degree distributions of the vertices in the local neighbourhood of the uniformly chosen vertex $k_0$ of $G_n\sim \mathrm{PA}(\pi)_n$. We focus on $k_0$ and the type L vertices. Note that the type L vertices are the ancestors of the uniformly chosen vertex, and they are of particular interest in fringe tree analysis (see for example, the recent survey by \textcite{holmgren2017fringe}). We state these results here, starting from the uniformly chosen vertex. Let $D_{i,n}:=W_{i,n}+1$ be the degree of vertex $i$, noting that $W_{i,n}$ is the in-degree of vertex $i$ in $G_n$. Define $D^0_n:=D_{k_0,n}$. In view of Definition \ref{localweak}, the limiting distribution of $D^0_n$ and the rate of convergence of the total variation distance can be read from Theorem \ref{bigthm}. However, we shall modify the coupling proof of Theorem \ref{bigthm}, and prove that this result holds without the assumption of bounded fitness, and derive a sharper convergence rate.

\begin{thm}\label{uniformvert}
Assuming that $\E X^p_2<\infty$ for some $p>4$, let $\xi_0:=\tau_0+1$, with $\tau_0$ being the random variable with distribution 
\begin{equation*}
    \mathrm{Po}\left(Z_0(a^{-1/\mu}_0-1) \right),
\end{equation*}
where given $X_0\sim \pi$, $Z_0\sim \mathrm{Gamma}(X_0,1)$, and independently of $Z_0$, $U_0\sim \mathrm{U}[0,1]$ and $a_0:=U^\chi_0$. There are positive constants $C:=C(X_1,\mu,p)$ and $d:=d(\mu,p)<1$ such that
\begin{align}\label{unidegtvd}
    d_{\mathrm{TV}}\left(\mathcal{L}(D^0_n), \mathcal{L}(\xi_0)\right) \leq Cn^{-d}.
\end{align}
\end{thm}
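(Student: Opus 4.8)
The plan is to condition on the value of the uniformly chosen vertex $k_0$, reduce the theorem to a statement about the in-degree $W_{k,n}$ of a single fixed vertex $k$, and then embed the in-degree process of $k$ into a continuous-time Yule process, whose mixed-Poisson structure can be computed \emph{exactly}. Since $k_0$ is uniform on $\{1,\dots,n\}$, writing $\xi^{(k)}_0$ for $\xi_0$ conditioned on $U_0\in((k-1)/n,k/n]$, one has $\mathcal L(D^0_n)=\tfrac1n\sum_{k=1}^n\mathcal L(W_{k,n}+1)$ and $\mathcal L(\xi_0)=\tfrac1n\sum_{k=1}^n\mathcal L(\xi^{(k)}_0)$, where the ``$+1$'' is the outgoing edge of $k$, present except on $\{k_0=1\}$ (probability $1/n$). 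By subadditivity of $d_{\mathrm{TV}}$ along a common mixture it suffices to bound $\tfrac1n\sum_k d_{\mathrm{TV}}\big(\mathcal L(W_{k,n}+1),\mathcal L(\xi^{(k)}_0)\big)$; for $k\le n^{1-d}$ the summand is at most $1$, contributing $n^{-d}$, so the real task is to show $d_{\mathrm{TV}}\big(\mathcal L(W_{k,n}+1),\mathcal L(\xi^{(k)}_0)\big)\le Cn^{-d'}$ for $k>n^{1-d}$ and some $d'\ge d$.

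\textbf{Fitness control.} Fix such a $k$, set $a_0:=(k/n)^\chi$, and condition on $X_k$. Conditionally on the whole fitness sequence, $(W_{k,m})_{m\ge k}$ is a pure-birth chain with $\Prob(W_{k,m}=W_{k,m-1}+1\mid\cdot)=(W_{k,m-1}+X_k)/(m-2+S_{m-1})$, where $S_{m-1}:=\sum_{j\le m-1}X_j$ is deterministic given the fitnesses. A Rosenthal/Marcinkiewicz--Zygmund bound together with $\E X_2^p<\infty$ gives $\Prob\big(|S_m-\mu m|>m^{1/2+2/p}\big)\le Cm^{-2}$, so on an event $\mathcal A_k$ of probability $\ge 1-Cn^{-(1-d)}$ one has $|S_m-\mu m|\le m^{1/2+2/p}$ simultaneously for $m\in[k,n]$, whence $m-2+S_{m-1}=(1+\mu)m\,(1+O(m^{-1/2+2/p}))$ uniformly. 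The exponent $-\tfrac12+\tfrac2p$ is negative precisely when $p>4$, which is exactly where the hypothesis enters; the tails of $X_k$ itself are harmless since $\E X_2^p<\infty$.

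\textbf{Yule embedding and exact mixed-Poisson structure.} On $\mathcal A_k$ I would couple $(W_{k,m})_{m=k}^n$ with a continuous-time pure-birth process $(N_s)_{s\ge0}$, $N_0=0$, of rate $X_k+N_s$, using the time change $t_m:=\sum_{j=k+1}^m(j-2+S_{j-1})^{-1}=(1-\chi)\log(m/k)+O(k^{-1/2+2/p})$ (chosen so that the leading term of the per-step jump probability of the chain matches that of $N$ exactly), in such a way that $W_{k,n}=N_{t_n}$ off an event of probability $\lesssim\sum_{m=k}^n p_m^2$ with $p_m\asymp(W_{k,m-1}+X_k)/((1+\mu)m)$; since $W_{k,m}+X_k$ is of order $\zeta(m/k)^{1-\chi}$ with $\zeta:=\lim_s N_se^{-s}$, this sum is $\lesssim\zeta^2 n^{2d(1-\chi)-1}$, a negative power of $n$ once $d<\tfrac1{2(1-\chi)}\wedge1=\tfrac{1+\mu}2\wedge1$. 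The decisive point is that $(N_s)$ has an exact mixed-Poisson representation: if $\zeta\sim\mathrm{Gamma}(X_k,1)$ and, conditionally on $\zeta$, $\mathcal P$ is a Poisson process on $(0,\infty)$ with intensity $\zeta e^s\,ds$, then $\mathcal P$ is a pure-birth process of rate $X_k+N_s$ --- indeed the $\mathrm{Gamma}(X_k,1)$ prior is conjugate, so the posterior of $\zeta$ given $j$ points by time $s$ is $\mathrm{Gamma}(X_k+j,e^s)$, whence the next-point rate is $\E[\zeta\mid\text{past}]\,e^s=X_k+N_s$. Hence $N_{t_n}$ is mixed Poisson with $N_{t_n}\mid\zeta\sim\mathrm{Po}(\zeta(e^{t_n}-1))$, $\zeta\sim\mathrm{Gamma}(X_k,1)$, and on $\mathcal A_k$ one has $e^{t_n}-1=a_0^{-1/\mu}-1+O\big(k^{-1/2+2/p}(n/k)^{1-\chi}\big)$, since $e^{(1-\chi)\log(n/k)}=(n/k)^{1-\chi}=a_0^{-1/\mu}$.

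\textbf{Matching the limit, assembling, and the main obstacle.} Identifying $Z_0=\zeta$ and recalling $a_0=(k/n)^\chi$, the law $\mathrm{Po}(\zeta(a_0^{-1/\mu}-1))$ with $\zeta\sim\mathrm{Gamma}(X_k,1)$ is exactly $\mathcal L(\tau_0)$ for the $\pi$-P\'olya point tree with $X_0=X_k$; the residual discrepancies are (i) $e^{t_n}-1$ versus $a_0^{-1/\mu}-1$, of size $O(k^{-1/2+2/p}(n/k)^{1-\chi})$, and (ii) replacing $a_0^{-1/\mu}$ by $U_0^{-\chi/\mu}$ for $U_0\in((k-1)/n,k/n]$, of size $O(k^{-1}(n/k)^{1-\chi})$; since $d_{\mathrm{TV}}(\mathrm{Po}(\lambda),\mathrm{Po}(\lambda'))\le|\lambda-\lambda'|$, both transfer to $d_{\mathrm{TV}}$. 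Taking expectation over $X_k\sim\pi$ (all constants grow at most polynomially in $X_k$, and $\E X_2^2<\infty$), summing $\tfrac1n\sum_k$, and choosing $d=d(\mu,p)<1$ to balance $n^{-d}$ against the coupling errors --- the binding constraints being $d<\tfrac{1+\mu}2\wedge1$, $d(1-\chi)<(1-d)(\tfrac12-\tfrac2p)$ and $d<\tfrac1{2-\chi}$ --- yields \eqref{unidegtvd}. Only $X_k$ and the aggregate $S_m$ enter, which is why no boundedness assumption on $\pi$ is needed here (unlike Theorem~\ref{bigthm}, whose deeper neighbourhoods require controlling many fitnesses at once) and why the rate improves to a power of $n$. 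The main obstacle is the joint management of the two error budgets over the long range $m\in[k,n]$ with $k$ possibly as small as $n^{1-d}$: one needs the relative fitness fluctuation to vanish with a power-law rate (forcing $p>4$) so that $t_n$ and the target mean $a_0^{-1/\mu}-1$ are controlled, while simultaneously keeping the $O(\sum p_m^2)$ Poissonization error and the accumulated step-by-step coupling error at a small power of $n$; the exact conjugacy computation above is what removes any error from the Gamma-mixing step, which would otherwise be the most delicate part of the argument.
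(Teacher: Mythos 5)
Your proof is correct in outline but takes a genuinely different route from the paper. You share the same initial reduction --- average over the value of $k_0$, discard $k\le n^{1-d}$ at cost $n^{-d}$, and control the fitness sums $S_m$ via Rosenthal plus Chebyshev, so that both arguments end up coupling the in-degree $W_{k,n}$ of a fixed late vertex $k$ to a $\mathrm{Gamma}$-mixed Poisson. From there you diverge. The paper stays inside its P\'olya-urn framework: conditionally on the Beta/Gamma variables, the events ``$j$ attaches to $k$'' are \emph{independent} Bernoullis $\mathbf{Y}^{(k,n)}$, whose means $P_{j\to k}=(S_{k,n}/S_{j-1,n})B_k$ are approximated by $(k/j)^\chi\mathcal Z_k/((\mu+1)k)$ (Lemmas~\ref{Sasymptotic}, \ref{betagamma}), and the Bernoulli vector is coupled to the discretised mixed Poisson $\mathbf{V}^{(k,n)}$ through the intermediate vectors $\hat{\mathbf{Y}},\hat{\mathbf{V}}$ exactly as in Lemmas~\ref{firstcoupling}--\ref{thirdcoupling}, only with the threshold lowered from $n(\log\log n)^{-1}$ to $n^\psi$. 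You instead embed $(W_{k,m})_{m\ge k}$ into a continuous-time Yule process $N_s$ with rate $X_k+N_s$ via a time change $t_m\approx(1-\chi)\log(m/k)$, and observe that the Gamma--Poisson conjugacy makes $N_s\mid\zeta\sim\mathrm{Po}(\zeta(e^s-1))$ with $\zeta\sim\mathrm{Gamma}(X_k,1)$ an \emph{exact} identity, so the Gamma-mixing step carries no error at all --- the only errors are in the time change and the discrete/continuous chain coupling. This is conceptually cleaner at the mixing step, and the negative-binomial/Yule identity is a nice shortcut the paper does not use. What you lose is the conditional-independence structure of the urn: in the paper the Bernoulli means are fixed given $\Xi_{\mathbf{x}}$, so Lemma~\ref{tvdpf} applies directly; in your approach the per-step error $\sum_m p_m^2$ has $p_m$ \emph{state-dependent}, so making the chain-level coupling rigorous requires either a martingale bound on $\E[(W_{k,m}+X_k)^2]$ along the whole trajectory or a conditioning on the Yule limit $\zeta$ first, which is precisely where the urn's de~Finetti structure would re-enter by the back door. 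That is a genuine extra step you gloss over, but it is a matter of filling in moment estimates (e.g.\ via Lemma~\ref{morimom}) rather than a gap in the argument. Your binding constraints on $d$ --- $d(1-\chi)<(1-d)(\tfrac12-\tfrac2p)$, $d<\tfrac1{2-\chi}$, and $2d(1-\chi)<1$ --- play the same role as the paper's $\psi,\gamma',b$ bookkeeping, and you correctly identify $p>4$ as the threshold making $-\tfrac12+\tfrac2p<0$. Both approaches correctly exploit that only the single fitness $x_k$ and the aggregate $S_m$ matter, which is why no boundedness of $\pi$ is needed and the rate is polynomial.
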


Next, we give the probability mass function of the distribution of $\xi_0$, which is helpful for relating Theorem \ref{uniformvert} and some known results. Below we write $a_n\sim b_n$ to indicate $\lim_{n\to\infty} a_n/b_n=1$.

\begin{prop}\label{pmfuni}
Retaining the notations in Theorem \ref{uniformvert}, the probability mass function of the distribution of $\xi_0$ is given by 
\begin{align}\label{degdistn}
     p_{\pi}(k)= (\mu+1)\int^\infty_0 \frac{\Gamma(x+k-1)\Gamma(x+\mu+1)}{\Gamma(x)\Gamma(x+\mu+k+1)} d\pi(x),\qquad k\geq 1.
\end{align}
Furthermore, if $\E X^{\mu+1}_2<\infty$, then as $k\to\infty$,
\begin{equation}\label{powerlaw}
    p_{\pi}(k)\sim C_{\pi} k^{-(\mu+2)},\qquad C_{\pi}:=(\mu+1)\int^\infty_0 \frac{\Gamma(x+\mu+1)}{\Gamma(x)} d\pi(x).
\end{equation}
\end{prop}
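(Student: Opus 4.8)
The plan is to obtain \eqref{degdistn} by integrating out, in turn, the Poisson randomness, the age $a_0$, the Gamma variable $Z_0$, and finally the fitness $X_0$, and then to deduce \eqref{powerlaw} from \eqref{degdistn} by dominated convergence. Since $\xi_0=\tau_0+1$ we have $p_\pi(k)=\Prob(\tau_0=k-1)$, so it suffices to find the law of $\tau_0$. Conditionally on $X_0=x$, $Z_0=z$ and $a_0=a$, $\Prob(\tau_0=j\mid x,z,a)=e^{-z(a^{-1/\mu}-1)}(z(a^{-1/\mu}-1))^j/j!$. Because $a_0=U_0^\chi$ with $\chi=\mu/(\mu+1)$, the density of $a_0$ on $(0,1)$ is $\chi^{-1}a^{1/\chi-1}=\tfrac{\mu+1}{\mu}a^{1/\mu}$; the change of variables $v=a^{-1/\mu}$ turns this into $(\mu+1)v^{-\mu-2}\,dv$ on $(1,\infty)$, and then $t=v-1$ gives
\[
\Prob(\tau_0=j\mid x,z)=(\mu+1)\int_0^\infty e^{-zt}\,\frac{(zt)^j}{j!}\,(1+t)^{-\mu-2}\,dt.
\]
Integrating this against the $\mathrm{Gamma}(x,1)$ density $z^{x-1}e^{-z}/\Gamma(x)$ and carrying out the $z$-integral first yields
\[
\Prob(\tau_0=j\mid x)=(\mu+1)\,\frac{\Gamma(x+j)}{j!\,\Gamma(x)}\int_0^\infty\frac{t^j}{(1+t)^{x+j+\mu+2}}\,dt .
\]
The remaining integral is the Beta integral $B(j+1,x+\mu+1)=j!\,\Gamma(x+\mu+1)/\Gamma(x+j+\mu+2)$, which cancels the $j!$ and collapses the whole expression to $(\mu+1)\Gamma(x+j)\Gamma(x+\mu+1)/(\Gamma(x)\Gamma(x+j+\mu+2))$. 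Taking $j=k-1$ and integrating over $x\sim\pi$ gives \eqref{degdistn}.

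For the tail asymptotics, write $k^{\mu+2}p_\pi(k)=(\mu+1)\int_0^\infty g_k(x)\,d\pi(x)$ with $g_k(x):=k^{\mu+2}\Gamma(x+k-1)\Gamma(x+\mu+1)/(\Gamma(x)\Gamma(x+k+\mu+1))$. The standard ratio asymptotic $\Gamma(k+\alpha)/\Gamma(k+\beta)\sim k^{\alpha-\beta}$ gives $g_k(x)\to\Gamma(x+\mu+1)/\Gamma(x)$ for every fixed $x>0$, so it remains only to pass the limit through the integral. Using $\Gamma(x+k-1)/\Gamma(x)=\prod_{j=0}^{k-2}(x+j)$ and $\Gamma(x+\mu+1)/\Gamma(x+k+\mu+1)=\prod_{i=1}^{k}(x+\mu+i)^{-1}$ and pairing factors, one gets for $k\ge 2$
\[
g_k(x)=\frac{k^{\mu+2}}{x+\mu+k}\,\prod_{j=0}^{k-2}\frac{x+j}{x+j+\mu+1},
\]
and then $\log(1-t)\le -t$ together with the comparison $\sum_{j=0}^{k-2}(x+\mu+1+j)^{-1}\ge\int_0^{k-1}(x+\mu+1+t)^{-1}\,dt$ yields $\prod_{j=0}^{k-2}\tfrac{x+j}{x+j+\mu+1}\le\bigl(\tfrac{x+\mu+1}{x+\mu+k}\bigr)^{\mu+1}$, hence $g_k(x)\le(x+\mu+1)^{\mu+1}$; since also $g_1(x)=(x+\mu+1)^{-1}$, we have $g_k(x)\le C_\mu(1+x^{\mu+1})$ uniformly in $k$, which is $\pi$-integrable precisely because $\E X_2^{\mu+1}<\infty$. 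Dominated convergence then gives $k^{\mu+2}p_\pi(k)\to(\mu+1)\int_0^\infty\Gamma(x+\mu+1)/\Gamma(x)\,d\pi(x)=C_\pi$, which is \eqref{powerlaw}.

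The only genuinely delicate point is the uniform-in-$k$ domination needed in the last step, in particular the elementary estimate $\prod_{j=0}^{k-2}\tfrac{x+j}{x+j+\mu+1}\le\bigl(\tfrac{x+\mu+1}{x+\mu+k}\bigr)^{\mu+1}$: this is what renders the factor $k^{\mu+2}$ harmless and ties integrability of the dominating function to the moment hypothesis. Everything else is bookkeeping with Gamma and Beta integrals.
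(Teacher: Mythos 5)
Your derivation of the pmf is correct: you integrate out the Poisson randomness, the age $a_0$ (via the change of variables $a_0=U_0^\chi$ with $\chi=\mu/(\mu+1)$), the $\mathrm{Gamma}(x,1)$ variable, and finally the fitness, using a Beta integral to cancel the $j!$; this is exactly the computation the paper delegates to a citation of Berger, Borgs, Chayes and Saberi (Lemma~5.2) and to Lodewijks and Ortgiese. For the tail asymptotics, both you and the paper apply dominated convergence to $f_{\mu,k}(x)=k^{\mu+2}\Gamma(x+k-1)\Gamma(x+\mu+1)/(\Gamma(x)\Gamma(x+k+\mu+1))$, but the domination is obtained differently: the paper cites Jameson (2013, Theorem~1, eq.~(5)) for the sharp inequality $k^{\mu+2}\Gamma(x+k-1)/\Gamma(x+\mu+k+1)\le 1$ for all $k\ge1$, $x>0$, which immediately gives the dominator $\Gamma(x+\mu+1)/\Gamma(x)$; you instead derive the elementary (and slightly weaker) bound $f_{\mu,k}(x)\le(x+\mu+1)^{\mu+1}$ by factoring the Gamma ratios into a product, using $\log(1-t)\le -t$, and comparing the resulting sum to an integral. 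Both dominating functions grow like $x^{\mu+1}$, so both are $\pi$-integrable precisely under the hypothesis $\E X_2^{\mu+1}<\infty$, and both arguments close. The paper's route buys the tightest possible Gamma-ratio bound in one cited line; yours buys self-containedness at the cost of a few extra inequalities, and it is correct.
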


\begin{remark}\label{extrarelatedwork}
When for $i\geq 2$, $X_i=1$ almost surely, $(p_{\pi}(k),k\geq 1)$ is the probability mass function of $\mathrm{Geo}_1(\sqrt{U})$, where $\mathrm{Geo}_1(p)$ is the geometric distribution supported on the positive integers with parameter $p$ and $U$ is a standard uniform variable. For such fitness sequence (with $X_1=2$) and the model in Remark \ref{loop}, \textcite{bollobas2001degree} established that the limiting degree distribution of the uniform vertex is $(p_{\pi}(k),k\geq 1)$; and using Stein's method for the geometric distribution, \textcite[Theorem 6.1]{pekoz2013total} showed that the total variation distance is of order at most $n^{-1}\log n$. 

When our model is extended to the multi-edge setting and the fitness distribution has a finite mean, \textcite{lodewijks2020phase} used stochastic approximation to obtain the almost sure limit of the empirical degree distribution (\cite[Theorem 2.4]{lodewijks2020phase}), and analysed the tail behaviour of the limit for different types of fitness distribution (\cite[Theorem 2.6]{lodewijks2020phase}). In the tree setting, \cite{lodewijks2020phase} showed that the almost sure limit is given by $(p_{\pi}(k),k\geq 1)$ and established the power law behaviour in (\ref{powerlaw}) when $\E X^{\mu+1}_2<\infty$. However, we note that the representation in Theorem \ref{uniformvert} was not given by \cite{lodewijks2020phase}.
\end{remark}

The next theorem concerns the joint degree distribution of the uniformly chosen vertex and its type L vertices, and can be read from Theorem \ref{bigthm}. To state the result, let $L[0]:=0$ and $L[q]=(0,1,1,...,1)$, $|L[q]|=q+1$ for $q\geq 1$, so that $k_{L[0]}:=k_0$, and for $q\geq 1$, $k_{L[q]}$ is the type L vertex in $\partial \mathcal{B}_q:=V(B_q(G_n,k_0)\setminus V(B_{q-1}(G_n,k_0))$. Observe that $D_{k_{L[q]},n}=\theta_{L[q]}+2$, where $\theta_{L[q]}$ is defined as the number of type R neighbours of vertex $k_{L[q]}$, and the additional edges are the edges joining $k_{L[q]}$ to the vertices $k_{L[q-1]}$ and $k_{L[q+1]}$. Fix $r<\infty$. Note that on the event $k_{L[q]}=1$ for some $q<r$, $k_{L[q+1]},...,k_{L[r]}$ do not exist. Hence, we define $D^q_n:=D_{k_{L[q]},n}$ for $1\leq q\leq r$ if $k_{L[q]}\not = 1$ for all $1\leq q\leq r$; and if $k_{L[j]}=1$ for some $j\leq r$, we let $D^q_n:=D_{k_{L[q]},n}$ for $q<j$ and $D^k_n=-1$ for $j\leq k\leq r$. As discussed in Section \ref{UHN}, with high probability we do not observe vertex 1 in the local neighbourhood of $k_0$, so $(D^0_n,...,D^r_n)$ can be understood as the joint degree sequence of $k_0, k_{L[1]},...,k_{L[r]}$. 

\begin{thm}\label{ancestors}
Retaining the notations above , assume that the fitness distribution $\pi$ is supported on $(0,\kappa]$ for some $\kappa<\infty$, and given $r<\infty$, let $n\gg r$. Define $U_0\sim \mathrm{U}[0,1]$, $a_{L[0]}=U^\chi_0$; and given $a_{L[q-1]}$ for $1\leq q\leq r$, let $a_{L[q]}\sim \mathrm{U}[0,a_{L[q-1]}]$. Independently from $(a_{L[i]}, 0\leq i\leq r)$, let $X_{L[q]}$ be i.i.d.\ random variables with distribution $\pi$, and
\begin{align*}
Z_{L[q]}\sim
    \begin{cases}
    \mathrm{Gamma}(X_{L[q]},1),\quad\text{if $q=0$,}\\
    \mathrm{Gamma}(X_{L[q]}+1,1),\quad \text{if $1\leq q\leq r$.}
    \end{cases}
\end{align*}
Let $\tau_{L[q]}$ be conditionally independent random variables with distributions
\begin{align*}
    \mathrm{Po}\left(Z_{L[q]} (a^{-1/\mu}_{L[q]}-1) \right),
\end{align*}
and define $\tau:=(\tau_0+1, \tau_{L[1]}+2,...,\tau_{L[r]}+2)$ and $D_n:=(D^0_n, D^1_n,...,D^r_n)$. There is a positive constant $C:=C(X_1,\mu,r,\kappa)$ such that 
\begin{align*}
    d_{\mathrm{TV}}\left(\mathcal{L}(D_n)), \mathcal{L}(\tau)\right)\leq C(\log\log n)^{-\chi}. 
\end{align*}
\end{thm}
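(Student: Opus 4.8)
\textbf{Proof proposal.} The plan is to deduce the statement from Theorem \ref{bigthm} applied with radius $r+1$, together with a distributional identification of $\tau$. First I would note that, on the event that vertex $1$ does not belong to $B_{r+1}(G_n,k_0)$, the ancestral degree vector $D_n$ is a deterministic functional of $B_{r+1}(G_n,k_0)$ \emph{equipped with its edge orientations}: starting from $k_0$, the vertex $k_{L[q]}$ is obtained recursively as the out-neighbour (in the preferential attachment orientation) of $k_{L[q-1]}$, it then lies at distance $q$ from $k_0$, and $D^q_n$ equals the degree of $k_{L[q]}$ in $G_n$, all of whose incident edges reach vertices within distance $r+1$ of $k_0$; the coordinates of $D_n$ equal to $-1$ can occur only when vertex $1$ is present, hence not on this event. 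The very same functional, applied to $B_{r+1}(\mathcal{T},0)$, returns the degree vector of $(L[0],L[1],\dots,L[r])$ in the $\pi$-P\'olya point tree. Consequently, on the good event of the coupling constructed in the proof of Theorem \ref{bigthm} (with $r$ replaced by $r+1$) — which, as explained in Section \ref{UHN}, simultaneously matches $k_{\bar v}$ with $\bar v$, makes $\theta_{\bar v}=\tau_{\bar v}$, and keeps vertex $1$ out of the explored neighbourhood, with probability at least $1-C(\log\log n)^{-\chi}$ — the vector $D_n$ coincides with this degree vector of $(\mathcal{T},0)$, and (\ref{coupinter}) yields the asserted total variation bound.

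It remains to verify that $\tau=(\tau_0+1,\tau_{L[1]}+2,\dots,\tau_{L[r]}+2)$, as defined in the statement, indeed has the law of the degree vector of $(L[0],\dots,L[r])$ in $(\mathcal{T},0)$. This is obtained by unwinding Definition \ref{pipolyapointgraph} along the type~L spine $0,(0,1),(0,1,1),\dots$: the age recursion $a_{L[0]}=U^\chi_0$, $a_{L[q]}\sim\mathrm{U}[0,a_{L[q-1]}]$ and the Gamma variables $Z_{L[q]}$ appear verbatim, and integrating the intensity $\lambda_{L[q]}$ over $(a_{L[q]},1]$ gives exactly $Z_{L[q]}(a^{-1/\mu}_{L[q]}-1)$, so that conditionally the number of type~R children of $L[q]$ is $\mathrm{Po}\!\left(Z_{L[q]}(a^{-1/\mu}_{L[q]}-1)\right)$, i.e.\ $\tau_{L[q]}$. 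The shifts are then bookkeeping: the root $L[0]$ has $\tau_0$ type~R children and the single type~L child $(0,1)$, hence degree $\tau_0+1$; for $1\leq q\leq r$ the vertex $L[q]$ has $\tau_{L[q]}$ type~R children, the type~L child $L[q+1]$, and the parent $L[q-1]$, hence degree $\tau_{L[q]}+2$. Because the out-edge of $k_{L[r]}$ to $k_{L[r+1]}$ and the type~R children of $k_{L[r]}$ all lie in $\partial\mathcal{B}_{r+1}$, radius $r+1$ rather than $r$ is needed here.

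The main obstacle is that the argument uses more than the statement of Theorem \ref{bigthm}: it needs the coupling built in its proof to deliver, on the good event, not just a rooted-graph isomorphism $B_{r+1}(G_n,k_0)\cong B_{r+1}(\mathcal{T},0)$ but the finer facts that this isomorphism carries $k_{\bar v}$ to $\bar v$ (so the type~L spines are identified and, with the orientations, $k_{L[q]}$ corresponds to $L[q]$), that $\theta_{\bar v}=\tau_{\bar v}$ for the relevant $\bar v$, and that vertex $1$ is absent — all with error $O((\log\log n)^{-\chi})$. Since the coupling in the proof of Theorem \ref{bigthm} is constructed along exactly these lines (cf.\ the discussion in Section \ref{UHN} and after Definition \ref{pipolyapointgraph}), this is a matter of citing the appropriate intermediate step of that construction rather than the black-box inequality (\ref{maindtv}); carrying out that cross-reference cleanly, and checking that the $-1$ convention and the identification of the spine via orientations cause no trouble, is the only real work.
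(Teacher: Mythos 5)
Your proposal is correct and takes the same approach the paper intends. The paper itself declares Theorem~\ref{ancestors} ``an immediate consequence of Theorem~\ref{bigthm}'' and provides no further argument, so there is nothing in the paper to compare against; your filling-in of the deduction is sound. You are also right to flag the two points that a naive reading of ``immediate consequence'' would miss: the degree of $k_{L[r]}$ is not determined by the radius-$r$ ball (its type-R neighbours lie in $\partial\mathcal{B}_{r+1}$), and the TV bound~(\ref{maindtv}) concerns unoriented rooted graphs and hence does not by itself identify the spine, so one must invoke the stronger coupling result — essentially Corollary~\ref{maincor} together with the final $(\mathcal{T}_{\mathbf{X},n},0)$-to-$(\mathcal{T},0)$ coupling on the event $\mathcal{E}$ in the proof of Theorem~\ref{bigthm}, applied at radius $r+1$, whose good event matches PA labels, types, and orientations and keeps vertex $1$ out of the explored ball — rather than the black-box TV inequality. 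Your distributional identification of $\tau$ with the spine degree vector of $(\mathcal{T},0)$ (root degree $\tau_0+1$, interior spine degree $\tau_{L[q]}+2$) also matches Definition~\ref{pipolyapointgraph} exactly.
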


In the following, we give the limiting probability mass function of $D^1_n$, and shows that the distribution of $D^1_n$ also exhibits a power-law behaviour. 

\begin{prop}\label{ancestorpowerlaw}
Retaining the assumption and the notations of Theorem \ref{ancestors}, the probability mass function of the random variable $\tau_{L[1]}+2$ is given by
\begin{align}\label{dlimit}
    q_{\pi}(k)=\mu(\mu+1)(k-1)\int^\infty_0\frac{\Gamma(x+k-1)\Gamma(x+\mu+1)}{\Gamma(x+1)\Gamma(x+\mu+k+1)}d\pi(x),\qquad k\geq 2.
\end{align}
Furthermore, if $\E X^\mu_2<\infty$, then as $k\to\infty$,
\begin{align*}
q_{\pi}(k)\sim C_{\pi}k^{-(\mu+1)},\qquad C_{\pi}=\mu(\mu+1)\int^\infty_0\frac{\Gamma(x+\mu+1)}{\Gamma(x+1)} d\pi(x).
\end{align*}
\end{prop}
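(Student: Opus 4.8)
The plan is to compute the law of $\tau_{L[1]}$ by successively integrating out the Poisson mean, the Gamma variable $Z_{L[1]}$, the age $a_{L[1]}$ and the fitness $X_{L[1]}$, and then to read off the tail from the standard asymptotics of ratios of Gamma functions.

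\textbf{The exact formula.} First I would identify the law of $a_{L[1]}$. Since $a_{L[0]}=U_0^{\chi}$ with $U_0\sim\mathrm{U}[0,1]$, it has density $\chi^{-1}y^{1/\chi-1}$ on $(0,1)$, and conditioning on $a_{L[1]}\sim\mathrm{U}[0,a_{L[0]}]$ and then integrating over $a_{L[0]}\in(a,1)$ gives, using $1/\chi=(\mu+1)/\mu$ and $1-\chi=(\mu+1)^{-1}$, the density $(\mu+1)(1-a^{1/\mu})$ for $a_{L[1]}$ on $(0,1)$. Next, fix $X_{L[1]}=x$ and $a_{L[1]}=a$, set $b:=a^{-1/\mu}-1\ge 0$, and integrate the $\mathrm{Po}(Z_{L[1]}b)$ mass function against the $\mathrm{Gamma}(x+1,1)$ density of $Z_{L[1]}$; the $z$-integral is a Gamma integral, giving
\[
\Prob(\tau_{L[1]}=m\mid a,x)=\frac{\Gamma(m+x+1)}{m!\,\Gamma(x+1)}\,\frac{b^m}{(b+1)^{m+x+1}}=\frac{\Gamma(m+x+1)}{m!\,\Gamma(x+1)}\,a^{(x+1)/\mu}\bigl(1-a^{1/\mu}\bigr)^m,
\]
where the second equality uses $b+1=a^{-1/\mu}$ and $b=a^{-1/\mu}(1-a^{1/\mu})$. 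Integrating this against the density $(\mu+1)(1-a^{1/\mu})$ of $a_{L[1]}$ and substituting $t=a^{1/\mu}$ turns the $a$-integral into $\mu$ times the Beta integral $B(x+\mu+1,m+2)$, so that
\[
\Prob(\tau_{L[1]}=m\mid x)=\mu(\mu+1)\,\frac{\Gamma(m+2)}{m!}\,\frac{\Gamma(m+x+1)\Gamma(x+\mu+1)}{\Gamma(x+1)\Gamma(x+\mu+m+3)}=\mu(\mu+1)(m+1)\frac{\Gamma(m+x+1)\Gamma(x+\mu+1)}{\Gamma(x+1)\Gamma(x+\mu+m+3)}.
\]
Integrating over $X_{L[1]}\sim\pi$ and putting $k=m+2$ gives exactly (\ref{dlimit}).

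\textbf{The power law.} For each fixed $x$, the classical asymptotic $\Gamma(z+\alpha)/\Gamma(z+\beta)\sim z^{\alpha-\beta}$ as $z\to\infty$ yields $\Gamma(x+k-1)/\Gamma(x+\mu+k+1)\sim k^{-(\mu+2)}$, hence $(k-1)\Gamma(x+k-1)/\Gamma(x+\mu+k+1)\sim k^{-(\mu+1)}$. It then remains to justify exchanging this limit with the integral over $\pi$. Under the bounded-support hypothesis of Theorem~\ref{ancestors} (so that $\E X_2^\mu<\infty$ automatically) the function $k^{\mu+1}(k-1)\Gamma(x+k-1)\Gamma(x+\mu+1)/\bigl(\Gamma(x+1)\Gamma(x+\mu+k+1)\bigr)$ is bounded uniformly in $k$ and in $x\in(0,\kappa]$, so dominated convergence applies directly; more generally one combines a uniform-in-$x$ bound on $\Gamma(x+k-1)/\Gamma(x+\mu+k+1)$ with $\Gamma(x+\mu+1)/\Gamma(x+1)=O(1+x^\mu)$ to dominate the integrand by a constant multiple of $1+x^\mu$, which is $\pi$-integrable exactly when $\E X_2^\mu<\infty$. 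This gives $k^{\mu+1}q_\pi(k)\to\mu(\mu+1)\int_0^\infty\Gamma(x+\mu+1)/\Gamma(x+1)\,d\pi(x)=C_\pi$, as claimed.

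The only slightly delicate point is the interchange of limit and integral in the tail estimate, i.e.\ producing the uniform-in-$x$ domination of the Gamma ratio; everything else is routine manipulation of a Poisson--Gamma mixture and a Beta integral, so I anticipate no conceptual obstacle.
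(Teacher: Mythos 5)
Your proposal is correct and follows essentially the same route as the paper: the exact formula comes from successively integrating out the Poisson mean, the $\mathrm{Gamma}(x+1,1)$ variable, the age, and the fitness (the paper delegates this routine Poisson--Gamma--Beta computation to Berger et al., Lemma~5.2), and the tail asymptotic comes from dominated convergence with the uniform-in-$x$ bound $k^{\mu+2}\Gamma(x+k-1)/\Gamma(x+\mu+k+1)\leq 1$ (the paper cites Jameson's inequality for this; your ``uniform-in-$x$ bound'' is exactly that statement). The one cosmetic difference is that you spell out the integration that the paper treats as known; the substance and the key domination step coincide.
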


Comparing Proposition \ref{pmfuni} and \ref{ancestorpowerlaw}, we see that in the limit, the degree of vertex $k_{L[1]}:=k_{0,1}$ has a heavier tail than the degree of the uniform vertex $k_0$. This is due to the fact that $k_{L[1]}$ has received an incoming edge from the uniform vertex, and so $k_{L[1]}$ is more likely to have a higher degree than $k_0$.

\subsection{Idea of proof of Theorem \ref{bigthm}}
\subsubsection{Two graphs for intermediate coupling steps}\label{twographs}
Let $\mathbf{x}$ be a realisation of the fitness sequence $\mathbf{X}$. To prove Theorem \ref{bigthm} using graph couplings, we mostly work with the $(\mathbf{x},n)$-sequential model. We now introduce two random trees used in the intermediate coupling steps that are constructed using $\mathbf{x}$. Once we define these trees, we provide the finer detail of the graph couplings. The first tree is an alternative definition of the $(\mathbf{x},n)$-sequential model, which is commonly known as the P\'olya urn representation, and is better related to the $\pi$-P\'olya point tree. Its construction relies on the fact that the dynamics of the preferential attachment graphs can be represented as embedded classical P\'olya urns. By de Finetti's theorem, the attachment steps of the graph constructed using P\'olya urns are conditionally independent, so the graph is more tractable for our analysis. 

When $X_1=0$ and $X_i=1$ almost surely for $i\geq 2$, \textcite{Berger12asymptoticbehavior} used a P\'olya urn representation to prove their weak local limit theorem. The representation result below is Theorem~1 of \textcite{delphin2019}. The construction uses a line-breaking procedure, where we sample a sequence of conditionally independent beta variables.

\begin{defn}[$(\mathbf{x},n)$-P\'olya urn tree]\label{urnrep}
Given $\mathbf{x}$ and $n$, let $T_m:=\sum^m_{j=1} x_j$, and $(B^{(x)}_j, 1\leq j\leq n)$ be independent random variables such that $B^{(x)}_1:=1$ and
\begin{equation} 
   B^{(x)}_i \sim \mathrm{Beta}(x_{i},i-1+T_{i-1})\quad \text{for $2\leq i\leq n$.}
\end{equation}
 Moreover, let $S^{(x)}_{0,n}:=0$, $S^{(x)}_{n,n}:=1$ and
\begin{equation*}
   S^{(x)}_{k,n}:=\prod^{n}_{i=k+1}(1-B^{(x)}_i)\quad\text{for $1\leq k \leq n-1$.}%\hspace{0.4cm}\phi_{j}=B_j \prod^n_{i=j+1}(1-B_i)=B_jS^{(X}_{j,n}.
\end{equation*}
Starting with $n$ vertices labelled $\{1,...,n\}$ and no edges between them, we connect them as follows. Let $I_j=[S^{(x)}_{j-1,n}, S^{(x)}_{j,n})$ for $1\leq j \leq n$. Conditionally on $(S^{(x)}_{i,n},1\leq i \leq n-1)$, let $(U_k, 2\leq k\leq n)$ be independent variables such that $U_k~\sim~\mathrm{U}[0,S^{(x)}_{k-1,n}]$. If $j<k$ and $U_k\in I_j$, then we attach an outgoing edge from vertex $k$ to vertex $j$. We say that the resulting graph is a $(\mathbf{x},n)$-P\'olya urn tree.
\end{defn}

Note that $1-B^{(x)}_{j}$ in Definition \ref{urnrep} is $\beta_{j-1}$ in \cite{delphin2019}. The key result relating the $(\mathbf{x},n)$-sequential model and the $(\mathbf{x},n)$-P\'olya urn tree is the following.

\begin{thm}[Theorem 1, \textcite{delphin2019}]\label{linebreaking}
Let $G_n$ be an $(\mathbf{x},n)$-P\'olya urn tree, then $G_n\sim \mathrm{Seq}(\mathbf{x})_n$.
\end{thm}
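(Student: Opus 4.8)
\emph{Proof proposal.} The plan is to show the two laws coincide by computing, for an arbitrary labelled tree $T$ on $\{1,\dots,n\}$ with every edge directed from the larger to the smaller label, the probability that each construction outputs $T$, and then matching the two expressions; throughout write $p(k)<k$ for the parent of $k$ in $T$ and $d_v:=W_{v,n}(T)$ for the in-degree of $v$. For the $(\mathbf{x},n)$-sequential model I would group the $n-1$ attachment steps according to the vertex receiving the edge: if the children of $v$ are $c_1<\dots<c_{d_v}$, then $v$ has in-degree $i-1$ at the step $c_i$ is added, so $v$'s factors multiply to $\prod_{i=1}^{d_v}(i-1+x_v)=\Gamma(d_v+x_v)/\Gamma(x_v)$, whence
\[
  \mathrm{Seq}(\mathbf{x})_n(T)=\Big(\,\prod_{k=2}^n (k-2+T_{k-1})\Big)^{-1}\prod_{v=1}^n\frac{\Gamma(d_v+x_v)}{\Gamma(x_v)},
\]
an expression that depends on $T$ only through its in-degree sequence.

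For the $(\mathbf{x},n)$-P\'olya urn tree I would condition on $(B^{(x)}_i)_{i=2}^n$. Using $|I_j|=S^{(x)}_{j,n}B^{(x)}_j$ and $S^{(x)}_{j,n}/S^{(x)}_{k-1,n}=\prod_{i=j+1}^{k-1}(1-B^{(x)}_i)$ for $j<k$, vertex $k$ attaches to $p(k)$ with conditional probability $B^{(x)}_{p(k)}\prod_{i=p(k)+1}^{k-1}(1-B^{(x)}_i)$, so
\[
  \Prob\big(T\mid (B^{(x)}_i)\big)=\prod_{m=2}^n \big(B^{(x)}_m\big)^{d_m}\big(1-B^{(x)}_m\big)^{g_m},\qquad g_m:=\#\{k:\,p(k)<m<k\}.
\]
The elementary but crucial point is $g_m=\big(\sum_{v=1}^{m-1}d_v\big)-(m-1)$ (count the edges of $T$ with head in $\{1,\dots,m-1\}$ in two ways), so $g_m\ge 0$ and again depends on $T$ only through the in-degrees. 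Taking expectations, independence of the $B^{(x)}_m$ and the Beta integral turn the product into $\prod_{m=2}^n\Gamma(x_m+d_m)\Gamma(\beta_m+g_m)\Gamma(x_m+\beta_m)/\big(\Gamma(x_m)\Gamma(\beta_m)\Gamma(x_m+d_m+\beta_m+g_m)\big)$ with $\beta_m:=m-1+T_{m-1}$; writing $D_j:=\sum_{v\le j}d_v$, the identities $\beta_m+g_m=T_{m-1}+D_{m-1}$, $\,x_m+d_m+\beta_m+g_m=T_m+D_m$ and $x_m+\beta_m=\beta_{m+1}-1$ make all factors other than $\prod_{m=2}^n\Gamma(x_m+d_m)/\Gamma(x_m)$ telescope; after cancelling the shared boundary terms produced by $B^{(x)}_1=1$, $\beta_2=1+x_1$ and $T_1+D_1=x_1+d_1$, the expression collapses to exactly the displayed $\mathrm{Seq}(\mathbf{x})_n(T)$, which finishes the proof.

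I expect the only delicate part to be this final bookkeeping: keeping straight which breakpoint $S^{(x)}_{j,n}$ is ``used'' by which attachment, and carrying both telescopings, together with their boundary contributions, through correctly. A variant that confines the algebra to a single step is an induction on $n$: the $(\mathbf{x},n)$-P\'olya urn tree restricted to $\{1,\dots,n-1\}$ is an $(\mathbf{x},n-1)$-P\'olya urn tree (the $B^{(x)}_i$ do not depend on $n$, and the edges within $\{1,\dots,n-1\}$ depend only on $B^{(x)}_2,\dots,B^{(x)}_{n-1}$), hence has law $\mathrm{Seq}(\mathbf{x})_{n-1}$ by the inductive hypothesis; conjugacy of the Beta priors with the likelihood $\prod_m (B^{(x)}_m)^{d_m}(1-B^{(x)}_m)^{g_m}$ then shows that, given the tree $t$ on $\{1,\dots,n-1\}$, the $B^{(x)}_i$ stay independent with $B^{(x)}_i\sim\mathrm{Beta}\big(x_i+d_i(t),\beta_i+g_i(t)\big)$, so $\Prob(n\to k\mid t)=\E\big[B^{(x)}_k\prod_{i=k+1}^{n-1}(1-B^{(x)}_i)\mid t\big]$ equals, after the same telescoping, $(W_{k,n-1}(t)+x_k)/(n-2+T_{n-1})$, the sequential attachment probability for vertex $n$. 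Either way, the Beta--Gamma telescoping is the heart of the matter.
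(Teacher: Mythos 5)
Your proof is correct, and it takes a genuinely different route from the paper's. The paper proves Theorem~\ref{linebreaking} by \emph{urn embedding}: Lemma~\ref{urnembed} exhibits the weight process $U_k(\cdot)$ of each vertex $k$ as a classical P\'olya urn, whose limiting colour fraction is exactly $B^{(x)}_k$, and then de~Finetti's theorem upgrades this to conditional independence of the draws given the limit, from which the product formula $B^{(x)}_k\prod_{j=k+1}^{m-1}(1-B^{(x)}_j)$ for the attachment probability drops out. Your argument bypasses the urn and de~Finetti entirely: you compute the unconditional probability that either construction outputs a fixed tree $T$, and then match the two closed-form expressions. I checked the algebra and it goes through: your identity $g_m=\bigl(\sum_{v<m}d_v\bigr)-(m-1)$ is the right edge-count (the $m-1$ subtracts off the $m-2$ edges internal to $\{1,\dots,m-1\}$ plus the one outgoing edge of $m$), the three Gamma identities you list make the second and third blocks telescope, and the boundary terms from $B^{(x)}_1=1$, $\Gamma(\beta_2)=\Gamma(1+x_1)=x_1\Gamma(x_1)$ and $\Gamma(\beta_{n+1})/\Gamma(T_n+D_n)=T_n+n-1$ cancel exactly against the index shift in the normaliser, leaving $\prod_{v=1}^n\Gamma(d_v+x_v)/\Gamma(x_v)$ over $\prod_{k=2}^n(k-2+T_{k-1})$. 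The trade-off is clear: the paper's route is structural and, importantly, is what generalises to the \emph{embellished} model of Section~\ref{embmodel} (which is why the paper reproduces it), whereas yours is elementary but requires the careful bookkeeping you flag. Your inductive variant via Beta conjugacy is also correct and is arguably the cleanest of the three: the restriction of the $(\mathbf{x},n)$-tree to $\{1,\dots,n-1\}$ is an $(\mathbf{x},n-1)$-tree because the attachment of $k<n$ only reads $B^{(x)}_{p(k)},\dots,B^{(x)}_{k-1}$, and the posterior $B^{(x)}_i\sim\mathrm{Beta}(x_i+d_i(t),\beta_i+g_i(t))$ together with the single telescoping $\prod_{i=k+1}^{n-1}(T_{i-1}+D_{i-1})/(T_i+D_i)$ gives the sequential probability in one line. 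That inductive structure is essentially a posterior-predictive restatement of the exchangeability that de~Finetti supplies in the paper's version.
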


Unlike Theorem \ref{bigthm}, the theorem does not assume $x_i$ are uniformly bounded. Moreover, it is possible to derive similar urn representations for preferential attachment models that allow for multi-edges and self-loops (see for example, \cite[Theorem 2.1]{Berger12asymptoticbehavior} and \textcite[Lemma 1 and 2]{pekoz2017}), and proving weak local limits of these models using their respective representations. We reproduce the proof of Theorem~\ref{linebreaking} in Section \ref{pflinebreaking}, as it will be useful for proving a variation of this result that we need for the proof of Theorem \ref{bigthm}. An example of the $(\mathbf{x},n)$-P\'olya urn tree is given in Figure~\ref{polyagraph}. 

In preparation for the graph coupling, we equip the breadth-first search of the $(\mathbf{x},n)$-P\'olya urn tree (or equivalently the $(\mathbf{x},n)$-sequential model) with the Ulam-Harris labels $k_{\bar w}$, as defined in Section \ref{UHN}. Let $G_n\sim\mathrm{Seq}(\mathbf{x})_n$ and $k_0$ be its uniform vertex. Then, the probability that vertex $k$ attaches to $k_0$ can be read from Theorem \ref{linebreaking}, and importantly, we can encode the type R neighbours of vertex $k_0$ in the (mixed) Bernoulli point process $(\mathbbm{1}[U_k\in I_{k_0}], k_0+1\leq k\leq n)$. This Bernoulli point process can be coupled to a mixed Poisson point process which, after randomisation of the fitness sequence $\mathbf{X}$, is the mixed Poisson point process on $(a_0,1]$ with intensity $\lambda_0$. As conditioning on the discovered edges in the breadth-first search of $(G_n,k_0)$ changes the distributions of the vertex weights, Theorem \ref{linebreaking} does not give the probability that an unexplored vertex attaches to a non-root vertex $k_{\bar v}$. However, for a fixed sequence $\mathbf{x}$, it is possible to quantify the conditioning effect. In more detail, the neighbours of $k_{\bar v}=j$ are distributed as the neighbours of vertex $j$ in an $(\mathbf{x},n)$-sequential model conditional on the edges discovered before $k_{\bar v}$. This conditional model has a P\'olya urn representation, from which we can obtain the attachment probabilities for $k_{\bar v}$. In particular, we can derive a Bernoulli point process that encodes the type R neighbours of vertex $k_{\bar v}$, whose distribution is close to that of $(\mathbbm{1}[U_k\in I_{k_{\bar v}}], k_{\bar v}+1\leq k\leq n)$ when $n$ is large enough. Thus, we can couple this modified process and a mixed Poisson point process that is distributed as the mixed Poisson process on $(a_{\bar v},1]$ with intensity $\lambda_{\bar v}$ after randomisation of $\mathbf{X}$. 
\begin{figure}
    \centering
    \begin{tikzpicture}
    \draw (0,0) -- (5.4,0);
    \draw (0,0) -- (0,0.2) node[anchor=south] {$0$};
    \draw (1.5,0) -- (1.5,0.2) node[anchor=south] {$S^{(X)}_{1,5}$};
    \draw (3.2,0) -- (3.2,0.2) node[anchor=south] {$S^{(X)}_{2,5}$};
    \draw (4.1,0) -- (4.1,0.2) node[anchor=south] {$S^{(X)}_{3,5}$};
    \draw (5.4,0) -- (5.4,0.2) node[anchor=south] {$S^{(X)}_{4,5}$};
    %\draw (7,0) -- (7,0.2) node[anchor=south] {$1$;
    \draw [->, thick] (0.8,-0.5)--(0.8,0) node[below=5mm] {$U_2$} ;
    \draw [->, thick] (1.9,-0.5)--(1.9,0) node[below=5mm] {$U_3$} ;
    \draw [->, thick] (2.8,-0.5)--(2.8,0) node[below=5mm] {$U_4$} ;
    \draw [->, thick] (4.6,-0.5)--(4.6,0) node[below=5mm] {$U_5$} ;
    \draw [fill] (8,0) circle [radius=0.1] node[below=3mm] {$1$};
    \draw [fill] (9.5,0) circle [radius=0.1] node[below=3mm] {$2$};
    \draw [fill] (11,0) circle [radius=0.1] node[below=3mm] {$3$};
    \draw [fill] (12.5,0) circle [radius=0.1] node[below=3mm] {$4$};
    \draw [fill] (14,0) circle [radius=0.1] node[below=3mm] {$5$};
    \draw [->,thick] (9.5,0) to[bend right] (8.1,0);
    \draw [->,thick] (11,0) to[bend right] (9.6,0);
    \draw [->,thick] (12.5,0) to[bend right] (9.6,0);
    \draw [->,thick] (14,0) to[bend right] (12.6,0);
\end{tikzpicture}
    \caption{\small An example of the $(\mathbf{x},n)$-P\'olya urn tree for $n=5$, where $U_i\sim\mathrm{U}[0,S^{(x)}_{i-1,n}]$ for $i=2,...,5$ and an outgoing edge is drawn from vertices $i$ to $j$ if $U_i\in [S^{(x)}_{j-1,n}, S^{(x)}_{j,n})$.}
    \label{polyagraph}
\end{figure}
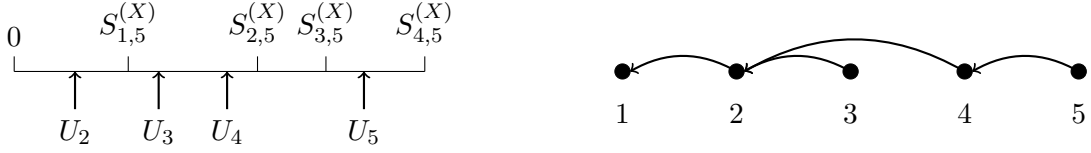
\normalsize

Hence using the mixed Poisson point processes conditional on $\mathbf{X}=\mathbf{x}$, and the betas for constructing the urn representations, we generate a conditional analog of the $\pi$-P\'olya point tree, which we call the intermediate P\'olya point tree. We denote this rooted random tree by $(\mathcal{T}_{\mathbf{x},n},0)$, where vertex $0$ is its root, and the subscripts are parameters corresponding to $\mathbf{x}$ and $n$ in the $(\mathbf{x},n)$-P\'olya urn tree. To construct the point processes in a similar way as in Definition \ref{pipolyapointgraph}, we adorn each vertex of this tree with an Ulam-Harris label $\bar v$, an age $\hat a_{\bar v}$, a type (except for the root), and a \textit{PA label} $\hat k_{\bar v}$. Moreover, vertex $\bar v$ has a random number of type R neighbours $\hat \tau_{\bar v}$, which is exactly the number of points of the mixed Poisson process. The Ulam-Harris labels, types and ages are defined similarly as in Section \ref{UHN}, and they are useful for matching the vertices of $(\mathcal{T}_{\mathbf{x},n},0)$ and the $\pi$-P\'olya point tree $(\mathcal{T},0)$. On the other hand, the PA label $\hat k_{\bar v}$ of vertex $\bar v\in V((\mathcal{T}_{\mathbf{x},n},0))$ determines the initial attractiveness of the vertex by setting its attractiveness as $x_{\hat k_{\bar v}}$. Importantly, $\hat k_0$ is distributed as the uniformly chosen vertex $k_0$; and for $\bar v\not =0$, the PA label $\hat k_{\bar v}$ is approximately distributed as $k_{\bar v}\in V((G_n,k_0))$. So using $\hat k_{\bar v}$, we can match the vertices $k_{\bar v}\in V((G_n,k_0))$ and $\bar v\in V((\mathcal{T}_{\mathbf{x},n},0))$, and hence the initial attractiveness of these vertices in the graph coupling. 

Before defining $(\mathcal{T}_{\mathbf{x},n},0)$, we need a final notation to help us track the existing vertices in the construction below. Write $\bar w <_{UH} \bar v$ if $\bar w$ is smaller than $\bar v$ in the breadth-first order. Letting $|\cdot|$ be the cardinality of any set, this means that either $|\bar w|< |\bar v|$, or when $\bar w=(0,w_1,...,w_q)$ and $\bar v=(0,v_1,...,v_q)$, $w_j<v_j$, where $j=\min\{l: v_l\not= w_l\}$. For example, $(0,2,3)<_{UH} (0,1,1,1)$ and $(0,3,1,5)<_{UH} (0,3,4,2)$. If $\bar w$ is either smaller than or equal to $\bar v$ in the breadth-first order, then we write $\bar w \leq_{UH} \bar v$. 

\begin{defn}[Intermediate P\'olya point tree]\label{condpttree}
Given $n$ and $\mathbf{x}$, $(\mathcal{T}_{\mathbf{x},n},0)$ is constructed recursively as follows. The root 0 has an age $\hat a_0=U_0^\chi$ and an initial attractiveness $x_{\hat k_0}$, where $U_0\sim \mathrm{U}[0,1]$ and $\hat k_0=\ceil{nU_0}$. Assume that $((\hat a_{\bar w}, \hat k_{\bar w}),\bar w \leq_{UH} \bar v)$ have been generated, such that $\hat k_{\bar w}>1$ for all $\bar w \leq_{UH} \bar v$. If vertex $\bar v$ is the root or belongs to type L, we generate $((\hat a_{\bar v,i}, \hat k_{\bar v,i}),1\leq i\leq 1+\hat \tau_{\bar v})$ as follows.
\begin{enumerate}
    \item \label{Lstep}We sample the age of the type L neighbour $(\bar v,1)$ by letting $U_{\bar v,1}\sim\mathrm{U}[0,1]$ and $\hat a_{\bar v,1}=\hat a_{\bar v} U_{\bar v,1}$.
    \item \label{betastep} Next we choose the PA label $\hat k_{\bar v,1}$. Noting that $\{\hat k_{\bar u}: \bar u <_{UH} 0 \}=\varnothing$, we define the independent variables $((\zeta_j[\bar v], \tilde \zeta_j[\bar v]), j\in\{2,...,n\}\setminus \{\hat k_{\bar u}:\bar u<_{UH}\bar v\})$. Define $T_{m}:=\sum^m_{j=1} x_j$. When $\bar v=0$, let $\zeta_j[0]\sim \mathrm{Gamma}(x_j,1)$ and $\tilde \zeta_i[0]\sim \mathrm{Gamma}(T_{i-1}+i-1,1)$; whereas when $\bar v=(0,1,1,...,1)$, let $\zeta_j[\bar v]\sim \mathrm{Gamma}(x_j + \mathbbm{1}[j=\hat k_{\bar v}],1)$, and 
    \begin{align*}
        \tilde \zeta_j[\bar v]\sim
        \begin{cases}
        \mathrm{Gamma}(T_{j-1}+j-1,1),\quad 2\leq j\leq \hat k_{\bar v},\\
        \mathrm{Gamma}(T_{j-1}+j,1),\quad \hat k_{\bar v}<j<\hat k_{\bar v'}\\
        \mathrm{Gamma}\left(T_{j-1}+j+1-|\bar v|-\sum_{\{\hat k_{\bar x}<j: \bar x <_{UH} \bar v\}}\{x_{\hat k_{\bar x}}+\hat \tau_{\bar x}\},1\right),\quad \hat k_{\bar v'}<j\leq n,
        \end{cases}
    \end{align*}
    where $\bar v'=(0,1,1,...,1)$ and $|\bar v'|=|\bar v|-1$. For either the root or the type L vertex, define $\beta_1[\bar v]:=1$, $\beta_i[\bar v]:=0$ for $i \in \{\hat k_{\bar s}: \bar s <_{UH} \bar v\}$ and 
    \begin{align}\label{condbeta}
        \beta_j[\bar v]:=\frac{\zeta_j[\bar v]}{\zeta_j[\bar v] + \tilde \zeta_j[\bar v]},\quad j\in\{2,...,n\}\setminus \{\hat k_{\bar u}:\bar u<_{UH}\bar v\};
    \end{align}
    then let $\mathcal{S}_{0,n}[\bar v]:=0$, $\mathcal{S}_{n,n}[\bar v]:=1$ and $\mathcal{S}_{k,n}[\bar v]:=\prod^n_{\ell=k+1}(1-\beta_\ell[\bar v])$ for $1\leq k\leq n-1$. Choose the PA label $\hat k_{\bar v,1}$ such that $ \mathcal{S}_{\hat k_{\bar v,1}-1,n}[\bar v]\leq U_{\bar v,1} \mathcal{S}_{\hat k_{\bar v}-1,n}[\bar v] < \mathcal{S}_{\hat k_{\bar v,1},n}[\bar v]$.
    
    \item \label{Rstep} We generate the ages and PA labels of the type R neighbours. Let $(\hat a_{\bar v,i},2\leq i\leq 1+\hat \tau_{\bar v})$ be points of a mixed Poisson process on $(\hat a_{\bar v},1]$ with intensity 
    \begin{equation}\label{condintensity}
    \hat \lambda_{\bar v}(y)dy:=\frac{\zeta_{\hat k_{\bar v}}[\bar v]}{\mu \hat a^{1/\mu}_{\bar v}} y^{1/\mu-1} dy.
    \end{equation}
    Denote $M_{\bar v}:=\min\{k: (k/n)^\chi \geq \hat a_{\bar v}\}$. Then choose $\hat k_{\bar v,i}$ such that 
    \begin{equation}\label{bins}
   ((\hat k_{\bar v,i}-1)/n)^\chi <\hat a_{\bar v,i}\leq  (\hat k_{\bar v,i}/n)^\chi\quad \text{for $M_{\bar v}\leq \hat k_{\bar v,i}\leq n$.}
    \end{equation}
\end{enumerate}
If $\bar v$ belongs to type R, let $\zeta_{\hat k_{\bar v}}[\bar v]\sim \mathrm{Gamma}(x_{\hat k_{\bar v}},1)$ and apply step \ref{Rstep} only to obtain $((\hat a_{\bar v,l}, \hat k_{\bar v,l}), 1\leq l\leq \hat \tau_{\bar v})$. We build $(\mathcal{T}_{\mathbf{x},n},0)$ by iterating this process, and terminate the construction whenever there is some vertex $\bar v$ such that $\hat k_{\bar v}=1$. 
\end{defn}

We give several remarks on the construction of $(\mathcal{T}_{\mathbf{x},n},0)$. When $\hat k_{\bar v}=1$, it must be the case that $\bar v=0$ or $\bar v=(0,1...,1)$. We stop the construction in this case because vertex 1 does not have a type L neighbour, so steps \ref{Lstep} and \ref{betastep} are unnecessary, and $\zeta_1[\bar v]$ is undefined in step \ref{Rstep}. Nevertheless, for $r<\infty$ and any vertex $\bar v$ in the $r$-neighbourhood of vertex $0\in V(\mathcal{T}_{\mathbf{x},n},0)$, $B_r(\mathcal{T}_{\mathbf{x},n},0)$, the probability that $\hat k_{\bar v}=1$ tends to zero as $n\to\infty$.  

By the beta-gamma algebra, $(\beta_j[0], 1\leq j\leq n)=_d (B^{(x)}_j, 1\leq j\leq n)$ and $(\mathcal{S}_{k,n}[0], 1\leq k\leq n)=_d (S^{(x)}_{k,n}, 1\leq k\leq n)$, where $B^{(x)}_j$ and $S^{(x)}_{k,n}$ are as in Definition \ref{urnrep}. We use the gamma variables to generate the beta variables to better compare $(\mathcal{T}_{\mathbf{x},n},0)$ and $(\mathcal{T},0)$ in the coupling. When vertex $\bar v\not=0$, the beta variables $\beta_j[\bar v]$ are building blocks of the urn representation of the $(\mathbf{x},n)$-sequential model conditional on the edges joining the vertices $\{\bar w:\bar w\leq_{UH} \bar v\}$, with the parameters chosen to adjust for the conditioning effect. More specifically, we can use these betas to construct a Bernoulli point process that encodes the type R neighbours of vertex $k_{\bar v}$ in the $r$-neighbourhood of $k_0$, $B_r(G_n,k_0)$, and then couple with the mixed Poisson point process on $(\hat a_{\bar v},1]$ with intensity $\hat \lambda_{\bar v}$. The comprehensive arguments for deriving these betas are deferred to Section \ref{indeg01} and \ref{embmodel}. 

\subsubsection{Coupling of the 1-neighbourhoods }\label{couplingidea}
Keeping the notations above, we are ready to give an overview of the proof. The proof proceeds in two major steps. Firstly, given that $G_n\sim\mathrm{Seq}(\mathbf{x})_n$, we couple $(G_n,k_0)$ and $(\mathcal{T}_{\mathbf{x},n},0)$ such that $(B_r(G_n,k_0),k_0)\cong (B_r(\mathcal{T}_{\mathbf{x},n},0),0)$ with high probability. Denote by $(\mathcal{T}_{\mathbf{X},n},0)$ be the intermediate P\'olya point tree after randomisation of $\mathbf{X}$. In the second step, we bound the total variation distance between $(B_r(\mathcal{T}_{\mathbf{X},n},0),0)$ and $(B_r(\mathcal{T},0),0)$. The theorem then follows from applying the triangle inequality for the total variation distance.

We begin by sketching out the coupling of $(G_n,k_0)$ and $(\mathcal{T}_{\mathbf{x},n},0)$. As the vertices of $G_n$ do not have ages, we define the \textit{age} of vertex $j$ in $G_n$ as $(j/n)^\chi$ to compare $(G_n,k_0)$ and $(\mathcal{T}_{\mathbf{x},n},0)$. In the coupling, we consider the vertex pairs $k_{\bar w} \in V(B_{r-1}(G_n,k_0))$ and $\bar w \in  V(B_{r-1}(\mathcal{T}_{\mathbf{x},n},0))$ in the breadth-first order. Assuming that $\bar w$ and $k_{\bar w}$ are already coupled such that $\hat k_{\bar w}=k_{\bar w}$ and the ages are sufficiently close: $(k_{\bar w}/n)^\chi\approx \hat a_{\bar w}$, we construct a coupling such that with high probability, $\hat \tau_{\bar w}=\theta_{\bar w}$, and for all $1\leq i\leq \hat \tau_{\bar w}+\mathbbm{1}[\bar w=0\text{ or }(0,1,1,...,1)]$, $k_{\bar w,i}=\hat k_{\bar w,i}$ and $\hat a_{\bar w,i}\approx(k_{\bar w,i}/n)^\chi$. Note that when $\theta_{\bar u}=\hat\tau_{\bar u}$ for all $\bar u\in V(B_{r-1}(\mathcal{T}_{\mathbf{x},n},0))$, we have $(B_r(G_n,k_0),k_0)\cong (B_r(\mathcal{T}_{\mathbf{x},n},0),0)$.

Next, we elaborate on this outline, but to spare the technical detail of conditioning on the existing edges in the case of type L and R vertices, below we only focus on coupling the 1-neighbourhoods of $(G_n,k_0)$ and $(\mathcal{T}_{\mathbf{x},n},0)$. 

\textbf{Step 1.} We couple the initial attractiveness and the ages of the root vertices. Let $U_0\sim \mathrm{U}[0,1]$, $k_0=\ceil{n U_0}$ and $\hat a_0=U^\chi_0$. In light of Definition \ref{condpttree}, we have $\hat k_0=k_0$, implying that the initial attractiveness of the root of $(\mathcal{T}_{\mathbf{x},n},0)$ is $x_{k_0}$, and $\hat a_0\approx (k_0/n)^\chi$. The closeness of the ages and the matching of the initial attractiveness are required to couple $\theta_0$ and $\hat \tau_0$, as well as the ages and initial attractiveness of $k_{0,i}\in V(B_1(G_n,k_0))$ and $(0,i)\in V(B_1(\mathcal{T}_{\mathbf{x},n},0))$, as we illustrate next.

\textbf{Step 2a.} Given that the root vertices are coupled as in the previous step, we construct a coupling such that with high probability, $\hat \tau_0=\theta_0$, and for $2\leq i\leq 1+\theta_0$, $\hat k_{0,i}=k_{0,i}$ and $\hat a_{0,i}\approx (k_{0,i}/n)^\chi$. We build a Bernoulli point process on $((k_0/n)^\chi,1]$ that encodes the vertex labels in $G_n\sim \mathrm{Seq}(\mathbf{x})_n$. For $2\leq k\leq n$, let $U_k\sim \mathrm{U}[0,S^{(x)}_{k-1,n}]$ and $I_{k_0}=[S^{(x)}_{k_0-1,n}, S^{(x)}_{k_0,n})$. As shown in Figure~\ref{bernoulliprocesspic}, we put a point on $(k/n)^\chi$ if and only if $\mathbbm{1}[U_k \in I_{k_0}]=1$ for $k_0+1\leq k\leq n$, which from Definition \ref{urnrep}, occurs with the conditional probability
\begin{equation}\label{berprob}
    \frac{S^{(x)}_{k_0,n}}{S^{(x)}_{k-1,n}} B^{(x)}_{k_0}.
\end{equation}

\begin{figure}
    \centering
    \begin{tikzpicture}
% the line
    \draw (0,0) -- (9,0);
    \draw (0,0) -- (0,0.2) node[anchor=south] {$\left(\frac{k_0}{n}\right)^\chi$};
    \draw (9,0) -- (9,0.2) node[anchor=south] {$1$};
% segments
    \foreach \y in {0.2,0.4,...,8.8}
    {
    \draw (\y, 0) -- (\y, 0.2);
    }
% points
    \draw (2.2,0) -- (2.2,0.2) node[anchor=south] {$\left(\frac{j_1}{n}\right)^\chi$};
    \node at (2.2,0.1) {$\times$};
    \draw (3.4,0) -- (3.4,0.2) node[anchor=south] {$\left(\frac{j_2}{n}\right)^\chi$};
    \node at (3.4,0.1) {$\times$};
    \draw (6.4,0) -- (6.4,0.2) node[anchor=south] {$\left(\frac{j_3}{n}\right)^\chi$};
    \node at (6.4,0.1) {$\times$};
% the vertices
    \draw [fill] (0,-1.5) circle [radius=0.1] node[below=3mm] {$k_0$};
    \draw [fill] (2.2,-1.5) circle [radius=0.1] node[below=3mm] {$j_1$};
    \draw [fill] (3.4,-1.5) circle [radius=0.1] node[below=3mm] {$j_2$};
    \draw [fill] (6.4,-1.5) circle [radius=0.1] node[below=3mm] {$j_3$};
% the edges
    \draw [->, thick] (2.2,-1.5) to[bend right] (0.1,-1.5);
    \draw [->, thick] (3.4,-1.5) to[bend right] (0.1,-1.5);
    \draw [->, thick] (6.4,-1.5) to[bend right] (0.1,-1.5);
\end{tikzpicture}
    \caption{\small An illustration of the relation between the Bernoulli point process on $((k_0/n)^\chi,1]$ constructed using $(\mathbbm{1}[U_k \in I_{k_0}],k_0+1\leq i\leq n)$ and the $(\mathbf{x},n)$-P\'olya urn tree. Here $\theta_0=3$, $k_{0,2}=j_1$, $k_{0,3}=j_2$ and $k_{0,4}=j_3$. }
    \label{bernoulliprocesspic}
\end{figure}
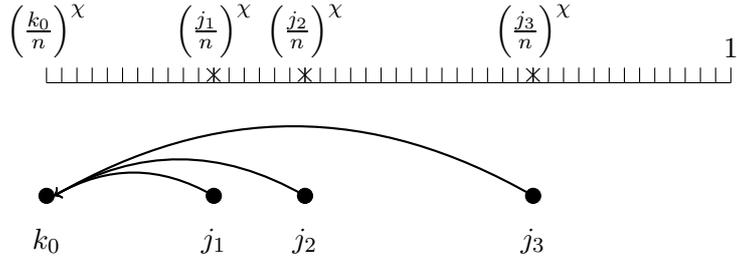
\textbf{Step 2b.} We couple the Bernoulli process and a discretisation of the mixed Poisson process on $(\hat a_0,1]$ that encodes the ages and the PA labels $((\hat k_{0,i},\hat a_{0,i}),2\leq i\leq 1+\hat \tau_0)$, as appeared in step \ref{Rstep} of Definition \ref{condpttree}. To this purpose, we need an estimate of (\ref{berprob}). For $n$ and $k$ sufficiently large, we can apply the law of large numbers and standard moment inequalities to show that $(k/n)^\chi\approx S^{(x)}_{k,n}$ with high probability. So applying the beta-gamma algebra and the law of large numbers to $B^{(x)}_{k_0}$, we can closely couple (\ref{berprob}) and
\begin{equation}\label{coupberprob}
    \left(\frac{k_0}{k}\right)^\chi \frac{\mathcal{Z}_{k_0}}{(\mu+1)k_0},\quad \text{where $\mathcal{Z}_{k_0}\sim \mathrm{Gamma}(x_{k_0},1)$.}
\end{equation}
Importantly, (\ref{berprob}) and (\ref{coupberprob}) are close enough that we can couple the Bernoulli processes constructed using with these means. On the other hand, (\ref{coupberprob}) are small enough that the Bernoulli process with these means is approximately distributed as the discretised mixed Poisson process with means
\begin{equation}\label{dismean}
     \int^{(k/n)^\chi}_{((k-1)/n)^\chi} \frac{\mathcal{Z}_{k_0}}{\mu a^{1/\mu}_0} y^{1/\mu-1}dy;
\end{equation}
noting that $(k_0/n)^\chi \approx \hat a_0$. Thus, we use standard techniques to couple these point processes such that $\theta_0=\hat \tau_0$ and $\hat k_{0,i}=k_{0,i}$ for $i=2,...,1+\tau_0$ with high probability. From (\ref{bins}) of Definition \ref{condpttree}, it is clear that under the coupling, $(k_{0,i}/n)^\chi\approx \hat a_{0,i}$ for $n$ large enough.

\textbf{Step 3.} We couple $\hat k_{0,1}$ and $k_{0,1}$ such that $k_{0,1}=\hat k_{0,1}$, and then show that under the coupling, $\hat a_{0,1}\approx (k_{0,1}/n)^\chi$ with high probability. In view of Definition \ref{urnrep} and \ref{condpttree}, we can achieve this by using the same set of betas in the constructions of the two graphs. To establish the closeness of ages, it is enough to prove that $S^{(x)}_{k_{0,1},n}\approx (k_{0,1}/n)^\chi$ because $\hat a_{0}\approx (k_0/n)^\chi$. This follows showing that with high probability, $(k/n)^\chi\approx S^{(x)}_{k,n}$ for sufficiently large $k$ and $n$, and that $k_{0,1}$ is large enough. This completes the coupling of the 1-neighbourhoods.

We reiterate that although the closeness of the ages are not part of the local weak convergence, we need $\hat a_{0,i}\approx(k_{0,i}/n)^\chi$ to couple the 2-neighbourhoods of $(G_n,k_0)$ and $(\mathcal{T}_{\mathbf{x},n},0)$. This is not possible if for the 1-neighbourhoods, we only couple $\sum^n_{j=k_0+1}\mathbbm{1}[U_k\in I_{k_0}]$ to a Poisson variable with parameter $ \int^1_{a_0}\mathcal{Z}_{k_0}(\mu a^{1/\mu}_0)^{-1} y^{1/\mu-1}dy$. In that case, we can only couple $(G_n,k_0)$ and $(\mathcal{T}_{\mathbf{x},n},0)$ such that $(B_1(G_n,k_0),k_0)\cong (B_1(\mathcal{T}_{\mathbf{x},n},0),0)$ with high probability, which is not enough to prove Theorem \ref{bigthm}. When coupling the $r$-neighbourhoods for $r>1$, we consider the Bernoulli point processes constructed using the betas $\beta_j[\bar v]$, $\bar v\not =0$, in Definition \ref{condpttree}. As $\beta_j[\bar v]$ are approximately distributed as $B^{(x)}_j$ when the number of discovered edges is not too large, an event that occurs with high probability, it follows that the coupling is similar to the above.

Having coupled $(G_n,k_0)$ and $(\mathcal{T}_{\mathbf{x},n},0)$ as above, we bound the total variation distance between $(B_r(\mathcal{T}_{\mathbf{X},n},0),0)$ and $(B_r(\mathcal{T},0),0)$. The main issue is that two vertices $\bar u,\bar w\in V(B_r(\mathcal{T}_{\mathbf{x},n},0))$ share the same initial attractiveness $x_k$ (or equivalently the PA label) whenever the Poisson points $\hat a_{\bar u}$ and $\hat a_{\bar w}$ land in the same bin $(((k-1)/n)^\chi,(k/n)^\chi]$, as described in (\ref{bins}); while the fitness of each vertices in $(\mathcal{T},0)$ are independent, and so the initial attractiveness of the vertices in $(\mathcal{T}_{\mathbf{x},n},0)$ must be derived from separate $x_i$. Hence to bound the total variation distance, we couple the graphs using a simple procedure, and show that the probability that the PA labels are not distinct tends to zero as $n\to\infty$. 

\subsection{Related works}\label{recentw}
In this section we give an overview on the recent development in the study of preferential attachment graphs with additive fitness. We also collect some known results on the weak local limit of the model when the initial attractiveness are equal almost surely. 

The recent work \textcite{lodewijks2020phase} used martingale techniques to investigate the maximum degree for fitness distributions with different tail behaviours, where the results are applicable to preferential attachment graphs with additive fitness that allow for multiple edges. They also studied the empirical degree distribution, whose detail we already mentioned in Remark \ref{extrarelatedwork}. Our model is also a special case of the preferential attachment tree considered in \textcite[Section 3]{iyer2020degree}, where vertices are chosen with probability proportional to a suitable function of their fitness and degrees at each attachment step. Using branching processes, \cite{iyer2020degree} studied the empirical degree distribution under the assumption of bounded fitness, as well as the condensation phenomena of the model. Our model was also studied in \textcite[Section 5.5.1]{bhamidi2007}. Assuming bounded fitness, \cite{bhamidi2007} used theories of branching processes to investigate the empirical degree distribution, the height and the degree of the initial vertex. We note that `global' properties such as the maximum degree, the height and the degree of the initial vertex cannot be deduced from the weak local limit. 

Moreover, our model is closely related to the random recursive trees introduced in \textcite{borovkov2006}. The random recursive tree is constructed follows: starting from a single vertex with weight one, a new vertex with an edge attached to it is added to the existing graph at each step. The new vertex and edge respectively have a random weight and a random length, and given the weights of the existing vertices, the recipient of the new edge is chosen from these vertices with probability proportional to their weights. \cite{borovkov2006} studied the average degree of a fixed vertex and the distance between of a newly added vertex and the initial vertex; while \cite{delphin2019} investigated the degree sequence of fixed vertices, the height and the profile, assuming that the weight of the initial vertex is also random and all edges have length one. As observed in \cite{delphin2019}, we can view the preferential attachment tree as a random recursive tree via Theorem \ref{linebreaking}. More specifically, let $(X_i,i\geq 1)$ is the fitness sequence. Define $B^{(X)}_j\sim \mathrm{Beta}(X_j,\sum^{j-1}_{\ell=1}X_\ell +j-1)$ and $S^{(X)}_{k,n}=\prod^n_{i=k+1}(1-B^{(X)}_i)$, with $S^{(X)}_{0,n}=0$ and $S^{(X)}_{n,n}=1$. Then, the preferential attachment tree is a random recursive tree, where the weights of the vertices $1\leq j\leq n$ are distributed as $(S^{(X)}_{j,n}-S^{(X)}_{j-1,n}, {2\leq j\leq n})$, and each edge has length one.  

We now survey the weak local limit results developed for preferential attachment graphs. When $x_1=0$ and $x_i=1$ for all $i\geq 2$, the $(\mathbf{x},n)$-sequential model is the pure `sequential' model in \textcite{Berger12asymptoticbehavior} with no multi-edges; and a special case of the model considered in \textcite{rudas2007random}, where the `weight' function in their model is the identity function plus one. In \cite[Theorem 2.2]{Berger12asymptoticbehavior}, the authors showed that the weak local limit of several types of preferential attachment random graphs is the P\'olya point tree (see Section 2.3.2 of \cite{Berger12asymptoticbehavior} for the precise definition). This weak local limit is the $\pi$-P\'olya point tree when there is no multi-edges, and the fitness distributions of these models put a unit mass on value one. Furthermore, the models studied in \cite{Berger12asymptoticbehavior} can be generalised in such a way that self-loops are allowed, and the fitness distribution puts a unit mass on $\delta>0$. Thus, \textcite[Chapter 5, Theorem 5.8]{vanderhofstad2} and \textcite[Chapter 4, Theorem 4.2.1]{garavaglia} adapted the proof of \cite{Berger12asymptoticbehavior} to obtain the weak local limit of these models. Theorem \ref{bigthm} can therefore be seen as a generalisation of their results. Using branching process techniques, \cite{rudas2007random} studied the asymptotic distribution of the subtree rooted at a uniformly chosen vertex, which, as observed by \cite{Berger12asymptoticbehavior}, implies the local weak limit of the preferential attachment family. 

%In \cite{pekoz2017}, the authors gave a simple representation of the limiting distribution of the joint degree counts;
Finally, we note that a different preferential attachment random graph with fitness was studied in \cite{bhamidi2007}, \cite{borgs2007first}, \cite{dereich2016preferential}, and \cite{dereich2014robust}, where the probability that a new vertex attaches to an existing vertex is proportional to its fitness \textit{times} its degree.

%\cite[Theorem 5]{iyer2020degree} is applicable to our model by taking $h(W)=W-1$ and $g(W)=1$, where $W$ is the fitness. 

\subsection{Organisation of the paper}
The remainder of this paper is organised as follows. In the next section, we state the approximations of $S^{(x)}_{k,n}$ and $B^{(x)}_j$ used for the estimation of~(\ref{berprob}); and we describe the distributions of the neighbours of the type L and R vertices in Section \ref{indeg01}. In Section~\ref{slocal} we specialise to coupling 1-neighbourhoods; while in Section \ref{secgenr} we use the results in Section \ref{indeg01} to inductively extend the coupling to the type L and R vertices, hence proving Theorem \ref{bigthm}. Section \ref{ssup} collects the supplementary proofs for Theorem \ref{bigthm}; and in Section \ref{sapp} we prove the results in Section \ref{subsecapp}. In the last section, we construct the P\'olya urn representation of the $(\mathbf{x},n)$-sequential model conditional on a finite collection of edges, which we use to obtain the results appearing in Section \ref{indeg01}.

\section{Approximation of the beta variables}\label{sasymp}
Recall that for any fitness sequence $\mathbf{X}:=(X_i,i\geq 1)$, we assume that $X_1>-1$ and $(X_i,i\geq 2)$ are i.i.d.\ positive random variables, and $\mathbf{x}:=(x_i,i\geq 1)$ is a realisation of $\mathbf{X}$ with $X_1=x_1$. We stress that in this section, we do not assume bounded fitness, because we shall apply the lemmas to prove Theorem \ref{uniformvert}. 

To approximate $S^{(x)}_{k,n}$ and $B^{(x)}_j$, we require that with high probability, $\mathbf{x}$ is such that $\sum^j_{\ell=2} x_\ell$ is close enough to $(j-1)\mu$ for all $j$ sufficiently large. So given $1/2<\alpha<1$ and $n$ the size of the $(\mathbf{x},n)$-sequential model, we define
\begin{equation}\label{eventX}
    A_{\alpha,n} = \bigg\{\mathbf{x}: \bigcap^\infty_{j=\ceil{\phi(n)}} \bigg\{\bigg|\sum^j_{h=2} x_h-(j-1)\mu\bigg|\leq j^{\alpha} \bigg\} \bigg\},
\end{equation}
where $\phi(n)=\Omega(n^\chi)$. We adapt the techniques in \textcite{pekoz2019} to produce the following lemmas, and defer the proofs to Section \ref{pfssasymp}. Arguments of similar flavour can be found in \textcite{bloemreddy2017}, where the authors studied a different random graph. The first lemma is due to an application of standard moment inequalities.

\begin{lemma}\label{goodsum}
Assume that $\E(X^p_2)<\infty$ for some $p>2$. Given a positive integer $n$ and $1/2+1/p<\alpha<1$, there is a constant $C:=C(\mu,\alpha,p)$ such that $\Prob(\mathbf{x}\in A_{\alpha,n})\geq 1- Cn^{\chi[-p(\alpha-1/2)+1]}$. 
\end{lemma}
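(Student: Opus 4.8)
The plan is to bound the complement probability $\Prob(\mathbf{x} \notin A_{\alpha,n})$ by a union bound over $j \geq \lceil \phi(n) \rceil$ of the "bad" events $\{|\sum_{h=2}^j x_h - (j-1)\mu| > j^\alpha\}$, and then control each term by a moment inequality. Write $S_j := \sum_{h=2}^j (X_h - \mu)$, a centered sum of $(j-1)$ i.i.d.\ random variables with finite $p$-th moment (since $\E X_2^p < \infty$ and $p > 2$; note $\E|X_2 - \mu|^p < \infty$ as well). Then $\mathbf{x} \notin A_{\alpha,n}$ iff $|S_j| > j^\alpha$ for some $j \geq \lceil \phi(n)\rceil$, so
\begin{equation*}
    \Prob(\mathbf{x} \notin A_{\alpha,n}) \leq \sum_{j = \lceil \phi(n)\rceil}^\infty \Prob\bigl(|S_j| > j^\alpha\bigr).
\end{equation*}

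The key step is to estimate $\Prob(|S_j| > j^\alpha)$. First I would apply Markov's inequality at the $p$-th moment: $\Prob(|S_j| > j^\alpha) \leq j^{-\alpha p}\, \E|S_j|^p$. Then I would invoke the Marcinkiewicz–Zygmund (or Rosenthal) inequality, which gives $\E|S_j|^p \leq C_p\, \E\bigl(\sum_{h=2}^j (X_h-\mu)^2\bigr)^{p/2}$; since $p > 2$, a further application of Jensen / the power-mean inequality to the convex function $t \mapsto t^{p/2}$ bounds this by $C_p (j-1)^{p/2 - 1} \sum_{h=2}^j \E|X_h - \mu|^p = C_p (j-1)^{p/2} \E|X_2 - \mu|^p$. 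Combining, $\Prob(|S_j| > j^\alpha) \leq C(\mu, p)\, j^{p/2 - \alpha p} = C(\mu,p)\, j^{-p(\alpha - 1/2)}$. Since $\alpha > 1/2 + 1/p$ we have $p(\alpha - 1/2) > 1$, so the series $\sum_{j \geq \lceil\phi(n)\rceil} j^{-p(\alpha-1/2)}$ converges and, by comparison with the integral, is bounded by a constant times $\phi(n)^{1 - p(\alpha - 1/2)}$. Finally, using $\phi(n) = \Omega(n^\chi)$, this is at most $C(\mu,\alpha,p)\, n^{\chi[1 - p(\alpha-1/2)]} = C\, n^{\chi[-p(\alpha - 1/2) + 1]}$, which is exactly the claimed bound.

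The main obstacle — really the only non-routine point — is citing the right moment inequality for sums of i.i.d.\ centered variables with only $p$-th moments ($2 < p$, not necessarily integer). Marcinkiewicz–Zygmund is the clean tool here; one must be slightly careful that it requires centering (which holds) and that the constant $C_p$ depends only on $p$. Everything else (Markov, Jensen, the tail-sum-to-integral comparison, and substituting the growth rate of $\phi$) is standard bookkeeping. I would also note the mild subtlety that $X_1 = x_1$ is fixed and excluded from the sum, so $S_j$ is genuinely a sum over the i.i.d.\ block $(X_h)_{h \geq 2}$ and no adjustment for $x_1$ is needed.
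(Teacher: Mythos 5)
Your proposal is correct and takes essentially the same route as the paper: union bound, Markov at the $p$-th moment, a moment inequality yielding $\E|S_j|^p \lesssim (j-1)^{p/2}\E|X_2-\mu|^p$, and an integral comparison. The only cosmetic difference is that you derive the moment bound from Marcinkiewicz--Zygmund plus Jensen, whereas the paper cites it directly as a ready-made inequality (attributed to Jogdeo, from Petrov's book); the two give the identical estimate.
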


The lemma above indicates that for large $n$, the probability that $\mathbf{X}$ satisfies
\begin{equation*}
    \bigcap^\infty_{j=\ceil{\phi(n)}} \bigg\{\bigg|\sum^j_{h=2} X_h-(j-1)\mu\bigg|\leq j^{\alpha} \bigg\}
\end{equation*}
tends to one as $n\to\infty$. The next lemma is an approximation result of $S^{(x)}_{k,n}$, $\ceil{\phi(n)}\leq k \leq n$, and is an extension of \textcite[Lemma 3.1]{Berger12asymptoticbehavior}. Note that from now on we use the subscript $\mathbf{x}$ to indicate the conditioning on $\mathbf{X}=\mathbf{x}$. 

\begin{lemma}\label{Sasymptotic}
Given a positive integer $n$ and $1/2<\alpha<1$, assume that $\mathbf{x}\in A_{\alpha,n}$. Then there are positive constants $C:=C(x_1,\mu,\alpha)$ and  $c:=c(x_1,\mu,\alpha)$ such that
\begin{equation}\label{maxbound}
    \Prob_{\mathbf{x}}\left(\max\limits_{\ceil{\phi(n)} \leq k \leq n}\left|S^{(x)}_{k,n}-\left(\frac{k}{n} \right)^{\chi}\right|\leq \delta_n \right) \geq 1-\varepsilon_n,
\end{equation}
where $\delta_n:=Cn^{-\chi(1-\alpha)/4}$ and $\varepsilon_n:=cn^{-\chi(1-\alpha)/2}$.   
\end{lemma}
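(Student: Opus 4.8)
The plan is to control $S^{(x)}_{k,n} = \prod_{i=k+1}^{n}(1-B^{(x)}_i)$ by passing to logarithms and comparing $\log S^{(x)}_{k,n} = \sum_{i=k+1}^{n}\log(1-B^{(x)}_i)$ with its conditional mean. Since $B^{(x)}_i \sim \mathrm{Beta}(x_i, i-1+T_{i-1})$ with $T_{i-1} = \sum_{j=1}^{i-1}x_j$, each $B^{(x)}_i$ is small (of order $1/i$) when $i$ is large, so $\log(1-B^{(x)}_i) \approx -B^{(x)}_i$ up to a quadratic error. The target exponent $\chi = \mu/(\mu+1)$ arises because $\E_{\mathbf{x}}[B^{(x)}_i] = x_i/(i-1+T_{i-1})$, and on the event $\mathbf{x}\in A_{\alpha,n}$ we have $T_{i-1} = T_1 + \sum_{h=2}^{i-1}x_h = (i-2)\mu + O(i^\alpha) + x_1$, so $i-1+T_{i-1} = (i-1)(\mu+1) + O(i^\alpha)$ for $i \geq \ceil{\phi(n)}$ (recall $\phi(n) = \Omega(n^\chi)$, so all indices in play are large). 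Hence $\sum_{i=k+1}^n \E_{\mathbf{x}}[B^{(x)}_i] \approx \frac{1}{\mu+1}\sum_{i=k+1}^n \frac{x_i}{i} \approx \frac{\mu}{\mu+1}\sum_{i=k+1}^n \frac{1}{i} \approx \chi \log(n/k)$, which exponentiates to $(k/n)^\chi$.

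First I would split the deviation $\log S^{(x)}_{k,n} - \chi\log(k/n)$ into three pieces: (i) the bias $\sum_{i=k+1}^n\big(\E_{\mathbf{x}}[\log(1-B^{(x)}_i)] - (-\E_{\mathbf{x}}[B^{(x)}_i])\big)$ coming from the quadratic Taylor term, which is $O(\sum_i i^{-2}) = O(k^{-1})$ and hence negligible compared to $\delta_n$; (ii) the deterministic bias $\big|\sum_{i=k+1}^n \E_{\mathbf{x}}[B^{(x)}_i] - \chi\log(n/k)\big|$, which I would bound using $\mathbf{x}\in A_{\alpha,n}$ by writing $\frac{x_i}{(i-1)(\mu+1)+O(i^\alpha)} = \frac{x_i}{(i-1)(\mu+1)} + O(i^{\alpha-2})$ and then summing $\frac{1}{\mu+1}\sum \frac{x_i}{i-1}$ against $\frac{\mu}{\mu+1}\sum\frac{1}{i}$ via Abel summation / partial summation using the bound $|\sum_{h=2}^j x_h - (j-1)\mu|\leq j^\alpha$; this contributes a term of order $k^{\alpha-1}$, and since $k \geq \phi(n) = \Omega(n^\chi)$ this is $O(n^{-\chi(1-\alpha)})$, within the claimed $\delta_n$; and (iii) the fluctuation $M_{k,n} := \sum_{i=k+1}^n \big(\log(1-B^{(x)}_i) - \E_{\mathbf{x}}[\log(1-B^{(x)}_i)]\big)$, which is a sum of independent bounded-variance terms (conditionally on $\mathbf{x}$), with $\mathrm{Var}_{\mathbf{x}}(\log(1-B^{(x)}_i)) = O(i^{-2})$, so $\mathrm{Var}_{\mathbf{x}}(M_{k,n}) = O(k^{-1})$. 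A maximal inequality (Kolmogorov or Doob applied to the martingale $k\mapsto M_{k,n}$ in reverse, i.e.\ as a sum over $i$ from $n$ downward) then gives $\Prob_{\mathbf{x}}(\max_{\ceil{\phi(n)}\leq k\leq n}|M_{k,n}| > t) \leq C t^{-2} \phi(n)^{-1} = O(t^{-2}n^{-\chi})$; choosing $t$ of order $n^{-\chi(1-\alpha)/4}$ makes the right-hand side $O(n^{-\chi(1/2 + (1-\alpha)/2)})$, comfortably inside $\varepsilon_n = cn^{-\chi(1-\alpha)/2}$ for $\alpha<1$. Finally I would exponentiate: on the good event $|\log S^{(x)}_{k,n} - \chi\log(k/n)| \leq \delta'_n$ for all $k$ in the range, with $\delta'_n$ of the stated order, and since $(k/n)^\chi \leq 1$, the mean value theorem gives $|S^{(x)}_{k,n} - (k/n)^\chi| \leq (k/n)^\chi(e^{\delta'_n}-1) \leq C\delta'_n$, absorbing constants into $C$.

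The main obstacle is step (ii): getting the deterministic bias down to the right power of $n$ requires carefully exploiting the $A_{\alpha,n}$ bound through a partial-summation argument rather than a crude termwise estimate, because a naive bound $|x_i - \mu| \leq $ (something) is \emph{not} available — only the partial sums $\sum_{h=2}^j x_h$ are controlled. Writing $\sum_{i=k+1}^n \frac{x_i}{i}$ and summing by parts against $\big(\sum_{h=2}^i x_h - (i-1)\mu\big)$ turns the $i^\alpha$ error into $\sum_{i} i^{\alpha}\cdot|\frac{1}{i} - \frac{1}{i+1}| = \sum_i O(i^{\alpha-2})$, which converges and is $O(k^{\alpha-1})$; this is the computation that must be done with care, and it is essentially the extension of \cite[Lemma 3.1]{Berger12asymptoticbehavior} to the random-fitness setting. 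A secondary technical point is ensuring all the $O(i^\alpha)$ error terms in the denominators $i-1+T_{i-1}$ do not degrade the exponent — this is fine because $i^\alpha/i^2 = i^{\alpha-2}$ still sums, but it needs to be tracked. The fluctuation bound in step (iii) is routine once the conditional variance estimate $\mathrm{Var}_{\mathbf{x}}(\log(1-B^{(x)}_i)) = O(i^{-2})$ is established, which follows from standard Beta-distribution moment formulas together with $\log(1-b) = -b + O(b^2)$ on the bulk of the distribution and a tail estimate for $B^{(x)}_i$ near $1$ (negligible since the second Beta parameter grows linearly).
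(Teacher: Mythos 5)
Your approach is genuinely different from the paper's, and on the whole it works, but one of your intermediate estimates implicitly assumes bounded fitness — which this lemma does \emph{not} (unlike Theorem~\ref{bigthm}).

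The paper avoids logarithms entirely. It first proves an exact moment identity
\[
\E_{\mathbf{x}}\big[(S^{(x)}_{k,n})^p\big]
= \bigg[\prod_{h=0}^{p-1}\frac{T_k+k+h}{T_n+n-1+h}\bigg]
\prod_{j=0}^{p-1}\prod_{i=k+1}^{n-1}\Big(1+\frac{1}{T_i+i-1+j}\Big),
\]
coming from independence of the betas, and uses the case $p=1$ to establish $|\E_{\mathbf{x}}[S^{(x)}_{k,n}]-(k/n)^\chi|\le Cn^{\chi(\alpha-1)}$ (Lemma~\ref{dctc}). It then forms the \emph{multiplicative} martingale $M^{(x)}_j=S^{(x)}_{n-j,n}/\E_{\mathbf{x}}[S^{(x)}_{n-j,n}]$ and applies Doob to the submartingale $(M^{(x)}_j-1)^2$, bounding $\mathrm{Var}_{\mathbf{x}}(M^{(x)}_{n-K})$ from the $p=2$ moment. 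The key structural point is that in that formula the dependence on $\mathbf{x}$ enters only through the \emph{partial sums} $T_i$ and the \emph{first differences} $T_i-T_{i-1}=x_i$, never through $x_i^2$; so $A_{\alpha,n}$ gives precisely the control needed, and the variance is $O(K^{\alpha-1})$ with no further work.

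Your route — take logs, split into Taylor bias, deterministic bias, and a reversed additive martingale fluctuation, then exponentiate — is a legitimate alternative and your step~(ii) (partial summation against $\eta_j=\sum_{h=2}^j x_h-(j-1)\mu$) matches the paper's Lemma~\ref{dctc} and correctly gives $O(k^{\alpha-1})$. The flaw is in steps~(i) and~(iii): you assert the per-term estimates $\E_{\mathbf{x}}[\log(1-B^{(x)}_i)]+\E_{\mathbf{x}}[B^{(x)}_i]=O(i^{-2})$ and $\mathrm{Var}_{\mathbf{x}}(\log(1-B^{(x)}_i))=O(i^{-2})$, but for $B^{(x)}_i\sim\mathrm{Beta}(x_i,T_{i-1}+i-1)$ with $T_{i-1}+i-1\asymp(\mu+1)i$ the correct per-term sizes are $O(x_i(x_i+1)/i^2)$ and $O(x_i/i^2)$ respectively, and $A_{\alpha,n}$ only yields the telescoping bound $x_i\le\mu+i^\alpha+(i-1)^\alpha$ for individual $x_i$. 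Your claim that ``a naive bound $|x_i-\mu|\le$ (something) is not available'' is thus not quite right — the partial-sum control \emph{does} give $|x_i-\mu|\le i^\alpha+(i-1)^\alpha$ — but this weaker bound makes the $x_i^2/i^2$ term in step~(i) of size up to $O(i^{2\alpha-2})$, whose sum over $i>k$ can reach $O(k^{2\alpha-2})$ (e.g.\ a dyadic sequence of spikes $x_{2^\ell}\approx\mu+2^{\ell\alpha}$), not the $O(k^{-1})$ you claimed. The variance sum in step~(iii) can still be brought down to $O(k^{-1})$ by the same Abel summation used in step~(ii), because it is linear in $x_i$, but you would need to say so; termwise it is only $O(k^{\alpha-1})$. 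Fortunately $k\ge\ceil{\phi(n)}=\Omega(n^\chi)$ gives $k^{2\alpha-2}=O(n^{2\chi(\alpha-1)})$ which is still $\ll\delta_n=Cn^{-\chi(1-\alpha)/4}$, and $t^{-2}\cdot O(k^{\alpha-1})\le O(n^{-\chi(1-\alpha)/2})=O(\varepsilon_n)$ with $t=n^{-\chi(1-\alpha)/4}$, so the final conclusion survives; but the intermediate claims need to be corrected, and the argument is tighter when done the paper's way, where only first differences $x_j$ (not squares) ever appear and $A_{\alpha,n}$ is tailored exactly to that.
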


We outline the proof of Lemma \ref{Sasymptotic} as follows. The first step is to derive an expression for $\E_x[S^{(x)}_{k,n}]$, where we modify a moment formula given in \textcite{delphin2019}. In the second step, we use the formula to show that when $\mathbf{x}\in A_{\alpha, n}$, the difference between the mean of $S^{(x)}_{k,n}$ and $(k/n)^\chi$ for $k\geq \ceil{\phi(n)}$ is small enough for large $n$. Once we take care of swapping the mean of $\E_x[S^{(x)}_{k,n}]$ and $(k/n)^\chi$, the lemma then follows from a martingale argument.

\begin{remark}
For vertices of order at most $n^\chi$, the upper bound $\delta_n$ in Lemma \ref{Sasymptotic} is only meaningful when $\chi>(\alpha+3)/4$, as otherwise $\delta_n$ is of order greater than $(k/n)^\chi$. However, the bound is useful for estimating (\ref{berprob}) as the probability that $k_0=o(n)$ tends to zero as $n\to \infty$.
\end{remark}

For the approximation of $B^{(x)}_j$, recall that $B^{(x)}_i\sim \mathrm{Beta}(x_i,T_{i-1}+i-1)$ for $i\geq 2$, where $T_m:=\sum^m_{i=1}x_i$. In what follows we drop the superscript $(x)$ to simplify notations. Independently of $B^{(x)}_i$, let $\mathcal{Z}_j$ and $\mathcal{\tilde Z}_k$ be independent variables such that $\mathcal{Z}_j\sim \mathrm{Gamma}(x_j,1)$ and $\mathcal{\tilde Z}_k \sim \mathrm{Gamma}(T_k+k,1)$ for $2\leq j\leq n$ and $1\leq k\leq n-1$. Then by the beta-gamma algebra, we have the following distributional identity:
\begin{equation*}
    \left(B^{(x)}_j, \mathcal{\tilde Z}_{j-1}+\mathcal{Z}_{j}\right)=_d \left(\frac{\mathcal{Z}_{j}}{\mathcal{Z}_{j}+\mathcal{\tilde Z}_{j-1}}, \mathcal{\tilde Z}_{j-1}+\mathcal{Z}_{j}\right) \quad \text{for $2\leq j\leq n$,}
\end{equation*}
where the two random variables on the right-hand side are independent. The next lemma is due to the law of the large numbers and Chebyshev's inequality, and is an extension of \cite[Lemma 3.2]{Berger12asymptoticbehavior}.
%\begin{equation}\label{newbeta}
 %   \tilde B_j:=\frac{\mathcal{Z}_{j}}{\mathcal{Z}_{j}+\mathcal{\tilde Z}_{j-1}}\quad \text{for $j\geq 2$.}
%\end{equation}
\begin{lemma}\label{betagamma}
Given positive integer $n$ and $1/2<\alpha<3/4$, let $\mathcal{Z}_{j}$ and $\mathcal{\tilde Z}_j$ be as above. Define the event
\begin{equation}\label{epsj}
   E_{\varepsilon,j,\mathbf{x}}:= \left\{\left|\frac{\mathcal{Z}_{j}}{\mathcal{Z}_{j}+\mathcal{\tilde Z}_{j-1}}-\frac{\mathcal{Z}_{j}}{(\mu+1)j}\right|\leq \frac{\mathcal{Z}_{j}}{(\mu+1)j}\varepsilon\right\}, \quad 2\leq j\leq n.
\end{equation}
When $\mathbf{x}\in A_{\alpha,n}$, there is a positive constant $C:=C(x_1,\alpha,\mu)$ such that
\begin{equation}\label{bgc}
    \Prob_{\mathbf{x}}\bigg(\bigcap^n_{j= \ceil{\phi(n)}}E_{\varepsilon,j,{\mathbf{x}}} \bigg)\geq 1-C(1+\varepsilon)^4\varepsilon^{-4}n^{\chi(4\alpha-3)}.
\end{equation}
In addition,
\begin{equation}\label{woboundedass}
    \Prob_{\mathbf{x}}\bigg(\bigcap^n_{j= \ceil{\phi(n)}} \{\mathcal{Z}_{j}\geq j^{1/2}\}\bigg) \geq 1-\sum^n_{j= \ceil{\phi(n)}} j^{-2} \prod^{3}_{\ell=0} (x_j+\ell);
\end{equation}
and if $x_i \in (0,\kappa]$ for all $i\geq 2$, then there is a positive constant $C$ such that
\begin{equation}\label{boundedwhp}
    \Prob_{\mathbf{x}}\bigg(\bigcap^n_{j= \ceil{\phi(n)}} \{\mathcal{Z}_{j}\geq j^{1/2}\}\bigg)\geq 1- C \kappa^4 n^{-\chi}.
\end{equation}
\end{lemma}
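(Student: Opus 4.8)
The plan is to handle the three assertions separately, each via a union bound over $j$ from $\ceil{\phi(n)}$ to $n$, using Chebyshev-type inequalities to control the individual events. For the first assertion~\eqref{bgc}, the key observation is that $E_{\varepsilon,j,\mathbf{x}}$ fails exactly when $|\mathcal{\tilde Z}_{j-1} - \mu j|$ is large relative to $(\mu+1)j$; more precisely, rearranging, $\frac{\mathcal{Z}_j}{\mathcal{Z}_j + \mathcal{\tilde Z}_{j-1}} - \frac{\mathcal{Z}_j}{(\mu+1)j}$ equals $\frac{\mathcal{Z}_j\,((\mu+1)j - \mathcal{Z}_j - \mathcal{\tilde Z}_{j-1})}{((\mu+1)j)(\mathcal{Z}_j + \mathcal{\tilde Z}_{j-1})}$, so $E_{\varepsilon,j,\mathbf{x}}^c$ forces $|(\mu+1)j - \mathcal{Z}_j - \mathcal{\tilde Z}_{j-1}| > \varepsilon(\mathcal{Z}_j + \mathcal{\tilde Z}_{j-1})$. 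Writing $Y_j := \mathcal{Z}_j + \mathcal{\tilde Z}_{j-1} \sim \mathrm{Gamma}(x_j + T_{j-1} + j - 1, 1) = \mathrm{Gamma}(T_j + j - 1, 1)$, on the event $\mathbf{x}\in A_{\alpha,n}$ we have $|T_j + j - 1 - (\mu+1)j| = |T_1 + \sum_{h=2}^j x_h - (j-1)\mu - 1 - \mu| \leq j^\alpha + O(1)$ for $j \geq \ceil{\phi(n)}$, so $\E_{\mathbf{x}} Y_j = T_j + j - 1$ is within $O(j^\alpha)$ of $(\mu+1)j$. Then a fourth-moment Markov inequality applied to $Y_j - \E_{\mathbf{x}} Y_j$ (whose centred fourth moment for a Gamma is $O(\E_{\mathbf{x}} Y_j) + O((\E_{\mathbf{x}} Y_j)^2) = O(j^2)$) gives $\Prob_{\mathbf{x}}(|Y_j - (\mu+1)j| > \tfrac{\varepsilon}{2}\E_{\mathbf{x}} Y_j) \lesssim j^2 (\varepsilon j)^{-4} \asymp \varepsilon^{-4} j^{-2}$, and one similarly controls $\varepsilon Y_j$ from below by $\tfrac{\varepsilon}{2}(\mu+1)j$ on a further such event; summing $\sum_{j \geq \ceil{\phi(n)}} j^{-2} \asymp \phi(n)^{-1} = O(n^{-\chi})$ and keeping track of the $(1+\varepsilon)^4\varepsilon^{-4}$ factors yields the stated bound $1 - C(1+\varepsilon)^4\varepsilon^{-4} n^{\chi(4\alpha-3)}$ (the exponent $\chi(4\alpha-3)$ rather than $-\chi$ presumably arises from also needing to control deviations of $\mathcal{Z}_j$ or from the $A_{\alpha,n}$-slack $j^\alpha$ entering the bound on $\E_{\mathbf{x}} Y_j$, which forces a $j^{4\alpha-4}$ rather than $j^{-4}$ summand over a range of length $\asymp n$).

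For the second assertion~\eqref{woboundedass}, I would bound $\Prob_{\mathbf{x}}(\mathcal{Z}_j < j^{1/2})$ for a single $j$: since $\mathcal{Z}_j \sim \mathrm{Gamma}(x_j, 1)$ has small probability of being near $0$, I use Markov's inequality on the negative-power moment $\E[\mathcal{Z}_j^{-s}]$ for a suitable $s$, or more directly observe $\Prob(\mathcal{Z}_j \leq t) \leq t^{x_j}/\Gamma(x_j+1)$ for small $t$ — but to match the stated bound $\sum_j j^{-2}\prod_{\ell=0}^3(x_j+\ell)$ the cleaner route is: $\Prob_{\mathbf{x}}(\mathcal{Z}_j < j^{1/2}) = \Prob_{\mathbf{x}}(\mathcal{Z}_j^{-1} > j^{-1/2}) \leq j^2 \E_{\mathbf{x}}[\mathcal{Z}_j^{-4}]$? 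No — rather use $\Prob(\text{large deviation below the mean})$ only when $x_j$ is large, and note $\E[\mathcal{Z}_j^4] = x_j(x_j+1)(x_j+2)(x_j+3) = \prod_{\ell=0}^3(x_j+\ell)$, so that a fourth-moment bound gives $\Prob(|\mathcal{Z}_j - x_j| > \text{(something)}) \lesssim j^{-2}\prod_{\ell=0}^3(x_j+\ell)$ once one checks $j^{1/2} < x_j - c\,(\text{fourth-moment scale})$ in the relevant regime; summing over $j$ gives~\eqref{woboundedass}. The third assertion~\eqref{boundedwhp} then follows immediately: if $x_i \in (0,\kappa]$ then $\prod_{\ell=0}^3(x_j+\ell) \leq (\kappa+3)^4 \leq C\kappa^4$ uniformly in $j$, so $\sum_{j \geq \ceil{\phi(n)}} j^{-2}\prod_{\ell=0}^3(x_j+\ell) \leq C\kappa^4 \sum_{j \geq \ceil{\phi(n)}} j^{-2} \leq C\kappa^4 \phi(n)^{-1} = O(\kappa^4 n^{-\chi})$ since $\phi(n) = \Omega(n^\chi)$.

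The main obstacle is getting the exponents exactly right in~\eqref{bgc}: one must carefully track how the $A_{\alpha,n}$-slack $j^\alpha$ in the approximation $\E_{\mathbf{x}} Y_j \approx (\mu+1)j$ interacts with the relative error $\varepsilon(\mu+1)j$ demanded by $E_{\varepsilon,j,\mathbf{x}}$, and ensure that the fourth-moment bound on the Gamma fluctuations (which scales like $j^2$, i.e.\ square of the mean) after dividing by $(\varepsilon j)^4$ produces a summable series whose tail is $O(n^{\chi(4\alpha-3)})$ rather than merely $O(n^{-\chi})$ — in particular one needs $4\alpha - 3 < 0$, i.e.\ $\alpha < 3/4$, which explains the hypothesis $1/2 < \alpha < 3/4$. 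A secondary technical point is verifying that $Y_j = \mathcal{Z}_j + \mathcal{\tilde Z}_{j-1}$ indeed has the claimed Gamma distribution and that its centred moments up to order four are $O((\E_{\mathbf{x}} Y_j)^2)$ uniformly over the range of $j$, which is routine but must be stated. Everything else — the union bounds, the $\phi(n) = \Omega(n^\chi)$ input, the reduction of~\eqref{boundedwhp} to~\eqref{woboundedass} — is straightforward bookkeeping.
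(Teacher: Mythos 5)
Your plan for \eqref{bgc} is the same as the paper's (pass to $Y_j := \mathcal{Z}_j + \mathcal{\tilde Z}_{j-1}\sim\mathrm{Gamma}(T_j+j-1,1)$, rewrite $E^c_{\varepsilon,j,\mathbf{x}}$ as a deviation of $Y_j$ from $(\mu+1)j$, apply a fourth-moment Markov bound, and sum), but the computation you actually present contains a gap. The claim $\Prob_{\mathbf{x}}\bigl(|Y_j-(\mu+1)j|>\tfrac{\varepsilon}{2}\E_{\mathbf{x}}Y_j\bigr)\lesssim\varepsilon^{-4}j^{-2}$ uses only the \emph{centred} fourth moment of $Y_j$ and silently discards the bias $|\E_{\mathbf{x}}Y_j-(\mu+1)j|=O(j^\alpha)$, which is not absorbable for general $\varepsilon$: indeed $j^\alpha$ exceeds $\varepsilon j$ whenever $\varepsilon<j^{\alpha-1}$, which is exactly the regime used downstream (with $\varepsilon=n^{-\gamma}$ and $\gamma$ small). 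The paper avoids this by first writing $\Prob_{\mathbf{x}}(E^c_{\varepsilon,j,\mathbf{x}})\leq\Prob_{\mathbf{x}}(|Y_j/((\mu+1)j)-1|\geq\varepsilon/(1+\varepsilon))$ and then bounding $\E_{\mathbf{x}}\bigl[(Y_j/((\mu+1)j)-1)^4\bigr]$ directly from the Gamma moment formula, so that the fourth moment is taken about $(\mu+1)j$ rather than about $\E_{\mathbf{x}}Y_j$; the bias then contributes a term of order $(j^\alpha/j)^4=j^{4\alpha-4}$, which for $\alpha>1/2$ dominates the centred-fluctuation contribution $j^{-2}$, and summing from $\ceil{\phi(n)}=\Omega(n^\chi)$ (using $4\alpha-4<-1$, i.e.\ $\alpha<3/4$) gives $n^{\chi(4\alpha-3)}$. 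You do flag the role of the $j^\alpha$-slack in your closing paragraph, but as presented the body of your argument yields the summand $\varepsilon^{-4}j^{-2}$, which is not a valid bound once the bias is accounted for.

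For \eqref{woboundedass} you are derailed by what is evidently a sign typo in the lemma statement: as you yourself observe, $\mathcal{Z}_j\sim\mathrm{Gamma}(x_j,1)$ has bounded mean and is typically \emph{below} $j^{1/2}$, so the printed event $\{\mathcal{Z}_j\geq j^{1/2}\}$ cannot hold for all $j$ with high probability. The intended event is $\{\mathcal{Z}_j\leq j^{1/2}\}$ (compare $F_{1,3}$ in \eqref{3f} and $F_{\rho[q],3}$ in \eqref{3fL}), after which the proof is a one-liner: $\Prob_{\mathbf{x}}(\mathcal{Z}_j\geq j^{1/2})\leq j^{-2}\E_{\mathbf{x}}[\mathcal{Z}_j^4]=j^{-2}\prod_{\ell=0}^3(x_j+\ell)$ by Markov's inequality, followed by a union bound. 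Your attempted repair via $\Prob(|\mathcal{Z}_j-x_j|>\cdots)$ would require $j^{1/2}<x_j$, which fails for all large $j$ since $x_j$ is fixed, so that route does not go through. Your reduction of \eqref{boundedwhp} to \eqref{woboundedass} is correct.
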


\section{The distributions of the neighbours of the type L and R vertices}\label{indeg01}
Let $(G_n,k_0)$ be an $(\mathbf{x},n)$-P\'olya urn tree rooted at its uniformly chosen vertex $k_0$. From Section \ref{UHN}, recall that $B_r(G_n,k_0)$ is the $r$-neighbourhood of the uniform vertex $k_0$ in $(G_n,k_0)$, and a vertex in $\partial \mathcal{B}_r:= V(B_r(G_n,k_0))\setminus V(B_{r-1}(G_n,k_0))$ is of type L if it receives an incoming edge from a vertex in $\partial \mathcal{B}_{r-1}$; and it is of type R if it sends an outgoing edge to a vertex in $\partial \mathcal{B}_{r-1}$. To prepare for the graph coupling in the later sections, here we first give a characterisation of the breadth-first search of $G_n$, which shows that the unexplored neighbours of vertex $k_{\bar v}=k$ are distributed as the neighbours of vertex $k$ in the $(\mathbf{x},n)$-sequential model conditional on the set of discovered edges. We shall refer to the (finite) collection of edges that we condition on as an \textit{embellishment}. Using this relation, we construct a Bernoulli point process that encodes the type R neighbours of vertex $k_{\bar v}\in V(B_r(G_n,k_0))$, and the distribution of the type L neighbour if $k_{\bar v}$ belongs to type L.

We use the Ulam-Harris labels introduced in Section \ref{UHN} to characterise the breadth-first search as follows. The exploration process of $G_n$ is a random sequence of partitions of $V(G_n)$, denoted $(\mathcal{A}_t, \mathcal{P}_t, \mathcal{N}_t)_{t\geq 0}$, where the letters respectively stand for \textit{active, probed} and \textit{neutral}. We initialise the process with 
\begin{equation*}
    (\mathcal{A}_0, \mathcal{P}_0, \mathcal{N}_0)=(\{k_0\}, \varnothing, V(G_n)\setminus\{k_0\}).
\end{equation*}
Given $(\mathcal{A}_{t-1}, \mathcal{P}_{t-1}, \mathcal{N}_{t-1})$, $(\mathcal{A}_t, \mathcal{P}_t, \mathcal{N}_t)$ is generated as follows. Let $k[1]=k_0$ and $k[j]\in \mathbbm{N}$ be the vertex in $\mathcal{A}_{j-1}$ that is the smallest in the breadth-first order. That is, if $k_{\bar v}\in \mathcal{A}_{j-1}$ and $\bar v<_{UH} \bar u$ for all $k_{\bar u}\in \mathcal{A}_{j-1}\setminus\{k_{\bar v}\}$, then $k[j]=k_{\bar v}$. Denote by $\mathcal{D}_t$ the set of vertices in $\mathcal{N}_{t-1}$ that are attached to $k[t]$:
\begin{equation*}
    \mathcal{D}_t :=\{u\in \mathcal{N}_{t-1}:\{u,k[t]\}\in E(G_n)\}.
\end{equation*}
Then in the $t$-th exploration step we set 
\begin{equation*}
    (\mathcal{A}_t, \mathcal{P}_t, \mathcal{N}_t)=(\mathcal{A}_{t-1}\setminus\{k[t]\}\cup \mathcal{D}_t, \mathcal{P}_{t-1}\cup\{k[t]\}, \mathcal{N}_{t-1}\setminus \mathcal{D}_t);
\end{equation*}
and if $\mathcal{A}_{t-1}=\varnothing$, then set $(\mathcal{A}_t, \mathcal{P}_t, \mathcal{N}_t)=(\mathcal{A}_{t-1}, \mathcal{P}_{t-1}, \mathcal{N}_{t-1})$. In words, at step $t$ we probe vertex $k[t]$ and mark the neutral vertices attached to $k[t]$ as active. Note that this characterisation of the exploration process is standard, and more examples can be found in \cite{karp1990transitive} and \cite[Chapter 4]{van2009random}.%p117

The alternative vertex labelling $k[t]$ is useful as the distribution of the neighbours of $k[t]$ depends on $(\mathcal{A}_{t-1}, \mathcal{P}_{t-1}, \mathcal{N}_{t-1})$; whereas we use the Ulam-Harris scheme for the breadth-first search and matching up the vertices in $(G_n,k_0)$ and in $(\mathcal{T}_{\mathbf{x},n},0)$ during the coupling. As we show next, the original vertex labels, used in the construction of $G_n$, is helpful for identifying a type L vertex in the breadth-first search. Hereafter we ignore the possibility that $k[t]=1$, because when $t$ (or equivalently the number of discovered vertices) is not too large, the probability that $k[t]=o(n)$ tends to zero as $n\to\infty$. For $t\geq 1$, let $k_s[t]$ (resp.\ $k_s^*[t]$) be the vertex in $\mathcal{P}_{t-1}$ (resp.\ $\mathcal{A}_{t-1}$) that has the smallest original vertex label, that is
\begin{equation*}
    k_s[t]:=\min\{ v: v\in  \mathcal{P}_{t-1}\}\quad\text{and}\quad k_s^*[t]:=\min\{ v: v\in  \mathcal{A}_{t-1}\}.
\end{equation*}
The lemma below says that $k[t]$ is a type L vertex if and only if $k[t]=k^*_s[t]$, which can be understood as a consequence of the fact that immediately after we probe an active type L vertex, we uncover a new active type L vertex. A pictorial example is given in Figure~\ref{fig:kst}.
\begin{figure}
    \centering
    \begin{tikzpicture}
% the line
    \draw (0,0) -- (13,0);
% the vertices
    \draw [fill] (0.8,0) circle [radius=0.08] node[below=3mm] {$k_{0,1,1}$};
    \draw [fill] (2.5,0) circle [radius=0.08] node[below=3mm] {$k_{0,1}$};
    \draw [fill] (4.8,0) circle [radius=0.08] node[below=3mm] {$k_{0,1,2}$};
    \draw [fill] (6.4,0) circle [radius=0.08] node[below=3mm] {$k_{0}$};
    \draw [fill] (7.6,0) circle [radius=0.08] node[below=3mm] {$k_{0,2}$};
    \draw [fill] (10,0) circle [radius=0.08] node[below=3mm] {$k_{0,3}$};
    \draw [fill] (11.3,0) circle [radius=0.08] node[below=3mm] {$k_{0,2,1}$};
% the edges
    \draw [->, thick] (2.5,0) to[bend right] (0.9,0);
    \draw [->, thick] (4.8,0) to[bend right] (2.6,0);
    \draw [->, thick] (6.4,0) to[bend right] (2.6,0);
    \draw [->, thick] (7.6,0) to[bend right] (6.5,0);
    \draw [->, thick] (10,0) to[bend right] (6.5,0);
    \draw [->, thick] (11.3,0) to[bend right] (7.7,0);
\end{tikzpicture}
    \caption{\small The vertices are arranged from left to right in increasing order of their \textit{original} labels in $G_n$. Here, $\mathcal{P}_{3}=\{k_{0},k_{0,1},k_{0,2}\}$ and $\mathcal{A}_{3}=\{k_{0,3},k_{0,1,1},k_{0,1,2},k_{0,2,1}\}$. So $k[4]=k_{0,3}$, $k_s[4]=k_{0,1}$ and $k^*_s[4]=k_{0,1,1}$. }
    \label{fig:kst}
\end{figure}
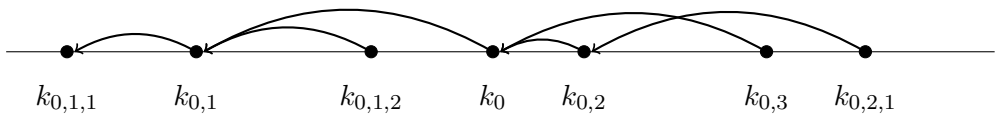

\begin{lemma}\label{explorationobs}
Assume that $\mathcal{A}_{t-1}\cup \mathcal{P}_{t-1}$ does not contain vertex 1. If $t=2$, then $k_s[2]=k_0$ and $k^*_s[2]=k_{0,1}$; while if $2<i\leq t$, $k_s[i]$ and $k^*_s[i]$ are type L vertices, where $k^*_s[i]$ is the only type L vertex in $\mathcal{A}_{i-1}$, and it receives an incoming edge from $k_s[i]$.
\end{lemma}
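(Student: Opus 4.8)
The plan is to prove the two claims by induction on the exploration step, tracking how a single ``type L'' thread propagates through the breadth-first search. The key structural fact is the following: in a tree, every vertex except $k_0$ has exactly one outgoing edge, and in the breadth-first exploration rooted at $k_0$, a vertex $k_{\bar v}\in\partial\mathcal{B}_j$ is of type R precisely when its unique outgoing edge points ``up'' to its parent $k_{\bar v'}\in\partial\mathcal{B}_{j-1}$, and of type L when its outgoing edge points ``down'' to one of its children $k_{\bar v,1}\in\partial\mathcal{B}_{j+1}$; in the latter case the parent $k_{\bar v'}$ reached $k_{\bar v}$ via an edge that $k_{\bar v}$ received. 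Crucially, because $G_n$ is a tree and vertex $1$ is excluded from $\mathcal{A}_{t-1}\cup\mathcal{P}_{t-1}$, the outgoing edge of the type L vertex $k_{\bar v}$ goes to a vertex with a \emph{smaller original label} than any other neighbour of $k_{\bar v}$ in $G_n$ (since in the sequential model an outgoing edge always goes from a later vertex to an earlier one, whereas incoming edges come from later vertices). This is the mechanism that makes ``smallest original label'' detect the type L vertex.

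First I would establish the base case $t=2$ directly. At step $1$ we probe $k[1]=k_0$, so $\mathcal{P}_1=\{k_0\}$, hence $k_s[2]=k_0$ trivially. The active set $\mathcal{A}_1=\mathcal{D}_1$ consists of all neighbours of $k_0$ in $G_n$: the unique vertex $k_0$ sent its outgoing edge to (which has a smaller label than $k_0$, namely $k_{0,1}$ by the Ulam--Harris convention ordering type L before type R) together with the vertices that sent edges to $k_0$ (all of which have larger labels than $k_0$). Hence $k^*_s[2]=\min\mathcal{A}_1 = k_{0,1}$, which is indeed the type L vertex of $\partial\mathcal{B}_1$, and it received an incoming edge from $k_0=k_s[2]$. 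This matches the claim.

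Next I would carry out the inductive step. Suppose the statement holds through step $i-1$ with $2<i\le t$, so that $k^*_s[i-1]$ is the unique type L vertex in $\mathcal{A}_{i-2}$. There are two cases according to whether the vertex $k[i-1]$ probed at step $i-1$ equals $k^*_s[i-1]$ or not. If $k[i-1]\ne k^*_s[i-1]$, then $k[i-1]$ is a type R vertex, its newly activated neighbours $\mathcal{D}_{i-1}$ are all its children, and every child of a type R vertex has a \emph{larger} original label than $k[i-1]$ — in fact larger than the label of the type L vertex still sitting in $\mathcal{A}$ — so the minimum-label vertex is unchanged: $k^*_s[i]=k^*_s[i-1]$, still the unique type L vertex in $\mathcal{A}_{i-1}$; meanwhile $k_s[i]=k_s[i-1]$ since $k[i-1]$ has a larger label than $k_s[i-1]$ (it lies strictly below $k_s[i-1]$ in the tree, having been discovered via an incoming edge from an ancestor). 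If $k[i-1]=k^*_s[i-1]$, then we are probing the current type L vertex; its outgoing edge points to its child $k_{0,1,\dots,1}\in\partial\mathcal{B}_{j}$ (the next type L vertex), which has a smaller label than $k[i-1]$ and than all the other children of $k[i-1]$, so it becomes the new minimum of $\mathcal{A}_{i-1}$, giving $k^*_s[i]$ equal to this new type L vertex, which received its incoming edge from $k[i-1]=k_s[i]$ (now the probed type L vertex with the smallest original label). In both cases one checks $k^*_s[i]$ is the \emph{only} type L vertex in $\mathcal{A}_{i-1}$, since a new type L vertex is activated only at the moment the old one is probed, and all other activated vertices are type R.

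The main obstacle I anticipate is the bookkeeping of \emph{original} labels versus Ulam--Harris labels, specifically verifying the two comparison facts carefully: (a) the vertex a type L vertex $k[i-1]$ sends its outgoing edge to has smaller original label than every vertex currently in $\mathcal{A}_{i-1}$, and (b) when a type R vertex is probed, none of its children displaces the current minimum. Claim (a) requires knowing that all vertices remaining active lie ``below'' or ``beside'' $k_s[i-1]$ in the tree in a way that forces larger labels — this uses that in the sequential model labels increase along every root-to-leaf path, combined with the exclusion of vertex $1$ so that the type L thread never terminates prematurely within the explored region. I would isolate (a) and (b) as a short preliminary observation about original labels in the $(\mathbf{x},n)$-sequential model (any vertex lies below its out-neighbour in label order, i.e.\ $\mathrm{Seq}(\mathbf{x})_n$ produces increasing trees), and then the induction becomes essentially a case check. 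The tree structure (no multi-edges, single outgoing edge per non-root vertex) is what keeps exactly one type L vertex alive at a time and is used throughout.
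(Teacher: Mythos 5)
Your proposal is correct and follows essentially the same route as the paper: induction on the exploration step $i$, splitting according to whether the vertex probed at the previous step was the unique active type L vertex, and using the fact that in the sequential model outgoing edges always point to vertices with smaller original labels (so the type L chain carries the minimum label in $\mathcal{A}$). You supply somewhat more explicit bookkeeping on the label comparisons than the paper does, but there is no difference in method.
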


\begin{proof}
We prove the lemma by an induction on $2\leq i\leq t$. The base case is clear, since $\mathcal{P}_{1}=\{k_0\}$ and $\mathcal{A}_{1}=\{k_{0,1},...,k_{0,1+\theta_0}\}$. Assume that the lemma holds for some $2\leq i<t$. If we probe a type L vertex at time $i$, then $k[i]=k^*_s[i]$, and there is vertex $u\in \mathcal{N}_{i-1}$ that receives the incoming edge emanating from $k[i]$. Hence vertex $u$ belongs to type L and $k^*_s[i+1]=u$. Furthermore, $k_s[i+1]=k[i]$, as $k_s[i]$ sends an outgoing edge to $k[i]$ by assumption, implying $k[i]<k_s[i]$. If we probe a type R vertex at time $i$, then $k[i]>k^*_s[i]$ and we uncover vertices in $\mathcal{N}_{i-1}$ that have larger labels than $k[i]$, and set $\mathcal{P}_{i}=\mathcal{P}_{i-1}\cup k[i]$. It follows that $k_s[i+1]=k_s[i]$ and $k^*_s[i+1]=k^*_s[i]$, which are type L vertices.
\end{proof}

The next lemma shows that the type L vertex $k^*_s[t]$ is the only vertex in $\mathcal{A}_{t-1}$ to condition on having received at least one incoming edge, which has a biasing effect on the probability that an unexplored vertex attaches to $k^*_s[t]$. On the other hand, the edge linking a type R vertex $k \in \mathcal{A}_{t-1}$ and a vertex in $\mathcal{P}_{t-1}$ does not have a similar biasing effect, because the edge is necessarily an outgoing edge from vertex $k$, and conditioning on this edge does not alter the weight of vertex $k$.

\begin{lemma}\label{onlyedge}
Given $t\geq 2$, assume that $\mathcal{A}_{t-1}\cup \mathcal{P}_{t-1}$ does not contain vertex 1. Then the recipient of the incoming edge from vertex $v\in \mathcal{P}_{t-1}$ is also in $\mathcal{P}_{t-1}$, unless $v=k_s[t]$, in which case the recipient is $k^*_s[t]\in \mathcal{A}_{t-1}$. 
\end{lemma}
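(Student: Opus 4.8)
The plan is to combine Lemma \ref{explorationobs} with the elementary structure of the breadth-first exploration. Since $\mathcal{A}_{t-1}\cup\mathcal{P}_{t-1}$ avoids vertex $1$, every $v\in\mathcal{P}_{t-1}$ has $v\ge 2$ and hence sends exactly one outgoing edge in $G_n$; write $w(v)$ for its recipient. As $G_n$ is a tree explored in breadth-first order, the edge $\{v,w(v)\}$ joins $v$ either to the vertex that first discovered $v$ (its exploration parent, one layer closer to $k_0$) or to a vertex first discovered when $v$ was probed (one layer further from $k_0$, and then $v$ directs its outgoing edge to it). The exception case of the lemma is immediate: by Lemma \ref{explorationobs}, $k_s^*[t]$ receives an incoming edge from $k_s[t]$ — for $t=2$ this reads $k_s[2]=k_0$, $k_s^*[2]=k_{0,1}$ — which says precisely $w(k_s[t])=k_s^*[t]\in\mathcal{A}_{t-1}$.

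Next I would fix $v\in\mathcal{P}_{t-1}$ with $v\ne k_s[t]$ (so $|\mathcal{P}_{t-1}|=t-1\ge 2$, i.e.\ $t\ge 3$) and argue by contradiction, assuming $w(v)\notin\mathcal{P}_{t-1}$. Since $v$ has been probed, all of its neighbours lie in $\mathcal{A}_{t-1}\cup\mathcal{P}_{t-1}$, so $w(v)\in\mathcal{A}_{t-1}$. The exploration parent of $v$ (if $v\ne k_0$) was probed strictly before $v$, hence lies in $\mathcal{P}_{t-1}$, so $w(v)$ cannot be that vertex; therefore $w(v)$ is a vertex first discovered when $v$ was probed, sitting one layer beyond $v$, to which $v$ sends its outgoing edge. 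By definition such a vertex receives an incoming edge from the previous layer, so it is a type L vertex; by Lemma \ref{explorationobs} it must then be the \emph{unique} type L vertex of $\mathcal{A}_{t-1}$, i.e.\ $w(v)=k_s^*[t]$.

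To close the argument I would identify the senders of incoming edges into $w(v)=k_s^*[t]$. Since $k_s^*[t]$ lies in $\mathcal{A}_{t-1}$ it has not been probed, so all of its neighbours one layer further out are still neutral; hence the only neighbour of $k_s^*[t]$ that sends it an incoming edge and lies in $\mathcal{A}_{t-1}\cup\mathcal{P}_{t-1}$ is its exploration parent, which (as just established) is $v$. But Lemma \ref{explorationobs} also asserts that $k_s^*[t]$ receives an incoming edge from $k_s[t]\in\mathcal{P}_{t-1}$, forcing $k_s[t]=v$ and contradicting $v\ne k_s[t]$. Therefore $w(v)\in\mathcal{P}_{t-1}$, proving the lemma. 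I expect the only genuinely delicate point to be the dichotomy for the edge $\{v,w(v)\}$ together with the observation that a child of $v$ reached by $v$'s outgoing edge is necessarily type L; the remainder is a direct application of Lemma \ref{explorationobs}.
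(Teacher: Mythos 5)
Your proof is correct, and it follows a genuinely different route from the paper's. The paper argues directly: it fixes a probed vertex $k[s]$, splits according to whether $k[s]$ is type~L or type~R, and reads off the recipient of its outgoing edge in each case (the discovering vertex in $\mathcal{P}_{s-1}$ for type~R; the next type~L vertex $k^*_s[s+1]$ for type~L), leaving somewhat implicit the verification that $k^*_s[s+1]\in\mathcal{P}_{t-1}$ whenever $k[s]\ne k_s[t]$. You instead handle the exceptional vertex $k_s[t]$ up front via Lemma~\ref{explorationobs} and then argue by contradiction for $v\ne k_s[t]$: supposing $w(v)\in\mathcal{A}_{t-1}$, you identify $w(v)$ as a BFS-child of $v$, hence type~L, hence the unique type~L vertex $k^*_s[t]$ of $\mathcal{A}_{t-1}$; then, since the only non-neutral neighbour of the unprobed $k^*_s[t]$ is its exploration parent and Lemma~\ref{explorationobs} names $k_s[t]$ as the sender of its incoming edge, you force $v=k_s[t]$. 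The contradiction framework buys you a cleaner handling of the boundary between $\mathcal{P}_{t-1}$ and $\mathcal{A}_{t-1}$ at the cost of a slightly longer argument; the paper's direct version is shorter but asks the reader to see that the recipient of a type~L vertex's outgoing edge stays in $\mathcal{P}_{t-1}$ except at the last type~L vertex probed. Both are sound, and both lean essentially on the uniqueness of the type~L vertex in $\mathcal{A}_{t-1}$ established in Lemma~\ref{explorationobs}.
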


\begin{proof}
The vertex we probe at exploration time $2\leq s\leq t$, $k[s]$, either belongs to type L or type R. If it belongs to type R, then it must have been discovered at some step $i<s$ via the outgoing edge it sends to $k[i]\in \mathcal{P}_{s-1}$. If $k[s]$ is a type L vertex, then by Lemma \ref{explorationobs}, $k[s]=k_s[s+1]$, and vertex $k^*_s[s+1]\in \mathcal{A}_s$ receives the incoming edge from $k[s]$. 
\end{proof}

We proceed to construct the neighbour distribution of vertex $k[t]$. Let $\mathcal{E}_0 =\varnothing$, and for $s\geq 1$, let $\mathcal{E}_s$ be the set of edges connecting the vertices in $\mathcal{A}_s \cup \mathcal{P}_s$. Given $(\mathcal{A}_{t-1}, \mathcal{P}_{t-1},\mathcal{N}_{t-1})$, the neighbours of $k[t]=k$ are distributed as the neighbours of vertex $k$ in the embellished $(\mathbf{x},n)$-sequential model, with the edges in $\mathcal{E}_{t-1}$ being the embellishment. In Section \ref{embmodel}, we prove that the embellished model still retains the preferential attachment mechanism, and the embellished preferential attachment process can be represented as embedded P\'olya urns, hence there is an analog of Theorem~\ref{linebreaking} for the model. Here we only state the main consequence of this analog, that is, the type R neighbours of $k[t]$ can be encoded in a Bernoulli point process analogous to $(\mathbbm{1}[U_{k_0}\in I_k],k_0+1\leq k\leq n)$, where $U_{k_0}$ and $I_{k}$ are respectively the uniform variable and the intervals in Definition \ref{urnrep}; and when $k[t]$ belongs to type L, the distribution of its type L neighbour can be constructed using the beta variables appearing in Definition \ref{condpttree}. 

We need to define some variables for this purpose, which are distributed as the gammas and betas appearing in Definition \ref{condpttree}, but expressed using the notations used in the characterisation of the breadth-first search. These new variables are more convenient when we relate them to the betas in the urn representation of the embellished model. Given a positive integer $m$, denote the set of vertices and edges in $\mathcal{P}_t$ and $\mathcal{E}_{t}$ that are added to $G_n$ before vertex $m$ as
\begin{align}\label{eptm}
    \mathcal{P}_{t, m}=\{ v \in \mathcal{P}_t:v<m\}\quad\text{and}\quad
    \mathcal{E}_{t, m}=\{\{h,i\}\in \mathcal{E}_t:\max\{h,i\}<m\}.
\end{align}
Noting that $(\mathcal{A}_0, \mathcal{P}_0, \mathcal{N}_0)=(\{k_0\}, \varnothing, V(G_n)\setminus\{k_0\})$ and $\mathcal{E}_0=\varnothing$, let $((\mathcal{Z}_{i}[t], \mathcal{\tilde Z}_{i}[t]) , i \in \{2,...,n\}\setminus \mathcal{P}_{t-1})$ be a sequence of independent variables, where $[t]$ indicates the exploration step. Omitting $\mathbf{x}$ in the notations, let $\mathcal{Z}_h[1]\sim \mathrm{Gamma}(x_h,1)$ and $\mathcal{\tilde Z}_i[1]\sim \mathrm{Gamma}(T_{i-1}+i-1,1)$, where $T_m:=\sum^m_{l=1}x_l$. Assume that $j \in \{2,...,n\}\setminus \mathcal{P}_{t-1}$, we define
\begin{align}\label{newgamma}
    \mathcal{Z}_{j}[t]\sim \mathrm{Gamma}(x_{j}+\mathbbm{1}[j=k^*_s[t]],1), \quad t\geq 2,
\end{align}
and 
\begin{align*}
    \mathcal{\tilde Z}_{j}[t]\sim
    \begin{cases}
    & \mathrm{Gamma}(T_{j-1}+j-1,1),\quad \text{$2\leq j\leq k^*_s[t]$,}\\
    & \mathrm{Gamma}(T_{j-1}+j,1),\quad \text{$k^*_s[t]<j<k_{s}[t]$,}\\
    & \mathrm{Gamma}(T_{j-1}+j-\sum_{k\in \mathcal{P}_{t-1,j}}x_k - |\mathcal{E}_{t-1,j}|,1),\quad \text{$k_s[t]<j\leq n$.}\numberthis \label{newtildegamma}
    \end{cases}
\end{align*}
Define $B_1[t]:=1$, $B_k[t]:=0$ for $t\geq 1$ and $k\in \mathcal{P}_{t-1}$, as well as
\begin{equation}\label{newbeta1}
    B_j[t]:=\frac{\mathcal{Z}_j[t]}{\mathcal{Z}_j[t]+\mathcal{\tilde Z}_j[t]}\quad \text{for $j \in \{2,...,n\}\setminus \mathcal{P}_{t-1}$}.
\end{equation}
Denote $S_{0,n}[t]:=0$, $S_{n,n}[t]:=1$ and
 \begin{equation}\label{news}
     S_{k,n}[t]:=\prod^n_{j=k+1}(1-B_j[t])= \prod^n_{j=k+1;j\not \in \mathcal{P}_{t-1}}(1-B_j[t])\quad\text{for $1\leq k\leq n-1$,}
 \end{equation}
where the second equality is true because $B_j[t]=0$ for $j\in \mathcal{P}_{t-1}$. Observe that by the beta-gamma algebra, $(B_i[1], 1\leq i\leq n)=_d (B^{(x)}_j,1\leq j\leq n)$ and $(S_{i,n}[1],1\leq i\leq n)=_d (S^{(x)}_{j,n},1\leq j\leq n)$, where $B^{(x)}_j$ and $S^{(x)}_{j,n}$ are as in Definition \ref{urnrep}. Furthermore, $\mathcal{P}_{t-1}=\{k_{\bar u}:\bar u<_{UH} \bar v\}$ for $t\geq 2$ and $k_{\bar v}=k[t]$. Thus, when $\hat k_{\bar u}=k_{\bar u}$ for $k_{\bar u}\in \mathcal{P}_{t-1}\cup \mathcal{A}_{t-1}$, $(B_k[t],1\leq k\leq n)=_d(\beta_j[\bar v],1\leq j\leq n)$ and $(S_{k,n}[t], 1\leq k\leq n)=_d(\mathcal{S}_{i,n}[\bar v], 1\leq i\leq n)$, where $\beta_j[\bar v]$ and $\mathcal{S}_{i,n}[\bar v]$ are as in Definition \ref{condpttree}. 

Theorem \ref{linebreaking} implies that the $(\mathbf{x},n)$-P\'olya urn graph constructed using $(B_j[1],1\leq j\leq n)$ is distributed as Seq$(\mathbf{x})_n$. For the exploration step $u\geq 2$, the parameters of the gammas and betas can be understood as follows. The edges in $\mathcal{E}_{u-1}$ are the embellishment, and the variables $(B_j[u],1\leq j\leq n)$ are ingredients for building an urn graph that is distributed as the embellished $(\mathbf{x},n)$-sequential model. As the edges attached to $\mathcal{P}_{u-1}$ are already determined, $B_j[u]:=0$ for $j \in \mathcal{P}_{u-1}$. The shape parameter of $\mathcal{\tilde Z}_j[u]$ is the total weight of the vertices in $\mathcal{A}_{u-1}\cup \mathcal{N}_{u-1}$ with labels less than $j$. This is because when adding a new vertex to the embellished model, the recipient of its outgoing edge cannot be a vertex in $\mathcal{P}_{u-1}$, and is chosen with probability proportional to the current weights of the vertices in $\mathcal{A}_{u-1}\cup \mathcal{N}_{u-1}$ with labels less than the new vertex. The initial attractiveness of $k^*_s[u]$ is $x_{k^*_s[u]}+1$ due to the size-bias effect of $\{k_s[u], k^*_s[u]\}\in \mathcal{E}_{u-1}$, so the total weight of the vertices $\{1,...,j\}$, $k^*_s[u]<j<k_s[u]$, is $T_{j-1}+j$ instead of $T_{j-1}+j-1$; and the shape parameter of $\mathcal{Z}_{k^*_s[u]}[u]$ is $x_{k^*_s[u]}+1$, which explains the type L gamma variables in Definition~\ref{pipolyapointgraph}.

We now construct a Bernoulli point process that encodes the type R neighbours of vertex $k[t]$. For $j\in \mathcal{N}_{t-1}$ and $k[t]+1\leq j\leq n$, let $R_j[t]$ be an indicator variable that takes value one if and only if vertex $j$ sends an outgoing edge to $k[t]$; while for $j\not \in \mathcal{N}_{t-1}$, let $R_j[t]=0$ with probability one, since the recipient of the incoming edge from vertex $j$ is already in $\mathcal{P}_{t-1}$. Note that $\sum^n_{j=k[t]+1}R_j[t]$ is the number of type R neighbours of vertex $k[t]$. We also assume $\mathcal{N}_{t-1}\not =\varnothing$, because for large $n$, with high probability the local neighbourhood of vertex $k_0$ does not contain all the vertices of $G_n$. To state the distribution of $(R_j[t],k[t]+1\leq j\leq n)$, we use (\ref{newbeta1}) and (\ref{news}) to define
\begin{align*}
    P_{k\to k[t]} :=
    \begin{cases}
    \frac{S_{k[t],n}[t]}{ S_{k-1,n}[t]} B_{k[t]}[t],\quad \text{if $k \in \mathcal{N}_{t-1}$ and $k[t]+1\leq k\leq n$;}\\
    0,\qquad \text{if $k\not \in \mathcal{N}_{t-1}$ and $k[t]+1\leq k\leq n$.}\numberthis\label{tmean1}
    \end{cases}
\end{align*}

\begin{defn}\label{berproc01}
Given $(\mathcal{A}_{t-1},\mathcal{P}_{t-1},\mathcal{N}_{t-1})$ and $(B_j[t],k[t]\leq j\leq n)$, let $Y_{k\to k[t]}$, $k[t]+1\leq j\leq n$ be conditionally independent Bernoulli variables, each with parameter $P_{k\to k[t]}$. We define this Bernoulli point process by the random vector
\begin{equation*}
    \mathbf{Y}^{(k[t],n)}:=\left(Y_{(k[t]+1)\to k[t]},Y_{(k[t]+2)\to k[t]},...,Y_{n\to k[t]}\right).
\end{equation*}
\end{defn}

With the preparations above, we are ready to state the main results of this section. The proofs of the following lemmas are given in Section \ref{altdefembmodel}, which are immediate consequences of the urn representation of the embellished model established there. 

\begin{lemma}\label{condurnconseq}
Assume that $\mathcal{N}_{t-1} \not= \varnothing$ and $\mathcal{A}_{t-1}\cup\mathcal{P}_{t-1}$ does not contain vertex 1. Then given $(\mathcal{A}_{t-1},\mathcal{P}_{t-1},\mathcal{N}_{t-1})$, the random vector $(R_j[t],k[t]+1\leq j\leq n)$ is distributed as $\mathbf{Y}^{(k[t],n)}$.
\end{lemma}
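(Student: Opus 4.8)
The plan is to reduce the statement to the urn representation of the embellished $(\mathbf{x},n)$-sequential model that is established in Section \ref{altdefembmodel}, and then simply read off the distribution of the type R neighbours of $k[t]$. First I would observe that, conditionally on $(\mathcal{A}_{t-1},\mathcal{P}_{t-1},\mathcal{N}_{t-1})$, the collection $\mathcal{E}_{t-1}$ of edges joining vertices in $\mathcal{A}_{t-1}\cup\mathcal{P}_{t-1}$ is precisely the embellishment, and the conditional law of the remaining graph is that of the $(\mathbf{x},n)$-sequential model with embellishment $\mathcal{E}_{t-1}$: this is the characterisation of the breadth-first search recalled just before the lemma. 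In particular the yet-undiscovered incoming edges to $k[t]$ in the original graph are, in distribution, exactly the incoming edges to vertex $k[t]$ in the embellished model from vertices in $\mathcal{N}_{t-1}$. By the analog of Theorem~\ref{linebreaking} for the embellished model (whose statement and proof are in Section~\ref{altdefembmodel}), this embellished model admits a P\'olya urn / line-breaking representation built from the beta variables $B_j[t]$ of \eqref{newbeta1}, with the interval endpoints $S_{k,n}[t]$ of \eqref{news}; the parameter choices in \eqref{newgamma}–\eqref{newtildegamma} are exactly those dictated by the current weights of the vertices in $\mathcal{A}_{t-1}\cup\mathcal{N}_{t-1}$ and by the size-bias on $k^*_s[t]$ coming from the edge $\{k_s[t],k^*_s[t]\}\in\mathcal{E}_{t-1}$ (Lemma~\ref{onlyedge} guarantees this is the \emph{only} such biased vertex).

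Next I would transcribe the urn representation into the indicator variables $R_j[t]$. In the embellished urn tree, vertex $j>k[t]$ with $j\in\mathcal{N}_{t-1}$ sends its outgoing edge into the interval $I_{k[t]}[t]=[S_{k[t]-1,n}[t],S_{k[t],n}[t])$ with conditional probability
\begin{equation*}
    \frac{S_{k[t],n}[t]-S_{k[t]-1,n}[t]}{S_{j-1,n}[t]}=\frac{S_{k[t],n}[t]}{S_{j-1,n}[t]}\,B_{k[t]}[t],
\end{equation*}
using $S_{k[t],n}[t]-S_{k[t]-1,n}[t]=S_{k[t],n}[t]B_{k[t]}[t]$ from \eqref{news}; this is $P_{k\to k[t]}$ in \eqref{tmean1}, and for $j\notin\mathcal{N}_{t-1}$ the edge from $j$ is already accounted for in $\mathcal{E}_{t-1}$ so $R_j[t]=0$ deterministically, matching \eqref{tmean1}. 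Conditional independence across $j$ in the urn representation (the uniform points $U_j$ are independent given the line-breaking points) gives that $(R_j[t],k[t]+1\le j\le n)$ is a vector of independent Bernoulli variables with these parameters, which is exactly the law of $\mathbf{Y}^{(k[t],n)}$ from Definition~\ref{berproc01}. The hypotheses $\mathcal{N}_{t-1}\neq\varnothing$ and that vertex $1\notin\mathcal{A}_{t-1}\cup\mathcal{P}_{t-1}$ are what let us invoke Lemmas~\ref{explorationobs} and~\ref{onlyedge} to identify the unique biased vertex $k^*_s[t]$ and ensure the gamma parameters in \eqref{newtildegamma} are well defined and positive.

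The only real work, and the step I expect to be the main obstacle, is justifying the embellished line-breaking representation itself — i.e. checking that conditioning the $(\mathbf{x},n)$-sequential model on a consistent finite set of edges $\mathcal{E}_{t-1}$ preserves enough of the P\'olya-urn structure that a variant of Theorem~\ref{linebreaking} applies, with precisely the shifted parameters recorded in \eqref{newgamma}–\eqref{newtildegamma}. That is deferred to Section~\ref{altdefembmodel}/\ref{embmodel}; granting it, the present lemma is just a matter of matching the bookkeeping between the breadth-first-search notation ($k[t]$, $\mathcal{P}_{t-1}$, $\mathcal{E}_{t-1}$) and the Ulam–Harris notation of Definition~\ref{condpttree}, which was already set up in the paragraphs preceding the lemma (in particular the distributional identities $(B_k[t])=_d(\beta_j[\bar v])$ and $(S_{k,n}[t])=_d(\mathcal{S}_{i,n}[\bar v])$ when $\hat k_{\bar u}=k_{\bar u}$ for $k_{\bar u}\in\mathcal{P}_{t-1}\cup\mathcal{A}_{t-1}$).
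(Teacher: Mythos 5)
Your proposal is correct and follows the same route as the paper: for $t\geq 2$ one applies Theorem~\ref{condpolyaurnrep} with $\mathcal{V}=\mathcal{P}_{t-1}$, $\mathcal{V}^*=\mathcal{A}_{t-1}$, $\mathcal{E}=\mathcal{E}_{t-1}$, $v_s=k_s[t]$ (and Theorem~\ref{linebreaking} for $t=1$), after which the lemma is a transcription of the urn representation into the Bernoulli indicators $R_j[t]$ with parameters $P_{k\to k[t]}$ from~\eqref{tmean1}. The paper's own proof is exactly this two-line reduction, also deferring the real content to Section~\ref{altdefembmodel}; your explicit verification that $S_{k[t],n}[t]-S_{k[t]-1,n}[t]=S_{k[t],n}[t]B_{k[t]}[t]$ and the independence of the $U_j$ given the $S_{k,n}[t]$ simply spell out details the paper leaves to the reader.
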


\begin{remark}\label{hoodsize}
Further assume that $\mathbf{x}\in A_{\alpha,n}$, where $A_{\alpha,n}$ is the event that $\sum^k_{l=2} x_l$ is close enough to $\mu (k-1)$ for all $k=\Omega(n^\chi)$, as defined in (\ref{eventX}). When $|\mathcal{E}_{t-1}|=o(n)$ for some $t\geq 2$, $P_{k\to k[t]}$ is approximately distributed as 
\begin{equation*}
    \frac{S^{(x)}_{k[t],n}}{S^{(x)}_{k,n}}B^{(x)}_{k[t],n}
\end{equation*}
for $n$ sufficiently large. This is because the shape parameters of $\mathcal{Z}_j[t]$, $k_s[t]\leq j\leq n$, given in (\ref{newtildegamma}), are such that $n^{-1}(T_{j-1}+j-\sum_{k\in \mathcal{P}_{t-1}} x_k-|\mathcal{E}_{t-1,j}|)\to \mu+1$ as $n\to\infty$; and there are not too many $B_h[t]$ that are equal to zero almost surely. These imply that $B_j[t]$ and $S_{k,n}[t]$ are approximately distributed as $B^{(x)}_j$ and $S^{(x)}_{k,n}$ for $j\not \in \mathcal{P}_{t-1}$ and $1\leq k\leq n$. Intuitively, this says that when the local neighbourhood of $k_0$ is not too large, the type R neighbours of vertex $k[t]=k$ are approximately distributed as the vertices that send an outgoing edge to vertex $k$ in the $(\mathbf{x},n)$-sequential model.
\end{remark}

When $k[t]$ is the uniform vertex or a type L vertex, we use the betas in (\ref{newbeta1}) to obtain the distribution of the recipient of the incoming edge from $k[t]$. Observe that $(B_j[t],2\leq j\leq k[t]-1)$ does not appear in Definition \ref{berproc01}, but are required for this purpose. 

\begin{lemma}\label{tilted01}
Assume that $\mathcal{N}_{t-1} \not= \varnothing$, $\mathcal{A}_{t-1}\cup\mathcal{P}_{t-1}$ does not contain vertex 1, and $k[t]$ is either the root $k_0$ or a type L vertex. Given $(\mathcal{A}_{t-1},\mathcal{P}_{t-1}, \mathcal{N}_{t-1})$, let $S_{k,n}[t]$ be as in (\ref{news}), and $U\sim \mathrm{U}[0,S_{k[t]-1,n}{[t]}]$. Then for $1\leq h\leq k[t]-1$, the probability that vertex $h$ receives the incoming edge from $k[t]$ is given by the probability that $S_{h-1,n}[t] \leq  U < S_{h,n}[t]$.
\end{lemma}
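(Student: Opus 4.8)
The plan is to derive Lemma \ref{tilted01} as an immediate consequence of the P\'olya urn representation of the \emph{embellished} $(\mathbf{x},n)$-sequential model, i.e.\ the analogue of Theorem \ref{linebreaking} that is established in Section \ref{altdefembmodel}, once one notes that the outgoing edge of $k[t]$ is \emph{not} part of the embellishment $\mathcal{E}_{t-1}$ when $k[t]$ is the root or of type L. Recall that, conditionally on $(\mathcal{A}_{t-1},\mathcal{P}_{t-1},\mathcal{N}_{t-1})$, the neighbours of $k[t]$ are distributed as the neighbours of vertex $k[t]$ in the $(\mathbf{x},n)$-sequential model embellished by $\mathcal{E}_{t-1}$; since $\mathcal{A}_{t-1}\cup\mathcal{P}_{t-1}$ avoids vertex $1$, we have $k[t]\ge 2$, so $k[t]$ has a unique outgoing edge, and it is the conditional law of its recipient (a vertex in $\{1,\dots,k[t]-1\}$) that we must identify.

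First I would dispose of the case $t=1$: here $k[1]=k_0$ and $\mathcal{E}_0=\varnothing$, so the embellished model is just the $(\mathbf{x},n)$-sequential model, and Theorem \ref{linebreaking} (Definition \ref{urnrep}) gives that $k_0$ attaches to $h$ precisely when a draw $U_{k_0}\sim\mathrm{U}[0,S^{(x)}_{k_0-1,n}]$ lies in $[S^{(x)}_{h-1,n},S^{(x)}_{h,n})$; since by the beta--gamma algebra $(B_j[1],1\le j\le n)=_d(B^{(x)}_j,1\le j\le n)$ and hence $(S_{k,n}[1],1\le k\le n)=_d(S^{(x)}_{k,n},1\le k\le n)$, this is the asserted statement. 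For $t\ge 2$ with $k[t]$ of type L, by Lemma \ref{explorationobs} we have $k[t]=k^*_s[t]$, and by Lemma \ref{onlyedge} the only edge of $\mathcal{E}_{t-1}$ incident to $k[t]$ is the incoming edge $\{k_s[t],k[t]\}$ that $k[t]$ received from $k_s[t]$; in particular the outgoing edge of $k[t]$ is unconstrained by the embellishment. The urn representation of the embellished model from Section \ref{altdefembmodel} is built from exactly the variables $B_j[t]$ of (\ref{newbeta1}) and the broken-stick quantities $S_{k,n}[t]$ of (\ref{news}) --- with $B_j[t]=0$ for $j\in\mathcal{P}_{t-1}$, reflecting that a probed vertex receives no further incoming edge --- and in that representation the outgoing edge of any vertex $m\notin\mathcal{P}_{t-1}$ is attached to $h$ iff a draw $U\sim\mathrm{U}[0,S_{m-1,n}[t]]$ satisfies $S_{h-1,n}[t]\le U<S_{h,n}[t]$. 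Taking $m=k[t]$ yields the lemma; note that for $h\in\mathcal{P}_{t-1}$ one has $S_{h-1,n}[t]=S_{h,n}[t]$, so such an $h$ gets probability $0$, consistent with the recipient lying in the (non-empty) set $\mathcal{N}_{t-1}$.

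The substance is therefore not in this deduction but in the ingredient it rests on: the construction in Sections \ref{embmodel} and \ref{altdefembmodel} of the P\'olya urn representation of the embellished model, and the verification that its beta parameters are exactly those in (\ref{newgamma})--(\ref{newbeta1}). Concretely one has to show that conditioning the $(\mathbf{x},n)$-sequential model on the edge set $\mathcal{E}_{t-1}$ leaves the preferential-attachment dynamics intact on the vertices outside $\mathcal{P}_{t-1}$, but with the vertex weights size-biased as prescribed --- the extra $+1$ on $x_{k^*_s[t]}$ coming from the conditioned incoming edge $\{k_s[t],k^*_s[t]\}$ --- and with the probed vertices removed from the pool of admissible recipients, so that the line-breaking procedure of Definition \ref{urnrep} goes through verbatim with the modified betas. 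That representation result is the step I expect to be the main obstacle.
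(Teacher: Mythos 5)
Your proof is correct and takes essentially the same route as the paper: the paper's one-line proof reduces $t=1$ to Theorem \ref{linebreaking} and $t\geq 2$ to Theorem \ref{condpolyaurnrep} via exactly the identifications $\mathcal{V}=\mathcal{P}_{t-1}$, $\mathcal{V}^*=\mathcal{A}_{t-1}$, $\mathcal{E}=\mathcal{E}_{t-1}$, $v_s=k_s[t]$ that you invoke. The extra bookkeeping you supply via Lemmas \ref{explorationobs} and \ref{onlyedge} (that $k[t]=k^*_s[t]=v^*_s$, so $k[t]$ is not among the excluded vertices in Definition \ref{tiltedpug} and therefore receives a uniform draw) is left implicit by the paper but is precisely the right check to make explicit.
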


\section{Local weak limit proof: the base case}\label{slocal}
Let $(G_n,k_0)$ be the $(\mathbf{x},n)$-P\'olya urn tree rooted at the uniform vertex $k_0$, and $(\mathcal{T}_{\mathbf{x},n},0)$ be the intermediate P\'olya point tree in Definition \ref{condpttree}. Note that $k_0=k[1]$. In this section, we couple $(G_n,k_0)$ and $(\mathcal{T}_{\mathbf{x},n},0)$ such that with high probability, $(B_1(G_n,k_0),k_0)\cong (B_1(\mathcal{T}_{\mathbf{x},n},0),0)$, and for each pair $k_{\bar v}\in V(B_1(G_n,k_0))$ and $\bar v\in V(B_1(\mathcal{T}_{\mathbf{x},n},0))$, $(k_{\bar v}/n)^\chi \approx \hat a_{\bar v}$ and $k_{\bar v}=\hat k_{\bar v}$.

We start by coupling the vertices $k_0\in V(B_1(G_n,k_0))$ and $0\in V(B_1(\mathcal{T}_{\mathbf{x},n},0))$. Let $U_0\sim \mathrm{U}[0,1]$, $\hat a_0=U^{\chi}_0$ and $k_0=\ceil{nU_0}$, where $\hat a_0$ is the age of vertex $0$. Then $\hat a_0\approx (k_0/n)^\chi$, and the initial attractiveness of vertex 0 is $x_{k_0}$ by a direct comparison to Definition \ref{condpttree}. Under this coupling we define the event
\begin{equation}\label{h0}
    \mathcal{H}_{1,0}=\{\hat a_0>(\log \log n)^{-\chi} \},
\end{equation}
which guarantees that we choose a vertex of low degree. To prepare for the coupling of the neighbours of the root vertices, let $((\mathcal{Z}_j[1],\mathcal{\tilde Z}_{j}[1]),2\leq j\leq n)$ and $(S_{k,n}[1],1\leq k\leq n)$ be as in (\ref{newgamma}), (\ref{newtildegamma}) and (\ref{news}). We use $(S_{k,n}[1],1\leq k\leq n)$ to construct the distribution of the type R neighbours of $k_0$, and for sampling the initial attractiveness of vertex $(0,1)\in V(B_1(\mathcal{T}_{\mathbf{x},n},0))$. Let $\hat a_{0,1}\sim \mathrm{U}[0,\hat a_0]$, and $(\hat a_{0,i},2\leq i\leq 1+\hat \tau_0)$ be the points of the mixed Poisson process on $(\hat a_0,1]$ with intensity $\mathcal{Z}_{k_0}[1]a^{-1/\mu}_0\mu y^{1/\mu-1}dy$. Recalling that $\theta_0$ is the number of type R neighbours of vertex $k_0$, below we define a coupling of $(G_n,k_0)$ and $(\mathcal{T}_{\mathbf{x},n},0)$, and for this coupling we define the events
\begin{equation}\label{h10}
\begin{split}
    &\mathcal{H}_{1,1} = \{\hat a_{0,1}>(\log \log n)^{-2\chi}\},\\
    &\mathcal{H}_{1,2}= \bigg\{ \theta_0= \hat \tau_0,\text{ for $\bar v\in V(B_1(\mathcal{T}_{\mathbf{x},n},0))$, $k_{\bar v}=\hat k_{\bar v}$,}
    \left|\hat a_{\bar v}-\left(\frac{k_{\bar v}}{n}\right)^\chi \right|\leq C_1n^{-\frac{\chi}{12}}(\log\log n)^{\chi}\bigg\},\\
    &\mathcal{H}_{1,3}=\{\hat \tau_0<(\log n)^{1/r}\},
\end{split}
\end{equation}
%&\quad \text{$k_{0,i}\not= k_{0,j}$ for any $i\not= j$ and $1\leq i,j\leq 1+\theta_0$; and}\\
where $C_1:=C_1(x_1,\mu)$ is a constant that will be chosen in the proof of Lemma \ref{r1coupling} below. On the event $\mathcal{H}_{1,2}$, the two graphs are coupled such that $(B_1(G_n,k_0),k_0)\cong (B_1(\mathcal{T}_{\mathbf{x},n},0),0)$, and for the vertices $(0,i)\in \partial \mathfrak{B}_1:=V(B_1(\mathcal{T}_{\mathbf{x},n},0))\setminus\{0\}$ and $k_{0,i}\in\partial\mathcal{B}_1:=V(B_1(G_n,k_0))\setminus\{k_0\}$, their initial attractiveness match and their ages are close enough. Consequently, we can couple the Bernoulli point process and the mixed Poisson point process that encode the type R neighbours of vertices $k_{0,i}\in \partial\mathcal{B}_1$ and $(0,i)\in \partial \mathfrak{B}_1$, and hence coupling the $2$-neighbourhoods. The event $\mathcal{H}_{1,1}$ ensures that the neighbours of $k_0$ have low degrees, so that the size of the local neighbourhood of $k_0$ grows slowly. On the event $\mathcal{H}_{1,3}$, the number of vertex pairs that we need to couple for the $2$-neighbourhoods are not too large, so that by a union bound argument, the probability that the coupling of any of the vertex pairs fail tends to zero as $n\to\infty$.

The aim of this section is to show that when the difference between $\sum^k_{j=2}x_j$ and $(k-1)\mu$ is small enough for all $k$ sufficiently large, we can couple the two graphs such that $\bigcap^3_{i=0}\mathcal{H}_{1,i}$ occurs with high probability. Thus, let $A_n:=A_{2/3,n}$ be as in (\ref{eventX}) with $\alpha=2/3$; and we choose this $\alpha$ to simplify our calculations. The result is in the lemma below, and is the base case when we inductively prove an analogous result for a general radius $r<\infty$. Note that $B_1(\mathcal{T}_{\mathbf{x},n},0)$ is distributed as the 1-neighbourhood of the $\pi$-P\'olya point tree after randomisation of $\mathbf{X}$, and Lemma \ref{goodsum} implies $\mathbf{x}\in A_n$ with high probability. Hence, it follows from the lemma below and the coupling interpretation of the total variation distance that (\ref{maindtv}) of Theorem \ref{bigthm} holds for the 1-neighbourhoods. 

\begin{lemma}\label{r1goodevents}
Assume that $x_i\in (0,\kappa]$ for $i\geq 2$ and $\mathbf{x}\in A_n$. Let $\mathcal{H}_{1,i}$, $i=0,...,3$ be defined as in (\ref{h0}) and (\ref{h10}). Then given $r<\infty$ and $n\gg r$, there is a coupling of $(G_n,k_0)$ and $(\mathcal{T}_{\mathbf{x},n},0)$, with $p>\max\{r-1,7\}$ and a positive constant $C:=C(x_1,\mu,p,\kappa)$ such that
\begin{align}\label{withh10}
    \Prob_{\mathbf{x}}\bigg(
    \bigg(\bigcap^3_{i=0}\mathcal{H}_{1,i}\bigg)^c\bigg)\leq C (\log \log n)^{-\chi}.
\end{align}
\end{lemma}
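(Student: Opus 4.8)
The plan is to construct an explicit coupling that processes the root, then its type R neighbours, then its type L neighbour, and to bound the failure probability of each of the four events $\mathcal{H}_{1,i}$ separately, concluding by a union bound. First I would handle $\mathcal{H}_{1,0}$ and $\mathcal{H}_{1,1}$, which are the easy pieces: since $\hat a_0 = U_0^\chi$ with $U_0\sim\mathrm{U}[0,1]$, we have $\Prob_{\mathbf x}(\mathcal{H}_{1,0}^c) = \Prob(U_0 \le (\log\log n)^{-1}) = (\log\log n)^{-1}$, which is $O((\log\log n)^{-\chi})$ since $\chi<1$; and conditionally on $\mathcal{H}_{1,0}$, $\hat a_{0,1} = \hat a_0 U_{0,1}$ with $U_{0,1}\sim\mathrm{U}[0,1]$, so $\Prob_{\mathbf x}(\mathcal{H}_{1,1}^c)\le \Prob_{\mathbf x}(\mathcal{H}_{1,0}^c) + \Prob(\hat a_0 U_{0,1}\le (\log\log n)^{-2\chi}\mid \mathcal{H}_{1,0})$, and on $\mathcal{H}_{1,0}$ the latter is at most $(\log\log n)^{-2\chi}/(\log\log n)^{-\chi} = (\log\log n)^{-\chi}$. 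For $\mathcal{H}_{1,3}$, conditionally on the Poisson intensity the number $\hat\tau_0$ of type R neighbours is Poisson with mean $\mathcal Z_{k_0}[1]\hat a_0^{-1/\mu}\int_{\hat a_0}^1 (\mu)^{-1}y^{1/\mu-1}\,dy = \mathcal Z_{k_0}[1](\hat a_0^{-1/\mu}-1)$; on $\mathcal{H}_{1,0}$ this mean is $O(\mathcal Z_{k_0}[1](\log\log n))$, and since $x_{k_0}\le\kappa$ the gamma variable $\mathcal Z_{k_0}[1]$ has all moments bounded by constants depending on $\kappa$, so a Markov/Chernoff bound on the Poisson tail gives $\Prob_{\mathbf x}(\mathcal{H}_{1,3}^c \cap \mathcal{H}_{1,0}) = o((\log\log n)^{-\chi})$ easily (indeed polynomially small in $\log n$).

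The substantive work is $\mathcal{H}_{1,2}$: here I would invoke Lemma~\ref{condurnconseq} (with $t=1$, so $\mathcal P_0=\varnothing$, $\mathcal N_0 = V(G_n)\setminus\{k_0\}$) to identify the type R neighbours of $k_0$ in $G_n$ with the Bernoulli point process $\mathbf Y^{(k_0,n)}$, whose $k$-th coordinate has parameter $P_{k\to k_0} = (S^{(x)}_{k_0,n}/S^{(x)}_{k-1,n})B^{(x)}_{k_0}$ by \eqref{tmean1}. The chain of approximations sketched in Steps~2a--2b of the introduction is then made quantitative using the lemmas of Section~\ref{sasymp}: on $\mathbf x\in A_n$ (i.e.\ $\alpha = 2/3$), Lemma~\ref{Sasymptotic} gives $|S^{(x)}_{k,n} - (k/n)^\chi|\le \delta_n = Cn^{-\chi/12}$ uniformly over $k\ge\ceil{\phi(n)}$ with probability $\ge 1-\varepsilon_n = 1-cn^{-\chi/6}$, and Lemma~\ref{betagamma} (with the same $\alpha$) lets me replace $B^{(x)}_{k_0}$ by $\mathcal Z_{k_0}/((\mu+1)k_0)$ up to a relative error $\varepsilon$ on an event of probability $\ge 1 - C(1+\varepsilon)^4\varepsilon^{-4}n^{\chi(4\cdot 2/3 - 3)} = 1 - C(1+\varepsilon)^4\varepsilon^{-4}n^{-\chi/3}$, together with \eqref{boundedwhp} to ensure $\mathcal Z_{k_0}\ge k_0^{1/2}$ so the relative errors do not blow up. Combining these, conditionally on $\mathcal{H}_{1,0}$ (which forces $k_0\ge n(\log\log n)^{-1}\ge\ceil{\phi(n)}$ for $n$ large), the Bernoulli parameters $P_{k\to k_0}$ are within a small multiplicative factor of the means $\int_{((k-1)/n)^\chi}^{(k/n)^\chi}\lambda_0(y)\,dy$ of the discretised mixed Poisson process of step~\ref{Rstep}; a standard maximal coupling of Bernoulli and Poisson point processes (the error being controlled by $\sum_k P_{k\to k_0}^2$ plus the total discrepancy of the means, both $O(n^{-\chi/12}(\log\log n)^{\chi})$ after accounting for the $\hat a_0\approx (k_0/n)^\chi$ replacement) then yields $\theta_0 = \hat\tau_0$ and $k_{0,i} = \hat k_{0,i}$ for all type R neighbours, and $|\hat a_{0,i} - (k_{0,i}/n)^\chi| \le n^{-\chi}$ follows from the binning rule \eqref{bins}. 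Finally, the type L vertex $k_{0,1}$ is coupled by using the \emph{same} betas $(B_j[1])$ in both constructions per Lemma~\ref{tilted01} and step~\ref{betastep}, so $k_{0,1} = \hat k_{0,1}$ automatically, and the age bound $|\hat a_{0,1} - (k_{0,1}/n)^\chi|\le \delta_n'$ reduces, via $\hat a_{0,1} = \hat a_0 U_{0,1}$ and the defining relation $S^{(x)}_{k_{0,1}-1,n}\le U_{0,1}S^{(x)}_{k_0-1,n} < S^{(x)}_{k_{0,1},n}$, to another application of Lemma~\ref{Sasymptotic} at index $k_{0,1}$, which is $\ge\ceil{\phi(n)}$ with high probability on $\mathcal{H}_{1,1}$ (this uses that $S^{(x)}_{k_{0,1},n}\ge\hat a_{0,1}-\delta_n > (\log\log n)^{-2\chi}/2$, forcing $k_{0,1}\gg\phi(n)$).

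Assembling the pieces, every conditioning event above fails with probability at most $C(\log\log n)^{-\chi}$ (the dominant term, coming from $\mathcal{H}_{1,0}$ and $\mathcal{H}_{1,1}$) plus negative powers of $n$ (from Lemmas~\ref{Sasymptotic}, \ref{betagamma} and the Poisson tail), so a union bound gives \eqref{withh10} with $C = C(x_1,\mu,p,\kappa)$, where the parameter $p>\max\{r-1,7\}$ enters only through the choice of which polynomial rate in Lemma~\ref{betagamma}/the tail bound $\mathcal{H}_{1,3}$ we need (ensuring the number of neighbours is $o((\log n)^{1/r})$ so that the subsequent inductive union bound over $r$ generations in Section~\ref{secgenr} still closes). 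The main obstacle is bookkeeping the compounded multiplicative errors in the approximation $P_{k\to k_0}\approx \int\lambda_0$: one must track that the error from $S^{(x)}_{k,n}\approx (k/n)^\chi$ is $O(n^{-\chi/12})$ \emph{relative} size only when $k$ is not too small, that the error from $B^{(x)}_{k_0}\approx \mathcal Z_{k_0}/((\mu+1)k_0)$ stays controlled because $\mathcal Z_{k_0}$ is neither too small (event in \eqref{boundedwhp}) nor too large (moment bound from $x_{k_0}\le\kappa$), and that after replacing $\hat a_0$ by $(k_0/n)^\chi$ the intensity $\lambda_0$ itself shifts — all of which must be shown to produce a total-variation error between the two point processes of order $n^{-\chi/12}(\log\log n)^{O(1)}$, negligible against $(\log\log n)^{-\chi}$.
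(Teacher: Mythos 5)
Your proposal follows the paper's proof closely: handle $\mathcal{H}_{1,0}$ and $\mathcal{H}_{1,1}$ by direct uniform-variable computation, $\mathcal{H}_{1,3}$ by a moment bound on the Poisson tail (this is the paper's Lemma~\ref{regularity}), and $\mathcal{H}_{1,2}$ by a Bernoulli--Poisson point-process coupling with $\hat P_{k\to k_0}$ as intermediate means, quantified via Lemmas~\ref{Sasymptotic} and~\ref{betagamma} on $\mathbf x\in A_n$ (the paper packages this as Lemmas~\ref{r1coupling} and~\ref{poissonproc}), then assemble by a union bound over the telescoped events. The one substantive inaccuracy is your reading of the gamma-bound event: $F_{1,3}$ (cf.~\eqref{boundedwhp}) is used to guarantee $\mathcal{Z}_{k_0}\le k_0^{1/2}$ --- an \emph{upper} bound --- so that $\hat P_{k\to k_0} = (k_0/k)^\chi\mathcal{Z}_{k_0}/((\mu+1)k_0)\le 1$ is a valid Bernoulli parameter; you wrote $\mathcal{Z}_{k_0}\ge k_0^{1/2}$ and justified it as ``preventing relative errors from blowing up,'' which is both the wrong direction and the wrong rationale (the sign in the paper's display~\eqref{woboundedass}--\eqref{boundedwhp} is itself reversed, but the definition of $F_{1,3}$ and the proof of Lemma~\ref{betagamma} make the intended inequality unambiguous). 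This does not change the structure of the argument, which otherwise matches the paper's.
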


We prove Lemma \ref{r1goodevents} in the remaining part of this section. From the definitions of $\hat a_0$ and $\hat a_{0,1}$, it is obvious that the probabilities of the events $\mathcal{H}_{1,0}$ and $\mathcal{H}_{1,1}$ tend to one as $n\to\infty$; and by Chebyshev's inequality, we can show that this is also true for the event $\mathcal{H}_{1,3}$. We take care of $\mathcal{H}_{1,2}$ in the lemma below, whose proof is the core of this section. 

\begin{lemma}\label{r1coupling}
Retaining the assumptions and notations of Lemma \ref{r1goodevents}, there is a coupling of $(G_n,k_0)$ and $(\mathcal{T}_{\mathbf{x},n},0)$, with $0<\gamma<\chi/12$ and a positive constant $C:=C(x_1,\mu,\kappa)$ such that
\begin{align*}
    \Prob_{\mathbf{x}}\left(\mathcal{H}_{1,0}\cap \mathcal{H}_{1,1} \cap \mathcal{H}^c_{1,2}\right)
    &\leq C n^{-\beta}(\log \log n)^{1-\chi},
\end{align*}
where $\beta=\min\{\chi/3-4\gamma,\gamma\}$.
\end{lemma}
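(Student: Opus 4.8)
The plan is to realise the coupling of $(G_n,k_0)$ and $(\mathcal{T}_{\mathbf{x},n},0)$ through the three-step recipe sketched in Section 1.3.2 (Steps 1, 2a, 2b, 3), and to bound the failure probability of $\mathcal{H}_{1,2}$ on the event $\mathcal{H}_{1,0}\cap\mathcal{H}_{1,1}$ by splitting it into the contributions coming from (i) the Bernoulli-vs.-mixed-Poisson point process coupling for the type R neighbours of $k_0$, and (ii) the matching of the type L neighbour $k_{0,1}$. First I would fix the shared randomness: let $U_0\sim\mathrm{U}[0,1]$, $k_0=\ceil{nU_0}$, $\hat a_0=U_0^\chi$, so $\hat k_0=k_0$ and $(k_0/n)^\chi\approx\hat a_0$ automatically (the discretisation error is $O(n^{-1})$ on $\mathcal{H}_{1,0}$, hence negligible against $n^{-\gamma}$). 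This disposes of the root pair. On $\mathcal{H}_{1,0}$ we have $k_0\geq n(\log\log n)^{-1}$, which is the regime $k\geq\phi(n)=\Omega(n^\chi)$ where Lemmas 2.2--2.4 apply.

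For the type R neighbours, I would couple in three layers, each contributing an error term. Layer one: by Lemma 3.7 the indicators $(R_j[1],k_0+1\leq j\leq n)$ have the law of the Bernoulli process $\mathbf{Y}^{(k_0,n)}$ with parameters $P_{k\to k_0}=\frac{S^{(x)}_{k_0,n}}{S^{(x)}_{k-1,n}}B^{(x)}_{k_0}$ (here $\mathcal{E}_0=\varnothing$ so there is no embellishment correction). Layer two: replace these means by the idealised means $\left(\frac{k_0}{k}\right)^\chi\frac{\mathcal{Z}_{k_0}}{(\mu+1)k_0}$ of (1.?); the pointwise discrepancy is controlled by Lemma 2.3 (so $S^{(x)}_{k,n}\approx(k/n)^\chi$ uniformly, with error $\delta_n=Cn^{-\chi(1-\alpha)/4}$ off an event of probability $\varepsilon_n$) together with Lemma 2.4 applied to $B^{(x)}_{k_0}$ (so $B^{(x)}_{k_0}\approx\mathcal{Z}_{k_0}/((\mu+1)k_0)$ up to a multiplicative $1\pm\varepsilon$ off an event of probability $C(1+\varepsilon)^4\varepsilon^{-4}n^{\chi(4\alpha-3)}$). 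With $\alpha=2/3$ these exponents are $n^{-\chi/18}$ and $n^{-\chi/3}\varepsilon^{-4}$. Summing the total-variation cost of a maximal coupling of Bernoulli vectors with close parameters over $k_0<k\leq n$ (the sum of $|p_k-q_k|$ telescopes against $\sum_k k^{-\chi}$, which converges since $\chi<1$, times $k_0^{-1}\lesssim n^{-1}(\log\log n)$), one gets a bound of order $n^{-\gamma'}(\log\log n)$ for a suitable $\gamma'$. Layer three: the idealised Bernoulli process is coupled to the discretised mixed Poisson process with means (1.?), using the standard Poissonisation estimate that a Bernoulli sum with small means $p_k$ is within $\sum_k p_k^2$ in total variation of a Poisson; since each $p_k=O(1/k_0)$ and there are $O(n)$ of them, $\sum_k p_k^2=O(1/k_0)=O(n^{-1}\log\log n)$, again absorbed. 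The mixed Poisson process itself is exactly the one in step (R) of Definition 1.9 once we identify $\mathcal{Z}_{k_0}[1]$ with its $\zeta$, so after this chain $\theta_0=\hat\tau_0$ and $k_{0,i}=\hat k_{0,i}$, $\hat a_{0,i}\approx(k_{0,i}/n)^\chi$ for $2\leq i\leq 1+\tau_0$, all off an event whose probability is the sum of the three layer errors.

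For the type L neighbour I would, following Step 3, use the \emph{same} betas $B_j[1]=\beta_j[0]$ in both constructions: draw $U_{0,1}\sim\mathrm{U}[0,1]$, set $\hat a_{0,1}=\hat a_0 U_{0,1}$ and pick $\hat k_{0,1}=k_{0,1}$ via $\mathcal{S}_{k_{0,1}-1,n}[0]\leq U_{0,1}\mathcal{S}_{k_0-1,n}[0]<\mathcal{S}_{k_{0,1},n}[0]$, which by Lemma 3.8 is exactly the conditional law of the in-neighbour of $k_0$ in $G_n$; so $k_{0,1}=\hat k_{0,1}$ holds with probability one under this coupling. It remains to show $|\hat a_{0,1}-(k_{0,1}/n)^\chi|\leq C_1 n^{-\chi/12}(\log\log n)^\chi$ with high probability. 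Since $\hat a_{0,1}=U_{0,1}\hat a_0\approx U_{0,1}S^{(x)}_{k_0,n}$ and, by construction, $S^{(x)}_{k_{0,1},n}$ sits between $U_{0,1}S^{(x)}_{k_0-1,n}$ and its neighbour, it suffices to have $S^{(x)}_{k_{0,1},n}\approx(k_{0,1}/n)^\chi$, i.e.\ to apply Lemma 2.3 at index $k_{0,1}$; for this one needs $k_{0,1}\geq\phi(n)$, which holds off the event $\{k_{0,1}<\phi(n)\}$. Conditionally on $\mathcal{H}_{1,1}=\{\hat a_{0,1}>(\log\log n)^{-2\chi}\}$ we have $U_{0,1}S^{(x)}_{k_0,n}\gtrsim(\log\log n)^{-2\chi}$, forcing $k_{0,1}\gtrsim n(\log\log n)^{-2}\gg\phi(n)$, so this bad event is controlled; quantitatively the probability that $\mathcal{H}_{1,1}$ holds but $k_{0,1}$ is nonetheless too small is of order $(\log\log n)^{1-\chi}$ times a negative power of $n$, matching the stated bound.

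Finally I would collect: $\mathcal{H}^c_{1,2}\cap\mathcal{H}_{1,0}\cap\mathcal{H}_{1,1}$ is contained in the union of (a) the $\mathbf{x}$-conditional failure events of Lemmas 2.3--2.4 with $\alpha=2/3$, contributing $\lesssim n^{-\chi/3}$ and $\lesssim n^{-\chi/18}$ respectively (absorbing the $\varepsilon$ by taking $\varepsilon$ a fixed small constant, or $\varepsilon=n^{-\gamma}$ for small $\gamma$), (b) the point-process coupling failure, of order $n^{-\gamma}(\log\log n)^{1-\chi}$ for the choice $\gamma$ arising from balancing the layers, and (c) the type-L closeness failure, of the same order. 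Taking $\beta=\min\{\chi/3-4\gamma,\gamma\}$ — the $4\gamma$ being the $\varepsilon^{-4}=n^{4\gamma}$ loss in Lemma 2.4 — gives $\Prob_{\mathbf{x}}(\mathcal{H}_{1,0}\cap\mathcal{H}_{1,1}\cap\mathcal{H}^c_{1,2})\leq Cn^{-\beta}(\log\log n)^{1-\chi}$, as claimed. The main obstacle is bookkeeping in layer two: one must simultaneously control the deterministic discretisation error, the martingale fluctuation of $S^{(x)}_{k,n}$ uniformly in $k$, and the law-of-large-numbers fluctuation of $B^{(x)}_{k_0}$, and then push all of this through a coupling of two long vectors of Bernoulli variables whose parameters are themselves random and only close in a high-probability sense — so the argument is really a nested conditioning on the good events of Section 2 before invoking the elementary Bernoulli/Poisson coupling bounds, and care is needed that the small multiplicative errors on means of size $\Theta(1/k_0)$ sum to something of order $o(1)$ rather than $O(1)$ across the $\Theta(n)$ coordinates.
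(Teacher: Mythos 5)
Your proposal follows the same route as the paper: decompose $\mathcal{H}_{1,0}\cap\mathcal{H}_{1,1}\cap\mathcal{H}^c_{1,2}$ over the good events $F_{1,1},F_{1,2},F_{1,3}$ from Section~2, bound those failure probabilities via Lemmas~\ref{Sasymptotic} and~\ref{betagamma} with $\varepsilon=n^{-\gamma}$ (producing the $\chi/6$, $\chi/3-4\gamma$, and $\chi$ exponents), realise the type R coupling through a chain Bernoulli $\to$ idealised Bernoulli $\to$ discretised Poisson as in Lemma~\ref{poissonproc}, and get $k_{0,1}=\hat k_{0,1}$ deterministically by sharing $U_{0,1}$ and the betas, leaving only the age-closeness of the type L vertex to check on the good event. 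Two small slips worth noting, neither of which breaks the argument: the exponent of $\varepsilon_n$ in Lemma~\ref{Sasymptotic} at $\alpha=2/3$ is $\chi/6$ (not $\chi/18$); and in your layer-two bound the sum $\sum_{k>k_0}k^{-\chi}$ does not converge (it is $\Theta(n^{1-\chi})$) --- the cancellation is the prefactor $k_0^{\chi-1}\lesssim n^{\chi-1}(\log\log n)^{1-\chi}$, so the dimensionful product is $O((\log\log n)^{1-\chi})$, which is exactly what appears in the paper's bound (\ref{sumbd}).
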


Given that the vertices $k_0\in V(G_n,k_0)$ and $0\in V(\mathcal{T}_{\mathbf{x},n},0)$ are coupled, we use the strategy in Section \ref{couplingidea} to prove Lemma \ref{r1coupling} as follows. We first couple $\mathbf{Y}^{(k[1],n)}$ in Definition \ref{berproc01}, the Bernoulli point process that encodes the type R neighbours of vertex $k_0$, and a discretisation of the mixed Poisson process that encodes the ages and the PA labels $((\hat a_{0,i},\hat k_{0,i}),2\leq i\leq 1+\hat \tau_0)$. Then we couple the vertices $k_{0,1}\in \partial \mathcal{B}_1$ and $(0,1)\in \partial \mathfrak{B}_1$ such that on the event $\mathcal{H}_{1,0}\cap \mathcal{H}_{1,1}$, $\hat k_{0,1}=k_{0,1}$ and $(k_{0,1}/n)^\chi\approx \hat a_{0,1}$ with high probability. 

For the means of the discretised Poisson process, we define
\begin{equation}\label{mean3}
    \lambda^{[1]}_{k_0+1}:=\int^{\left(\frac{k_0+1}{n}\right)^\chi}_{\hat a_0} \frac{\mathcal{Z}_{k_0}[1]}{\hat a^{1/\mu}_0\mu} y^{1/\mu-1} dy\quad \text{and} \quad  
    \lambda^{[1]}_{k}:=\int^{\left(\frac{k}{n}\right)^\chi}_{\left(\frac{k-1}{n}\right)^\chi} \frac{\mathcal{Z}_{k_0}[1]}{\hat a^{1/\mu}_0\mu} y^{1/\mu-1} dy
\end{equation}
for $k_0+2\leq k \leq n$. 
\begin{defn}\label{dispoi}
Given $k_0=k[1]$, $\hat a_0$ and $\mathcal{Z}_{k_0}[1]$, let $V_{k\to k_0}$, $k_0+1\leq k\leq n$, be conditionally independent Poisson random variables, each with parameter $\lambda^{[1]}_{k}$ as in (\ref{mean3}). We define a discretised mixed Poisson process by the random vector
 \begin{equation*}
     \mathbf{V}^{(k[1],n)}:=(V_{(k_0+1)\to k_0},V_{(k_0+2)\to k_0},...,V_{n\to k_0}).
\end{equation*}
\end{defn}
Furthermore, we define the events that ensure $P_{k\to k_0}$ is close enough to $\lambda^{[1]}_k$. Let $\phi(n)=\Omega(n^\chi)$, $C^*:=C^*(x_1,\mu)$ be a positive constant such that (\ref{maxbound}) of Lemma \ref{Sasymptotic} holds with $\delta_n=C^* n^{-\frac{\chi}{12}}$, and $B_j[1]$ be as in (\ref{newbeta1}). Define the events 
\begin{equation} \label{3f}
\begin{split}
    &F_{1,1}=\left\{\max_{\ceil{\phi(n)}\leq k \leq n}\left|S_{k,n}[1] - \left(\frac{k}{n}\right)^\chi\right|\leq C^* n^{-\frac{\chi}{12}}\right\},\\
    & F_{1,2} = \bigcap^n_{j=\ceil{\phi(n)}}\left\{\left| B_{j}[1]-\frac{\mathcal{Z}_{j}[1]}{(\mu+1)j}\right|\leq \frac{\mathcal{Z}_{j}[1]n^{-\gamma}}{(\mu+1)j}\right\}, \\
    & F_{1,3} = \bigcap^n_{j=\ceil{\phi(n)}}\{\mathcal{Z}_{j}[1]\leq j^{1/2}\}.
\end{split}
\end{equation}

The next lemma is the major step towards proving Lemma \ref{r1coupling}, as it implies that we can couple the ages and the initial attractiveness of the type R neighbours of the vertices $k_0\in V(G_n,k_0)$ and $0\in V(\mathcal{T}_{\mathbf{x},n},0)$.

\begin{lemma}\label{poissonproc}
Retaining the notations and the assumption in Lemma \ref{r1coupling}, let $\mathbf{Y}^{(k[1],n)}$ and $\mathbf{V}^{(k[1],n)}$ be as in Definition \ref{berproc01} and \ref{dispoi}; and $F_{1,i}$, $i=1,2,3$ be as in (\ref{3f}). Then there is a coupling of the random vectors, and a positive constant $C:=C(x_1,\mu,\kappa)$ such that 
\begin{align*}
  \Prob_{\mathbf{x}}\bigg(\mathbf{Y}^{(k[1],n)}\not=\mathbf{V}^{(k[1],n)}, \bigcap^3_{i=1} F_{1,i} \cap \mathcal{H}_{1,0}\bigg)
  \leq C n^{-\gamma} (\log \log n)^{1-\chi}.
\end{align*}
\end{lemma}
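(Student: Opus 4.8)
The plan is to couple $\mathbf{Y}^{(k[1],n)}$ and $\mathbf{V}^{(k[1],n)}$ coordinate by coordinate, conditionally on the $\sigma$-algebra $\mathcal{G}$ generated by $U_0$ (equivalently $k_0$ and $\hat a_0$) together with the gamma variables $(\mathcal{Z}_j[1],\mathcal{\tilde Z}_j[1])$, $2\le j\le n$; this makes every Bernoulli parameter $P_{k\to k_0}$ in (\ref{tmean1}) and every Poisson mean $\lambda^{[1]}_k$ in (\ref{mean3}) measurable. Given $\mathcal{G}$, the coordinates of $\mathbf{Y}^{(k[1],n)}$ are independent $\mathrm{Ber}(P_{k\to k_0})$ and those of $\mathbf{V}^{(k[1],n)}$ are independent $\mathrm{Po}(\lambda^{[1]}_k)$; coupling each pair $(Y_{k\to k_0},V_{k\to k_0})$ optimally and independently over $k_0+1\le k\le n$, and using the elementary estimates $d_{\mathrm{TV}}(\mathrm{Ber}(p),\mathrm{Po}(p))\le p^2$, $d_{\mathrm{TV}}(\mathrm{Po}(a),\mathrm{Po}(b))\le|a-b|$ with the triangle inequality, gives
\[
\Prob_{\mathbf{x}}\bigl(\mathbf{Y}^{(k[1],n)}\ne\mathbf{V}^{(k[1],n)}\mid\mathcal{G}\bigr)\;\le\;\sum_{k=k_0+1}^{n}\Bigl(P_{k\to k_0}^{2}+\bigl|P_{k\to k_0}-\lambda^{[1]}_k\bigr|\Bigr).
\]
Multiplying by $\mathbbm{1}[\bigcap_{i=1}^{3}F_{1,i}\cap\mathcal{H}_{1,0}]$ and taking expectations, it then suffices to bound the right‑hand side on the good event and integrate out the remaining randomness.

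Second, I estimate the summands on $\bigcap_{i=1}^{3}F_{1,i}\cap\mathcal{H}_{1,0}$. On $\mathcal{H}_{1,0}$ we have $U_0>(\log\log n)^{-1}$, hence $k_0\ge nU_0>n(\log\log n)^{-1}$; in particular $k_0\ge\lceil\phi(n)\rceil$ for $n$ large, $(n/k)^{\chi}\le(n/k_0)^{\chi}\le\log\log n$ for all $k_0+1\le k\le n$, and $|\hat a_0-(k_0/n)^{\chi}|\le\chi U_0^{\chi-1}/n\le(\log\log n)^{1-\chi}/n$. On $F_{1,1}$, since $k-1\ge k_0\ge\lceil\phi(n)\rceil$, one gets $S_{k_0,n}[1]=(k_0/n)^{\chi}(1+O(n^{-\chi/12}\log\log n))$ and, using $((k-1)/n)^{\chi}=(k/n)^{\chi}(1+O(1/k_0))$, also $S_{k-1,n}[1]=(k/n)^{\chi}(1+O(n^{-\chi/12}\log\log n))$, whence $S_{k_0,n}[1]/S_{k-1,n}[1]=(k_0/k)^{\chi}(1+O(n^{-\chi/12}\log\log n))$; on $F_{1,2}$, $B_{k_0}[1]=\tfrac{\mathcal{Z}_{k_0}[1]}{(\mu+1)k_0}(1+O(n^{-\gamma}))$. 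On the other hand, since $\chi/\mu=1-\chi=1/(\mu+1)$, evaluating the integrals in (\ref{mean3}) gives $\lambda^{[1]}_k=\mathcal{Z}_{k_0}[1]\,\hat a_0^{-1/\mu}\bigl((k/n)^{1-\chi}-((k-1)/n)^{1-\chi}\bigr)$ for $k\ge k_0+2$, and the analogue with lower limit $\hat a_0^{1/\mu}$ for $k=k_0+1$; combining the mean value theorem, $\hat a_0^{1/\mu}=(k_0/n)^{1-\chi}(1+O((\log\log n)/n))$ and $1/k\le(\log\log n)/n$ yields $\lambda^{[1]}_k=\tfrac{\mathcal{Z}_{k_0}[1]}{(\mu+1)k_0}(k_0/k)^{\chi}(1+O((\log\log n)/n))$. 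Since $\gamma<\chi/12$, all the error terms in these two expansions are $O(n^{-\gamma})$, so there is $C=C(x_1,\mu)$ with
\[
P_{k\to k_0}\le C\,\frac{\mathcal{Z}_{k_0}[1]}{k_0}\Bigl(\frac{k_0}{k}\Bigr)^{\chi}\qquad\text{and}\qquad\bigl|P_{k\to k_0}-\lambda^{[1]}_k\bigr|\le C\,n^{-\gamma}\,\frac{\mathcal{Z}_{k_0}[1]}{k_0}\Bigl(\frac{k_0}{k}\Bigr)^{\chi}
\]
for all $k_0+1\le k\le n$.

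Third, I sum over $k$ and take expectations. As $0<\chi<1$, $\sum_{k=k_0+1}^{n}(k_0/k)^{\chi}\le Ck_0^{\chi}n^{1-\chi}=Ck_0(n/k_0)^{1-\chi}\le Ck_0(\log\log n)^{1-\chi}$ on $\mathcal{H}_{1,0}$, and (treating $2\chi>1$, $2\chi=1$, $2\chi<1$ separately and using $n/k_0\le\log\log n$) $\sum_{k=k_0+1}^{n}(k_0/k)^{2\chi}\le Ck_0\log\log n$. Hence, on the good event,
\[
\sum_{k=k_0+1}^{n}\bigl|P_{k\to k_0}-\lambda^{[1]}_k\bigr|\le C\,n^{-\gamma}\,\mathcal{Z}_{k_0}[1]\,(\log\log n)^{1-\chi},\qquad\sum_{k=k_0+1}^{n}P_{k\to k_0}^{2}\le C\,\mathcal{Z}_{k_0}[1]^{2}\,\frac{(\log\log n)^{2}}{n},
\]
the second bound also using $k_0>n(\log\log n)^{-1}$. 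Conditionally on $k_0$, $\mathcal{Z}_{k_0}[1]\sim\mathrm{Gamma}(x_{k_0},1)$ with $x_{k_0}\le\kappa$, so $\E[\mathcal{Z}_{k_0}[1]\mid k_0]\le\kappa$ and $\E[\mathcal{Z}_{k_0}[1]^{2}\mid k_0]\le\kappa(\kappa+1)$. Taking expectations therefore gives
\[
\Prob_{\mathbf{x}}\Bigl(\mathbf{Y}^{(k[1],n)}\ne\mathbf{V}^{(k[1],n)},\ \bigcap_{i=1}^{3}F_{1,i}\cap\mathcal{H}_{1,0}\Bigr)\le C\kappa\,n^{-\gamma}(\log\log n)^{1-\chi}+C\kappa(\kappa+1)\frac{(\log\log n)^{2}}{n},
\]
and since $\gamma<1$ the second term is absorbed, yielding the claim with $C=C(x_1,\mu,\kappa)$.

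The substance of the argument is the uniform bookkeeping in the second step: one must check that every perturbation — the additive $O(n^{-\chi/12})$ error in $S_{k,n}[1]$ (which degrades to a relative error of order $n^{-\chi/12}\log\log n$ once divided by $S_{k-1,n}[1]\asymp(k/n)^{\chi}\ge(\log\log n)^{-1}$), the $O(n^{-\gamma})$ error in $B_{k_0}[1]$, the Riemann‑sum error in $\lambda^{[1]}_k$, and the error from $\hat a_0\approx(k_0/n)^{\chi}$ — is dominated by $n^{-\gamma}$ simultaneously for all $k_0+1\le k\le n$, which is exactly why $\gamma$ is taken below $\chi/12$; the endpoint $k=k_0+1$, whose Poisson mean is an integral starting at $\hat a_0$ rather than $(k_0/n)^{\chi}$, requires a separate but entirely analogous estimate. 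The one point beyond routine computation is that $\mathcal{Z}_{k_0}[1]$ cannot be controlled sharply enough pointwise on the good event, so one must pass to expectation, and this is where bounded fitness enters through the uniform first‑ and second‑moment bounds on $\mathcal{Z}_{k_0}[1]$; the truncation event $F_{1,3}$ is carried along in the good‑event bundle so that the same scheme also applies, without boundedness, in the proof of Theorem~\ref{uniformvert}.
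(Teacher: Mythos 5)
Your proof is correct and tracks the same estimates as the paper's, but with a noticeably leaner organisation. The paper builds an explicit three-stage coupling through the intermediate vectors $\hat{\mathbf{Y}}^{(k[1],n)}$ and $\hat{\mathbf{V}}^{(k[1],n)}$ driven by the cleaned mean $\hat P_{k\to k_0}$ (Definitions \ref{interber}--\ref{interpoi}, Lemmas \ref{firstcoupling}--\ref{thirdcoupling}), and applies a union bound over the three failure events; you instead use coordinate-wise optimal couplings and the triangle inequality
\[
d_{\mathrm{TV}}\bigl(\mathrm{Ber}(P_{k\to k_0}),\mathrm{Po}(\lambda^{[1]}_k)\bigr)\le P_{k\to k_0}^{2}+\bigl|P_{k\to k_0}-\lambda^{[1]}_k\bigr|,
\]
which collapses the three lemmas into one sum. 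The computational content is identical — you still pass through $\hat P_{k\to k_0}$ via the two expansions $P_{k\to k_0}=\hat P_{k\to k_0}(1+O(n^{-\gamma}))$ (from $F_{1,1}\cap F_{1,2}\cap\mathcal{H}_{1,0}$) and $\lambda^{[1]}_k=\hat P_{k\to k_0}(1+O((\log\log n)/n))$ — so this is a reorganisation rather than a different argument. One side benefit of your version: you never instantiate the intermediate Bernoulli vector, so you never need $\hat P_{k\to k_0}\le 1$; the paper uses $F_{1,3}$ precisely for that purpose, and you are right that carrying it along is harmless but not actually required here. Two small points to tighten. First, the uniform claim $|P_{k\to k_0}-\lambda^{[1]}_k|\le Cn^{-\gamma}\hat P_{k\to k_0}$ is not quite correct at the single endpoint $k=k_0+1$: because the integral defining $\lambda^{[1]}_{k_0+1}$ starts at $\hat a_0=U_0^{\chi}$ rather than at $(k_0/n)^{\chi}$, the relative discrepancy between $\lambda^{[1]}_{k_0+1}$ and $\hat P_{(k_0+1)\to k_0}$ is of order $1$, not $O(n^{-\gamma})$; ``entirely analogous'' overstates it. The conclusion is unaffected, since that single term contributes only $O(\mathcal{Z}_{k_0}[1](\log\log n)/n)\le O(\mathcal{Z}_{k_0}[1]\,n^{-\gamma}(\log\log n)^{1-\chi})$, but the sentence should be rephrased to say that the endpoint term is estimated directly rather than by the same multiplicative expansion. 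Second, when you ``integrate out the remaining randomness'' you should note explicitly that the good event is $\mathcal{G}$-measurable and that you bound its indicator by $1$ before conditioning on $k_0$ to use $\E_{\mathbf{x}}[\mathcal{Z}_{k_0}[1]\mid k_0]=x_{k_0}\le\kappa$; as written this is clear in spirit but elided.
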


The proof of Lemma \ref{poissonproc} is deferred to Section \ref{pfpoissonproc}, but we summarise it as follows. On the event $F_{1,1}\cap F_{1,2}\cap \mathcal{H}_{1,0}$, a little calculation shows that the Bernoulli success probability $P_{j\to k_0}$ in (\ref{tmean1}) is close enough to
\begin{equation}\label{mean2}
    \hat P_{k\to k_0}:=\left(\frac{k_0}{k}\right)^\chi \frac{\mathcal{Z}_{k_0}[1]}{(\mu+1)k_0},
\end{equation}
while $F_{1,3}$ ensures that $\hat P_{k\to k_0}\leq 1$. On the other hand, $\lambda^{[1]}_k\approx \hat P_{j\to k_0}$ as $\hat a_0\approx (k_0/n)^\chi$. Hence we use $\hat P_{k\to k_0}$ in (\ref{mean2}) as means to construct two intermediate Bernoulli and Poisson processes, and then explicitly couple the four processes using standard techniques. It is enough to consider the coupling under the event $\bigcap^3_{i=1} F_{1,i}\cap \mathcal{H}_{1,0}$, because when $\mathbf{x}\in A_{n}$, Lemma \ref{Sasymptotic} and \ref{betagamma} imply that $F_{1,1}$, $F_{1,2}$ and $F_{1,3}$ occur with high probability. 

Below we use Lemma \ref{Sasymptotic}, \ref{betagamma} and \ref{poissonproc} to prove Lemma \ref{r1coupling}.

\begin{proof}[Proof of Lemma \ref{r1coupling}]
To prove the lemma, we bound the right-hand side of
\begin{align*}
    &\Prob_{\mathbf{x}}(\mathcal{H}^c_{1,2}\cap \mathcal{H}_{1,1}\cap\mathcal{H}_{1,0}) \\
    &\leq \Prob_{\mathbf{x}}\bigg(\bigcap^3_{i=1} F_{1,i} \cap \bigcap^1_{k=0}\mathcal{H}_{1,k}\cap \mathcal{H}^c_{1,2} \bigg) + \Prob_{\mathbf{x}}\bigg(\bigg(\bigcap^3_{i=1} F_{1,i}\bigg)^c\bigg)\\
    &= \Prob_{\mathbf{x}}\bigg(\bigcap^3_{i=1} F_{1,i} \cap \bigcap^1_{k=0}\mathcal{H}_{1,k}\cap \mathcal{H}^c_{1,2} \bigg)+
   \Prob_{\mathbf{x}}\bigg(\bigcap^2_{i=1}F_{1,i}\cap F^c_{1,3} \bigg)+ \Prob_{\mathbf{x}}(F^c_{1,2}\cap F_{1,1} )+\Prob_{\mathbf{x}}( F^c_{1,1} ).
\end{align*}
under a suitable coupling of $(G_n,k_0)$ and $(\mathcal{T}_{\mathbf{x},n},0)$. We first handle the last three terms. In particular, apply (\ref{boundedwhp}) of Lemma \ref{betagamma} to obtain
\begin{align*}
   \Prob_{\mathbf{x}}\bigg(\bigcap^2_{j=1}F_{1,j}\cap F^c_{1,3} \bigg) \leq \Prob_{\mathbf{x}}( F^c_{1,3} )= 
    \Prob_{\mathbf{x}}\bigg(\bigcup^n_{j= \phi(n)}\{\mathcal{Z}_j[1]\geq j^{1/2}\}\bigg)
   \leq  C \kappa^4 n^{-\chi}, 
\end{align*}
where $C$ is a positive constant. As we assume $\mathbf{x}\in A_n$, we can apply (\ref{bgc}) of Lemma \ref{betagamma} (with $\varepsilon=n^{-\gamma}$, $0<\gamma<\chi/12$ and $\alpha=2/3$) and Lemma \ref{Sasymptotic} to deduce that 
\begin{gather*}
     \Prob_{\mathbf{x}}(F^c_{1,2}\cap F_{1,1})\leq \Prob_{\mathbf{x}}(F^c_{1,2})=\Prob_{\mathbf{x}}\bigg(\bigcup^n_{j=\ceil{\phi(n)}}\left\{\left| B_j[1]-\frac{\mathcal{Z}_j[1]}{(\mu+1)j}\right|\geq\frac{\mathcal{Z}_j[1] n^{-\gamma}}{(\mu+1)j}\right\}\bigg)
    \leq Cn^{4\gamma-\frac{\chi}{3}};\\
    \Prob_{\mathbf{x}}( F^c_{1,1} )\leq \Prob_{\mathbf{x}}\left(\max_{\ceil{\phi(n)}\leq k\leq n} \left|S_{k,n}[1]-\left(\frac{k}{n}\right)^\chi\right|\geq C^* n^{-\frac{\chi}{12}}\right)\leq c n^{-\frac{\chi}{6}},
\end{gather*}
where $C:=C(x_1,\gamma,\mu)$ and $c:=c(x_1,\mu)$ are positive constants.

To bound the first probability, we give the appropriate coupling. Let the vertices $k_0\in V(G_n,k_0)$ and $0\in V(\mathcal{T}_{\mathbf{x},n},0)$ be coupled as in the beginning of this section. Assume that they are such that the event $\mathcal{H}_{1,0}$ occurs; and the variables $((\mathcal{Z}_j[1],\mathcal{\tilde Z}_{j-1}[1]),2\leq j\leq n)$ are such that $\bigcap^3_{i=1}F_{1,i}$ holds. We first argue that under the event $\bigcap^3_{i=1}F_{1,i}\cap \mathcal{H}_{1,0}$, their type R neighbours can be coupled such that with high probability, $\theta_0=\hat \tau_0$ and $k_{0,i}=\hat k_{0,i}$ for $i=2,...,1+\hat \tau_0$. In view of Definition \ref{urnrep} and \ref{condpttree}, this follows readily from Lemma \ref{poissonproc}. It remains to prove that under this coupling, the age differences $|(k_0/n)^\chi-\hat a_0|$ and $|\hat a_{0,i}-(k_{0,i}/n)^\chi|$ are sufficiently small. Since $k_0=\ceil{n \hat a^{1/\chi}_0}$, and for $i=2,...,1+\hat \tau_0$, $k_{0,i}$ satisfies $k_{0,i}>k_0$ and $((k_{0,i}-1)/n)^\chi<\hat a_{0,i}\leq (k_{0,i}/n)^\chi$, it is enough to bound $(k/n)^\chi-((k-1)/n)^\chi$ for $k_0+1\leq k\leq n$. We use the fact that $k_0>n(\log\log n)^{-1}$ on the event $\mathcal{H}_{1,0}$, and the mean value theorem to obtain
\begin{equation}\label{Rage}
   \mathbbm{1}[\mathcal{H}_{1,0}] \left[\left(\frac{k}{n}\right)^\chi- \left(\frac{k-1}{n}\right)^\chi\right] \leq  \mathbbm{1}[\mathcal{H}_{1,0}] \frac{\chi}{n}  \left(\frac{n}{k-1}\right)^{1-\chi} \leq \frac{\chi(\log\log n)^{1-\chi}}{n}.
\end{equation}

Next, we couple the type L neighbours of $k_0\in V(G_n,k_0)$ and $0\in V(\mathcal{T}_{\mathbf{x},n},0)$, $k_{0,1}\in \partial \mathcal{B}_1$ and $(0,1)\in \partial \mathfrak{B}_1$ such that $k_{0,1}=\hat k_{0,1}$ and $(k_{0,1}/n)^\chi\approx \hat a_{0,1}$. Independently from $\hat a_0$, let $U_{0,1}\sim \mathrm{U}[0,1]$ and $\hat a_{0,1}=U_{0,1}\hat a_0$. From Definition \ref{urnrep} and \ref{condpttree}, we can define $k_{0,1}$ to satisfy
\begin{equation*}
    S_{k_{0,1}-1,n}[1]\leq U_{0,1}S_{k_{0}-1,n}[1]<  S_{k_{0,1},n}[1];
\end{equation*}
or equivalently, 
\begin{equation}\label{Lage}
    S_{k_{0,1}-1,n}[1]\leq \frac{\hat a_{0,1}}{\hat a_0}S_{k_{0}-1,n}[1]<  S_{k_{0,1},n}[1],
\end{equation}
and take $k_{0,1}=\hat k_{0,1}$. Now we show that under this coupling, $\hat a_{0,1}\approx (k_{0,1}/n)^\chi$ on the `good' event $\mathcal{H}_{1,0}\cap \mathcal{H}_{1,1}\cap F_{1,1}$. Observe that $S_{k_{0,1},n}[1]=\Omega((\log \log n)^{-3\chi})$ on the good event, because for $n$ large enough,
\begin{align*}
    S_{k_{0,1},n}[1]&\geq U_{0,1}S_{k_{0}-1,n}[1]
    \geq (\log \log n)^{-2\chi}\left[\left(\frac{k_0-1}{n}\right)^\chi-C^*n^{-\frac{\chi}{12}}\right]\\
    &\geq (\log \log n)^{-2\chi}[(\log \log n)^{-\chi} -2C^*n^{-\frac{\chi}{12}}].
\end{align*} %p35 in WLL 7Sep2020
Since $S_{k,n}[1]$ increases with $k$, $k_{0,1}=\Omega(n^\chi)$ on the good event, and hence $|S_{h,n}[1]-(h/n)^\chi|\leq C^* n^{-\frac{\chi}{12}}$ for $h=k_{0,1},k_{0,1}-1$. Furthermore, a little calculation shows that on the event $\mathcal{H}_{1,0}\cap F_{1,1}$, there is a constant $c:=c(x_1,\mu)$ such that
\begin{align*}
    \left|(S_{k_0-1,n}[1]/\hat a_0)-1\right| = c n^{-\frac{\chi}{12}}(\log \log n)^{\chi}.
\end{align*}
Now, swapping $(S_{k_0-1,n}[1]/\hat a_0)$, $S_{k_{0,1}-1,n}[1]$ and $S_{k_{0,1},n}[1]$ in (\ref{Lage}) for one, $(k_{0,1}/n)^\chi$ and $((k_{0,1}-1)/n)^\chi$ at the costs above, we get that there exists $\widehat C:=\widehat C(x_1,\mu)$ such that on the good event, $|\hat a_{0,1}-(k_{0,1}/n)^\chi|\leq \widehat Cn^{-\frac{\chi}{12}}(\log \log n)^{\chi}$. 

Finally, we bound $\Prob_{\mathbf{x}}\left(\bigcap^3_{i=1} F_{1,i} \cap (\bigcap^1_{k=0}\mathcal{H}_{1,k})\cap \mathcal{H}^c_{1,2} \right)$ under the coupling above. We take $C_1:=\widehat C \vee \chi$ for the event $\mathcal{H}_{1,2}$, and apply Lemma \ref{poissonproc} to obtain
\begin{align*}
    \Prob_{\mathbf{x}}\bigg(\bigcap^3_{i=1} F_{1,i} \cap \mathcal{H}^c_{1,2} \cap \mathcal{H}_{1,1}\cap\mathcal{H}_{1,0}\bigg)&= \Prob_{\mathbf{x}}\bigg(\mathbf{Y}^{(k[1],n)}\not=\mathbf{V}^{(k[1],n)}, \bigcap^3_{i=1} F_{1,i} \cap \bigcap^1_{k=0}\mathcal{H}_{1,k}\bigg)\\
    &\leq \Prob_{\mathbf{x}}\bigg(\mathbf{Y}^{(k[1],n)}\not=\mathbf{V}^{(k[1],n)}, \bigcap^3_{i=1} F_{1,i} \cap \mathcal{H}_{1,0}\bigg)\\
    &\leq C n^{-\gamma} (\log \log n)^{1-\chi},
\end{align*}
where $C:=C(x_1,\mu,\kappa)$ is a constant. This concludes the proof.
\end{proof}

Before proving Lemma \ref{r1goodevents}, we require a final result that says that under the graph coupling, $\mathcal{H}_{1,3}=\{\hat \tau _0<(\log n)^{1/r}\}$, the event that vertex $0\in V(\mathcal{T}_{\mathbf{x},n},0)$ (and hence vertex $k_0$) has a low degree occurs with high probability. Using Chebyshev's inequality, we prove the lemma in Section \ref{pfuniformd}.

\begin{lemma}\label{regularity}
Assume that $x_i\in (0,\kappa]$ for $i\geq 2$ and $\mathbf{x}\in A_n$. Given $r<\infty$ and $n$ large enough, there is a coupling of $(G_n,k_0)$ and $(\mathcal{T}_{\mathbf{x},n},0)$, with $p>\max\{r-1,7\}$ and a positive constant $C:=C(p,\kappa)$ such that 
\begin{equation*}
   \Prob_{\mathbf{x}}\bigg(\bigcap^2_{i=0} \mathcal{H}_{1,i}\cap \mathcal{H}^c_{1,3}\bigg)\leq C (\log n)^{-p/r}(\log \log n)^{p/(\mu+1)}.
\end{equation*}
\end{lemma}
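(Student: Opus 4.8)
The plan is a straightforward $p$-th moment (Markov) bound. The key point is that the event $\mathcal{H}_{1,3}=\{\hat\tau_0<(\log n)^{1/r}\}$ depends only on $\hat\tau_0$, the number of type R neighbours of the root of $(\mathcal{T}_{\mathbf{x},n},0)$, so no genuine coupling work is needed here; I would simply keep the coupling supplied by Lemma \ref{r1coupling}. Since $\bigcap_{i=0}^2\mathcal{H}_{1,i}\cap\mathcal{H}_{1,3}^c\subseteq\mathcal{H}_{1,0}\cap\{\hat\tau_0\geq(\log n)^{1/r}\}$, it suffices to bound $\Prob_{\mathbf{x}}(\mathcal{H}_{1,0}\cap\{\hat\tau_0\geq(\log n)^{1/r}\})$. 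We may take $p$ to be a positive integer with $p>\max\{r-1,7\}$.

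First I would pin down the conditional law of $\hat\tau_0$. By step \ref{Rstep} of Definition \ref{condpttree}, given $\hat a_0$ and $\mathcal{Z}_{k_0}[1]\sim\mathrm{Gamma}(x_{k_0},1)$, the variable $\hat\tau_0$ is the number of points of a mixed Poisson process on $(\hat a_0,1]$ with intensity $\mathcal{Z}_{k_0}[1](\mu\hat a_0^{1/\mu})^{-1}y^{1/\mu-1}$, hence conditionally $\hat\tau_0\sim\mathrm{Po}(\Lambda)$ with $\Lambda:=\mathcal{Z}_{k_0}[1](\hat a_0^{-1/\mu}-1)$, the integral of the intensity over $(\hat a_0,1]$. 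On $\mathcal{H}_{1,0}$ we have $\hat a_0>(\log\log n)^{-\chi}$, and since $\chi/\mu=1/(\mu+1)$ this gives $\hat a_0^{-1/\mu}<(\log\log n)^{1/(\mu+1)}$, so $\Lambda\leq\mathcal{Z}_{k_0}[1](\log\log n)^{1/(\mu+1)}$ on that event.

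Next I would estimate $\E_{\mathbf{x}}[\hat\tau_0^p\mathbbm{1}[\mathcal{H}_{1,0}]]$. For $N\sim\mathrm{Po}(\lambda)$ the quantity $\E[N^p]$ is a Touchard polynomial in $\lambda$ of degree $p$ with nonnegative coefficients and zero constant term, so $\lambda\mapsto\E[N^p]$ is nondecreasing and bounded by $C_p(1+\lambda^p)$ for a constant $C_p$ depending only on $p$; using monotonicity together with the bound on $\Lambda$, this yields $\E_{\mathbf{x}}[\hat\tau_0^p\mid\hat a_0,\mathcal{Z}_{k_0}[1]]\,\mathbbm{1}[\mathcal{H}_{1,0}]\leq C_p(1+\mathcal{Z}_{k_0}[1]^p(\log\log n)^{p/(\mu+1)})$. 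On $\mathcal{H}_{1,0}$ we also have $k_0=\ceil{nU_0}>n(\log\log n)^{-1}\geq2$ for $n$ large, so the shape parameter $x_{k_0}$ lies in $(0,\kappa]$ and $\E_{\mathbf{x}}[\mathcal{Z}_{k_0}[1]^p\mid k_0]=\prod_{\ell=0}^{p-1}(x_{k_0}+\ell)\leq\prod_{\ell=0}^{p-1}(\kappa+\ell)$, a constant depending only on $p$ and $\kappa$. Taking expectations gives $\E_{\mathbf{x}}[\hat\tau_0^p\mathbbm{1}[\mathcal{H}_{1,0}]]\leq C(p,\kappa)(\log\log n)^{p/(\mu+1)}$ for $n$ large; since $\hat\tau_0^p\mathbbm{1}[\mathcal{H}_{1,0}]\geq(\log n)^{p/r}$ on the event we are bounding, Markov's inequality delivers
\[
\Prob_{\mathbf{x}}\Big(\textstyle\bigcap_{i=0}^2\mathcal{H}_{1,i}\cap\mathcal{H}_{1,3}^c\Big)\leq\Prob_{\mathbf{x}}\big(\mathcal{H}_{1,0}\cap\{\hat\tau_0\geq(\log n)^{1/r}\}\big)\leq C(p,\kappa)(\log n)^{-p/r}(\log\log n)^{p/(\mu+1)},
\]
as required.

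I do not expect a real obstacle: this is a routine Markov bound. The only points requiring care are recognising that $\hat\tau_0$ is, conditionally on $(\hat a_0,\mathcal{Z}_{k_0}[1])$, an ordinary Poisson variable with parameter $\mathcal{Z}_{k_0}[1](\hat a_0^{-1/\mu}-1)$, and checking that the final constant is free of $n$, $x_1$ and $\mu$. The latter works because on $\mathcal{H}_{1,0}$ the shape $x_{k_0}$ of $\mathcal{Z}_{k_0}[1]$ always lies in the fixed bounded set $(0,\kappa]$, so its $p$-th moment is bounded uniformly, and all $\mu$-dependence is captured by the explicit factor $(\log\log n)^{1/(\mu+1)}$ coming from bounding $\hat a_0^{-1/\mu}$ on $\mathcal{H}_{1,0}$.
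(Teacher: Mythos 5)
Your proposal is correct and follows essentially the same route as the paper: condition on $(\hat a_0,\mathcal{Z}_{k_0}[1])$ to identify $\hat\tau_0$ as a Poisson variable with parameter $\mathcal{Z}_{k_0}[1](\hat a_0^{-1/\mu}-1)$, bound this parameter on $\mathcal{H}_{1,0}$, apply a $p$-th moment Markov bound using Stirling/Touchard polynomials, and control $\E_{\mathbf{x}}[\mathcal{Z}_{k_0}[1]^p]$ via bounded fitness. The only cosmetic difference is that the paper passes through a stochastically dominating $\tilde\tau_0\sim\mathrm{Po}(\zeta_0(\log\log n)^{1-\chi})$ and states the moment bound as $\E Y^p\leq C_p\theta^p$ for $\theta\geq1$, whereas your $\E[N^p]\leq C_p(1+\lambda^p)$ avoids the side condition.
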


We now complete the proof of Lemma \ref{r1goodevents} by applying Lemma \ref{r1coupling} and \ref{regularity}.

\begin{proof}[Proof of Lemma \ref{r1goodevents}]
The lemma follows from bounding the right-hand side of
\begin{align*}
    \Prob_{\mathbf{x}}\bigg(\bigg(\bigcap^3_{i=0} \mathcal{H}_{1,i}\bigg)^c\bigg)&=\Prob_{\mathbf{x}}(  \mathcal{H}^c_{1,0})+\sum^3_{\ell=1}\Prob_{\mathbf{x}}\bigg(\bigcap^{\ell-1}_{k=0}\mathcal{H}_{1,k}\cap \mathcal{H}^c_{1,\ell}\bigg),
\end{align*}
under the coupling described in the proof of Lemma \ref{r1coupling}. To bound $\Prob_{\mathbf{x}}( \mathcal{H}^c_{1,0})$ and $\Prob_{\mathbf{x}}(\mathcal{H}_{1,0}\cap \mathcal{H}^c_{1,1})$, recall that $U_{0}$ and $U_{0,1}$ are independent uniform variables on $[0,1]$, with $\hat a_0=U^\chi_0$ and $\hat a_{0,1}=U_{0,1} \hat a_{0}$. Hence
\begin{align*}
    \Prob_{\mathbf{x}}(\mathcal{H}_{1,0}\cap \mathcal{H}^c_{1,1})
    &=\E\left[\mathbbm{1}[ \mathcal{H}_{1,0}]\Prob_{\mathbf{x}}(U_{0,1}\leq \hat a^{-1}_0(\log \log n)^{-2\chi}|U_0)\right]\\
    &\leq \Prob_{\mathbf{x}}(U_{0,1}\leq (\log \log n)^{-\chi})\\
    &= (\log \log n)^{-\chi};
\end{align*}
and $\Prob_{\mathbf{x}}( \mathcal{H}^c_{1,0})=\Prob_{\mathbf{x}}(U_0\leq(\log \log n)^{-1})=(\log \log n)^{-1}$. The remaining probabilities can be bounded by Lemma \ref{r1coupling} and \ref{regularity}. This completes the proof.
\end{proof}

\section{Local weak limit proof: the general case}\label{secgenr}
Let $(G_n,k_0)$ be the $(\mathbf{x},n)$-P\'olya urn tree rooted at its uniformly chosen vertex $k_0$, we now couple $(G_n,k_0)$ and the intermediate P\'olya point tree $(\mathcal{T}_{\mathbf{x},n},0)$ such that with high probability, $(B_r(G_n,k_0),k_0)\cong (B_r(\mathcal{T}_{\mathbf{x},n},0),0)$, and for $k_{\bar w}\in V(B_r(G_n,k_0))$ and $\bar w\in V(B_r(\mathcal{T}_{\mathbf{x},n},0)$, $(k_{\bar w}/n)^\chi\approx\hat a_{\bar w}$ and $k_{\bar w}=\hat k_{\bar w}$. Then using the coupling result, we prove Theorem \ref{bigthm}.  

Recall that we write $L[s]=(0,1,...,1)$ and $|L[s]|=s+1$ in Section \ref{subsecapp}, so that $L[s]$ and $k_{L[s]}$ are the type L vertices in $\partial \mathfrak{B}_s:=V(B_s(\mathcal{T}_{\mathbf{x},n},0))\setminus V(B_{s-1}(\mathcal{T}_{\mathbf{x},n},0))$ and $\partial \mathcal{B}_s$. To define the coupling events, let $\chi/12:=\beta_1>\beta_2>\cdots>\beta_r>0$ so that $n^{-\beta_q}>n^{-\beta_{q-1}}(\log \log n)^{q\chi}$ for $n$ large enough. Then, we define
\begin{equation}\label{hqi}
\begin{split}
     \mathcal{H}_{q,1}&=\{\hat a_{L[q]}>(\log \log n)^{-\chi(q+1)}\},\\
    \mathcal{H}_{q,2}&= \bigg\{(B_q(G_n,k_0),k_0)\cong (B_q(\mathcal{T}_{\mathbf{x},n},0),0),\text{ and for $\bar v\in V(B_{q}(\mathcal{T}_{\mathbf{x},n},0))$, }\\
   &\hspace{5cm} k_{\bar v}=\hat k_{\bar v}\text{ and }\left|\hat a_{\bar v}-\left(\frac{k_{\bar v}}{n}\right)^\chi\right|\leq C_q n^{-\beta_q}\bigg \},\\
    \mathcal{H}_{q,3}&=\{\hat \tau_{\bar w}<(\log n)^{1/r} \text{ for $\bar w\in V(B_{q-1}(\mathcal{T}_{\mathbf{x},n},0))$}\}, 
\end{split}
\end{equation}
where $C_q:=C_q(x_1,\mu)$ is a constant that will be specified in the proof of Lemma \ref{gencoupling} later. The events $\mathcal{H}_{q,i}$, $i=1,2,3$ are analogous to the events $\mathcal{H}_{1,i}$ in (\ref{h10}). In particular, they ensure that the vertices in $\partial \mathfrak{B}_q$ and $\partial \mathcal{B}_{q}$ can be coupled in a similar way as the root vertices, and that under the coupling, $(B_{q+1}(G_n,k_0),k_0)\cong (B_{q+1}(\mathcal{T}_{\mathbf{x},n},0),0)$ with high probability.

The next lemma is the main result of this section, where we recall that $A_n:=A_{2/3,n}$, with $A_{2/3,n}$ as in (\ref{eventX}). The lemma essentially states that if we can couple $(G_n,k_0)$ and $(\mathcal{T}_{\mathbf{x},n},0)$ such that $(B_{q}(G_n,k_0),k_0)\cong (B_{q}(\mathcal{T}_{\mathbf{x},n},0),0)$ with high probability, then we can achieve this for the $(q+1)$-neighbourhoods too.  

\begin{lemma}\label{verygoodevents}
Let $\mathcal{H}_{q,i}$, $1\leq q\leq r$ and $i=1,2,3$ be as in (\ref{h10}) and (\ref{hqi}). Assume that $x_i\in (0,\kappa]$ for $i\geq 2$ and $\mathbf{x}\in A_n$. Given $r<\infty$, $1\leq q\leq r-1$ and $n>>r$, if there is a coupling of $(G_n,k_0)$ and $(\mathcal{T}_{\mathbf{x},n},0)$, and a positive constant $C:=C(x_1,\mu,\kappa,q)$ such that
\begin{align*}
    \Prob_{\mathbf{x}}\bigg(\bigg(\bigcap^{3}_{i=1}\mathcal{H}_{q,i}\bigg)^c\bigg)\leq C (\log \log n)^{-\chi},
\end{align*}
then there is a coupling of $(G_n,k_0)$ and $(\mathcal{T}_{\mathbf{x},n},0)$, and a positive constant $C':=C'(x_1,\mu,\kappa,q)$ such that 
\begin{align*}
    \Prob_{\mathbf{x}}\bigg(\bigg(\bigcap^{3}_{i=1}\mathcal{H}_{q+1,i}\bigg)^c\bigg)\leq C' (\log \log n)^{-\chi}.
\end{align*}
\end{lemma}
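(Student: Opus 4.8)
Throughout I condition on $\mathbf{X}=\mathbf{x}$ with $\mathbf{x}\in A_n$ and on the inductive event $\bigcap_{i=1}^{3}\mathcal{H}_{q,i}$. The plan is to run, level by level, the same three-step argument used for the root in Section~\ref{slocal}, now applied to every vertex $\bar v$ on the sphere $\partial\mathfrak{B}_q:=V(B_q(\mathcal{T}_{\mathbf{x},n},0))\setminus V(B_{q-1}(\mathcal{T}_{\mathbf{x},n},0))$, and then to union–bound over these vertices. On $\bigcap_{i=1}^{3}\mathcal{H}_{q,i}$ we already have $(B_q(G_n,k_0),k_0)\cong(B_q(\mathcal{T}_{\mathbf{x},n},0),0)$, matched PA labels $k_{\bar v}=\hat k_{\bar v}$, age errors $|\hat a_{\bar v}-(k_{\bar v}/n)^\chi|\le C_qn^{-\beta_q}$, and $\hat\tau_{\bar w}<(\log n)^{1/r}$ for all $\bar w\in V(B_{q-1})$; hence $|V(B_q(\mathcal{T}_{\mathbf{x},n},0))|\le (\log n)^{q/r}$, so the embellishment $\mathcal{E}_{t-1}$ accumulated in the exploration of $(G_n,k_0)$ before probing any $k_{\bar v}=k[t]$ with $\bar v\in\partial\mathfrak{B}_q$ has size $o(n)$, and $\mathcal{H}_{q,1}$ forces all the relevant labels to be $\Omega(n^\chi/\mathrm{polylog}\,n)$.

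The event $\mathcal{H}_{q+1,1}$ is immediate as in the base case: $\hat a_{L[q+1]}=U_{L[q],1}\hat a_{L[q]}$ with $U_{L[q],1}\sim\mathrm{U}[0,1]$ independent, so $\Prob_{\mathbf{x}}(\mathcal{H}_{q,1}\cap\mathcal{H}_{q+1,1}^c)\le\Prob_{\mathbf{x}}(U_{L[q],1}\le(\log\log n)^{-\chi})=(\log\log n)^{-\chi}$. The heart of the argument is $\mathcal{H}_{q+1,2}$, for which I would prove the exact analogue of Lemma~\ref{r1coupling} (this is where the constants $C_{q+1}$ are fixed). Fix $\bar v\in\partial\mathfrak{B}_q$ with $k_{\bar v}=k[t]$. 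By Lemma~\ref{condurnconseq} the type~R neighbours of $k[t]$ form the Bernoulli process $\mathbf{Y}^{(k[t],n)}$ of Definition~\ref{berproc01}, with success probabilities $P_{k\to k[t]}$ built from the embellished betas $B_j[t]$; since $|\mathcal{E}_{t-1}|=o(n)$, Remark~\ref{hoodsize} (together with the arguments behind Lemmas~\ref{Sasymptotic}–\ref{betagamma} applied to the shifted gamma shapes, for which $n^{-1}(\mathrm{shape})\to\mu+1$) shows that on a high–probability analogue $\bigcap_i F_{q,i}$ of (\ref{3f}) — with $S_{k,n}[t]$, $B_j[t]$, $\mathcal{Z}_j[t]$ replacing their $[1]$–versions — one has $P_{k\to k[t]}\approx (k_{\bar v}/k)^\chi\,\mathcal{Z}_{k_{\bar v}}[t]/((\mu+1)k_{\bar v})\approx\lambda^{[t]}_k$, the discretised Poisson mean of Definition~\ref{dispoi} with $\hat a_{\bar v}$, $\mathcal{Z}_{k_{\bar v}}[t]$ in place of $\hat a_0$, $\mathcal{Z}_{k_0}[1]$. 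Running the coupling of Lemma~\ref{poissonproc} verbatim (using $\mathcal{H}_{q,1}$ to bound $(k/n)^\chi-((k-1)/n)^\chi$ by $\chi(\log\log n)^{q}/n$) gives, with probability $1-O(n^{-\beta_{q+1}}\,\mathrm{polylog}\,n)$, that $\theta_{\bar v}=\hat\tau_{\bar v}$, $k_{\bar v,i}=\hat k_{\bar v,i}$ and $|\hat a_{\bar v,i}-(k_{\bar v,i}/n)^\chi|\le C_{q+1}n^{-\beta_{q+1}}$ for all type~R neighbours; when $\bar v=L[q]$, the type~L neighbour $L[q+1]$ is coupled to $k_{L[q],1}$ exactly as in Step~3 of Section~\ref{couplingidea} via Lemma~\ref{tilted01} with the same $B_j[t]$, and the bound on $|\hat a_{L[q+1]}-(k_{L[q+1]}/n)^\chi|$ follows from $\mathcal{H}_{q+1,1}$ and the analogue of $F_{q,1}$ as in the base case. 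A union bound over the $\le(\log n)^{q/r}$ vertices of $\partial\mathfrak{B}_q$, using that $n^{-\beta_{q+1}}>n^{-\beta_q}(\log\log n)^{(q+1)\chi}$ so the age errors telescope, yields $\Prob_{\mathbf{x}}(\bigcap_{i=1}^{3}\mathcal{H}_{q,i}\cap\mathcal{H}_{q+1,2}^c)\le Cn^{-\beta_{q+1}}\,\mathrm{polylog}\,n=o((\log\log n)^{-\chi})$.

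For $\mathcal{H}_{q+1,3}$ I would prove the analogue of Lemma~\ref{regularity}: conditionally on the coupling, each $\hat\tau_{\bar v}$, $\bar v\in V(B_q(\mathcal{T}_{\mathbf{x},n},0))$, is mixed Poisson with a parameter having a finite $p$-th moment (here bounded fitness gives all moments of $\mathcal{Z}_{k_{\bar v}}[t]$), so Chebyshev gives $\Prob(\hat\tau_{\bar v}\ge(\log n)^{1/r})=O((\log n)^{-p/r}(\log\log n)^{O(1)})$; a union bound over the $\le(\log n)^{q/r}$ such vertices, using $p>r-1\ge q$, keeps this $o((\log\log n)^{-\chi})$. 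Combining the three estimates through
\[
\Prob_{\mathbf{x}}\Big(\big(\textstyle\bigcap^{3}_{i=1}\mathcal{H}_{q+1,i}\big)^c\Big)\le\Prob_{\mathbf{x}}\Big(\big(\textstyle\bigcap^{3}_{i=1}\mathcal{H}_{q,i}\big)^c\Big)+\sum_{\ell=1}^{3}\Prob_{\mathbf{x}}\Big(\textstyle\bigcap^{3}_{i=1}\mathcal{H}_{q,i}\cap\bigcap_{k<\ell}\mathcal{H}_{q+1,k}\cap\mathcal{H}_{q+1,\ell}^c\Big)
\]
with the inductive hypothesis finishes the proof. \textbf{Main obstacle.} The delicate point is the uniform control of the conditioning (embellishment) effect: one must show that the shifted betas $B_j[t]$ and partial products $S_{k,n}[t]$ obey the same approximations as the unconditioned $B^{(x)}_j$, $S^{(x)}_{k,n}$ of Lemmas~\ref{Sasymptotic}–\ref{betagamma}, with error terms small enough to survive a union bound over all $\mathrm{polylog}\,n$ vertices of $B_q$ and over the whole exploration — this is precisely where the a~priori polylogarithmic bound on $|V(B_q)|$ coming from $\mathcal{H}_{q,3}$, and the bounded-fitness assumption, are essential.
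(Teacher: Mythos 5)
Your proposal matches the paper's proof in structure and substance: condition on the inductive event, dispose of $\mathcal{H}_{q+1,1}$ via the independent uniform $U_{L[q],1}$, prove per-vertex analogues of Lemma~\ref{r1coupling} and Lemma~\ref{regularity} for each $\bar v\in\partial\mathfrak{B}_q$ using the embellished urn representation of Section~\ref{embmodel}, and union-bound over the polylogarithmically many sphere vertices --- the paper merely names these per-vertex events $\mathcal{K}_{s,1},\mathcal{K}_{s,2}$ and conditions on them sequentially along the breadth-first order via $\mathcal{J}_{q,t}$, which you leave implicit but clearly intend. The one step your ``run Lemma~\ref{poissonproc} verbatim'' glosses over is that for a non-root vertex $\bar v$ the Bernoulli process starts at $k_{\bar v}+1$ while the discretised Poisson process starts at $M_{\bar v}=\min\{k:(k/n)^\chi\ge\hat a_{\bar v}\}$, and these need not coincide; the paper resolves this index mismatch by padding one side with zero-mean variables (Definition~\ref{pp01} and Table~\ref{tableofmeans}) and then checking separately in Lemma~\ref{rthirdcoup} that only a bounded number of padded indices contribute --- a small but genuine technical addition you would need to supply, along with the slightly different exponent ($d=\min\{\chi/3-4\gamma,\gamma,1-\chi,\beta_q\}$ rather than $\beta_{q+1}$, though either suffices).
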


Since Lemma \ref{r1goodevents} implies that there is a coupling such that $\Prob_{\mathbf{x}}\left(
\left(\bigcap^3_{i=1}\mathcal{H}_{1,i}\right)^c\right)\leq C (\log \log n)^{-\chi}$, combining Lemma \ref{r1goodevents} and \ref{verygoodevents} yields the following corollary, which is the key to proving Theorem~\ref{bigthm}.

\begin{cor}\label{maincor}
Retaining the assumption and the notations in Lemma \ref{verygoodevents}, there is a coupling of $(G_n,k_0)$ and $(\mathcal{T}_{\mathbf{x},n},0)$, and a positive constant $C:=C(x_1,\mu,\kappa,r)$ such that 
\begin{align*}
    \Prob_{\mathbf{x}}\bigg(\bigg(\bigcap^{3}_{i=1}\mathcal{H}_{r,i}\bigg)^c\bigg)\leq C (\log \log n)^{-\chi}.
\end{align*}
\end{cor}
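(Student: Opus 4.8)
The plan is to prove Corollary \ref{maincor} by a finite induction on $q$ running from $1$ to $r$, where the inductive hypothesis at stage $q$ is the assertion that there exists a coupling of $(G_n,k_0)$ and $(\mathcal{T}_{\mathbf{x},n},0)$ together with a constant $C_q := C_q(x_1,\mu,\kappa,q)$ such that $\Prob_{\mathbf{x}}((\bigcap_{i=1}^{3}\mathcal{H}_{q,i})^c) \leq C_q (\log\log n)^{-\chi}$ for all $n \gg r$. Since $r$ is finite and fixed, the requirement $n \gg r$ present in all the auxiliary lemmas is harmless for the asymptotic conclusion.

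First I would dispatch the base case $q = 1$. Lemma \ref{r1goodevents} supplies a coupling of $(G_n,k_0)$ and $(\mathcal{T}_{\mathbf{x},n},0)$ for which $\Prob_{\mathbf{x}}((\bigcap_{i=0}^{3}\mathcal{H}_{1,i})^c) \leq C(\log\log n)^{-\chi}$. Because intersecting over the larger index range $\{0,1,2,3\}$ yields a smaller event, $\bigcap_{i=0}^{3}\mathcal{H}_{1,i} \subseteq \bigcap_{i=1}^{3}\mathcal{H}_{1,i}$, so $(\bigcap_{i=1}^{3}\mathcal{H}_{1,i})^c \subseteq (\bigcap_{i=0}^{3}\mathcal{H}_{1,i})^c$ and the same coupling already satisfies the inductive hypothesis at $q = 1$. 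The inductive step is then exactly Lemma \ref{verygoodevents}: assuming the hypothesis at some stage $q$ with $1 \leq q \leq r-1$, that lemma produces a (possibly different) coupling of $(G_n,k_0)$ and $(\mathcal{T}_{\mathbf{x},n},0)$ and a constant $C_{q+1} := C'(x_1,\mu,\kappa,q)$ with $\Prob_{\mathbf{x}}((\bigcap_{i=1}^{3}\mathcal{H}_{q+1,i})^c) \leq C_{q+1}(\log\log n)^{-\chi}$. Iterating this step $r-1$ times from the base case gives the hypothesis at stage $q = r$, and taking $C := \max\{C_1,\dots,C_r\}$, which depends only on $x_1,\mu,\kappa$ and $r$, yields the statement of the corollary.

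There is no genuine obstacle in this argument beyond bookkeeping, since all the probabilistic and analytic content is already contained in Lemmas \ref{r1goodevents} and \ref{verygoodevents}. The only point I would stress is that the induction is carried out over the \emph{existence} of a coupling rather than over a single fixed coupling: each invocation of Lemma \ref{verygoodevents} may replace or refine the coupling used at the previous stage. This is unproblematic because the object we ultimately need — a coupling realising the bound for the $r$-neighbourhood — is obtained after only finitely many steps, and the finitely many constants $C_1,\dots,C_r$ produced along the way are controlled by their maximum, giving the claimed $C := C(x_1,\mu,\kappa,r)$.
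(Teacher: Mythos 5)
Your proof is correct and follows exactly the route the paper intends: the base case $q=1$ comes from Lemma~\ref{r1goodevents}, the inductive step is Lemma~\ref{verygoodevents}, and since $r$ is finite you iterate and take the maximum of the finitely many constants. The remark that the coupling may be refined at each stage is a sound clarification but does not constitute a departure from the paper's argument.
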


The remainder of this section is dedicated to proving Lemma \ref{verygoodevents} and Theorem \ref{bigthm}. 

\subsection{Proof of Lemma \ref{verygoodevents}}
Fix $1\leq q\leq r-1$, and we may assume that two graphs are already coupled such that the event $\bigcap^3_{i=1}\mathcal{H}_{q,i}$ has occurred. On the event $\mathcal{H}_{q,1}$, it is clear from the definitions of $\hat a_{L[q+1]}$ and $\mathcal{H}_{q+1,1}$, that $\mathcal{H}_{q+1,1}$ occurs with high probability. While on the event $\mathcal{H}_{q,1}\cap \mathcal{H}_{q,3}$, we can show that $\mathcal{H}_{q+1,3}$ occurs with high probability by applying standard inequalities. So for the most part of the proof, we handle the event $\mathcal{H}_{q+1,2}$. 

To take care of $\mathcal{H}_{q+1,2}$, we consider the vertices of $\partial \mathcal{B}_q$ and $\partial \mathfrak{B}_q$ in the breath-first order, and couple $k_{\bar v}\in \partial \mathcal{B}_q$ and $\bar v\in \partial \mathfrak{B}_q$ such that with high probability, $\theta_{\bar v}=\hat \tau_{\bar v}$, and for $1\leq i\leq \mathbbm{1}[\bar v=L[q+1]]+\hat \tau_{\bar v}$, $k_{\bar v,i}=\hat k_{\bar v,i}$ and $(k_{\bar v,i}/n)^\chi\approx\hat a_{\bar v}$. Below we only consider the type L vertices $k_{L[q]}$ and $L[q]$ in detail, because the type R case can be proved similarly. Observe that for $n$ large enough, there must be a type L vertex in $\partial \mathcal{B}_q$ on the event $\bigcap^3_{i=1}\mathcal{H}_{q,i}$. Furthermore, recall that we define $k[t]$ as the vertex probed at time $t$. Thus writing $k[\rho[q]]:=k_{L[q]}$, we have
\begin{equation*}
    \rho[1]=2\quad\text{and}\quad\rho[q]=|V(B_{q-1}(G_n,k_0))|+1\quad\text{for $2\leq q \leq r-1$.}
\end{equation*}
Noting that
\begin{align*}
    (\mathcal{A}_{\rho[q]-1},\mathcal{P}_{\rho[q]-1}, \mathcal{N}_{\rho[q]-1})=(\partial \mathcal{B}_q,V(B_{q-1}(G_n,k_0)),V(G_n)\setminus V(B_{q}(G_n,k_0))),
\end{align*}
let
\begin{align*}
    \left(\left(\mathcal{Z}_j[\rho[q]], \mathcal{\tilde Z}_{j}[\rho[q]]\right),j\in \mathcal{A}_{\rho[q]-1}\cup \mathcal{N}_{\rho[q]-1}\right)\quad \text{and} \quad \left(S_{k,n}[\rho[q]],1\leq k\leq n\right)
\end{align*}
be as in (\ref{newgamma}), (\ref{newtildegamma}) and (\ref{news}). For convenience, we also denote  $\zeta_q:=\mathcal{Z}_{k[\rho[q]]}{[\rho[q]]}$, so that $\zeta_q\sim\mathrm{Gamma}(x_{k_{L[q]}}+1,1)$. We use $(S_{k,n}[\rho[q]],1\leq k\leq n)$ to construct the neighbour distributions of vertex $k_{L[q]}$, and let $(\hat a_{L[q],j},2\leq j\leq 1+\hat \tau_{L[q]})$ be the points of the mixed Poisson process on $(\hat a_{L[q]},1]$ with intensity 
\begin{equation}\label{coupleppp}
   \frac{\zeta_q}{\mu \hat a^{1/\mu}_{L[q]}} y^{1/\mu-1} dy,
\end{equation}
and $\hat a_{L[q],1} \sim \mathrm{U}[0,\hat a_{L[q]}]$. We define a coupling of the vertices $k_{L[q]}\in \partial \mathcal{B}_q$ and $L[q]\in \partial \mathfrak{B}_q$ in the lemma below, and for this coupling we define the events
\begin{equation}\label{kr1}
    \begin{split}
    \mathcal{K}_{\rho[q],1}&= \bigg\{\theta_{L[q]}=\hat \tau_{L[q]},\text{ and for $1\leq i\leq 1+\hat \tau_{L[q]}$, }\hat k_{L[q],i}=k_{L[q],i}\\
    &\hspace{4cm}\text{and }
    \left|\hat a_{L[q],i}-\left(\frac{k_{L[q],i}}{n}\right)^\chi\right| \leq C_{q+1} n^{-\beta_{q+1}}\bigg\}, \\
     \mathcal{K}_{\rho[q],2}&=\{\hat \tau_{L[q]}<(\log n)^{1/r}\},
    \end{split}
\end{equation}
where $C_{q+1}$ and $\beta_{q+1}$ are as in the event $\mathcal{H}_{q+1,2}$. In the sequel, we develop lemmas analogous to Lemma \ref{r1coupling} and \ref{regularity} to show that on the event $\bigcap^3_{i=1} \mathcal{H}_{q,i}$, there is a coupling such that $\mathcal{K}_{\rho[q],1}$ and $\mathcal{K}_{\rho[q],2}$ occur with high probability. As in the $1$-neighbourhood case, the difficult part is proving the claim for $\mathcal{K}_{\rho[q],1}$.

\begin{lemma}\label{gencoupling}
Retaining the notations and the assumption in Lemma \ref{verygoodevents}, let $\beta_q$ and $\mathcal{K}_{\rho[q],1}$ be as in (\ref{hqi}) and (\ref{kr1}). Then there is a coupling of $(G_n,k_0)$ and $(\mathcal{T}_{\mathbf{x},n},0)$, with $0<\gamma<\chi/12$ and a positive constant $C:=C(x_1,\mu,\kappa,q)$ such that
\begin{align*}
   \Prob_{\mathbf{x}}\bigg(\bigcap^3_{i=1} \mathcal{H}_{q,i} \cap \mathcal{H}_{q+1,1}\cap \mathcal{K}^c_{\rho[q],1} \bigg)\leq C n^{-d}(\log\log n)^{q+1}(\log n)^{q/r},
\end{align*}
where $d:=\min\{\chi/3-4\gamma,\gamma, 1-\chi,\beta_q\}$.
\end{lemma}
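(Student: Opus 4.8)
The plan is to run the argument of Lemma~\ref{r1coupling} with the exploration time $t=1$ replaced by $t=\rho[q]$ and the root replaced by the type~L vertex $k_{L[q]}=k[\rho[q]]$. Throughout I may assume that $(G_n,k_0)$ and $(\mathcal{T}_{\mathbf{x},n},0)$ have been coupled so that $\bigcap_{i=1}^{3}\mathcal{H}_{q,i}$ already holds; in particular $k_{L[q]}=\hat k_{L[q]}$, the age error satisfies $\lvert\hat a_{L[q]}-(k_{L[q]}/n)^\chi\rvert\le C_q n^{-\beta_q}$, and, since $\mathcal{H}_{q,3}$ bounds every in-degree in $B_{q-1}(\mathcal{T}_{\mathbf{x},n},0)$ by $(\log n)^{1/r}$, the probed set $\mathcal{P}_{\rho[q]-1}=V(B_{q-1}(G_n,k_0))$ and the embellishment $\mathcal{E}_{\rho[q]-1}$ both have cardinality $O((\log n)^{q/r})=o(n^\chi)$. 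By Lemma~\ref{condurnconseq} the type~R neighbours of $k_{L[q]}$ are encoded by the Bernoulli point process $\mathbf{Y}^{(k[\rho[q]],n)}$ of Definition~\ref{berproc01}, with success probabilities $P_{k\to k_{L[q]}}$ of~(\ref{tmean1}) built from the broken line $(S_{k,n}[\rho[q]])_{1\le k\le n}$ in~(\ref{news}); and by Lemma~\ref{tilted01} the recipient of the single outgoing edge of $k_{L[q]}$, that is the type~L neighbour $k_{L[q+1]}$, is read off from that same broken line through a rescaled uniform variable.

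The first real step is to transfer Lemmas~\ref{Sasymptotic} and~\ref{betagamma} to the embellished quantities. I would introduce the analogues of~(\ref{3f}): $F_{q,1}'=\{\max_{\ceil{\phi(n)}\le k\le n}\lvert S_{k,n}[\rho[q]]-(k/n)^\chi\rvert\le C n^{-\chi/12}\}$, the relative-error event $F_{q,2}'$ for the betas $B_j[\rho[q]]$ with tolerance $n^{-\gamma}$, and $F_{q,3}'=\{\mathcal{Z}_j[\rho[q]]\le j^{1/2}\text{ for }\ceil{\phi(n)}\le j\le n\}$. On $\mathcal{H}_{q,3}$ the shape parameters in~(\ref{newgamma})--(\ref{newtildegamma}) differ from their unconditioned counterparts by at most $O((\log n)^{q/r})$, and at most $O((\log n)^{q/r})$ of the betas $B_j[\rho[q]]$ are forced to $0$; hence the moment/martingale computation behind Lemma~\ref{Sasymptotic} and the law-of-large-numbers computation behind Lemma~\ref{betagamma} go through with unchanged exponents, at the cost of one extra multiplicative factor $(\log n)^{q/r}$ in the failure probabilities, i.e.\ $\Prob_{\mathbf{x}}\big((F_{q,1}'\cap F_{q,2}'\cap F_{q,3}')^c\big)=O\big((\log n)^{q/r}(n^{-\chi/6}+n^{-(\chi/3-4\gamma)}+n^{-\chi})\big)$. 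This is the source of the factor $(\log n)^{q/r}$ in the statement.

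Next I would prove the analogue of Lemma~\ref{poissonproc}. On $F_{q,1}'\cap F_{q,2}'\cap\mathcal{H}_{q,1}$ a short calculation, using $k_{L[q]}=\Omega\big(n(\log\log n)^{-(q+1)}\big)$ and $\zeta_q\le k_{L[q]}^{1/2}$ on $F_{q,3}'$, shows that $P_{k\to k_{L[q]}}$ agrees with $\hat P_{k\to k_{L[q]}}:=(k_{L[q]}/k)^\chi\zeta_q/((\mu+1)k_{L[q]})$ up to relative error $O\big(n^{-\gamma}(\log\log n)^{q+1}\big)$, with $\hat P_{k\to k_{L[q]}}\le 1$; moreover, since $\lvert\hat a_{L[q]}-(k_{L[q]}/n)^\chi\rvert\le C_q n^{-\beta_q}$ with $\hat a_{L[q]}>(\log\log n)^{-\chi(q+1)}$, the same $\hat P_{k\to k_{L[q]}}$ matches the discretised Poisson means $\lambda_k^{[\rho[q]]}:=\int_{((k-1)/n)^\chi}^{(k/n)^\chi}\zeta_q(\mu\hat a_{L[q]}^{1/\mu})^{-1}y^{1/\mu-1}\,dy$ coming from the intensity~(\ref{coupleppp}) up to a comparable error. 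Two standard Bernoulli/Poisson couplings, routed through the intermediate processes with means $\hat P_{k\to k_{L[q]}}$, then yield a joint law under which $\mathbf{Y}^{(k[\rho[q]],n)}$ equals the discretised mixed Poisson process generating $\big((\hat a_{L[q],i},\hat k_{L[q],i}),2\le i\le 1+\hat\tau_{L[q]}\big)$ off an event of probability $O\big(n^{-\min\{\gamma,\beta_q,1-\chi\}}(\log\log n)^{q+1}(\log n)^{q/r}\big)$; on its complement $\theta_{L[q]}=\hat\tau_{L[q]}$ and $k_{L[q],i}=\hat k_{L[q],i}$ for every type~R neighbour, while~(\ref{bins}) together with the mean-value estimate $(k/n)^\chi-((k-1)/n)^\chi\le\frac{\chi}{n}(n/(k-1))^{1-\chi}\le\chi n^{-1}(\log\log n)^{(q+1)(1-\chi)}$ (valid for $k\ge k_{L[q]}$ on $\mathcal{H}_{q,1}$) gives $\lvert\hat a_{L[q],i}-(k_{L[q],i}/n)^\chi\rvert\le C_{q+1}n^{-\beta_{q+1}}$ for the constants $C_{q+1},\beta_{q+1}$ of $\mathcal{H}_{q+1,2}$.

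Finally I would couple the type~L neighbour as in~(\ref{Lage})--(\ref{Rage}): draw $U_{L[q],1}\sim\mathrm U[0,1]$, set $\hat a_{L[q],1}=U_{L[q],1}\hat a_{L[q]}$, and define $k_{L[q],1}=\hat k_{L[q],1}$ through $S_{k_{L[q],1}-1,n}[\rho[q]]\le(\hat a_{L[q],1}/\hat a_{L[q]})S_{k_{L[q]}-1,n}[\rho[q]]<S_{k_{L[q],1},n}[\rho[q]]$, using the \emph{same} betas $B_j[\rho[q]]$, $j<k_{L[q]}$, in both graphs so that $k_{L[q+1]}=\hat k_{L[q+1]}$ automatically. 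On $\mathcal{H}_{q+1,1}$ one has $\hat a_{L[q],1}=\hat a_{L[q+1]}>(\log\log n)^{-\chi(q+2)}$, whence $S_{k_{L[q],1},n}[\rho[q]]=\Omega\big((\log\log n)^{-\chi(q+2)}\big)$ and $k_{L[q],1}\gg\phi(n)$, so $\lvert S_{h,n}[\rho[q]]-(h/n)^\chi\rvert\le C n^{-\chi/12}$ for $h\in\{k_{L[q],1}-1,k_{L[q],1}\}$ on $F_{q,1}'$; combining this with $\lvert S_{k_{L[q]}-1,n}[\rho[q]]/\hat a_{L[q]}-1\rvert=O\big((\log\log n)^{q+1}(n^{-\chi/12}+n^{-\beta_q})\big)$ (from $F_{q,1}'$, the age bound on $\mathcal{H}_{q,2}$ and the mean-value theorem) and substituting into the displayed sandwich gives $\lvert\hat a_{L[q],1}-(k_{L[q],1}/n)^\chi\rvert\le\widehat C_q n^{-\beta_q}(\log\log n)^{q+1}$. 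Choosing $C_{q+1}:=\widehat C_q\vee\chi\vee C_q$, and $\beta_{q+1}$ as in the definition of the $\beta$'s (which forces $n^{-\beta_{q+1}}>n^{-\beta_q}(\log\log n)^{(q+1)\chi}$ for large $n$), absorbs every preceding error bound into $\mathcal{K}_{\rho[q],1}$; summing the failure probabilities of $F_{q,1}',F_{q,2}',F_{q,3}'$, of the point-process coupling and of the type~L step then yields the asserted bound with $d=\min\{\chi/3-4\gamma,\gamma,1-\chi,\beta_q\}$. I expect the main obstacle to be the second step: showing that conditioning on the $O((\log n)^{q/r})$ already-discovered edges shifts the relevant Beta/Gamma parameters, and removes factors from the broken-line product, only by amounts $o(n^\chi)$, so that Lemmas~\ref{Sasymptotic} and~\ref{betagamma} survive essentially unchanged — compounded, in the last two steps, by the relative-error inflation by $1/\hat a_{L[q]}\le(\log\log n)^{\chi(q+1)}$ incurred each time age estimates are pushed one further level down the tree.
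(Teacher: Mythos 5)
Your overall strategy is the paper's strategy: assume $\bigcap_{i=1}^3\mathcal{H}_{q,i}$, introduce embellished analogues $F_{\rho[q],i}$ of the good events, show that the $S_{\cdot,n}[\rho[q]]$-approximation and the Beta-Gamma law of large numbers survive the conditioning because $|\mathcal{P}_{\rho[q]-1}\cup\mathcal{A}_{\rho[q]-1}|=O((\log n)^{q/r})=o(n^\chi)$, run a Bernoulli/intermediate/Poisson three-way coupling of the type R neighbours, and couple the type L neighbour through the shared broken line. This matches Lemma \ref{gencoupling} and its main ingredient Lemma \ref{extnhood}. However, two things in the middle step are glossed over and they are precisely where the paper has to work hardest.

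First, your ``two standard Bernoulli/Poisson couplings'' do not account for the fact that the Bernoulli process and the discretised Poisson process do not even live on the same index set. The Poisson bins start at $M_{L[q]}:=\min\{k:(k/n)^\chi\geq\hat a_{L[q]}\}$, while the Bernoulli process starts at $k_{L[q]}+1$, and on $\mathcal{H}_{q,2}$ these can differ in either direction. The paper handles this by redefining $\mathbf{Y}^{(k[\rho[q]],n)}$ in (\ref{adjberpoi}) and by the case split of Table \ref{tableofmeans} (padding with zero-mean Bernoullis when $M_{L[q]}\le k_{L[q]}$, zero-mean Poissons when $M_{L[q]}\geq k_{L[q]}+2$), and then bounding the extra ``orphan'' coupling terms $\sum_{h=M_{L[q]}}^{k_{L[q]}}\lambda_h^{[\rho[q]]}$ and $\sum_{h=k_{L[q]}+1}^{M_{L[q]}}\hat P_{h\to k[\rho[q]]}$ in (\ref{Lsum})--(\ref{Psum}), which is where the exponent $\beta_q$ in $d$ enters. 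Without this step the coupling as you describe it is between random vectors of different lengths and does not make sense. Second, for $k\in V(B_q(G_n,k_0))$ the Bernoulli means $P_{k\to k[\rho[q]]}$ vanish identically (those vertices are already discovered), while the corresponding $\hat P_{k\to k[\rho[q]]}$ and Poisson means are positive; these pairs must be coupled separately and their total contribution bounded by $\sum_{h\in\mathcal{A}_{\rho[q]-1}\cup\mathcal{P}_{\rho[q]-1}}\hat P_{h\to k[\rho[q]]}$ as in (\ref{rfirstb2}). That sum, not the $F_{\rho[q],i}$ failure probabilities, is the source of the $(\log n)^{q/r}$ factor; your attribution of $(\log n)^{q/r}$ to the martingale/LLN step is wrong (the paper shows those bounds go through with $q$-dependent constants but the \emph{same} polynomial rates $n^{-\chi/6}$, $n^{4\gamma-\chi/3}$, $n^{-\chi}$ as in the base case). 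Your endgame — deriving the age error for the type L child from (\ref{Lcoup}), picking $C_{q+1}$ and using $n^{-\beta_{q+1}}>n^{-\beta_q}(\log\log n)^{(q+1)\chi}$ — is correct and matches the paper.
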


Similar to Lemma \ref{r1coupling}, the critical step to proving Lemma \ref{gencoupling} is showing that we can couple the type R neighbours of vertex $k_{L[q]}$. For this purpose, let the Bernoulli random vector $\mathbf{Y}^{(k[\rho[q]],n)}$ be as in Definition \ref{berproc01}, which by Lemma \ref{condurnconseq}, encodes the type R neighbours of vertex $k_{L[q]}$. Additionally, define
\begin{align}\label{closestvert}
    M_{L[q]}:=\min\{\ell\in \mathbbm{N}: (\ell/n)^\chi\geq \hat a_{L[q]}\}.
\end{align}
In particular, we use the bins $(\hat a_{L[q]}, ((\ell/n)^\chi)^n_{\ell=M_{L[q]}})$ to construct the mixed Poisson process that encodes the ages and the PA labels $((\hat a_{L[q],j},\hat k_{L[q],j}),2\leq j\leq 1+\hat \tau_{L[q]})$. However, it is possible that $M_{L[q]}\not = k_{L[q]}+1$, and in that case the number of Bernoulli and Poisson variables is not the same. Hence, we modify $\mathbf{Y}^{(k[\rho[q]],n)}$ to match these numbers. If $M_{L[q]}\leq k_{L[q]}$, define $Y_{k\to k_{L[q]}}$, $M_{L[q]}\leq k \leq k_{L[q]}$ as `Bernoulli variables' with means $P_{k\to k_{L[q]}}:=0$, and concatenate the vectors $(Y_{k\to k_{L[q]}},M_{L[q]}\leq k \leq k_{L[q]})$ and $\mathbf{Y}^{(k[\rho[q]],n)}$. This corresponds to the fact that vertex $k$ cannot send an outgoing edge to vertex $k_{L[q]}$ in $(G_n,k_0)$. If $M_{L[q]}\geq k_{L[q]}+1$, let $\mathbf{Y}^{(k[\rho[q]],n)}$ be as in Definition \ref{berproc01}. Saving notations, we redefine
\begin{align}\label{adjberpoi}
     \mathbf{Y}^{(k[\rho[q]],n)}:= \left(Y_{k^*_{L[q]}\to k_{L[q]}}, Y_{(k^*_{L[q]}+1)\to k_{L[q]}}, ..., Y_{n\to k_{L[q]}}\right), 
\end{align}
where
\begin{equation}\label{nextvert}
    k^*_{L[q]}:=\min\{M_{L[q]}, k_{L[q]}+1\}.
\end{equation}
When $M_{L[q]}\leq k_{L[q]}+1$, define the means of the discretised mixed Poisson process:
\begin{align}
    \lambda^{[\rho[q]]}_{M_{L[q]}}:=\int^{\left(\frac{M_{L[q]}}{n}\right)^\chi}_{\hat a_{L[q]}}\frac{\zeta_q}{\mu \hat a^{1/\mu}_{L[q]}} y^{1/\mu-1} dy\quad\text{and}\quad
    \lambda^{[\rho[q]]}_{k}:=\int^{\left(\frac{k}{n}\right)^\chi}_{\left(\frac{k-1}{n}\right)^\chi}\frac{\zeta_q}{\mu \hat a^{1/\mu}_{L[q]}} y^{1/\mu-1}dy \numberthis\label{tmean2}
\end{align}
for $M_{L[q]}+1\leq k\leq n$; whereas when $M_{L[q]}\geq k_{L[q]}+2$, we let
\begin{equation}\label{tmeanex}
    \lambda^{[\rho[q]]}_{j}:=0\quad\text{for $k_{L[q]}+1\leq k\leq M_{L[q]}$}
\end{equation} 
in addition to (\ref{tmean2}), so that the upcoming random vector is a discretisation of the mixed Poisson point process on $(\hat a_{L[q]},1]$.

\begin{defn}\label{pp01}
Given $k_{L[q]}$, $\hat a_{L[q]}$ and $\zeta_q$, let $k^*_{L[q]}$ be as in (\ref{nextvert}); and for $k^*_{L[q]}+1\leq k\leq n$, let $V_{k\to k[\rho[q]]}$ be conditionally independent Poisson random variables, each with parameters $\lambda^{[\rho[q]]}_k$ given in (\ref{tmean2}) and (\ref{tmeanex}). We define this discretised mixed Poisson point process with the random vector
\begin{equation*}
    \mathbf{V}^{(k[\rho[q]],n)}:=\left(V_{k^*_{L[q]}\to k_{L[q]}},V_{(k^*_{L[q]}+1)\to k_{L[q]}},...,V_{n\to k_{L[q]}}\right).
\end{equation*}
\end{defn}

We proceed to define the events analogous to $F_{1,i}$, $i=1,2,3$ in (\ref{3f}). These events ensure that $P_{k\to k[\rho[q]]}$ in (\ref{tmean1}) and $\lambda^{[\rho[q]]}_k$ are close enough for most $k\in [\max\{M_{L[q]},k_{L[q]}+1\},n]$, so that we can couple the two point processes. Let $\phi(n)=\Omega(n^\chi)$, $0<\gamma<\chi/12$, $B_{j}{[\rho[q]]}$ be as in (\ref{newbeta1}) and $C^*_q:=C^*_q(x_1,\mu)$ be a positive constant that we specify later. Define
\begin{align*}
    & F_{\rho[q],1}=\left\{\max_{\ceil{\phi(n)}\leq k\leq n}\left|S_{k,n}{[\rho[q]]}-\left(\frac{k}{n}\right)^\chi\right|\leq C^*_q n^{-\frac{\chi}{12}}\right\},\\
    & F_{\rho[q],2}=\bigcap^{n}_{\substack{j=\ceil{\phi(n)};j\not \in\mathcal{P}_{\rho[q]-1}}}\left\{\left|B_{j}{[\rho[q]]}-\frac{\mathcal{Z}_{j}{[\rho[q]]}}{(\mu+1)j}\right|\leq \frac{\mathcal{Z}_{j}{[\rho[q]]}n^{-\gamma}}{(\mu+1)j}\right\},\\
    & F_{\rho[q],3} = \bigcap^n_{j=\ceil{\phi(n)};j\not \in\mathcal{P}_{\rho[q]-1} }\{\mathcal{Z}_{j}{[\rho[q]]}\leq j^{1/2}\}. \numberthis \label{3fL}
\end{align*}

The following analog of Lemma \ref{poissonproc} is the main ingredient for proving Lemma \ref{gencoupling}.

\begin{lemma}\label{extnhood}
Retaining the notations and the assumption in Lemma \ref{gencoupling}, let $\mathbf{ Y}^{(k[\rho[q]],n)}$, $\mathbf{V}^{(k[\rho[q]],n)}$ and $F_{\rho[q],i}$, $i=1,2,3$ be as in Definition \ref{berproc01}, \ref{pp01} and (\ref{3fL}). Then there is a coupling of the random vectors, and a positive constant $C=C(x_1,\mu,\kappa,q)$ such that 
\begin{align*}
     \Prob_{\mathbf{x}}\bigg(\mathbf{Y}^{(k[\rho[q]],n)}\not = \mathbf{V}^{(k[\rho[q]],n)}, \bigcap^3_{i=1} F_{\rho[q], i}\cap \bigcap^3_{k=1} \mathcal{H}_{q,k}\bigg)
     \leq C n^{-d}(\log \log n)^{q+1}(\log n)^{q/r},
\end{align*}
where $d:=\min\{\beta_q,\gamma,1-\chi \}$.
\end{lemma}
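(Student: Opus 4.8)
The plan is to mimic the proof of Lemma \ref{poissonproc} (the $r=1$ case), interposing two intermediate Bernoulli/Poisson processes built from the simplified success probabilities
\begin{equation*}
  \hat P_{k\to k_{L[q]}}:=\left(\frac{k_{L[q]}}{k}\right)^\chi\frac{\zeta_q}{(\mu+1)k_{L[q]}},
\end{equation*}
and then explicitly coupling the four processes term by term. The additional work relative to Lemma \ref{poissonproc} is of two kinds: first, accounting for the conditioning on the discovered edges $\mathcal{E}_{\rho[q]-1}$, which is already absorbed into the definitions of $\mathcal{Z}_j[\rho[q]]$, $\mathcal{\tilde Z}_j[\rho[q]]$, $B_j[\rho[q]]$ and $S_{k,n}[\rho[q]]$ via (\ref{newgamma})--(\ref{news}); and second, handling the index mismatch between $k_{L[q]}+1$ and $M_{L[q]}$, which has been dealt with by the padding convention (\ref{adjberpoi})--(\ref{tmeanex}) that sets the corresponding means to $0$ on both sides, so those coordinates couple trivially.

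First I would work on the good event $\bigcap_{i=1}^3 F_{\rho[q],i}\cap\bigcap_{k=1}^3\mathcal{H}_{q,k}$. On $\mathcal{H}_{q,1}$ we have $\hat a_{L[q]}>(\log\log n)^{-\chi(q+1)}$, hence $k_{L[q]}=\Omega(n(\log\log n)^{-(q+1)/\chi})$ by $\mathcal{H}_{q,2}$ and (\ref{closestvert}); in particular $k_{L[q]}\geq\ceil{\phi(n)}$ once $\phi(n)=\Omega(n^\chi)$ and $n$ is large. On $F_{\rho[q],1}$ this lets me replace $S_{k_{L[q]},n}[\rho[q]]$ and $S_{k-1,n}[\rho[q]]$ by $(k_{L[q]}/n)^\chi$ and $((k-1)/n)^\chi$ up to an additive error $C^*_q n^{-\chi/12}$, which is small relative to $(k_{L[q]}/n)^\chi$; and on $F_{\rho[q],2}$ I can replace $B_{k_{L[q]}}[\rho[q]]$ by $\mathcal{Z}_{k_{L[q]}}[\rho[q]]/((\mu+1)k_{L[q]})=\zeta_q/((\mu+1)k_{L[q]})$ up to a multiplicative $(1\pm n^{-\gamma})$ factor. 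Combining, $P_{k\to k_{L[q]}}=\hat P_{k\to k_{L[q]}}(1+O(n^{-\beta_q}+n^{-\gamma}))$ uniformly over the relevant range of $k$, using also that the age closeness $\mathcal{H}_{q,2}$ contributes an error of order $n^{-\beta_q}$ when $\hat a_{L[q]}$ is swapped for $(k_{L[q]}/n)^\chi$. Then $F_{\rho[q],3}$ gives $\zeta_q=\mathcal{Z}_{k_{L[q]}}[\rho[q]]\leq k_{L[q]}^{1/2}$, so $\hat P_{k\to k_{L[q]}}\leq 1$ and all four parameter families are genuine probabilities/intensities. On the same event, the comparison $\lambda^{[\rho[q]]}_k\approx\hat P_{k\to k_{L[q]}}$ follows exactly as in the $1$-neighbourhood case by the mean value theorem, again losing only $O(n^{-\beta_q})$ thanks to $\mathcal{H}_{q,2}$ and $\mathcal{H}_{q,1}$ (the latter keeping the derivative $y^{1/\mu-1}$ bounded on $[\hat a_{L[q]},1]$).

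With these estimates in hand, the coupling is assembled coordinate-by-coordinate using the standard facts that two Bernoulli laws with means $p,p'$ couple to disagree with probability $|p-p'|$, that a Bernoulli$(p)$ and a Poisson$(p)$ disagree with probability at most $p^2$, and subadditivity of total variation over independent coordinates. Summing $|P_{k\to k_{L[q]}}-\hat P_{k\to k_{L[q]}}|$, $|\hat P_{k\to k_{L[q]}}-\lambda^{[\rho[q]]}_k|$ and $(\hat P_{k\to k_{L[q]}})^2+(\lambda^{[\rho[q]]}_k)^2$ over $k^*_{L[q]}+1\leq k\leq n$ — each sum being dominated by $\sum_k (k_{L[q]}/k)^\chi k_{L[q]}^{-1/2}$ times the appropriate small factor, which is $O(n^{1-\chi}k_{L[q]}^{\chi-1}k_{L[q]}^{1/2})=O(k_{L[q]}^{-1/2}n^{1-\chi})$ up to the error factors — and then using $k_{L[q]}=\Omega(n(\log\log n)^{-(q+1)/\chi})$ to turn powers of $k_{L[q]}$ into powers of $n$ times $(\log\log n)^{q+1}$, produces the claimed bound with $d=\min\{\beta_q,\gamma,1-\chi\}$. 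The extra $(\log n)^{q/r}$ factor enters because on $\mathcal{H}_{q,3}$ each of the at most $(\log n)^{q/r}$ type L/R predecessors contributes a comparable error and a union bound is taken; I would record this dependence carefully when passing from the single-vertex estimate to the statement as phrased. The main obstacle is the bookkeeping in the first paragraph of estimates: verifying that the conditioning-adjusted shape parameter $T_{j-1}+j-\sum_{k\in\mathcal{P}_{\rho[q]-1,j}}x_k-|\mathcal{E}_{\rho[q]-1,j}|$ in (\ref{newtildegamma}) still satisfies $n^{-1}(\cdots)\to\mu+1$ uniformly in $j\geq\ceil{\phi(n)}$ on $A_n\cap\mathcal{H}_{q,3}$ — this uses $|\mathcal{E}_{\rho[q]-1}|\leq r(\log n)^{1/r}=o(n)$ and $\mathbf{x}\in A_n$ — so that $B_j[\rho[q]]$ and $S_{k,n}[\rho[q]]$ inherit the approximations of Lemmas \ref{Sasymptotic} and \ref{betagamma} with only constants depending on $q$; this is the content of Section \ref{embmodel} and Remark \ref{hoodsize}, which I would invoke rather than reprove.
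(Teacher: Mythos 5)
Your overall architecture matches the paper's: build intermediate processes from $\hat P_{k\to k_{L[q]}}$, couple the four processes coordinatewise, and apply a union bound, with the $(\log n)^{q/r}$ factor coming from the already-discovered vertices $k\in V(B_q(G_n,k_0))$ where $P_{k\to k_{L[q]}}=0$ but $\hat P_{k\to k_{L[q]}}>0$. However, there are two places where the argument as written would fail.

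The more serious one is your treatment of $\zeta_q$. You bound $\zeta_q\leq k_{L[q]}^{1/2}$ via $F_{\rho[q],3}$ and feed this into the sum $\sum_k\hat P_{k\to k_{L[q]}}$, giving $\sum_k(k_{L[q]}/k)^\chi k_{L[q]}^{-1/2}\asymp k_{L[q]}^{\chi-1/2}n^{1-\chi}$; since $k_{L[q]}=\Theta(n)$ up to iterated-log factors, this is $\Theta(n^{1/2})$, which does not vanish after multiplying by $n^{-\gamma}$ or $n^{-\beta_q}$ (both $<1/2$). (Your intermediate step $n^{1-\chi}k_{L[q]}^{\chi-1}k_{L[q]}^{1/2}=O(k_{L[q]}^{-1/2}n^{1-\chi})$ also contains an algebra slip, dropping a factor $k_{L[q]}^\chi$.) The event $F_{\rho[q],3}$ is only there to force $\hat P_{k\to k_{L[q]}}\leq 1$ so the Bernoulli parameters are legitimate; it should not be used to bound $\zeta_q$ in the coupling error. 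The right move is to carry $\zeta_q$ through the entire conditional estimate — conditioning on $\Xi_\mathbf{x}$, the bound is linear-plus-quadratic in $\zeta_q$ with coefficients of the claimed size — and only at the very end take expectation over $\zeta_q$, using that $\zeta_q\sim\mathrm{Gamma}(x_{k_{L[q]}}+1,1)$ has $\E_\mathbf{x}[\zeta_q]\leq\kappa+1$ and $\E_\mathbf{x}[\zeta_q^2]\leq(\kappa+1)(\kappa+2)$ because the fitness is bounded. This is exactly where the $\kappa$-dependence of the constant $C$ enters, which your sketch never produces.

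Secondly, the padding of (\ref{adjberpoi})--(\ref{tmeanex}) does not ``set the corresponding means to $0$ on both sides.'' When $M_{L[q]}\leq k_{L[q]}$, the coordinates $M_{L[q]}\leq h\leq k_{L[q]}$ have $P_{h\to k_{L[q]}}=\hat P_{h\to k_{L[q]}}=0$ but $\lambda_h^{[\rho[q]]}>0$ as in (\ref{tmean2}); when $M_{L[q]}\geq k_{L[q]}+2$, the coordinates $k_{L[q]}+1\leq h\leq M_{L[q]}$ have $\lambda_h^{[\rho[q]]}=0$ but $P_{h\to k_{L[q]}},\hat P_{h\to k_{L[q]}}>0$. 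These pairs do not couple trivially; one side is deterministically $0$ while the other is a nondegenerate Poisson. You need a dedicated estimate here — the content of (\ref{Lsum}) and (\ref{Psum}) — which works because the number of such coordinates is small (of order $|M_{L[q]}-k_{L[q]}|=O(n^{1-\beta_q/\chi})$ by $\mathcal{H}_{q,2}$) and each offending mean is $O(\zeta_q/k_{L[q]})$. Your proof should make both of these repairs before the claimed bound with $d=\min\{\beta_q,\gamma,1-\chi\}$ follows.
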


The proof of Lemma \ref{extnhood} is in Section \ref{pfextnhood} and we summarise the main steps here. As in the proof of Lemma \ref{poissonproc}, we construct a Bernoulli and a discretised Poisson processes using suitable means $\hat P_{i\to k[\rho[q]]}$, $k^*_{L[q]}\leq i\leq n$, and then couple the four processes. However, this time we need to handle the cases where  we couple a Bernoulli variable with mean zero and a Poisson variable with positive mean, or vice versa. We take care of these cases by choosing the appropriate $\hat P_{i\to k[\rho[q]]}$. Firstly, we observe that on the event $(\bigcap^3_{i=1} F_{\rho[q], i})\cap(\bigcap^3_{k=1} \mathcal{H}_{q,k})$, $P_{k\to k[\rho[q]]}$ is close enough to
\begin{equation}\label{tmean3}
    \hat P_{k\to k[\rho[q]]}:=\left(\frac{k[\rho[q]]}{k}\right)^\chi \frac{\zeta_q}{(\mu+1)k[\rho[q]]}\quad \leq 1
\end{equation}
for $k_{L[q]}+1\leq k\leq  n$ and $k\not \in V(B_q(G_n,k_0))$. On the other hand, $\lambda^{[\rho[q]]}_{k}\approx \hat P_{k\to k[\rho[q]]}$ for $\max\{M_{L[q]},k_{L[q]}+1\}\leq k\leq n$ because $\hat a_{L[q]}\approx (k[\rho[q]]/n)^\chi$ on the event $\mathcal{H}_{q,2}$. Hence for $\max\{M_{L[q]},k_{L[q]}+1\}\leq k\leq n$ where $k \not \in V(B_q(G_n,k_0))$, we can couple $Y_{k\to k[\rho[q]]}$ and $V_{k\to k[\rho[q]]}$ the same way as in Lemma \ref{poissonproc}.

When $k \in V(B_q(G_n,k_0))$, the Bernoulli variable $Y_{k\to k[\rho[q]]}$ has mean zero, and we couple it and a Bernoulli variable with mean (\ref{tmean3}), and then argue similarly as in Lemma \ref{poissonproc}. To bound the probability that $Y_{k\to k[\rho[q]]}\not =V_{k\to k[\rho[q]]}$ for any such $k$, we first use the fact that there is at most one type L vertex in $\partial \mathcal{B}_j$, and iterate over the radii $j=1,...,q$ to deduce that on the event $\mathcal{H}_{q,3}$, 
\begin{equation}\label{qlogvertices}
    \begin{split}
        &|\partial \mathcal{B}_{j}|<1+\sum^{j}_{h=1}(\log n)^{h/r}<1+j(\log n)^{j/r},\quad \text{$1\leq j\leq q$,}\\
        &|V(B_q(G_n,k_0))|< 1+\sum^q_{j=1}[1+j(\log n)^{j/r}]< 1+q+ q^2(\log n)^{q/r}.
    \end{split}
\end{equation} 
Because $\hat P_{k\to k[\rho[q]]}$ are sufficiently small and there are at most $O((\log n)^{q/r})$ such pairs of Bernoulli variables, we can use a union bound to show that this probability tends to zero as $n\to\infty$.

Let $h\in[k^*_{L[q]}, \max\{M_{L[q]},k_{L[q]}+1\}]$. We now consider the coupling of $Y_{h\to k[\rho[q]]}$ and $V_{h\to k[\rho[q]]}$. In Table \ref{tableofmeans} below, we give the possible combinations of the Bernoulli and Poisson means, and our choice of intermediate means. As indicated in the table, we choose $\hat P_{h\to k[\rho[q]]}>0$ whenever $P_{h\to k[\rho[q]]}>0$. When $M_{L[q]}\leq k_{L[q]}$, we couple $V_{h\to k[\rho[q]]}$ and a `Poisson variable' with mean zero; while when $M_{L[q]}\geq k_{L[q]}+2$, $V_{h\to k[\rho[q]]}:=0$ by construction, and it is coupled with a Poisson variable with mean $\hat P_{h\to k[\rho[q]]}>0$. However, the number of these pairs is small because $M_{L[q]}\approx k_{L[q]}+1$ when $\hat a_{L[q]}\approx (k_{L[q]}/n)^\chi$. Consequently, a union bound argument shows that the probability that any of these couplings fail tends to zero as $n\to\infty$.  

\begin{table}[ht]
\centering
\begin{tabular}{|l|l|l|l|}
\hline
 & $M_{L[q]}\leq k_{L[q]}$ & $M_{L[q]}\geq k_{L[q]}+2$ & $M_{L[q]}= k_{L[q]}+1$ \\ \hline
$P_{h\to k[\rho[q]]}$ & 0 & as in (\ref{tmean1}) & as in (\ref{tmean1}) \\ \hline
$\lambda^{[\rho[q]]}_h$ & as in (\ref{tmean2}) & 0 & as in (\ref{tmean2}) \\ \hline
$\hat P_{h\to k[\rho[q]]}$ & 0 & as in (\ref{tmean3}) & as in (\ref{tmean3})\\ \hline
\end{tabular}
\caption{\small Combinations of means for $k^*_{L[q]}\leq h \leq \max\{M_{L[q]},k_{L[q]}+1\}$. Note that $h=k_{L[q]}+1$ when $M_{L[q]}=k_{L[q]}+1$, and in that case, the coupling is similar to that of Lemma \ref{poissonproc}.}\label{tableofmeans}
\end{table}
%Finally, we note that when $\mathbf{x}\in A_n$, the probabilities of $F_{\rho[q],i}$, $i=1,2,3$, on the event $\bigcap^3_{i=1}\mathcal{H}_{q,i}$, tend to one as $n\to\infty$. This is because $B_j[\rho[q]]$ and $S_{k,n}[\rho[q]]$ are approximately distributed as $B^{(x)}_j$ and $S^{(x)}_{k,n}$, and the results basically follow from the arguments of Lemma \ref{Sasymptotic} and \ref{betagamma}. In particular, it is possible to choose $C^*_q:=C^*_q(x_1,\mu)$ and $c^*_q:=c^*_q(x_1,\mu)$ such that on the event $\bigcap^3_{i=1}\mathcal{H}_{q,i}$, the probability of $F_{\rho[q],1}$ is at least $1-c^*_q n^{-\chi/6}$.
 
We are now ready to prove Lemma \ref{gencoupling}.

\begin{proof}[Proof of Lemma \ref{gencoupling}]
We bound the right-hand side of 
\begin{align*}
    &\Prob_{\mathbf{x}}\bigg(\bigcap^3_{i=1} \mathcal{H}_{q,i} \cap \mathcal{H}_{q+1,1}\cap  \mathcal{K}^c_{\rho[q],1} \bigg)\\
    &\leq \Prob_{\mathbf{x}}\bigg(\bigcap^3_{i=1} \mathcal{H}_{q,i} \cap \bigcap^3_{k=1} F_{\rho[q],k}\cap \mathcal{H}_{q+1,1}\cap \mathcal{K}^c_{\rho[q],1} \bigg)+\Prob_{\mathbf{x}}\bigg( \bigcap^3_{i=1} \mathcal{H}_{q,i}\cap\bigg(\bigcap^3_{k=1} F_{\rho[q],k}\bigg)^c \bigg)\\
    &= \Prob_{\mathbf{x}}\bigg(\bigcap^3_{i=1} \mathcal{H}_{q,i} \cap \bigcap^3_{k=1} F_{\rho[q],k}\cap  \mathcal{H}_{q+1,1} \cap \mathcal{K}^c_{\rho[q],1} \bigg) + \sum^3_{k=1} \Prob_{\mathbf{x}}\bigg( \bigcap^3_{i=1} \mathcal{H}_{q,i}\cap \bigcap^{k-1}_{j=1} F_{\rho[q],j} \cap F^c_{\rho[q],k}\bigg),
\end{align*}
under an appropriate coupling. We first take care of the last three terms. Using the observations in Remark \ref{hoodsize} and (\ref{qlogvertices}), we can argue much the same way as for Lemma \ref{Sasymptotic} and \ref{betagamma} to show that there are constants $C:=C(q)$, $C'=C'(x_1,\gamma,\mu,q)$, $C^*_q=C^*_q(x_1,\mu)$ and $c^*=c^*(x_1,\mu,q)$ such that 
\begin{gather*}
    \Prob_{\mathbf{x}}\bigg(\bigcap^3_{i=1} \mathcal{H}_{q,i}\cap \bigcap^2_{k=1} F_{\rho[q],k} \cap F^c_{\rho[q],3}\bigg)\leq \Prob_{\mathbf{x}}\bigg( \bigcap^3_{i=1} \mathcal{H}_{q,i} \cap F^c_{\rho[q],3}\bigg)\leq C \kappa^4 n^{-\chi},\\
    \Prob_{\mathbf{x}}\bigg( \bigcap^3_{i=1} \mathcal{H}_{q,i}\cap F_{\rho[q],1} \cap F^c_{\rho[q],2}\bigg)\leq \Prob_{\mathbf{x}}\bigg( \bigcap^3_{i=1} \mathcal{H}_{q,i} \cap F^c_{\rho[q],2}\bigg)\leq C'n^{4\gamma-\tfrac{\chi}{3}},\\
    \Prob_{\mathbf{x}}\bigg( \bigcap^3_{i=1} \mathcal{H}_{q,i}\cap F^c_{\rho[q],1}\bigg)\leq c^* n^{-\tfrac{\chi}{6}}.
\end{gather*}

To bound the first probability, we give the appropriate coupling of the vertices $k_{L[q]}\in \partial \mathcal{B}_q$ and $L[q]\in \partial \mathfrak{B}_q$, starting with their type R neighbours. Assume that
\begin{gather*}
    ((k_{\bar z}, \hat k_{\bar z}, \hat a_{\bar z}), \bar z\in V(B_q(\mathcal{T}_{\mathbf{x},n},0))),\quad
    ((\theta_{\bar x}, \hat \tau_{\bar x}),\bar x \in V(B_{q-1}(\mathcal{T}_{\mathbf{x},n},0))),\\
    \left(\left(\mathcal{Z}_j[\rho[q]], \mathcal{\tilde Z}_{j}[\rho[q]]\right),j\in \mathcal{A}_{\rho[q]-1}\cup \mathcal{N}_{\rho[q]-1}\right)
\end{gather*}
are coupled such that $E_q:=\bigcap^3_{i=1}\mathcal{H}_{q,i}\cap \bigcap^3_{i=1}F_{\rho[q],i}$ occurs. We first show that on the event $E_q$, there is a coupling such that $\theta_{L[q]}=\hat \tau_{L[q]}$ and $k_{L[q],i}=\hat k_{L[q],i}$ for $2\leq i\leq 1+\hat \tau_{L[q]}$ with high probability. In view of Definition \ref{condpttree} and \ref{berproc01}, this follows readily from Lemma \ref{extnhood}. We now prove that under this coupling, the ages $(k_{L[q],i}/n)^\chi\approx \hat a_{L[q],i}$. Note that when $\mathbf{Y}^{(k[\rho[q]],n)}$ and $\mathbf{V}^{(k[\rho[q]],n)}$ are coupled, $k_{L[q],i}\geq M_{L[q]}$. Thus using $M_{L[q]}\geq n\hat a^{1/\chi}_{L[q]}$, a little calculation shows that 
\begin{align*}
    \mathbbm{1}\bigg[\bigcap^2_{i=1}\mathcal{H}_{q,i}\bigg] \left[\left(\frac{k}{n}\right)^\chi-\left(\frac{k-1}{n}\right)^\chi\right] \leq \frac{\widehat c(\log\log n)^{(1+q)(1-\chi)}}{n},\quad M_{L[q]}\leq k\leq n,
\end{align*}
where $\widehat c:=\widehat c(\mu)$ is constant. 

We proceed to couple the type L neighbours of $k_{L[q]}\in \partial \mathcal{B}_q$ and $L[q]\in \partial \mathfrak{B}_q$ on the event $E_q\cap \mathcal{H}_{q+1}$. Independently from all the variables that have been generated so far, let $U_{L[q],1}\sim\mathrm{U}[0,1]$. Set $\hat a_{L[q],1}:=\hat a_{L[q+1]}=U_{L[q],1} \hat a_{L[q]}$, so that $\hat a_{L[q],1}$ is the age of vertex $(L[q],1)\in \partial \mathfrak{B}_{q+1}$. We define $k_{L[q],1}$ to satisfy
\begin{align}\label{Lcoup}
    S_{k_{L[q],1}-1,n}[\rho[q]]\leq U_{L[q],1} S_{k_{L[q]}-1,n}[\rho[q]]< S_{k_{L[q],1},n}[\rho[q]].
\end{align}
Hence, in light of Definition \ref{condpttree} and Lemma \ref{tilted01}, we have $\hat k_{L[q],1}=k_{L[q],1}$. To show that $\hat a_{L[q],1}\approx (k_{L[q],1}/n)^\chi$, we first argue that we can substitute $S_{k,n}[\rho[q]]$ in (\ref{Lcoup}) with $(k/n)^\chi$ at a small enough cost. A straightforward computation shows that on the event $E_q \cap \mathcal{H}_{q+1,1}$, 
\begin{align*}
    U_{L[q],1}\geq (\log\log n)^{-(q+2)\chi}\quad\text{and}\quad k_{L[q]}\geq n(\log\log n)^{-(q+1)} - C_q n^{1-\beta_q/\chi},
\end{align*}
where $C_q$ is the constant in $\mathcal{H}_{q,2}$. Consequently, there is a constant $C:=C(x_1,\mu,q)$ such that
\begin{align*}
    S_{k_{L[q],1},n}[\rho[q]]&\geq U_{L[q],1} S_{k_{L[q]}-1,n}[\rho[q]]
    \geq (\log\log n)^{-(q+2)\chi} \left[\left(\frac{k_{L[q]}-1}{n}\right)^\chi - C^*_q n^{-\frac{\chi}{12}}
    \right]\\
    &\geq C (\log\log n)^{-(2q+3)\chi} \numberthis \label{cost1}
\end{align*}
on the event $E_q\cap \mathcal{H}_{q+1,1}$. This implies that $k_{L[q],1}=\Omega(n^\chi)$, and so $|S_{h,n}[\rho[q]]-(h/n)^\chi|\leq C^*_q n^{-\chi/12}$ for $h=k_{L[q],1}, k_{L[q],1}-1$. Additionally, a direct calculation yields
\begin{align*}
    \left|\left(S_{k_{L[q]}-1,n}[\rho[q]]/\hat a_{L[q]}\right)-1\right|=Cn^{-\beta_q}(\log \log n)^{(q+1)\chi},
\end{align*}
 where $C:=C(x_1,\mu,q)$ is a constant. Replacing $(S_{k_{L[q]},n}[\rho[q]]/\hat a_{L[q]})$, $S_{k_{L[q],1},n}[\rho[q]]$ and $S_{k_{L[q],1}-1,n}[\rho[q]]$ in (\ref{Lcoup}) for one, $(k_{L[q],1}/n)^\chi$ and $((k_{L[q],1}-1)/n)^\chi$ at the costs above, then recalling $\beta_{q+1}<\beta_q$, we conclude that, on the event $E_q\cap \mathcal{H}_{q+1,1}$, there is a constant $\widehat C:=\widehat C(x_1,\mu,q)$ such that $|\hat a_{L[q],1}-(k_{L[q],1}/n)^\chi|\leq \widehat Cn^{-\beta_{q+1}}$.

The proof is complete once we bound $\Prob_{\mathbf{x}}\left(\bigcap^3_{i=1} \mathcal{H}_{q,i} \cap \bigcap^3_{k=1} F_{\rho[q],k}\cap  \mathcal{H}_{q+1,1} \cap \mathcal{K}^c_{\rho[q],1} \right)$ under the coupling above. For the event $\mathcal{K}_{\rho[q],1}$ (and $\mathcal{H}_{q+1,2}$), pick $C_{q+1}:=\widehat C\vee \widehat c$. By Lemma \ref{extnhood}, there is a positive constant $C:=C(x_1,\mu,\kappa,q)$ such that
\begin{align*}
    &\Prob_{\mathbf{x}}\bigg(\bigcap^3_{i=1} \mathcal{H}_{q,i} \cap \bigcap^3_{k=1} F_{\rho[q],k}\cap \mathcal{H}_{q+1,1}\cap \mathcal{K}^c_{\rho[q],1} \bigg)\\
   % &\leq \Prob_{\mathbf{x}}\left(\mathbf{Y}^{(k[\rho[q]],n)}\not = \mathbf{V}^{(k[\rho[q]],n)}, \cap^3_{i=1} F_{\rho[q], i}\cap (\cap^3_{k=1} \mathcal{H}_{q,k})\cap \mathcal{H}_{q+1,1}\right)\\
    &\qquad \leq \Prob_{\mathbf{x}}\bigg(\mathbf{Y}^{(k[\rho[q]],n)}\not = \mathbf{V}^{(k[\rho[q]],n)}, \bigcap^3_{i=1} F_{\rho[q], i}\cap \bigcap^3_{k=1} \mathcal{H}_{q,k}\bigg)\\
    &\qquad \leq  C n^{-d}(\log \log n)^{q+1}(\log n)^{q/r},
\end{align*}
where $d=\min\{\beta_q,\gamma,1-\chi\}$. 
\end{proof}

The following analog of Lemma \ref{regularity} shows that under the graph coupling, $\mathcal{K}_{q,2}=\{\hat \tau_{L[q]}<(\log n)^{1/r}\}$ occurs with high probability. We omit the proof as it is similar to that of Lemma \ref{regularity}.

\begin{lemma}\label{regt}
Retaining the notations and the assumption in Lemma \ref{verygoodevents}, let $\mathcal{K}_{\rho[q],i}$, $i=1,2$ be as in (\ref{kr1}). Then given $r<\infty$ and $n$ large enough, there is a coupling of $(G_n,k_0)$ and $(\mathcal{T}_{\mathbf{x},n},0)$,  $p>\max\{r-1,7\}$ and a positive constant $C:=C(\kappa,p)$ such that
\begin{equation*}
    \Prob_{\mathbf{x}}\left(\bigcap^3_{i=1}\mathcal{H}_{q,i}\cap  \mathcal{K}_{q,1}\cap\mathcal{K}^c_{q,2}\right)\leq C (\log n)^{-p/r}(\log \log n)^{p(q+1)/(\mu+1)}.
\end{equation*}
\end{lemma}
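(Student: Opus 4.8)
The plan is to follow the proof of Lemma~\ref{regularity} almost verbatim, the key observation being that, after intersecting events, the probability in question is controlled purely on the side of $(\mathcal{T}_{\mathbf{x},n},0)$. Fix the coupling of $(G_n,k_0)$ and $(\mathcal{T}_{\mathbf{x},n},0)$ furnished by Lemma~\ref{gencoupling}. Since $\bigcap_{i=1}^{3}\mathcal{H}_{q,i}\cap\mathcal{K}_{q,1}\subseteq\mathcal{H}_{q,1}$ and, by (\ref{kr1}), $\mathcal{K}^c_{q,2}=\{\hat\tau_{L[q]}\geq(\log n)^{1/r}\}$, it is enough to bound $\Prob_{\mathbf{x}}\big(\mathcal{H}_{q,1}\cap\{\hat\tau_{L[q]}\geq(\log n)^{1/r}\}\big)$, and for this I would apply Markov's inequality with the $p$-th moment. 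We may take $p$ to be a positive integer (otherwise replace $p$ by $\lceil p\rceil$ inside the moment and use Jensen, exactly as in Lemma~\ref{regularity}). Thus it remains to show
\[
\E_{\mathbf{x}}\!\left[\hat\tau_{L[q]}^{\,p}\,\mathbbm{1}[\mathcal{H}_{q,1}]\right]\leq C(\kappa,p)\,(\log\log n)^{p(q+1)/(\mu+1)} .
\]

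For the moment estimate, recall from (\ref{coupleppp}) that, conditionally on $\zeta_q\sim\mathrm{Gamma}(x_{k_{L[q]}}+1,1)$ and $\hat a_{L[q]}$, the variable $\hat\tau_{L[q]}$ is Poisson with mean $\zeta_q\big(\hat a_{L[q]}^{-1/\mu}-1\big)$. On $\mathcal{H}_{q,1}=\{\hat a_{L[q]}>(\log\log n)^{-\chi(q+1)}\}$, and using the identity $\chi/\mu=1/(\mu+1)$, this mean is at most $\zeta_q(\log\log n)^{(q+1)/(\mu+1)}$. Since the $p$-th moment of a Poisson variable is a polynomial of degree $p$ in its mean,
\[
\E\!\left[\hat\tau_{L[q]}^{\,p}\mid\zeta_q,\hat a_{L[q]}\right]\mathbbm{1}[\mathcal{H}_{q,1}]\leq C_p\Big(1+\zeta_q^{\,p}(\log\log n)^{p(q+1)/(\mu+1)}\Big) .
\]
Taking expectations and using that the bounded fitness assumption gives the uniform estimate $\E[\zeta_q^{\,p}\mid k_{L[q]}]=\Gamma(x_{k_{L[q]}}+1+p)/\Gamma(x_{k_{L[q]}}+1)\leq\Gamma(\kappa+1+p)/\Gamma(\kappa+1)$ (the ratio being increasing in $x_{k_{L[q]}}$), the displayed moment bound follows for $n$ large, as $(\log\log n)^{p(q+1)/(\mu+1)}\to\infty$ absorbs the lower-order terms of the polynomial. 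Feeding this into Markov's inequality yields $\Prob_{\mathbf{x}}\big(\mathcal{H}_{q,1}\cap\{\hat\tau_{L[q]}\geq(\log n)^{1/r}\}\big)\leq C(\kappa,p)(\log n)^{-p/r}(\log\log n)^{p(q+1)/(\mu+1)}$, which is the assertion.

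The computation is routine; the only points requiring care are that, conditionally on the realised label $k_{L[q]}$, $\zeta_q$ is a fresh $\mathrm{Gamma}(x_{k_{L[q]}}+1,1)$ variable independent of $\hat a_{L[q]}$ (so the conditioning above is legitimate and the type~L shape shift by $1$ is correctly accounted for), and that the Gamma moment bound is uniform over all possible labels precisely because $\pi$ is supported on $(0,\kappa]$. Both are exactly as in the proof of Lemma~\ref{regularity}, which is why that proof is invoked rather than repeated.
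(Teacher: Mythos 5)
Your proposal is correct and follows essentially the same route the paper indicates (the paper simply says the proof is similar to Lemma~\ref{regularity} and omits it). You reduce to $\Prob_{\mathbf{x}}(\mathcal{H}_{q,1}\cap\{\hat\tau_{L[q]}\geq(\log n)^{1/r}\})$, apply the $p$-th moment Markov bound to the conditional Poisson variable, use $\chi/\mu=1/(\mu+1)$ together with the lower bound on $\hat a_{L[q]}$ provided by $\mathcal{H}_{q,1}$ to control the Poisson parameter by $\zeta_q(\log\log n)^{(q+1)/(\mu+1)}$, and then average out $\zeta_q\sim\mathrm{Gamma}(x_{k_{L[q]}}+1,1)$ using the bounded-fitness assumption to absorb the $\Gamma$-ratio into a constant depending only on $\kappa$ and $p$ — exactly the computation performed in the proof of Lemma~\ref{regularity} with $q=0$, with the extra shape parameter shift of $1$ correctly tracked.
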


In the following we apply Lemma \ref{gencoupling} and \ref{regt} to prove Lemma \ref{verygoodevents}. 

\begin{proof}[Proof of Lemma \ref{verygoodevents}]
We may assume that there is at least one type R vertex in $\partial \mathcal{B}_q$, as otherwise the proof is similar to that of Lemma \ref{r1goodevents}. We begin by stating the type R analogs of the events $\mathcal{K}_{\rho[q],1}$ and $\mathcal{K}_{\rho[q],2}$, and the results parallel to Lemma \ref{gencoupling} and \ref{regt}, as they will be useful for bounding $\Prob_{\mathbf{x}}\left(\left(\bigcap^{3}_{i=1}\mathcal{H}_{q+1,i}\right)^c\right)$ later.

Recursively, if $(G_n,k_0)$ and $(\mathcal{T}_{\mathbf{x},n},0)$ are already coupled such that for some exploration step $t>\rho[q]$ and $k[t]\in \partial \mathcal{B}_q$, the event 
\begin{align*}
   \mathcal{H}_{q+1,1}\cap \bigg(\bigcap^3_{i=1}\mathcal{H}_{q,i}\bigg)\cap \bigcap_{\{u<t:k[u]\in \partial \mathcal{B}_q\}} (\mathcal{K}_{u,1}\cap\mathcal{K}_{u,2}) =:\mathcal{H}_{q+1,1}\cap \mathcal{J}_{q,t}
\end{align*}
has occurred, let $((\mathcal{Z}_j[t], \mathcal{\tilde Z}_{j-1}[t]),j \in \mathcal{A}_{t-1}\cup  \mathcal{N}_{t-1})$ and $\left(S_{k,n}[t],1\leq k\leq n\right)$ be as in (\ref{newgamma}), (\ref{newtildegamma}) and (\ref{news}). We use these variables to construct the distribution of the type R neighbours of $k[t]=k_{\bar v}$, and to generate the points of the mixed Poisson process with intensity (\ref{coupleppp}), where $\zeta_q$ and $\hat a_{L[q]}$ are now replaced with $\mathcal{Z}_{k_{\bar v}}[t]\sim \mathrm{Gamma}(x_{k_{\bar v}},1)$ and $\hat a_{\bar v}$. Denote these points by $(\hat a_{\bar v,i},1\leq i\leq \hat \tau_{\bar v})$. We define a coupling for the pair $k_{\bar v}\in \partial \mathcal{B}_q$ and $\bar v\in \partial \mathfrak{B}_q$, and for the coupling we define the events analogous to $\mathcal{K}_{\rho[q],1}$ and $\mathcal{K}_{\rho[q],2}$:
\begin{align*}
    \mathcal{K}_{t,1}&= \bigg\{\theta_{\bar v}=\hat \tau_{\bar v}, \text{and for $1\leq i\leq \hat \tau_{\bar v}$, }\left|\hat a_{\bar v,i}-\left(\frac{k_{\bar v,i}}{n}\right)^\chi\right| \leq C_{q+1}n^{-\beta_{q+1}}\text{ and }
    \hat k_{\bar v,i} = k_{\bar v,i} \bigg\},\\
     \mathcal{K}_{t,2}&=\{\hat \tau_{\bar v}<(\log n)^{1/r}\},
\end{align*}
where $C_{q+1}$ is the constant in $\mathcal{H}_{q+1,2}$. On the event $\mathcal{J}_{q,t}$, (\ref{qlogvertices}) implies that the number of discovered vertices up to exploration time $t$ can be bounded as
\begin{align*}
    |\mathcal{A}_{t-1}\cup \mathcal{P}_{t-1}|\leq 2+q+(q+1)^2(\log n)^{(q+1)/r}.
\end{align*}
Hence, proceeding similarly as for Lemma \ref{gencoupling} and \ref{extnhood}, we get that there is a coupling of $(G_n,k_0)$ and $(\mathcal{T}_{\mathbf{x},n},0)$, and positive constants $C:=C(x_1,\mu,\kappa,q)$ and $c:=c(\kappa,p)$ such that
\begin{equation}\label{Rb2}
    \begin{split}
    &\Prob_{\mathbf{x}}\left( \mathcal{K}^c_{t,1}\cap \mathcal{H}_{q+1,1} \cap \mathcal{J}_{q,t}\right)
    \leq C n^{-d}(\log \log n)^{q+1} (\log n)^{(q+1)/r},\\
    &\Prob_{\mathbf{x}}\left(\mathcal{K}^c_{t,2}\cap \mathcal{K}_{t,1}\cap \mathcal{J}_{q,t}\right)
    \leq c (\log n)^{-\frac{p}{r}}(\log \log n)^{\frac{p(q+1)}{\mu+1}},
    \end{split}
\end{equation}
where $d=\min\{\chi/3-4\gamma,1-\chi,\gamma,\beta_q\}$.

To bound $\Prob_{\mathbf{x}}\left(\left(\bigcap^{3}_{i=1}\mathcal{H}_{q+1,i}\right)^c\right)$ using Lemma \ref{gencoupling}, \ref{regt} and (\ref{Rb2}), we note that
\begin{align*}
    &\Prob_{\mathbf{x}}\bigg(\bigg(\bigcap^{3}_{i=1}\mathcal{H}_{q+1,i}\bigg)^c\bigg)\\
    &\leq \Prob_{\mathbf{x}}\bigg(\bigg(\bigcap^{3}_{i=1}\mathcal{H}_{q+1,i}\bigg)^c \cap \bigcap^{3}_{j=1}\mathcal{H}_{q,j} \bigg) + \Prob_{\mathbf{x}}\bigg(\bigg(\bigcap^{3}_{j=1}\mathcal{H}_{q,j}\bigg)^c\bigg)\\
    &=\Prob_{\mathbf{x}}\bigg(\bigg(\bigcap^{3}_{i=2}\mathcal{H}_{q+1,i}\bigg)^c \cap \mathcal{H}_{q+1,1}\cap \bigcap^{3}_{j=1}\mathcal{H}_{q,j} \bigg) + \Prob_\mathbf{x}\bigg(\bigcap^{3}_{j=1}\mathcal{H}_{q,j}\cap \mathcal{H}^c_{q+1,1}\bigg)+ \Prob_{\mathbf{x}}\bigg(\bigg(\bigcap^{3}_{j=1}\mathcal{H}_{q,j}\bigg)^c\bigg)\\
    &=
   \Prob_{\mathbf{x}}\bigg(\bigg(\bigcap^{3}_{j=1}\mathcal{H}_{q,j}\bigg)^c\bigg)+\Prob_\mathbf{x}\bigg(\bigcap^{3}_{j=1}\mathcal{H}_{q,j}\cap \mathcal{H}^c_{q+1,1}\bigg)\\
   &\hspace{4cm} + \Prob_{\mathbf{x}}\bigg(\bigg(\bigcap_{\{s:k[s]\in \partial\mathcal{B}_q \}}(\mathcal{K}_{s,1}\cap\mathcal{K}_{s,2})\bigg)^c\cap\bigcap^{3}_{j=1}\mathcal{H}_{q,j}\cap \mathcal{H}_{q+1,1}\bigg),\numberthis\label{complementofH}
\end{align*}
where the last equality follows from 
\begin{equation*}
   \bigg(\bigcap^3_{j=2}\mathcal{H}_{q+1,j}\bigg)^c\cap \bigcap^3_{l=1}\mathcal{H}_{q,l}= \bigg(\bigcap_{\{s:k[s]\in \partial\mathcal{B}_q \}}(\mathcal{K}_{s,1}\cap\mathcal{K}_{s,2})\bigg)^c \cap \bigcap^3_{l=1}\mathcal{H}_{q,l}.
\end{equation*}
Thus the lemma is proved once we show that each probability on the right-hand side of (\ref{complementofH}) is of order at most $(\log\log n)^{-\chi}$. By assumption, there is a constant $C:=C(x_1,\mu,\kappa,q)$ such that $\Prob_{\mathbf{x}}\left(\left(\bigcap^{3}_{j=1}\mathcal{H}_{q,j}\right)^c\right) \leq C (\log\log n)^{-\chi}$. For the second probability, recall that $\hat a_{L[q],1}=U_{L[q],1}\hat a_{L[q]}$, where $U_{L[q],1}\sim\mathrm{U}[0,1]$ is independent of $\hat a_{L[q]}$. Hence,
\begin{align*}
    \Prob_{\mathbf{x}}\bigg(\bigcap^{3}_{i=1}\mathcal{H}_{q,i}\cap \mathcal{H}^c_{q+1,1}\bigg)
    &\leq \Prob_{\mathbf{x}}\left(\mathcal{H}^c_{q+1,1}\cap \mathcal{H}_{q,1} \right)\\
    &=\Prob_{\mathbf{x}}\left(U_{L[q],1}\hat a_{L[q]} \leq (\log\log n)^{-\chi(q+2)}, \hat a_{L[q]}>(\log\log n)^{-\chi(q+1)}\right)\\
    &\leq \Prob_{\mathbf{x}}(U_{L[q],1} \leq (\log\log n)^{-\chi})\\
    &= (\log \log n)^{-\chi}, 
\end{align*}
Finally, for the third probability we observe
\begin{align*}
    &\Prob_{\mathbf{x}}\bigg(\bigg(\bigcap_{\{s:k[s]\in \partial\mathcal{B}_q \}}(\mathcal{K}_{s,1}\cap\mathcal{K}_{s,2})\bigg )^c\cap\bigcap^{3}_{j=1}\mathcal{H}_{q,j}\cap \mathcal{H}_{q+1,1}\bigg)\\
    &\quad =\sum_{\{s:k[s]\in \partial \mathcal{B}_q\}} \left[\Prob_{\mathbf{x}}\left(\mathcal{K}^c_{s,2}\cap \mathcal{K}_{s,1}\cap\mathcal{J}_{q,s}\cap \mathcal{H}_{q+1,1}\right)+\Prob_{\mathbf{x}}\left( \mathcal{K}^c_{s,1}\cap \mathcal{J}_{q,s}\cap \mathcal{H}_{q+1,1}\right)\right];
\end{align*}
noting that by (\ref{qlogvertices}), $|\partial \mathcal{B}_q|\leq 1+q(\log n)^{q/r}$ on the event $\mathcal{H}_{q,3}$. Bounding the summands using Lemma \ref{gencoupling}, \ref{regt} and (\ref{Rb2}), it follows that there is a constant $C:=C(x_1,\mu,\kappa,q)$ such that the probability above is bounded by $C(\log\log n)^{-\chi}$. 
\end{proof}

%\begin{align*}
    %&\Prob_{\mathbf{x}}\left(\left(\cap_{\{t:k[t]\in \partial\mathcal{B}_q \}}(\mathcal{K}_{t,1}\cap\mathcal{K}_{t,2})\right )^c\cap(\cap^{3}_{j=1}\mathcal{H}_{q,j})\right)\\
    %&\leq (1+q(\log n)^{q/r})\bigg\{ C \kappa^2_2 n^{-\gamma}(\log \log n)^{(1-\chi)(q+1)} + C' n^{-\beta}(\log\log n)^{2(q+1)\chi}\\
     %&\quad++C''\kappa^p_2 n^{-\chi(p/2-1)} + c\kappa^p_2(\log n)^{-\frac{p}{r}}(\log \log n)^{\frac{p(q+1)}{\mu+1}} \bigg\}.\numberthis \label{finalb1}
%\end{align*}
\subsection{Proof of Theorem \ref{bigthm}}
Equipped with Corollary \ref{maincor}, we can prove Theorem \ref{bigthm}.

\begin{proof}
Following from Definition \ref{bs}, it is enough to establish (\ref{maindtv}) of Theorem \ref{bigthm}. Let $G_n\sim \mathrm{PA}(\pi)_n$, $(\mathcal{T}_{\mathbf{X},n},0)$ be the intermediate P\'olya point tree randomised over $\mathbf{X}$, and $(\mathcal{T},0)$ be the $\pi$-P\'olya point tree introduced in Definition \ref{pipolyapointgraph}. By the triangle inequality for the total variation distance, for any $r<\infty$ we have
\begin{align*}
    &d_{\mathrm{TV}}\left(\mathcal{L}\left((B_r(G_n,k_0),k_0)\right), \mathcal{L}\left((B_r(\mathcal{T},0),0)\right)\right)\\
    &\quad \leq d_{\mathrm{TV}}\left(\mathcal{L}\left((B_r(G_n,k_0),k_0)\right), \mathcal{L}\left((B_r(\mathcal{T}_{\mathbf{X},n},0),0)\right)\right)\\
    &\hspace{5cm}+ d_{\mathrm{TV}}\left(\mathcal{L}\left((B_r(\mathcal{T}_{\mathbf{X},n},0),0)\right), \mathcal{L}\left((B_r(\mathcal{T},0),0)\right)\right).
\end{align*}
Let $A_n:=A_{2/3,n}$ be as in (\ref{eventX}). Applying Jensen's inequality to the total variation distance, it can be seen that the above is bounded by
\begin{align*}
    &\E [d_{\mathrm{TV}}\left(\mathcal{L}\left((B_r(G_n,k_0),k_0)|\mathbf{X}\right),\mathcal{L}\left((B_r(\mathcal{T}_{\mathbf{X},n},0),0)|\mathbf{X}\right)\right)]\\
    &\hspace{3cm}+ d_{\mathrm{TV}}\left(\mathcal{L}\left((B_r(\mathcal{T}_{\mathbf{X},n},0),0)\right), \mathcal{L}\left((B_r(\mathcal{T},0),0)\right)\right)\\
    &\leq \E [\mathbbm{1}[\mathbf{x}\in A_n]d_{\mathrm{TV}}\left(\mathcal{L}\left((B_r(G_n,k_0),k_0)|\mathbf{X}=\mathbf{x}\right),\mathcal{L}\left((B_r(\mathcal{T}_{\mathbf{X},n},0),0)|\mathbf{X}=\mathbf{x}\right)\right)] \\
    &\hspace{3cm}+\Prob(\mathbf{x}\in A^c_n) + d_{\mathrm{TV}}\left(\mathcal{L}\left((B_r(\mathcal{T}_{\mathbf{X},n},0),0)\right), \mathcal{L}\left((B_r(\mathcal{T},0),0)\right)\right). 
\end{align*}
We prove that each term above is of order at most $(\log\log n)^{-\chi}$, starting from the expectation. Let $\mathcal{H}_{r,i}$, $i=1,2,3$ be as in (\ref{hqi}). For $G_n\sim \mathrm{Seq}(\mathbf{x})_n$ and $\mathbf{x}\in A_n$, Corollary \ref{maincor} implies that there is a coupling of $(G_n,k_0)$ and $(\mathcal{T}_{\mathbf{x},n},0)$, and a positive constant $C:=C(x_1,\mu,\kappa,r)$ such that 
\begin{align*}
    \Prob_{\mathbf{x}}\left(B_r(\mathcal{T}_{\mathbf{x},n},0),0\not \cong (B_r(G_n,k_0),k_0)\right)
    \leq \Prob_{\mathbf{x}}\bigg(\bigg(\bigcap^{3}_{i=1}\mathcal{H}_{r,i}\bigg)^c\bigg) \leq C (\log \log n)^{-\chi}. \numberthis \label{loglogbd}
\end{align*}
Hence applying definition (\ref{coupinter}) and the last display yields the desired upper bound. 

The bound on $\Prob(\mathbf{x}\in A^c_n)$ follows immediately from Lemma \ref{goodsum} (with $\alpha=2/3$). To handle the last term, we couple $(B_r(\mathcal{T},0),0)$ and $(B_r(\mathcal{T}_{\mathbf{X},n},0),0)$. For this coupling, denote by $\mathcal{E}$ the event that the PA labels in $B_r(\mathcal{T}_{\mathbf{X},n},0)$ are distinct, that is, $\hat k_{\bar u}\not= \hat k_{\bar v}$ for any $\bar u\not =\bar v$, $\bar u,\bar v\in V(B_r(\mathcal{T}_{\mathbf{X},n},0))$. We first construct $B_r(\mathcal{T}_{\mathbf{X},n},0)$, then on the event $\mathcal{E}$, we set $B_r(\mathcal{T},0)$ as $B_r(\mathcal{T}_{\mathbf{X},n},0)$, inheriting the Ulam-Harris labels, ages, types and fitness from the latter; and if the PA labels are not distinct, we generate $B_r(\mathcal{T},0)$ independently from $B_r(\mathcal{T}_{\mathbf{X},n},0)$. For any set $D \subseteq \mathcal{G}$, with $\mathcal{G}$ being the set of connected, rooted finite graphs, the triangle inequality yields
\begin{align*} 
    &|\Prob((B_r(\mathcal{T}_{\mathbf{X},n},0),0)\in D)-\Prob((B_r(\mathcal{T},0),0)\in D)|\\
    &\quad \leq |\Prob((B_r(\mathcal{T}_{\mathbf{X},n},0),0)\in D,\mathcal{E})-\Prob((B_r(\mathcal{T},0),0)\in D,\mathcal{E})| \\
    &\hspace{3cm} +|\Prob((B_r(\mathcal{T}_{\mathbf{X},n},0),0)\in D,\mathcal{E}^c)-\Prob((B_r(\mathcal{T},0),0)\in D,\mathcal{E}^c)|.\numberthis \label{twoabs}
\end{align*}
Under the coupling, $\Prob((B_r(\mathcal{T}_{\mathbf{X},n},0),0)\in D,\mathcal{E})=\Prob((B_r(\mathcal{T},0),0)\in D,\mathcal{E})$; and from the definition of $\mathcal{H}_{r,2}$, we can use (\ref{loglogbd}) and Lemma \ref{goodsum} to bound the second term in (\ref{twoabs}) by $\Prob(\mathcal{E}^c)\leq \Prob\left(\left(\bigcap^3_{i=1}\mathcal{H}_{r,1}\right)^c\right)\leq C(\log\log n)^{-\chi}$. Since the resulting bound does not depend on $D$, it follows from definition (\ref{tvddef}) of the total variation distance that 
\begin{equation*}
    d_{\mathrm{TV}}\left(\mathcal{L}\left((B_r(\mathcal{T}_{\mathbf{X},n},0),0)\right), \mathcal{L}\left((B_r(\mathcal{T},0),0)\right)\right)\leq C(\log\log n)^{-\chi},
\end{equation*}
and the theorem is proved.
\end{proof}

\section{Supplementary proofs for the local weak limit theorem}\label{ssup}
\subsection{Proof of Theorem \ref{linebreaking}}\label{pflinebreaking}
We use the urn embedding method to prove Theorem \ref{linebreaking} (see \cite{pekoz2017} and \cite{delphin2019} for example). Let $G_n\sim\mathrm{Seq(\mathbf{x}})_n$. Additionally, let $M_k(n):=\sum^k_{j=1}(x_j+W_{j,n})$, $U_k(n)=M_k(n)-M_{k-1}(n)$ and $M_0(n)=0$, where $W_{j,n}$ is the in-degree of vertex $j$ in $G_n$. In words, $M_k(n)$ is the total weight of the first $k$ vertices in $G_n$, while $U_k(n)$ is the weight of vertex~$k$ after $n$ completed attachment steps. If  $k>n$, then we set $U_k(n)=0$. 

Furthermore, denote by Polya$(b,w;n)$ the law of the number of white balls after the $n$th draw in a classical P\'olya urn initially with $w$ white balls and $b$ black balls. The following lemma is an easy modification of \cite[Lemma 2]{pekoz2017} that relates $U_k(n)$ to the number of white balls in a classical P\'olya urn. 
\begin{lemma}\label{urnembed}
Retaining the notations above, let $T_j:=\sum^j_{h=1}x_h$. Then given $n\geq 2$, 
\begin{equation}\label{polya1}
    U_{n-1}(n)\sim \mathrm{Polya}(T_{n-2}+n-2,x_{n-1};1),
\end{equation}
and conditional on $M_k(n)$ and the events $(\{U_j(n)=U_j(n-1)\},k+1\leq j\leq n-1)$, 
\begin{equation}\label{polya}
    U_k(n)\sim \mathrm{Polya}(T_{k-1}+k-1,x_k;M_k(n)-T_k-k+1).
\end{equation}
\end{lemma}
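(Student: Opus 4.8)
The plan is to prove Lemma~\ref{urnembed} by directly tracking the Pólya-urn dynamics hidden inside the sequential attachment process, conditioning on everything that happens after time $k$ (resp. $n-1$) so that the remaining randomness is exactly a classical Pólya urn draw.

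First I would establish \eqref{polya1}. At the end of step $n-1$ (i.e., in $G_{n-1}$), vertex $n-1$ has just been born, so $U_{n-1}(n-1)=x_{n-1}$ and the total weight of vertices $\{1,\dots,n-1\}$ in $G_{n-1}$ is $\sum_{j=1}^{n-1}(x_j+W_{j,n-1}) = T_{n-1} + (n-2) = (T_{n-2}+x_{n-1}) + (n-2)$. At step $n$, vertex $n$ attaches to vertex $k\in\{1,\dots,n-1\}$ with probability proportional to its current weight; in particular it attaches to vertex $n-1$ with probability $x_{n-1}/(T_{n-2}+x_{n-1}+n-2)$. Interpreting ``vertex $n-1$'s weight'' as white balls and ``the combined weight of vertices $1,\dots,n-2$'' as black balls, this single attachment step is precisely one draw from a Pólya urn started with $w=x_{n-1}$ white and $b=T_{n-2}+n-2$ black, with $U_{n-1}(n)$ equal to the number of white balls after that one draw. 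This gives \eqref{polya1}.

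Next, for \eqref{polya} I would run a downward induction on the attachment time, or equivalently condition on the trajectory of vertices born after $k$. Condition on $M_k(n)$ and on the events $\{U_j(n)=U_j(n-1)\}$ for $k+1\le j\le n-1$, i.e.\ on the event that none of vertices $k+1,\dots,n-1$ receives any edge after its own birth. Under this conditioning, the only attachment steps that can change the weight of vertex $k$ or of the block $\{1,\dots,k-1\}$ are steps $m$ with $m$ attaching into $\{1,\dots,k\}$; the number of such steps is $M_k(n)-M_k(k) = M_k(n)-(T_k+k-1)$, since immediately after vertex $k$ is born the total weight of $\{1,\dots,k\}$ is $T_k+(k-1)$. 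At each such step, the edge lands on vertex $k$ (add a white ball) or on $\{1,\dots,k-1\}$ (add a black ball) with probability proportional to the respective current total weights — exactly the Pólya urn rule — and by the conditioning the steps that land outside $\{1,\dots,k\}$ do not disturb these two weights and can be ignored (the conditional law of the relevant sub-process is unchanged, since in a Pólya urn the draws restricted to a distinguished pair of colours form an autonomous Pólya urn). The urn thus starts with $x_k$ white and $T_{k-1}+k-1$ black balls and runs for $M_k(n)-T_k-k+1$ draws, yielding \eqref{polya}.

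The main obstacle is making rigorous the claim that conditioning on $M_k(n)$ together with the ``no-later-edges'' events for $j>k$ leaves the restricted ($\{1,\dots,k-1\}$ vs.\ $k$) sub-urn a genuine classical Pólya urn with the stated parameters: one must check that the conditioning is measurable with respect to the ``irrelevant'' coordinates and does not bias the white/black draw probabilities, and that the total number of relevant draws is a.s.\ $M_k(n)-T_k-k+1$ on that event. I would handle this by the standard exchangeability/merging property of Pólya urns — lumping colours $1,\dots,k-1$ into ``black'' gives a two-colour Pólya urn, and further conditioning only on the \emph{sizes} of excursions outside the pair (which is what $M_k(n)$ and the $U_j$-events encode) does not affect the colour sequence of the pair-urn — and then cite \cite[Lemma 2]{pekoz2017}, of which this is stated to be an easy modification, for the bookkeeping of the shifted parameters.
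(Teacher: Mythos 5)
Your argument is correct and follows essentially the same route as the paper's: treat each attachment step whose target falls in $\{1,\dots,k\}$ as one draw from a two-colour P\'olya urn with white $=$ vertex $k$ and black $=$ $\{1,\dots,k-1\}$, compute the one-step draw probability $U_k(m-1)/M_k(m-1)$, and read off the number of draws from $M_k(n)-M_k(k)$; the paper's own proof states exactly this and leaves the restriction/merging justification implicit, which you usefully spell out.

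One small but real slip: the parenthetical gloss ``i.e.\ on the event that none of vertices $k+1,\dots,n-1$ receives any edge after its own birth'' misreads the conditioning. The event $\{U_j(n)=U_j(n-1)\}$ only says that vertex $j$ receives no edge \emph{at step $n$}; taken over $k+1\le j\le n-1$ it says that vertex $n$'s outgoing edge lands in $\{1,\dots,k\}$. It does not forbid vertices $k+1,\dots,n-1$ from receiving edges at steps $k+2,\dots,n-1$. Under your reading, $M_k(n)$ would be deterministically $T_k+n-1$ and the extra conditioning on $M_k(n)$ would be vacuous, which is inconsistent with the rest of your own argument (where you correctly treat $M_k(n)$ as determining the random number of relevant draws and discuss draws landing outside $\{1,\dots,k\}$). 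Fortunately the incorrect ``i.e.'' is never used: the computation $M_k(n)-M_k(k)=M_k(n)-T_k-k+1$ for the draw count, the identification of the one-step draw probability, and the appeal to the restriction property of P\'olya urns all proceed exactly as they should under the correct interpretation. So the proof is sound once that clause is deleted or corrected.
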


\begin{proof}
To prove (\ref{polya1}), note that when adding vertex $n$ to the existing graph $G_{n-1}$, the probability that vertex $n$ sends an outgoing edge to $n-1$ is $x_{n-1}/(T_{n-1}+n-2)$. This implies that $U_{n-1}(n)$ evolves like Polya$(T_{n-2}+n-2,x_{n-1};1)$.

For (\ref{polya}), observe that if one of the vertices $1,...,k$ is chosen when adding vertex $n$ to $G_{n-1}$, then by a straightforward computation using the definition of the conditional probability and the preferential attachment rule, we can show that the probability that $U_k(n)=U_k(n-1)+1$ is $U_k(n-1)/M_k(n-1)$. In particular, this implies that $U_k(n)$ behaves like Polya$(b,w;m)$, where $b=T_{k-1}+k-1$, $w=x_k$, and $m=M_k(n)-M_k(k)=M_k(n)-T_k-k+1$ being the number of times the vertices $1,...,k$ are picked after the $k$th attachment step. 
\end{proof}

Before using Lemma \ref{urnembed} to prove Theorem \ref{linebreaking}, we recall a result on the classical P\'olya urn initially with $a$ white balls and $b$ balls. The almost sure limit of the proportion of the white balls exists as the number of draws tends to infinity. Denote this limit by $\xi$, and it is well-known that $\xi\sim \mathrm{Beta}(a,b)$. By de Finetti's theorem (\textcite[Theorem 4.7.9, p.\ 220]{durrett}), we have that conditional on $\xi$, each draw is independent, and the probability of choosing a white ball is~$\xi$.

\begin{proof}[Proof of Theorem \ref{linebreaking}]
It is enough to consider the attachment steps $3\leq m \leq n$, because the first two steps of the graph constructions are deterministic. To add vertex $m$ to $G_{m-1}$, we consider vertices $1,...,m-1$ in decreasing order. The event that vertex $m$ sends an outgoing edge to vertex $m-1$ is $U_{m-1}(m)=x_{m-1}+1$. By Lemma \ref{urnembed}, $U_{m-1}(m)\sim \mathrm{Polya}(T_{m-1}+m-2,x_{m-1};1)$. So it follows from de Finetti's theorem that conditional on $B^{(x)}_{m-1}$, $U_{m-1}(m)=x_{m-1}+1$ (resp. $U_{m-1}(m)=x_{m-1}$) with probability $B^{(x)}_{m-1}$ (resp. $1-B^{(x)}_{m-1}$). 

Fix $1\leq k \leq m-2$, we now consider the event that vertex $m$ sends an outgoing edge to vertex $k$, which is exactly $U_k(m)=U_k(m-1)+1$. Given that vertex $m$ attaches to one of the vertices $\{1,...,k\}$, $U_k(m)\sim \mathrm{Polya}(T_{k-1}+k-1,x_k;M_k(m)-M_k(k))$ by Lemma~\ref{urnembed}. Using de Finetti's theorem, we deduce that conditional on $B^{(x)}_k$ and the events $(\{U_j(m)=U_j(m-1)\}, k+1\leq j\leq m-1)$, $U_k(m)=U_{k}(m-1)+1$ (resp.\ $U_k(m)=U_{k}(m-1)$) with probability $B^{(x)}_k$ (resp.\ $1-B^{(x)}_k$). We emphasize that $B^{(x)}_{k}$ only depends on $(x_j, 1\leq j\leq k)$ and not $M_k(m)$, since $M_k(m)-M_k(k)$ is the number of draws from the P\'olya urn.

The theorem is proved once we show that the conditionally on $(B^{(x)}_j, 1\leq j\leq n)$, the probability that vertex $m$ sends an outgoing edge to vertex $k$ is given by $\{S^{(x)}_{k,n}-S^{(x)}_{k-1,n}\}/S^{(x)}_{m-1,n}$, where $S^{(x)}_{h,n}:=\prod^n_{j=h+1}(1-B^{(x)}_j)$. Observe that
\begin{align*}
    &\Prob_{\mathbf{x}}\bigg(U_k(m)=U_k(m-1)+1, \bigcap^{m-1}_{i=k+1}\{U_i(m)=U_i(m-1)\}\bigg|G_{m-1}\bigg)\\
    &=\Prob_{\mathbf{x}}\bigg(U_k(m)=U_k(m-1)+1\bigg|\bigcap^{m-1}_{i=k+1}\{U_i(m)=U_i(m-1)\}, G_{m-1}\bigg)\\
    &\quad \times \prod^{m-2}_{h=k+1}\Prob_{\mathbf{x}}\bigg(U_h(m)=U_h(m-1)\bigg|\bigcap^{m-1}_{j=h+1}\{U_j(m)=U_j(m-1)\}, G_{m-1}\bigg)\\
    &\qquad \times \Prob_{\mathbf{x}}(U_{m-1}(m)=U_{m-1}(m-1)|G_{m-1}),
\end{align*}
and so vertex $m$ attaches to vertex $k$ with conditional probability $B^{(x)}_k \prod^{m-1}_{j=k+1}(1-B^{(x)}_j)$. Since
\begin{equation*}
    S^{(x)}_{k,n}-S^{(x)}_{k-1,n}=B^{(x)}_k\prod^n_{j=k+1}(1-B^{(x)}_j)\quad\text{and}\quad \frac{S^{(x)}_{k,n}-S^{(x)}_{k-1,n}}{S^{(x)}_{m-1,n}}=B^{(x)}_k\prod^{m-1}_{j=k+1}(1-B^{(x)}_j),
\end{equation*}
the proof is completed.
\end{proof}

\subsection{Proofs of Lemma \ref{goodsum}, \ref{Sasymptotic} and \ref{betagamma}}\label{pfssasymp}

\begin{proof}[Proof of Lemma \ref{goodsum}]
Given $p>2$, choose $1/2+1/p<\alpha<1$. Let $A_{\alpha,n}$ be as in (\ref{eventX}), and $C_p$ be the positive constant given in Lemma \ref{petrov} below, which bounds the moment of a sum of variables in terms of the moments of the summands. Denote $T^*_m:=\sum^m_{i=2}X_i$. Then 
\begin{align*}
    \Prob(\mathbf{x} \in A^c_{\alpha,n})&=\Prob\bigg(\bigcup^\infty_{j=\ceil{\phi(n)}} \{|T^*_j-(j-1)\mu|>j^{\alpha}\}\bigg)\\
    &\leq \sum^\infty_{j=\ceil{\phi(n)}}\Prob(|T^*_j-(j-1)\mu|>j^\alpha)\quad \text{by a union bound,}\\
    &\leq \sum^\infty_{j=\ceil{\phi(n)}}\E(|T^*_j-(j-1)\mu|^{p})j^{-\alpha p}\quad \text{by the Chebyshev's inequality,}\\
    &\leq C_{p}\E(|X_2-\mu|^{p})\sum^\infty_{j=\ceil{\phi(n)}}j^{-p(\alpha-1/2)}\quad \text{by Lemma \ref{petrov},}\\
    &\leq C_{p}\E(|X_2-\mu|^{p})\int^\infty_{\ceil{\phi(n)}-1} y^{-p(\alpha-1/2)}dy\\
    &= C_p [p(\alpha-1/2)-1]^{-1}\E(|X_2-\mu|^{p})(\ceil{\phi(n)}-1)^{1-p(\alpha-1/2)},
\end{align*}
where $p(\alpha-1/2)>1$ and $\E(|X_2-\mu|^{p})<\infty$. The lemma follows from $\phi(n)=\Omega(n^\chi)$.
\end{proof}

The next lemma can be found in \cite[Item 16, p.\ 60]{petrov}, where it is attributed to \cite{jogdeo69}. 

\begin{lemma}\label{petrov}
Let $Y_1,...,Y_n$ be independent random variables such that for $i=1,...,n$, $\E Y_i=0$ and $\E |Y_i|^p<\infty$ for some $p\geq 2$. Let $W_n:=\sum^n_{j=1}Y_j$, then
\begin{equation*}
    \E|W_n|^p\leq C_p n^{p/2-1}\sum^{n}_{i=1}\E|Y_i|^p,
\end{equation*}
where 
\begin{equation*}
    C_p:=\frac{1}{2}p(p-1)\max (1, 2^{p-3})\left(1+\frac{2}{p}K^{(p-2)/2m}_{2m}\right),
\end{equation*}
and the integer $m$ satifies the condition $2m\leq p\leq 2m+2$ and 
\begin{equation*}
    K_{2m}=\sum^m_{k=1}\frac{k^{2m-1}}{(k-1)!}.
\end{equation*}
\end{lemma}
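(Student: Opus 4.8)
The plan is to follow the elementary combinatorial argument of Dharmadhikari and Jogdeo, which is the route that reproduces precisely the constant $C_p$ in the statement. The heart of the matter is the case of an even exponent $2m$. Expanding the $2m$-th power, $\E W_n^{2m}=\sum_f\E\bigl[\prod_{l=1}^{2m}Y_{f(l)}\bigr]$ with $f$ ranging over all maps $\{1,\dots,2m\}\to\{1,\dots,n\}$; since the $Y_i$ are independent and centred, a term vanishes unless every fibre of $f$ has size at least $2$. Each surviving $f$ is encoded by a set partition $\pi$ of $\{1,\dots,2m\}$ into $r\le m$ blocks, all of size $\ge2$, together with an injection of the blocks into $\{1,\dots,n\}$. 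For block sizes $a_1,\dots,a_r$, Lyapunov's inequality gives $(\E|Y_j|^{a_s})^{2m/a_s}\le\E|Y_j|^{2m}$, and weighted AM--GM then yields $\prod_{s}\E|Y_{j_s}|^{a_s}\le\sum_s\tfrac{a_s}{2m}\E|Y_{j_s}|^{2m}$; summing over the at most $n^{r-1}$ choices of the remaining indices and using $\sum_s a_s=2m$ bounds the contribution of $\pi$ by $n^{r-1}\sum_j\E|Y_j|^{2m}\le n^{m-1}\sum_j\E|Y_j|^{2m}$. Bounding the number of admissible partitions by $\sum_{r=1}^m r^{2m}/r!=\sum_{r=1}^m r^{2m-1}/(r-1)!=K_{2m}$ then gives $\E W_n^{2m}\le K_{2m}\,n^{m-1}\sum_j\E|Y_j|^{2m}$.

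Passing from even integers to a general $p\in[2m,2m+2]$ is the delicate part, because one assumes only $\E|Y_j|^p<\infty$, so that $\E|W_n|^{2m+2}$ may be infinite and the even-exponent bound cannot simply be interpolated at the endpoint $2m+2$. I would instead argue directly for the exponent $p$, via a Marcinkiewicz--Zygmund-type reduction: symmetrise by replacing $Y_j$ with $Y_j-Y_j'$ (which costs only a constant by Jensen, since $Y_j'$ is centred), condition on the random signs to reach a Khintchine-type inequality $\E_\varepsilon|\sum_j\varepsilon_j a_j|^p\le c_p(\sum_j a_j^2)^{p/2}$ whose constant $c_p$ is itself controlled by the combinatorial count above, and finish with the power-mean inequality $(\sum_j Y_j^2)^{p/2}\le n^{p/2-1}\sum_j|Y_j|^p$ together with Lyapunov. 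Chasing the constants through the symmetrisation step $|a+b|^p\le\max(1,2^{p-1})(|a|^p+|b|^p)$ --- the origin of the factor $\max(1,2^{p-3})$ --- and through the Khintchine constant (the origin of $K_{2m}^{(p-2)/2m}$ and of the polynomial factor $\tfrac12p(p-1)$) then produces $C_p=\tfrac12p(p-1)\max(1,2^{p-3})\bigl(1+\tfrac2pK_{2m}^{(p-2)/2m}\bigr)$.

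The combinatorial core --- identifying the surviving terms, the AM--GM/Lyapunov bound, and the count $K_{2m}$ --- is routine; the main obstacle is the second step, namely interpolating between the exponents $2m$ and $2m+2$ using only a finite $p$-th moment, and then tracking the elementary convexity inequalities carefully enough that the constant emerges exactly as $C_p$. Since this inequality is classical and is quoted verbatim from \cite[Item~16, p.~60]{petrov}, one may alternatively just defer to that reference; the sketch above records how the explicit value of $C_p$ arises.
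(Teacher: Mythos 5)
The paper itself offers no proof of this lemma: it is quoted directly from Petrov's book (Item 16, p.~60) with attribution to Dharmadhikari and Jogdeo, so there is no in-paper argument to compare against. Your review therefore has to stand on its own merit as a reconstruction of the classical proof.

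Your first paragraph — the even exponent $2m$ via the expansion into maps with all fibres of size $\ge 2$, the Lyapunov/weighted-AM--GM bound on $\prod_s \E|Y_{j_s}|^{a_s}$, the $n^{r-1}\le n^{m-1}$ count of injections, and the bound $\sum_{r=1}^m r^{2m}/r! = K_{2m}$ on admissible partitions — is a correct sketch and is indeed how Dharmadhikari--Jogdeo handle the even case.

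The second paragraph is where you drift. You propose symmetrisation followed by Khintchine and the power-mean inequality (i.e.\ a Marcinkiewicz--Zygmund route) and assert that ``chasing the constants\dots then produces $C_p$'' exactly. You do not carry out that chase, and I do not think it succeeds: the specific form
$C_p=\tfrac12 p(p-1)\max(1,2^{p-3})\bigl(1+\tfrac{2}{p}K_{2m}^{(p-2)/2m}\bigr)$
betrays a different mechanism, namely an \emph{induction on $n$} driven by a pointwise Taylor-type expansion of $|W_{n-1}+Y_n|^p$. Concretely, one uses an inequality of the shape
$|a+b|^p \le |a|^p + p\,\mathrm{sgn}(a)|a|^{p-1}b + \tfrac12 p(p-1)\max(1,2^{p-3})\bigl(|a|^{p-2}b^2+|b|^p\bigr)$,
takes expectations with $a=W_{n-1}$, $b=Y_n$ (the linear term vanishes by independence and $\E Y_n=0$), and then bounds $\E|W_{n-1}|^{p-2}$ via Lyapunov applied to the even-moment estimate $\E|W_{n-1}|^{2m}\le K_{2m}(n-1)^{m-1}\sum_j\E|Y_j|^{2m}$, which is exactly where the factor $K_{2m}^{(p-2)/2m}$ enters; the $\tfrac12 p(p-1)$ is the second-derivative coefficient and the $\max(1,2^{p-3})$ arises from bounding $|a+\theta b|^{p-2}$, not from the symmetrisation step $|a+b|^p\le 2^{p-1}(|a|^p+|b|^p)$ as you suggest. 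The Marcinkiewicz--Zygmund route is a legitimate way to obtain an inequality of this \emph{type}, but its Khintchine constant grows like $(2m-1)!!$, not like $K_{2m}$, so it will not reproduce the stated $C_p$; your claim that it does is the genuine gap in the proposal. If the goal is simply to justify the lemma, deferring to Petrov (as the paper does) is the right move; if the goal is to reproduce the constant, the argument needs to be the Taylor/induction one.
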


Keeping the notations $T_i:=\sum^i_{h=1}x_h$ and $\phi(n)=\Omega(n^\chi)$, we now prove Lemma \ref{Sasymptotic} under the assumption $\mathbf{x}\in A_{\alpha,n}$. The proof is done in several steps. The first step is to give a slight variation of the moment formula in the proof of Proposition 3 of \textcite{delphin2019}.

\begin{lemma}
Let $B^{(x)}_j$, $S^{(x)}_{k,n}$ be as in Definition \ref{urnrep} and $T_i$ be as above. Then for $1\leq k < n$ and a positive integer $p$,
\begin{equation}\label{pth}
    \E_{\mathbf{x}}\left[\left(S^{(x)}_{k,n}\right)^p\right]= \left[\prod^{p-1}_{h=0}\frac{T_k+k+h}{T_n+n-1+h}\right] \prod^{p-1}_{j=0}\prod^{n-1}_{i=k+1}\left(1+\frac{1}{T_{i}+i-1+j}\right).
\end{equation}
\end{lemma}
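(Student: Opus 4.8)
The plan is to compute the $p$-th moment of $S^{(x)}_{k,n}=\prod_{i=k+1}^n(1-B^{(x)}_i)$ by exploiting independence of the $B^{(x)}_i$ and the elementary moment formula for a Beta variable. Since the factors are independent, $\E_{\mathbf{x}}[(S^{(x)}_{k,n})^p]=\prod_{i=k+1}^n \E_{\mathbf{x}}[(1-B^{(x)}_i)^p]$. For $2\le i\le n$ we have $B^{(x)}_i\sim\mathrm{Beta}(x_i,i-1+T_{i-1})$, so $1-B^{(x)}_i\sim\mathrm{Beta}(i-1+T_{i-1},x_i)$, and using the Beta moment identity $\E[\mathrm{Beta}(a,b)^p]=\prod_{h=0}^{p-1}\frac{a+h}{a+b+h}$ with $a=i-1+T_{i-1}$, $a+b=i-1+T_i=T_i+i-1$ (note $T_i=T_{i-1}+x_i$), I get
\begin{equation*}
  \E_{\mathbf{x}}[(1-B^{(x)}_i)^p]=\prod_{h=0}^{p-1}\frac{T_{i-1}+i-1+h}{T_i+i-1+h}.
\end{equation*}
I would also need to treat $i=n$ specially: the definition has $S^{(x)}_{k,n}=\prod_{i=k+1}^{n}(1-B^{(x)}_i)$ but $B^{(x)}_n$ enters as $\mathrm{Beta}(x_n,n-1+T_{n-1})$, which is consistent with the general formula, so actually the product runs cleanly from $k+1$ to $n$; the asymmetry in the target formula (product over $i$ up to $n-1$ and a separate leading bracket) will come out of re-indexing, as explained next.

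The key manipulation is a telescoping/re-indexing of the double product. Writing $\E_{\mathbf{x}}[(S^{(x)}_{k,n})^p]=\prod_{i=k+1}^n\prod_{h=0}^{p-1}\frac{T_{i-1}+i-1+h}{T_i+i-1+h}$, I want to massage each ratio so that numerators and denominators nearly cancel between consecutive $i$. Observe $T_i+i-1+h = T_{i}+i-1+h$ while the next term's numerator is $T_i+i+h$ (replacing $i$ by $i+1$: $T_{(i+1)-1}+(i+1)-1+h = T_i+i+h$). So the telescoping is not exact but differs by a shift of $1$ in the ``$i$'' part; I would write $\frac{T_{i-1}+i-1+h}{T_i+i-1+h} = \frac{T_{i-1}+i-1+h}{T_i+i+h}\cdot\frac{T_i+i+h}{T_i+i-1+h}$, so that the first factors telescope over $i$ (leaving boundary terms at $i=k+1$ and $i=n$) and the second factors become $1+\frac{1}{T_i+i-1+h}$. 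After carrying out the telescoping for each fixed $h$ and then taking the product over $h=0,\dots,p-1$, the boundary terms assemble into $\prod_{h=0}^{p-1}\frac{T_k+k+h}{T_n+n-1+h}$ — here I need to be careful about exactly which endpoints survive and verify the index shift between ``$T_k+k$'' (with an extra $+1$ absorbed) versus ``$T_k+k-1$'', and similarly at the top end where the surviving denominator is $T_n+n-1+h$; and the leftover second-factor product, after re-indexing the running variable from $i$ back to the range $k+1\le i\le n-1$ (the $i=n$ term of the second factor is accounted for in the boundary bracket), gives $\prod_{h=0}^{p-1}\prod_{i=k+1}^{n-1}\bigl(1+\frac{1}{T_i+i-1+h}\bigr)$, matching (\ref{pth}) after renaming $h\to j$.

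The only real obstacle is bookkeeping: getting the boundary indices exactly right in the telescoping (whether the surviving term at $i=k+1$ is $T_k+k+h$ or $T_k+k-1+h$, and whether the top term is $T_n+n-1+h$ or $T_{n-1}+n-1+h=T_n+n-1-x_n+h$), and confirming that the $i=n$ factor is correctly partitioned between the bracket and nothing else. I would double-check this by testing $p=1$, where (\ref{pth}) should reduce to $\E_{\mathbf{x}}[S^{(x)}_{k,n}]=\frac{T_k+k}{T_n+n-1}\prod_{i=k+1}^{n-1}(1+\frac1{T_i+i-1})$; expanding the right side and telescoping confirms it equals $\prod_{i=k+1}^n\frac{T_{i-1}+i-1}{T_i+i-1}$, which is exactly $\prod_{i=k+1}^n\E_{\mathbf{x}}[1-B^{(x)}_i]$. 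Once the $p=1$ case checks out, the general $p$ case is the same computation done in parallel across $h=0,\dots,p-1$, and this is essentially the argument used in \textcite{delphin2019}, adapted to our indexing conventions.
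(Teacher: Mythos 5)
Your proposal is correct and follows essentially the same route as the paper: independence of the $B^{(x)}_i$, the Beta moment formula applied to $1-B^{(x)}_i$, and then a re-indexing/telescoping of the double product so that the boundary terms give $\prod_{h=0}^{p-1}(T_k+k+h)/(T_n+n-1+h)$ while the interior produces $\prod_{h}\prod_{i=k+1}^{n-1}(1+1/(T_i+i-1+h))$. The paper organizes the telescoping by inserting a factor $\frac{T_k+k+j}{T_n+n-1+j}\cdot\frac{T_n+n-1+j}{T_k+k+j}$ and cancelling, while you factor each ratio as $\frac{T_{i-1}+i-1+h}{T_i+i+h}\cdot\frac{T_i+i+h}{T_i+i-1+h}$ and then absorb the $i=n$ term of the second product into the boundary bracket; these are the same algebraic identity, just written differently, and your $p=1$ sanity check confirms the bookkeeping is right.
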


\begin{proof}
Since $(B^{(x)}_j,1\leq j\leq n)$ are independent beta random variables, we use the moment formula of the beta distribution to show that for $p\geq 1$,
\begin{align*}
    \E_{\mathbf{x}}\left[\left(S^{(x)}_{k,n}\right)^p\right]&=\prod^n_{i=k+1}\E\left[\left(1-B^{(x)}_i\right)^p\right]=\prod^n_{i=k+1}\prod^{p-1}_{j=0}\frac{T_{i-1}+i-1+j}{T_i+i-1+j},\\
    &= \prod^{p-1}_{j=0} \left\{\frac{(T_k+k+j)}{(T_n+n-1+j)}\frac{(T_n+n-1+j)}{(T_k+k+j)}\prod^n_{i=k+1}\frac{T_{i-1}+i-1+j}{T_i+i-1+j}\right\}.
\end{align*}
Noting that $T_k+k+j$ and $T_n+n-1+j$ in the second product above cancel with $(T_n+n-1+j)/(T_k+k+j)$, we can rewrite the final term as
\begin{align*}
    \E_{\mathbf{x}}\left[\left(S^{(x)}_{k,n}\right)^p\right]&=\left[\prod^{p-1}_{h=0}\frac{T_k+k+h}{T_n+n-1+h}\right]\prod^{n-1}_{i=k+1} \prod^{p-1}_{j=0}\frac{T_{i}+i+j}{T_{i}+i-1+j}\\
    &=\left[\prod^{p-1}_{h=0}\frac{T_k+k+h}{T_n+n-1+h}\right] \prod^{p-1}_{j=0}\prod^{n-1}_{i=k+1}\left(1+\frac{1}{T_{i}+i-1+j}\right),
\end{align*}
hence concluding the proof.
\end{proof}

Note that taking $k=1$ in (\ref{pth}) recovers the original formula of \cite{delphin2019}, where $T_i$ here is $A_i$ in \cite{delphin2019}. In the second step, we obtain an estimate for $\E_{\mathbf{x}}[S^{(x)}_{k,n}]$ when $\mathbf{x}\in A_{\alpha,n}$ and $k\geq \ceil{\phi(n)}$.

\begin{lemma}\label{dctc}
Given $1/2<\alpha<1$ and a positive integer $n$, assume that $\mathbf{x}\in A_{\alpha,n}$. Then there is a positive constant $C:=C(x_1,\mu,\alpha)$ such that for all $\ceil{\phi(n)} \leq k\leq n$,
\begin{equation}\label{bigc}
   \left|\E_{\mathbf{x}}[S^{(x)}_{k,n}]- \left(\frac{k}{n}\right)^{\chi }\right|\leq C n^{\chi (\alpha-1)}.
\end{equation}
\end{lemma}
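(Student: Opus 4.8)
The starting point is the exact moment formula \eqref{pth} with $p=1$, which gives
\[
  \E_{\mathbf{x}}\big[S^{(x)}_{k,n}\big]
  = \frac{T_k+k}{T_n+n-1}\prod_{i=k+1}^{n-1}\Big(1+\frac{1}{T_i+i-1}\Big).
\]
The idea is to compare this product with the corresponding product in which $T_i$ is replaced by its "typical" value $(i-1)\mu$, for which the telescoping/Gamma-function identity yields exactly $(k/n)^{\chi}$ up to lower-order terms (this is the computation underlying Lemma~3.1 of \textcite{Berger12asymptoticbehavior}, and I would follow that template). Concretely, I would write $\log \E_{\mathbf{x}}[S^{(x)}_{k,n}] = \log\frac{T_k+k}{T_n+n-1} + \sum_{i=k+1}^{n-1}\log\big(1+\frac{1}{T_i+i-1}\big)$, and likewise for the idealized version with $\mu$ in place of the empirical mean; then subtract. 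On the event $\mathbf{x}\in A_{\alpha,n}$ we have $|T_i - x_1 - (i-1)\mu|\le i^{\alpha}$ for all $i\ge \ceil{\phi(n)}$, hence $T_i + i - 1 = (\mu+1)(i-1)\big(1 + O(i^{\alpha-1})\big)$ uniformly in the relevant range (here I use $\phi(n)=\Omega(n^\chi)$ to guarantee $i$ is large, and absorb the $x_1$-dependence and the finitely many small indices into the constant $C=C(x_1,\mu,\alpha)$).

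The key estimates are then: (i) for each term, $\log\big(1+\frac{1}{T_i+i-1}\big) - \log\big(1+\frac{1}{(\mu+1)(i-1)}\big) = O\big(\frac{i^{\alpha}}{i^2}\big) = O(i^{\alpha-2})$, using $\log(1+u)-\log(1+v) = O(|u-v|)$ for small $u,v$ together with $|(T_i+i-1)^{-1}-((\mu+1)(i-1))^{-1}| = O(i^{\alpha-2})$; (ii) summing over $k+1\le i\le n-1$ gives a total error $O\big(\sum_{i\ge \ceil{\phi(n)}} i^{\alpha-2}\big) = O(\phi(n)^{\alpha-1}) = O(n^{\chi(\alpha-1)})$ since $\alpha<1$; and (iii) the prefactor contributes $\log\frac{T_k+k}{T_n+n-1} - \log\frac{(\mu+1)k}{(\mu+1)(n-1)} = O(k^{\alpha-1}) + O(n^{\alpha-1}) = O(n^{\chi(\alpha-1)})$, again because $k\ge\ceil{\phi(n)} = \Omega(n^\chi)$. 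Adding these, $\big|\log\E_{\mathbf{x}}[S^{(x)}_{k,n}] - \log(k/n)^{\chi}\big| \le C n^{\chi(\alpha-1)}$ uniformly in $k$. Finally I exponentiate: since $\E_{\mathbf{x}}[S^{(x)}_{k,n}]\le 1$ and $(k/n)^\chi\le 1$, and $|e^a - e^b|\le |a-b|$ when $a,b\le 0$, the log-bound transfers directly to \eqref{bigc}. One should double-check that the idealized product telescopes to $(k/n)^\chi$ up to an $O(n^{\chi(\alpha-1)})$ (or better) error — this is a standard Gamma-ratio asymptotic, $\prod_{i=k+1}^{n-1}\big(1+\frac{1}{(\mu+1)(i-1)}\big) = \frac{\Gamma(\cdots)}{\Gamma(\cdots)} = (n/k)^{1/(\mu+1)}(1+O(1/k))$, and $1/(\mu+1) = 1-\chi$, so combined with the prefactor $\sim k/n$ one gets $(k/n)^{1-(1-\chi)} = (k/n)^\chi$.

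The main obstacle is bookkeeping the uniformity in $k$ while keeping all error terms at the single scale $n^{\chi(\alpha-1)}$: the per-term errors $i^{\alpha-2}$ must be summed starting from $\ceil{\phi(n)}$ rather than from $k$ in the worst case, and one must be careful that the prefactor error $k^{\alpha-1}$ is dominated by $n^{\chi(\alpha-1)}$, which holds precisely because $k\ge \ceil{\phi(n)}$ with $\phi(n)=\Omega(n^\chi)$. A secondary technical point is handling the deterministic shift $x_1$ (and the possibility $x_1\le 0$, though $x_1>-1$) inside $T_i$ for small $i$; since we only ever sum over $i\ge\ceil{\phi(n)}$ this is harmless and gets swallowed into $C(x_1,\mu,\alpha)$. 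No martingale or concentration input is needed for this lemma — that enters only in the later step of Lemma~\ref{Sasymptotic} to pass from the mean to the maximum.
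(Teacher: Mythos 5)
Your proposal is correct and follows essentially the same route as the paper: start from the $p=1$ case of (\ref{pth}), split into the prefactor and a log of the product, exploit $\mathbf{x}\in A_{\alpha,n}$ to control $T_i+i-1$ in terms of $(\mu+1)i$ up to $O(i^\alpha)$, sum the per-term errors $O(i^{\alpha-2})$ from $\ceil{\phi(n)}$ to get $O(n^{\chi(\alpha-1)})$, and then undo the logarithm. The paper compares $\sum_i\log\big(1+\tfrac{1}{T_i+i-1}\big)$ directly against $\tfrac{1}{\mu+1}\log(n/k)$ and exponentiates via $e^x=1+x+O(x^2)$, whereas you insert the intermediate idealized product (with $T_i$ replaced by $(i-1)\mu$) and use $|e^a-e^b|\le|a-b|$ for $a,b\le 0$; these are just two ways to organize the same estimate, and your closing observation that $O(1/k)=O(n^{-\chi})$ is dominated by $n^{\chi(\alpha-1)}$ is what keeps the idealized-product route at the claimed rate.
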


\begin{proof}
We first prove the upper bound for $\E_{\mathbf{x}}[S^{(x)}_{k,n}]$, using techniques appearing in the proof of \textcite[Lemma 4.4]{pekoz2019}. Applying the formula (\ref{pth}) (taking $p=1$), for $\mathbf{x}\in A_{\alpha,n}$ and $k\geq \ceil{\phi(n)}$ we obtain,
\begin{equation}\label{prodbound}
    \E_{\mathbf{x}}[S^{(x)}_{k,n}]\leq \frac{k(\mu+1)+k^\alpha+b}{n(\mu+1)-n^\alpha+b-1} \prod^{n-1}_{i=k+1} \left(1+\frac{1}{i\mu-i^{\alpha}+i+b-1}\right). 
\end{equation}
where $b:=x_1-\mu$. We rewrite the first term on the right-hand side of (\ref{prodbound}) as follows.
\begin{align*}
   \left(\frac{k}{n}\right)\frac{\mu+1+k^{-1+\alpha}+k^{-1}b}{\mu+1-n^{-1+\alpha}+n^{-1}(b-1)}
    &=  \left(\frac{k}{n}\right)\left(1+\frac{k^{\alpha-1}+n^{\alpha-1}+(k^{-1}-n^{-1})b+n^{-1}}{\mu+1-n^{\alpha-1}+n^{-1}(b-1)}\right)\\
    & \leq \frac{k}{n}(1+\bar Ck^{\alpha-1})\\
    &\leq \frac{k}{n}(1+\bar Cn^{\chi(\alpha-1)}),
\end{align*}
 where $\bar C:=\bar C(x_1,\mu,\alpha)$ is a positive constant. To bound the product term on the right-hand side of~(\ref{prodbound}), we take logarithm and bound
 \begin{equation*}
    \left|\sum^{n-1}_{i=k+1}\log\left(1+\frac{1}{i(\mu+1)-i^{\alpha}+b-1}\right)-\frac{1}{\mu+1}\log\left(\frac{n}{k}\right)\right|.
\end{equation*}
By the triangle inequality, we have
\begin{align*}
     &\left|\sum^{n-1}_{i=k+1} \log \left(1+\frac{1}{i\mu-i^{\alpha}+i+b-1}\right)-\frac{1}{\mu+1}\log\left(\frac{n}{k}\right)\right|\\
     & \quad\leq \left|\sum^{n-1}_{i=k+1} \log \left(1+\frac{1}{i\mu-i^{\alpha}+i+b-1}\right)-\frac{1}{i(\mu+1)-i^\alpha+b-1}\right| \numberthis \label{nathan} \\
     &\qquad + \left|\sum^{n-1}_{j=k+1}\frac{1}{j(\mu+1)-j^\alpha+b-1} - \log\left(\frac{n}{k}\right)\right|.\numberthis \label{ross}
\end{align*}
In order to bound (\ref{nathan}), we use $\log(1+y)=y+\sum_{j\geq 2}(-1)^{j+1}y^j/j$ for $y$ near zero. Letting $y_i=(i(\mu+1)-i^{\alpha}+b-1)^{-1}$, this implies that for $k\geq \ceil{\phi(n)}$ and $n$ large enough, (\ref{nathan}) is bounded by $\sum^n_{i=k+1} y^2_i$. Furthermore, by an integral comparison, we obtain $\sum^n_{i=k+1}y^2_i=O(n^{-\chi})$. For (\ref{ross}), we have
\begin{align*}
   & \left|\sum^{n-1}_{i=k+1}\frac{1}{i(\mu+1)-i^\alpha+b-1}-\frac{1}{(\mu+1)}\log\left(\frac{n}{k}\right)\right|\\
   &\qquad= \left|\sum^{n-1}_{i=k+1}\left(\frac{1}{i(\mu+1)-i^\alpha+b-1}-\frac{1}{(\mu+1)i}\right)+O(k^{-1})\right|\\
    &\qquad \leq \sum^{n-1}_{i=k+1}\left|\frac{i^{\alpha}-b+1}{i(\mu+1)(i(\mu+1)-i^\alpha+b-1)}\right|+O(k^{-1})\\
    &\qquad\leq C'\sum^{n-1}_{i=k+1}i^{-2+\alpha}+ O(k^{-1})\\
    & \qquad \leq C'(1-\alpha)^{-1}[\phi(n)]^{\alpha-1}+ O(n^{-\chi}),
\end{align*}
where $C':=C'(x_1,\mu,\alpha)$ is a constant. Combining the bounds above, a little calculation shows that there are positive constants $\bar C:=\bar C(x_1,\mu,\alpha)$, $\widetilde C:=\widetilde C(x_1,\mu,\alpha)$ and $\kappa_n:=\widetilde C n^{\chi(\alpha-1)}$ such that for $\mathbf{x}\in A_{\alpha,n}$ and $\ceil{\phi(n)}\leq k\leq n$,
\begin{align*}
    \E_{\mathbf{x}}[S^{(x)}_{k,n}]\leq (k/n)^\chi (1+\bar Cn^{\chi(\alpha-1)}) e^{\kappa_n}.
\end{align*}
Since $e^{x}=1+x+O(x^2)$ for $x$ near zero, for $n$ large enough, there is a positive constant $C:= C(x_1,\mu,\alpha)$ such that for $\ceil{\phi(n)}\leq k\leq n$,
\begin{align*}
    \E_{\mathbf{x}}[S^{(x)}_{k,n}]\leq  (k/n)^\chi(1+  C n^{\chi(\alpha-1)})\leq (k/n)^\chi +  Cn^{\chi(\alpha-1)},
\end{align*}
hence proving the desired upper bound. The lower bound can be proved by first noting that for $\ceil{\phi(n)}\leq k\leq n$,
\small
\begin{equation*}
    \E_{\mathbf{x}}[S^{(x)}_{k,n}]\geq \frac{k(\mu+1)-k^\alpha+b}{n(\mu+1)+n^\alpha+b-1}\prod^n_{i=k+1} \left(1+\frac{1}{i\mu+i^{\alpha}+i+b-1}\right). 
\end{equation*}
\normalsize
Repeating the calculations above, we get that when $\mathbf{x}\in A_{\alpha,n}$, there is a positive constant $c:=c(x_1,\mu,\alpha)$ such that
\begin{align*}
    \E_{\mathbf{x}}[S^{(x)}_{k,n}]
    \geq (k/n)^\chi-cn^{\chi(\alpha-1)},
    %&\geq (k/n)^\chi(1-cn^{\chi(\alpha-1)})
\end{align*}
which completes the proof of the lemma.
\end{proof}

With Lemma \ref{dctc} and a martingale argument, we can prove Lemma \ref{Sasymptotic}. 

\begin{proof}[Proof of Lemma \ref{Sasymptotic}]
Let $\hat \delta_n=Cn^{\chi(\alpha-1)/4}$, where $C:=C(x_1,\mu,\alpha)$ is the positive constant in (\ref{bigc}) of Lemma \ref{dctc}. Denote by $D_{\hat \delta_n,n,{\mathbf{x}}}$ the event
\begin{equation*}
    \left\{\max\limits_{K \leq k \leq n} \left|S^{(x)}_{k,n}-\left(\frac{k}{n}\right)^{\chi}\right| \geq 2\hat \delta_n\right\},
\end{equation*}
where we write $K:=\ceil{\phi(n)}$ to shorten formulas. The lemma follows from bounding $\Prob_{\mathbf{x}}(D_{\hat \delta_n,n,{\mathbf{x}}})$ under the assumption $\mathbf{x}\in A_{\alpha,n}$. By the triangle inequality, we have
\begin{align*}
    \Prob_{\mathbf{x}}\left(D_{\hat \delta_n,n,{\mathbf{x}}}\right) 
    &\leq \Prob_{\mathbf{x}}\left(\max\limits_{K \leq k \leq n}\left|S^{(x)}_{k,n}-\E_{\mathbf{x}}[S^{(x)}_{k,n}]\right|+\max\limits_{K \leq j \leq n}\left|\E_{\mathbf{x}}[S^{(x)}_{j,n}]-\left(\frac{j}{n}\right)^{\chi}\right|\geq  2\hat \delta_n  \right).
\end{align*}
Applying Lemma \ref{dctc} to bound the difference between $S^{(x)}_{k,n}$ and $\E_{\mathbf{x}}[S^{(x)}_{k,n}]$, we obtain
\begin{align*}
    \Prob_{\mathbf{x}}\left(D_{\hat \delta_n,n,\mathbf{x}}\right) \leq \Prob_{\mathbf{x}}\left(\max\limits_{K \leq k \leq n}\left|S^{(x)}_{k,n}-\E_{\mathbf{x}}[S^{(x)}_{k,n}]\right|\geq \hat \delta_n\right)
    \leq \Prob_{\mathbf{x}}\left(\max\limits_{K \leq k \leq n}\left|S^{(x)}_{k,n}(\E_{\mathbf{x}}[S^{(x)}_{k,n}])^{-1}-1\right|\geq \hat \delta_n\right),
\end{align*}
where the final inequality is due to $\E_{\mathbf{x}}[S^{(x)}_{k,n}]\leq 1$. We proceed to bound the right-hand side of the above by using martingale techniques. Since $\E_{\mathbf{x}}[S^{(x)}_{k,n}]=\prod^n_{j=k+1}\E(1-B^{(x)}_j)$, we construct a martingale as follows. Define $ M^{(x)}_0:=1$ and for $j=1,...,n-K$, let
\begin{equation*}
    M^{(x)}_{j}:=\prod^{n}_{i=n-j+1}\frac{1-B^{(x)}_i}{\E[1-B^{(x)}_i]}=\frac{S^{(x)}_{n-j,n}}{\E[S^{(x)}_{n-j,n}]}.
\end{equation*}
Let $\mathcal{F}^{(x)}_{j}$ be the $\sigma$-algebra generated by $(B^{(x)}_i,n-j+1\leq i \leq n)$ for $1\leq j\leq n-K$, with $\mathcal{F}^{(x)}_0=\varnothing$. It follows that $((M^{(x)}_{j}, \mathcal{F}^{(x)}_{j}),0\leq j\leq n-K)$ is a martingale and $\E[M^{(x)}_{j}]=1$. Noting that $(M^{(x)}_j-1)^2$ is a submartingale, Doob's inequality \cite[Theorem 4.4.2, p.\ 204]{durrett} yields
\begin{align*}
   \Prob_{\mathbf{x}}\left(D_{\hat \delta_n,n,{\mathbf{x}}}\right)\leq 
    \Prob_{\mathbf{x}}\left(\max\limits_{0 \leq j\leq n-K}\left|M^{(x)}_{j}-1\right|\geq\hat \delta_n \right) \leq \hat \delta_n^{-2}\mathrm{Var}_{\mathbf{x}}(M^{(x)}_{n-K}). \numberthis \label{condpb}
\end{align*}
We then use the formulas for the first and second moments of the beta distribution to bound the variance in (\ref{condpb}):
\begin{align*}
    \mathrm{Var}_{\mathbf{x}}(M^{(x)}_{n-K})&=\E_{\mathbf{x}}[(M^{(x)}_{n-K})^2]-1
     =\bigg[\prod^n_{j=K+1}\frac{(T_{j-1}+j)}{(T_{j-1}+j-1)}\frac{(T_j+j-1)}{(T_{j}+j)}\bigg]-1\\
    %&= \prod^n_{j=K+1}\left(1-\frac{1}{T_j+j}\right)\left(1+\frac{1}{T_{j-1}+j-1}\right)-1\\
    & = \prod^n_{j=K+1}\left(1+\frac{T_j-T_{j-1}}{(T_j+j)(T_{j-1}+j-1)}\right)-1. 
\end{align*}
In the display below, let $C'=C'(x_1,\mu,\alpha)$ be a positive constant that may vary from line to line. As $|\sum^j_{h=2}x_h - (j-1)\mu| \leq  j^\alpha $ for all $K+1\leq j\leq n$ when $\mathbf{x}\in A_{\alpha,n}$, we obtain
\begin{align*}
    \mathrm{Var}_{\mathbf{x}}(M^{(x)}_{n-K})&\leq  \prod^n_{j=K+1}\left(1+\frac{\mu +j^\alpha+(j-1)^\alpha}{\{(\mu+1)j-\mu-j^\alpha+x_1 \}\{(\mu+1) j-2\mu-j^\alpha+x_1-1\}}\right)-1\\
    &\leq \prod^n_{j=K+1}\left(1+\frac{\mu +O(j^{\alpha})}{\{(\mu+1)j-\mu-j^\alpha+x_1 \}\{(\mu+1) j-2\mu-j^\alpha+x_1-1\}}\right)-1\\
    &\leq \prod^n_{j=K+1} (1+ C'j^{\alpha-2})-1 \\
    &\leq C' K^{\alpha-1} \numberthis \label{doobb}
\end{align*}
Applying (\ref{doobb}) to (\ref{condpb}) completes the proof.
\end{proof}

We conclude this subsection with the proof of Lemma \ref{betagamma}.

\begin{proof}[Proof of Lemma \ref{betagamma}]
Let $T_m:=\sum^m_{k=1}x_k$ and $Y_j\sim \mathrm{Gamma}(T_j+j-1,1)$. We start by proving (\ref{bgc}). Let $E_{\varepsilon,j,{\mathbf{x}}}$ be as in (\ref{epsj}), we have
\begin{align*}
    \Prob_{\mathbf{x}}(E^c_{\varepsilon,j,{\mathbf{x}}})&= \Prob_{\mathbf{x}}\left(\left|\frac{\mathcal{Z}_{j}}{\mathcal{Z}_{j}+\mathcal{\tilde Z}_{j-1}}-\frac{\mathcal{Z}_{j}}{(\mu+1)j}\right|\geq\frac{\mathcal{Z}_{j}}{(\mu+1)j}\varepsilon\right)\\
    &=\Prob_{\mathbf{x}}\left(\left|\frac{(\mu+1)j}{\mathcal{Z}_{j}+\mathcal{\tilde Z}_{j-1}}-1\right|\geq\varepsilon\right)\\
    &\leq \Prob_{\mathbf{x}}\left(\left|\frac{\mathcal{Z}_{j}+\mathcal{\tilde Z}_{j-1}}{(\mu+1)j}-1\right|\geq\frac{\varepsilon}{1+\varepsilon}\right)\\
    &=\Prob_{\mathbf{x}}\left(\left|\frac{Y_j}{(\mu+1)j}-1\right|\geq\frac{\varepsilon}{1+\varepsilon}\right)\\
    &\leq \left(\frac{1+\varepsilon}{\varepsilon}\right)^4 \E_{\mathbf{x}}\left[\left(\frac{Y_j}{(\mu+1)j}-1\right)^4\right];
\end{align*}
and so (\ref{bgc}) follows from bounding the moment in the last display, and then applying a union bound. We bound the moment above under the assumption $\mathbf{x}\in A_{\alpha,n}$. Let $a_j:=T_j+j-1$. By the moment formula for the standard gamma distribution, 
\begin{align*}
    \E_{\mathbf{x}}\left[\left(\frac{Y_j}{(\mu+1)j}-1\right)^4\right]
    &=\frac{\E_{\mathbf{x}}(Y^{4}_j)}{(\mu+1)^4j^4}-\frac{4\E_{\mathbf{x}}(Y^{3}_j)}{(\mu+1)^3j^3}+\frac{6\E_{\mathbf{x}}(Y^{2}_j)}{(\mu+1)^2 j^2 }-\frac{4\E_{\mathbf{x}}(Y_j)}{(\mu+1)j}+1\\
    &=\frac{\prod^3_{k=0}(a_j+k)}{(\mu+1)^4j^4}-\frac{4\prod^2_{k=0}(a_j+k)}{(\mu+1)^3j^3}+\frac{6\prod^1_{k=0}(a_j+k)}{(\mu+1)^2j^2}-\frac{4 a_j}{(\mu+1)j}+1.
\end{align*}
Noting that $|a_j-(\mu+1)j|\leq j^\alpha+x_1+\mu+1$ for $j\geq \phi(n)$, a direct calculation shows that there is a positive constant $C:=C(x_1, \mu,\alpha)$ such that
\begin{equation}\label{gammamom}
    \E_{\mathbf{x}}\left[\left(\frac{Y_j}{(\mu+1)j}-1\right)^4\right]\leq Cj^{4\alpha-4}.
\end{equation}
We now prove (\ref{bgc}) using (\ref{gammamom}). Let $C:=C(x_1,\alpha,\mu)$ be a positive constant that may vary from line to line in the subsequent formulas. Then,
\begin{align*}
    \Prob_{\mathbf{x}}\bigg(\bigcup^n_{j= \ceil{\phi(n)}} E^c_{\varepsilon,j,\mathbf{x}}\bigg)&\leq  \sum^n_{j= \ceil{\phi(n)}}\Prob_{\mathbf{x}}(E^c_{\varepsilon,j,\mathbf{x}}) 
     \leq C \left(\frac{1+\varepsilon}{\varepsilon}\right)^4  \sum^n_{j=\ceil{\phi(n)}} j^{4\alpha-4} \\
    &\leq C \left(\frac{1+\varepsilon}{\varepsilon}\right)^4 \int^\infty_{\ceil{\phi(n)}-1} y^{4\alpha-4} dy 
    \quad \leq C(1+\varepsilon)^4\varepsilon^{-4}n^{\chi(4\alpha-3)},
\end{align*}
as required. Next, we use a union bound and Chebyshev's inequality to prove (\ref{woboundedass}) as follows:
\begin{align*}
    \Prob_{\mathbf{x}}\bigg(\bigcup^n_{j=\ceil{\phi(n)}}\{\mathcal{Z}_{j}\geq j^{1/2}\}\bigg)
    & \leq \sum^n_{j= \ceil{\phi(n)}} \Prob_{\mathbf{x}}(\mathcal{Z}_{j}\geq j^{1/2})
    \leq  \sum^n_{j= \ceil{\phi(n)}} \E_{\mathbf{x}} (\mathcal{Z}^4_{j}) j^{-2}\\
    &= \sum^n_{j= \ceil{\phi(n)}} j^{-2} \prod^{3}_{\ell=0} (x_j+\ell). 
\end{align*}
If we further assume $x_2 \in (0, \kappa]$, then there is a positive number $C'$ such that
\begin{align*}
   \Prob_{\mathbf{x}}\bigg(\bigcup^n_{j=\ceil{\phi(n)}}\{\mathcal{Z}_{j}\geq j^{1/2}\}\bigg)\leq C' \kappa^4 \sum^n_{j= \ceil{\phi(n)}} j^{-2} \leq C' \kappa^4 \int^{\infty}_{\phi(n)-1} y^{-2} dy \leq C' \kappa^4 n^{-\chi},
\end{align*}
hence proving (\ref{boundedwhp}). 
\end{proof}

\subsection{Proof of Lemma \ref{poissonproc}}\label{pfpoissonproc}
In preparation, we use $\hat P_{k\to k_0}$ in (\ref{mean2}) to construct a Bernoulli point process and a discretised mixed Poisson process, which shall appear in the intermediate coupling steps. Recall that $k_0:=k[1]$ and $\zeta_0 := \mathcal{Z}_{k_0}[1]$. Furthermore, define
\begin{equation}\label{collectionrv}
    \Xi_{\mathbf{x}}:=(U_0, (\mathcal{Z}_j[1],\mathcal{\tilde Z}_j[1]),2\leq j\leq n)),
\end{equation}
and observe that the event $\bigcap^3_{i=1}F_{1,i} \cap \mathcal{H}_{1,0}$, defined in (\ref{h10}) and (\ref{3f}), is measurable with respect to $\Xi_{\mathbf{x}}$. 

\begin{defn}\label{interber}
Given $\hat a_0$, $k_0$ and $\zeta_0$, let $\hat Y_{k\to k_0}$, $k_0+1\leq k\leq n$, be conditionally independent Bernoulli variables, each with parameter $\hat P_{k\to k_0}$. We define a Bernoulli point process by the random vector
 \begin{equation*}
    \mathbf{\hat Y}^{(k[1],n)}:=(\hat Y_{(k_0+1)\to k_0},\hat Y_{(k_0+2)\to k_0},...,\hat Y_{n\to k_0}).
\end{equation*}
\end{defn}

\begin{defn}\label{interpoi}
Given $\hat a_0$, $k_0$ and $  \zeta_0$, let $\hat V_{k\to k_0}$, $k_0+1\leq k\leq n$, be conditionally independent Poisson random variables, each with parameter $\hat P_{k\to k_0}$. We define a mixed discretised Poisson point process by the random vector
 \begin{equation*}
    \mathbf{\hat V}^{(k[1],n)}:=(\hat V_{(k_0+1)\to k_0},\hat V_{(k_0+2)\to k_0},...,\hat V_{n\to k_0}). 
\end{equation*}
\end{defn}

We also require a simple result that turns the problem of coupling two random vectors into the problem of coupling two random variables.

\begin{lemma}\label{tvdpf}
Given a positive integer $d$, let $\mathbf{V}=(V_1,...,V_d)$ and $\mathbf{W}=(W_1,...,W_d)$ be vectors of independent random variables. Then there is a coupling of the random vectors such that
\begin{equation*}
    \Prob(\mathbf{V}\not = \mathbf{W}) \leq \sum^d_{i=1} \Prob( V_i\not= W_i).
\end{equation*}
\end{lemma}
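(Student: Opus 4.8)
\textbf{Proof proposal for Lemma \ref{tvdpf}.}

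The plan is to build the coupling coordinate by coordinate, using independence to glue the coordinate-wise optimal couplings together, and then apply a union bound. First I would invoke the coupling characterisation of the total variation distance, equation (\ref{coupinter}): for each $i \in \{1,\dots,d\}$ there is a coupling $(\widetilde V_i, \widetilde W_i)$ of $\mathcal{L}(V_i)$ and $\mathcal{L}(W_i)$ that attains (or, since the spaces are countable, attains) the infimum, so that $\Prob(\widetilde V_i \neq \widetilde W_i) = d_{\mathrm{TV}}(\mathcal{L}(V_i), \mathcal{L}(W_i)) \leq \Prob(V_i \neq W_i)$, the last inequality holding for \emph{any} coupling of the $i$th marginals, in particular the original one if $\mathbf{V}, \mathbf{W}$ come already coupled. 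Next I would form these $d$ couplings on a common probability space \emph{independently across $i$}; this is possible precisely because the coordinates of $\mathbf{V}$ are mutually independent and likewise for $\mathbf{W}$, so the product of the pair-couplings $(\widetilde V_i, \widetilde W_i)$, $1 \le i \le d$, has the correct marginal laws $\mathcal{L}(\mathbf{V}) = \bigotimes_i \mathcal{L}(V_i)$ and $\mathcal{L}(\mathbf{W}) = \bigotimes_i \mathcal{L}(W_i)$. Setting $\widetilde{\mathbf V} := (\widetilde V_1,\dots,\widetilde V_d)$ and $\widetilde{\mathbf W} := (\widetilde W_1,\dots,\widetilde W_d)$ therefore gives a genuine coupling of $\mathbf V$ and $\mathbf W$.

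The final step is the union bound: on this coupling,
\begin{equation*}
    \Prob(\widetilde{\mathbf V} \neq \widetilde{\mathbf W}) = \Prob\Big( \bigcup_{i=1}^d \{\widetilde V_i \neq \widetilde W_i\} \Big) \leq \sum_{i=1}^d \Prob(\widetilde V_i \neq \widetilde W_i) \leq \sum_{i=1}^d \Prob(V_i \neq W_i),
\end{equation*}
which is the claimed bound. I should be a little careful about the meaning of the right-hand side $\Prob(V_i \neq W_i)$: if the lemma is read as asserting the existence of a coupling for which the bound holds with a \emph{prescribed} right-hand side, then I simply take each $(\widetilde V_i,\widetilde W_i)$ to be the optimal coupling and replace the last inequality by $d_{\mathrm{TV}}(\mathcal L(V_i),\mathcal L(W_i))$, which is dominated by $\Prob(V_i\neq W_i)$ under any given joint law of $(V_i,W_i)$; either reading is handled by the same construction.

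There is essentially no obstacle here — the only point requiring care is the legitimacy of taking independent products of the coordinate couplings, which rests squarely on the hypothesis that $\mathbf V$ and $\mathbf W$ each have independent coordinates; without that hypothesis the product would not reproduce the joint laws of $\mathbf V$ and $\mathbf W$. In the applications (Lemmas \ref{poissonproc} and \ref{extnhood}) the vectors in question are vectors of conditionally independent Bernoulli or Poisson variables given the relevant $\sigma$-algebra, so the lemma is applied conditionally, and the same argument goes through verbatim on the conditional probability space.
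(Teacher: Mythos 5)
Your proposal is correct and is essentially the paper's argument: construct a coupling for each coordinate, take the independent product across coordinates (valid precisely because of the independence hypothesis on $\mathbf V$ and $\mathbf W$), and apply a union bound. The extra discussion about the meaning of the right-hand side is a reasonable clarification but does not change the substance of the proof.
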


\begin{proof}
For $i=1,..,d$, let $(\tilde V_i, \tilde W_i)$ be a coupling of $(V_i, W_i)$, where for $i\not = j$, $(\tilde V_i,\tilde W_i)$ is independent of $(\tilde V_j,\tilde W_j)$. Denote $\mathbf{\tilde V}=(\tilde V_1,...,\tilde V_d)$ and $\mathbf{\tilde W}=(\tilde W_1,...,\tilde W_d)$. By a union bound,
\begin{equation*}
      \Prob(\mathbf{\tilde V}\not = \mathbf{\tilde W}) \leq \Prob(\cup^d_{i=1} \{ \tilde V_i \not = \tilde W_i \})\leq \sum^d_{i=1} \Prob(\tilde V_i \not = \tilde W_i). \qedhere
\end{equation*}
\end{proof}

The proof of Lemma \ref{poissonproc} consists of two main components. The first is to use Lemma~\ref{tvdpf} and standard techniques to couple $(\mathbf{Y}^{(k[1],n)}, \mathbf{\hat Y}^{(k[1],n)})$, $(\mathbf{\hat Y}^{(k[1],n)},\mathbf{\hat V}^{(k[1],n)})$ and $(\mathbf{\hat V}^{(k[1],n)},\mathbf{V}^{(k[1],n)})$ under the event $\bigcap^3_{i=1}F_{1,i} \cap \mathcal{H}_{1,0}$. These results are given in the next three lemmas. The second is to combine these lemmas, and use a union bound argument. 

\begin{lemma}\label{firstcoupling}
Let $\mathbf{Y}^{(k[1],n)}$, $\mathbf{\hat Y}^{(k[1],n)}$, $\mathcal{H}_{1,0}$, $F_{1,i}$, $i=1,2,3$ and $\Xi_\mathbf{x}$ be as in Definition  \ref{berproc01}, \ref{interber}, (\ref{h0}), (\ref{3f}) and (\ref{collectionrv}). There is a coupling of the random vectors and a positive constant $C:=C(x_1,\mu)$ such that on the event $\bigcap^3_{i=1}F_{1,i} \cap \mathcal{H}_{1,0}$,
\begin{equation}\label{bos1}
    \Prob_\mathbf{x}\left(\mathbf{Y}^{(k[1],n)}\not = \mathbf{\hat Y}^{(k[1],n)}\big|\Xi_{\mathbf{x}}\right)
     \leq C \zeta_0 n^{-\gamma} (\log \log n)^{1-\chi}.
\end{equation}
\end{lemma}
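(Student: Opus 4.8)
The plan is to couple $\mathbf{Y}^{(k[1],n)}$ and $\mathbf{\hat Y}^{(k[1],n)}$ coordinate by coordinate, conditionally on $\Xi_{\mathbf{x}}$. Note first that the event $\bigcap^3_{i=1}F_{1,i}\cap\mathcal{H}_{1,0}$ is $\Xi_{\mathbf{x}}$-measurable, and that given $\Xi_{\mathbf{x}}$ both processes are vectors of independent Bernoulli variables whose parameters are deterministic functions of $\Xi_{\mathbf{x}}$: by (\ref{tmean1}) (all of $k_0+1,\dots,n$ being neutral at step $1$) one has $P_{k\to k_0}=\frac{S_{k_0,n}[1]}{S_{k-1,n}[1]}B_{k_0}[1]$, a function of $(\mathcal{Z}_j[1],\mathcal{\tilde Z}_j[1])$, while $\hat P_{k\to k_0}=(k_0/k)^\chi\zeta_0/((\mu+1)k_0)$ depends only on $k_0=\ceil{nU_0}$ and $\zeta_0=\mathcal{Z}_{k_0}[1]$. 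I would therefore pair $Y_{k\to k_0}$ and $\hat Y_{k\to k_0}$ by the maximal coupling of two Bernoulli laws, whose disagreement probability is exactly $|P_{k\to k_0}-\hat P_{k\to k_0}|$, and apply Lemma \ref{tvdpf} conditionally on $\Xi_{\mathbf{x}}$ to get
\begin{equation*}
\Prob_{\mathbf{x}}\big(\mathbf{Y}^{(k[1],n)}\neq\mathbf{\hat Y}^{(k[1],n)}\,\big|\,\Xi_{\mathbf{x}}\big)\leq\sum^n_{k=k_0+1}\big|P_{k\to k_0}-\hat P_{k\to k_0}\big|,
\end{equation*}
so the task reduces to bounding this sum on the good event.

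The key computation is to show $P_{k\to k_0}=\hat P_{k\to k_0}(1+O(n^{-\gamma}))$ uniformly in $k$ on $\bigcap^3_{i=1}F_{1,i}\cap\mathcal{H}_{1,0}$. On $\mathcal{H}_{1,0}$, $k_0\geq nU_0>n(\log\log n)^{-1}$, which for $n$ large exceeds $\ceil{\phi(n)}=\Omega(n^\chi)$; hence for every $k_0+1\leq k\leq n$ both $k_0$ and $k-1$ fall in the range of $F_{1,1}$, and $S_{k_0,n}[1],S_{k-1,n}[1]\geq(k_0/n)^\chi-C^*n^{-\chi/12}\geq\tfrac12(\log\log n)^{-\chi}$ for $n$ large. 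Using $F_{1,1}$ together with this lower bound one can factor $S_{k_0,n}[1]/S_{k-1,n}[1]=(k_0/(k-1))^\chi\big(1+O(n^{-\chi/12}(\log\log n)^\chi)\big)$, absorb the one-step shift via $(k_0/(k-1))^\chi=(k_0/k)^\chi(1+O(k_0^{-1}))$, and use $F_{1,2}$ to write $B_{k_0}[1]=\frac{\zeta_0}{(\mu+1)k_0}(1+O(n^{-\gamma}))$. Since $\gamma<\chi/12$, each of these multiplicative errors is $O(n^{-\gamma})$, giving the claimed estimate; and $F_{1,3}$ yields $\zeta_0\leq k_0^{1/2}$, hence $\hat P_{k\to k_0}\leq(\mu+1)^{-1}k_0^{-1/2}\leq1$, so that $\mathbf{\hat Y}^{(k[1],n)}$ is indeed well-defined.

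The sum is then immediate: with $|P_{k\to k_0}-\hat P_{k\to k_0}|\leq Cn^{-\gamma}\hat P_{k\to k_0}$ and $\sum_{k=k_0+1}^n(k_0/k)^\chi\leq k_0^\chi\int_{k_0}^n x^{-\chi}\,dx\leq k_0^\chi n^{1-\chi}/(1-\chi)$,
\begin{align*}
\sum^n_{k=k_0+1}\big|P_{k\to k_0}-\hat P_{k\to k_0}\big|
&\leq Cn^{-\gamma}\,\frac{\zeta_0}{(\mu+1)k_0}\cdot\frac{k_0^\chi n^{1-\chi}}{1-\chi}
=\frac{C\zeta_0\,n^{-\gamma}}{(\mu+1)(1-\chi)}\Big(\frac{n}{k_0}\Big)^{1-\chi}\\
&\leq C'\zeta_0\,n^{-\gamma}(\log\log n)^{1-\chi},
\end{align*}
the last inequality using $n/k_0<\log\log n$ on $\mathcal{H}_{1,0}$, which is (\ref{bos1}). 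The main obstacle is the bookkeeping in the middle paragraph: one must verify that the three sources of multiplicative error---the quotient of two perturbed $S$-values, the shift from $k-1$ to $k$, and the error in $B_{k_0}[1]$---together with the lower bound $\tfrac12(\log\log n)^{-\chi}$ on the $S$-values combine to an error of order exactly $O(n^{-\gamma})$ and no larger; this is precisely where the hypotheses $\gamma<\chi/12$ and $\phi(n)=\Omega(n^\chi)$ are used.
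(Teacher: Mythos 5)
Your proof is correct and follows essentially the same route as the paper: reduce to the coordinatewise bound $\sum_{k>k_0}|P_{k\to k_0}-\hat P_{k\to k_0}|$ via Lemma~\ref{tvdpf}, establish the multiplicative estimate $P_{k\to k_0}=\hat P_{k\to k_0}(1+O(n^{-\gamma}))$ on the good event using $F_{1,1}$ and $F_{1,2}$, and then bound the resulting sum by an integral comparison using $k_0>n(\log\log n)^{-1}$. The only cosmetic differences are that you invoke the maximal Bernoulli coupling where the paper uses the common uniform $U_k$ (these give the identical disagreement probability $|P_{k\to k_0}-\hat P_{k\to k_0}|$), and that you track the $k-1$ versus $k$ shift in the ratio $S_{k_0,n}[1]/S_{k-1,n}[1]$ a little more explicitly than the paper does.
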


\begin{proof}
We first show that on the event $\bigcap^3_{i=1}F_{1,i} \cap \mathcal{H}_{1,0}$, there is a positive constant $C:=C(x_1,\mu)$ such that 
\begin{equation}\label{impbd}
    (1-Cn^{-\gamma})\hat P_{k\to k_0}\leq P_{k\to k_0} \leq (1+Cn^{-\gamma})\hat P_{k\to k_0},
\end{equation}
and then couple the random vectors. We only prove the upper bound in (\ref{impbd}), as the lower bound follows from a similar calculation. Choose $n$ large enough so that $C^*n^{-\chi/12}(\log\log n)^\chi <1/2$, where $C^*$ is the constant in the event $F_{1,1}$. Since $k_0>n(\log\log n)^{-1}$ on the event $\mathcal{H}_{1,0}$, we get that on the event $F_{1,1}$,
\begin{align*}
     \frac{S_{k_0,n}[1]}{S_{k,n}[1]} &\leq \left\{\left(\frac{k_0}{n}\right)^\chi + C^*n^{-\chi/12}\right\}\left\{\left(\frac{k}{n}\right)^\chi - C^*n^{-\chi/12}\right\}^{-1}\\
    &\leq \left(\frac{n}{k}\right)^\chi \left\{\left(\frac{k_0}{n}\right)^\chi + C^*n^{-\chi/12}\right\}\left\{1 - C^*n^{-\chi/12}(\log\log n)^\chi\right\}^{-1}\\
    &\leq \left\{\left(\frac{k_0}{k}\right)^\chi + C^*n^{-\chi/12}(\log \log n)^\chi\right\} \sum_{l\geq 0}(-1)^l [-C^*n^{-\chi/12}(\log \log n)^\chi]^\ell\\
    &\leq \left\{\left(\frac{k_0}{k}\right)^\chi + C^*n^{-\chi/12}(\log \log n)^\chi\right\}\{1 + 2C^*n^{-\chi/12}(\log\log n)^\chi\},
\end{align*}
where we have used $(n/k)^\chi < (n/k_0)^\chi <(\log \log n)^\chi$ and the generalised binomial series (\cite[equation (5.13), p.\ 163]{concrete}). Thus, there is a positive constant $C':=C'(x_1,\mu)$ such that
\begin{align*}
    \frac{S_{k_0,n}[1]}{S_{k,n}[1]} \leq \left(\frac{k_0}{k}\right)^\chi+C'n^{-\frac{\chi}{12}}(\log \log n)^{\chi}\quad \text{for $k_0<k\leq n$}.
\end{align*}
Hence, on the event $\bigcap^3_{i=1}F_{1,i} \cap \mathcal{H}_{1,0}$, we can bound $P_{k\to k_0}$ in terms of $\hat P_{k\to k_0}$:
\begin{align*}
    P_{k\to k_0}&\leq \left[\frac{  \zeta_0}{(\mu+1)k_0}+\frac{  \zeta_0n^{-\gamma}}{(\mu+1)k_0}\right]\left[\left(\frac{k_0}{k}\right)^\chi+C'n^{-\frac{\chi}{12}}(\log \log n)^{\chi}\right]\\
    &=  \hat P_{k\to k_0}\left[1 + n^{-\gamma} +
    \left(\frac{k}{k_0}\right)^\chi C'(\log \log n)^{\chi} (n^{-\gamma-\frac{\chi}{12}} + n^{-\frac{\chi}{12}})\right].
\end{align*}
Using $(k/k_0)^\chi \leq (n/k_0)^\chi \leq (\log \log n)^\chi$ and $0<\gamma<\chi/12$, for large enough $n$ we have
\begin{align*}
     P_{k\to k_0}&\leq \hat P_{k\to k_0} (1+ n^{-\gamma} + C'n^{-\gamma-\frac{\chi}{12}}(\log \log n)^{3\chi} + C' n^{-\frac{\chi}{12}}(\log \log n)^{3\chi})\\
    &= \hat P_{k\to k_0}(1+3C'n^{-\gamma}).
\end{align*}
For the coupling, let $U_k$, $k_0+1\leq k\leq n$ be independent standard uniform variables. Define
\begin{equation*}
    Y'_{k\to k_0}=\mathbbm{1}[U_k\leq~ P_{k\to k_0}]\quad\text{and}\quad \hat Y'_{k\to k_0}=\mathbbm{1}[U_k\leq \hat P_{k\to k_0}].
\end{equation*}
Then on the event $\bigcap^3_{i=1}F_{1,i} \cap \mathcal{H}_{1,0}$, we obtain
\begin{equation*}
    \Prob_\mathbf{x}\left(Y'_{k\to k_0}\not =\hat Y'_{k\to k_0}\big|\Xi_{\mathbf{x}}\right)
    \leq \Prob\left(U_k \leq |P_{k\to k_0}-\hat P_{k\to k_0}|\big|\Xi_{\mathbf{x}}\right) \leq Cn^{-\gamma}\hat P_{k\to k_0}. 
\end{equation*}
By Lemma \ref{tvdpf}, we have that on the event $\bigcap^3_{i=1}F_{1,i} \cap \mathcal{H}_{1,0}$,
\begin{equation*}
    \Prob_\mathbf{x}\left(\mathbf{Y}^{(k[1],n)}\not = \mathbf{\hat Y}^{(k[1],n)}\big|\Xi_{\mathbf{x}}\right)\leq Cn^{-\gamma} \sum^n_{j=k_0+1} \hat P_{j\to k_0}. 
\end{equation*}
To bound the sum above, we use $k_0>n(\log \log n)^{-1}$ and an integral comparison to get
\begin{align*}
    \mathbbm{1}\bigg[\bigcap^3_{i=1}F_{1,i} \cap \mathcal{H}_{1,0}\bigg] \sum^n_{j=k_0+1} \hat P_{j\to k_0}&= \mathbbm{1}\bigg[\bigcap^3_{i=1}F_{1,i} \cap \mathcal{H}_{1,0}\bigg]\frac{   \zeta_0}{(\mu+1)k^{1-\chi}_0}\sum^n_{k=k_0+1} k^{-\chi} \\
    &\leq \frac{   \zeta_0}{(\mu+1)}\left(\frac{\log \log n }{n}\right)^{1-\chi} \int^n_{n(\log \log n)^{-1}} y^{-\chi} dy\\ 
     &\leq   \zeta_0 (\log \log n)^{1-\chi}. \numberthis \label{sumbd}
\end{align*}
Combining the last two inequalities gives (\ref{bos1}).
\end{proof}

\begin{lemma}\label{secondcoupling}
Let $\mathbf{\hat Y}^{(k[1],n)}$, $\mathbf{\hat V}^{(k[1],n)}$, $\mathcal{H}_{1,0}$, $F_{1,i}$, $i=1,2,3$ and $\Xi_\mathbf{x}$ be as in Definition \ref{interber}, \ref{interpoi}, (\ref{h0}), (\ref{3f}) and (\ref{collectionrv}). Then there is a coupling of the random vectors such that on the event $\bigcap^3_{i=1}F_{1,i} \cap \mathcal{H}_{1,0}$,
\begin{equation*}
    \Prob_\mathbf{x}\left(\mathbf{\hat Y}^{(k[1],n)}\not = \mathbf{\hat V}^{(k[1],n)}\big|\Xi_{\mathbf{x}}\right)\leq  \frac{\zeta^2_0(\log\log n)^{2-\chi}}{n}.
\end{equation*}
\end{lemma}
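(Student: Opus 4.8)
The plan is to couple $\mathbf{\hat Y}^{(k[1],n)}$ and $\mathbf{\hat V}^{(k[1],n)}$ one coordinate at a time. Conditionally on $\Xi_{\mathbf{x}}$ the entries of each vector are independent, with $\hat Y_{k\to k_0}\sim\mathrm{Bern}(\hat P_{k\to k_0})$ and $\hat V_{k\to k_0}\sim\mathrm{Po}(\hat P_{k\to k_0})$ sharing the \emph{same} parameter; moreover $\hat a_0$, $k_0$ and $\zeta_0$ are all $\Xi_{\mathbf{x}}$-measurable, so the events $F_{1,i}$ and $\mathcal{H}_{1,0}$ as well as every mean $\hat P_{k\to k_0}$ are determined by $\Xi_{\mathbf{x}}$. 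Hence, by Lemma~\ref{tvdpf}, it is enough to couple, for each $k_0+1\le k\le n$, the single pair $(\hat Y_{k\to k_0},\hat V_{k\to k_0})$ and to add up the discrepancy probabilities.

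For each $k$ I would invoke the elementary estimate $d_{\mathrm{TV}}\bigl(\mathrm{Bern}(p),\mathrm{Po}(p)\bigr)=p(1-e^{-p})\le p^{2}$ (the maximal coupling, or Le Cam's inequality); by the coupling description~(\ref{coupinter}) of the total variation distance this provides, conditionally on $\Xi_{\mathbf{x}}$, a coupling with $\Prob_{\mathbf{x}}(\hat Y_{k\to k_0}\ne\hat V_{k\to k_0}\mid\Xi_{\mathbf{x}})\le\hat P_{k\to k_0}^{2}$. Taking these couplings conditionally independent over $k$ and applying Lemma~\ref{tvdpf} yields $\Prob_{\mathbf{x}}(\mathbf{\hat Y}^{(k[1],n)}\ne\mathbf{\hat V}^{(k[1],n)}\mid\Xi_{\mathbf{x}})\le\sum_{k=k_0+1}^{n}\hat P_{k\to k_0}^{2}$. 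To estimate this sum on $\bigcap_{i=1}^{3}F_{1,i}\cap\mathcal{H}_{1,0}$, I would use that $\hat P_{k\to k_0}=(k_0/k)^{\chi}\zeta_{0}/((\mu+1)k_0)$ is nonincreasing in $k$, so $\hat P_{k\to k_0}\le\hat P_{(k_0+1)\to k_0}\le\zeta_{0}/((\mu+1)k_0)$ and therefore $\sum_{k=k_0+1}^{n}\hat P_{k\to k_0}^{2}\le\frac{\zeta_{0}}{(\mu+1)k_0}\sum_{k=k_0+1}^{n}\hat P_{k\to k_0}$. On $\mathcal{H}_{1,0}$ one has $k_0>n(\log\log n)^{-1}$, hence $1/k_0<(\log\log n)/n$; and $\sum_{k=k_0+1}^{n}\hat P_{k\to k_0}\le\zeta_{0}(\log\log n)^{1-\chi}$ is exactly the bound~(\ref{sumbd}) already derived in the proof of Lemma~\ref{firstcoupling}. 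Combining these and using $\mu+1>1$ gives $\sum_{k=k_0+1}^{n}\hat P_{k\to k_0}^{2}\le\zeta_{0}^{2}(\log\log n)^{2-\chi}/n$ on the relevant event, which is the assertion.

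I do not expect a real obstacle: this is a routine Poissonisation step. The only points requiring a little care are checking that conditioning on $\Xi_{\mathbf{x}}$ leaves us coupling honest product measures (so that the coordinatewise reduction via Lemma~\ref{tvdpf} is legitimate), and reusing the sum estimate~(\ref{sumbd}) rather than redoing the $\sum_{k}k^{-\chi}$ integral comparison; the rest is bookkeeping.
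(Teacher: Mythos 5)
Your proof is correct and follows essentially the same route as the paper: it reduces to coordinatewise coupling via Lemma~\ref{tvdpf}, uses the standard Bernoulli--Poisson coupling bound $\le \hat P_{k\to k_0}^2$, pulls out the factor $\hat P_{k\to k_0}\le \zeta_0/((\mu+1)k_0)$ (the paper phrases this as $(k_0/k)^\chi\le 1$, you phrase it as monotonicity in $k$, same observation), and then reuses the sum estimate~(\ref{sumbd}) together with $k_0>n(\log\log n)^{-1}$ on $\mathcal{H}_{1,0}$.
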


\begin{proof}
By the standard Poisson-Bernoulli coupling \cite[equation (1.11), p.\ 5]{lindvall2002lectures}, we have
\begin{equation*}
    \Prob_\mathbf{x}\left(\hat Y_{k\to k_0}\not = \hat V_{k\to k_0}\big|\Xi_{\mathbf{x}}\right) \leq \hat P^2_{k\to k_0}\quad \text{for $k_0+1\leq k\leq n$.}
\end{equation*}
Lemma \ref{tvdpf} and $(k_0/k)^\chi \leq 1$ imply 
\begin{equation*}
     \Prob_\mathbf{x}\left(\mathbf{\hat Y}^{(k[1],n)}\not = \mathbf{\hat V}^{(k[1],n)}\big|\Xi_{\mathbf{x}}\right)\leq \sum^n_{j=k_0+1} \hat P^2_{j\to k_0} \leq \frac{  \zeta_0}{(\mu+1)k_0} \sum^n_{j=k_0+1} \hat P_{j\to k_0},
\end{equation*}
so the lemma follows from applying (\ref{sumbd}) to the sum above, and noting $k_0>n(\log \log n)^{-1}$ on the event $\mathcal{H}_{1,0}$. 
\end{proof}

\begin{lemma}\label{thirdcoupling}
Let $\mathbf{\hat V}^{(k[1],n)}$, $\mathbf{\hat V}^{(k[1],n)}$, $\mathcal{H}_{1,0}$, $F_{1,i}$, $i=1,2,3$ and $\Xi_\mathbf{x}$ be as in Definition \ref{dispoi}, \ref{interpoi}, (\ref{h0}), (\ref{3f}) and (\ref{collectionrv}). Then there is a coupling of the random vectors and a positive constant $C:=C(\mu)$ such that on the event $\bigcap^3_{i=1}F_{1,i} \cap \mathcal{H}_{1,0}$,
\begin{equation*}
    \Prob_\mathbf{x}\left(\mathbf{V}^{(k[1],n)}\not = \mathbf{\hat V}^{(k[1],n)}\big|\Xi_{\mathbf{x}}\right)\leq  \frac{C\log \log n}{n}  \zeta_0.
\end{equation*}
\end{lemma}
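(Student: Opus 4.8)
The plan is to couple the two discretised mixed Poisson processes $\mathbf{V}^{(k[1],n)}$ and $\mathbf{\hat V}^{(k[1],n)}$ coordinatewise via Lemma~\ref{tvdpf}. Both vectors are built from conditionally independent Poisson variables: $V_{k\to k_0}$ has mean $\lambda^{[1]}_k$ (the integral of $\mathcal{Z}_{k_0}[1]\,\hat a_0^{-1/\mu}\mu^{-1}y^{1/\mu-1}$ over the bin ending at $(k/n)^\chi$), while $\hat V_{k\to k_0}$ has mean $\hat P_{k\to k_0}=(k_0/k)^\chi \zeta_0/((\mu+1)k_0)$. For two Poisson variables the standard maximal coupling gives $\Prob(\mathrm{Po}(\lambda)\neq \mathrm{Po}(\lambda'))\leq |\lambda-\lambda'|$, so by Lemma~\ref{tvdpf} it suffices to bound $\sum_{k=k_0+1}^n |\lambda^{[1]}_k - \hat P_{k\to k_0}|$ on the event $\bigcap_{i=1}^3 F_{1,i}\cap \mathcal{H}_{1,0}$.

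The key estimate is that $\lambda^{[1]}_k$ is close to $\hat P_{k\to k_0}$. First I would rewrite $\hat P_{k\to k_0}$ as an integral of the \emph{same} intensity but with $\hat a_0$ replaced by $(k_0/n)^\chi$ in the normalising factor $a^{1/\mu}$; more precisely, $\int_{((k-1)/n)^\chi}^{(k/n)^\chi}\mathcal{Z}_{k_0}[1]\,((k_0/n)^\chi)^{-1/\mu}\mu^{-1}y^{1/\mu-1}\,dy$ equals $((k_0/n)^\chi)^{-1/\mu}(\chi/\mu)\cdots$ which, after evaluating the elementary integral $\int y^{1/\mu-1}dy = \mu y^{1/\mu}$ and using $(1/\mu)\chi = 1-\chi$, collapses to exactly $(k_0/k)^\chi \zeta_0/((\mu+1)k_0) \cdot(1+O(1/k))$ — actually to $\hat P$ up to the discretisation error $(k/n)^\chi-((k-1)/n)^\chi$. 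Then the difference between $\lambda^{[1]}_k$ and this quantity comes only from $\hat a_0^{-1/\mu}$ versus $((k_0/n)^\chi)^{-1/\mu}$; since $\hat a_0 = (k_0/n)^\chi (1+O((\log\log n)/n))$ by $\hat a_0 = U_0^\chi$ and $k_0 = \lceil nU_0\rceil$, this factor is $1+O((\log\log n)/n)$ on $\mathcal{H}_{1,0}$. Collecting, $|\lambda^{[1]}_k - \hat P_{k\to k_0}| \leq C\,\hat P_{k\to k_0}\,(\log\log n)/n + (\text{discretisation term})$, and summing over $k$, I would reuse the bound $\mathbbm{1}[\bigcap_i F_{1,i}\cap\mathcal{H}_{1,0}]\sum_{j=k_0+1}^n \hat P_{j\to k_0}\leq \zeta_0 (\log\log n)^{1-\chi}$ from \eqref{sumbd} together with \eqref{Rage}-type estimates for the discretisation terms; note also $\lambda^{[1]}_{k_0+1}$ versus $\hat P_{(k_0+1)\to k_0}$ needs a separate but similar treatment since the first bin starts at $\hat a_0$ rather than $((k_0)/n)^\chi$, again contributing only $O((\log\log n)/n)\cdot\zeta_0$ because $|\hat a_0 - (k_0/n)^\chi|$ is tiny.

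Putting these together yields $\Prob_\mathbf{x}(\mathbf{V}^{(k[1],n)}\neq \mathbf{\hat V}^{(k[1],n)}\mid \Xi_\mathbf{x}) \leq C(\log\log n/n)\,\zeta_0$ on the good event, as claimed. The main obstacle I anticipate is bookkeeping the several distinct small errors cleanly — the replacement of $\hat a_0$ by the scaled label in the intensity normalisation, the mismatch of the endpoints of the first bin, and the elementary-but-fiddly exact evaluation showing $\hat P_{k\to k_0}$ really is (up to the right order) the bin-integral of the intensity with $a = (k_0/n)^\chi$ — and ensuring each is genuinely $O((\log\log n)/n)$ times a term that sums (via \eqref{sumbd}) to $O(\zeta_0(\log\log n)^{1-\chi})$ or better, so that the product is $O(\zeta_0 \log\log n/n)$ after absorbing the $(\log\log n)^{-\chi}\le 1$ factor. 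No genuinely hard probabilistic input is needed beyond the Poisson maximal coupling and Lemma~\ref{tvdpf}; it is all deterministic estimation on the event $\bigcap_{i=1}^3 F_{1,i}\cap \mathcal{H}_{1,0}$.
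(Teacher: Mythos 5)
Your plan is essentially identical to the paper's proof: couple coordinatewise via Lemma~\ref{tvdpf} and the Poisson coupling bound $\Prob(\mathrm{Po}(\lambda)\neq\mathrm{Po}(\lambda'))\leq|\lambda-\lambda'|$ (the paper uses the explicit additive coupling $\hat V' = V'+V''$, which achieves exactly this bound), reduce to $\sum_{k}|\lambda^{[1]}_k-\hat P_{k\to k_0}|$, split that difference into an age-normalisation piece coming from $\hat a_0^{-1/\mu}$ versus $(k_0/n)^{-\chi/\mu}$ and a discretisation piece, and treat the first bin separately. This is the paper's decomposition, see (\ref{bos3})--(\ref{meandiff}).

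The one place where your sketch does not reach the stated bound is the final bookkeeping. You propose to bound the age-normalisation piece by $C\hat P_{k\to k_0}(\log\log n)/n$ and then close via (\ref{sumbd}), i.e.\ $\sum_j\hat P_{j\to k_0}\leq\zeta_0(\log\log n)^{1-\chi}$. Multiplying these yields $C\zeta_0(\log\log n)^{2-\chi}/n$, and the cure you suggest — ``absorbing the $(\log\log n)^{-\chi}\leq 1$ factor'' — does not work, because $(\log\log n)^{2-\chi}/(\log\log n)=(\log\log n)^{1-\chi}\to\infty$. The paper avoids this loss by \emph{not} routing through (\ref{sumbd}) for this term: it bounds the absolute difference $\hat a_0^{-1/\mu}-(k_0/n)^{-\chi/\mu}\leq\eta_n$ on $\mathcal{H}_{1,0}$ (see (\ref{agediff})--(\ref{bda0})), factors it out of the sum so that only the bin-width factor $(k/n)^{1-\chi}-((k-1)/n)^{1-\chi}\lesssim n^{\chi-1}k^{-\chi}$ remains, and then evaluates $\sum_{k>k_0}k^{-\chi}\leq(\mu+1)n^{1-\chi}$ directly (no further $k_0$-dependence), landing at $\zeta_0\eta_n$; the discretisation piece then supplies the $\zeta_0(\log\log n)/n$. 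If you redo the sum along those lines you recover the stated exponent. (The discrepancy is harmless downstream: in Lemma~\ref{poissonproc} the contribution of Lemma~\ref{secondcoupling} is already $\zeta_0^2(\log\log n)^{2-\chi}/n$, and the eventual rate is governed by Lemma~\ref{firstcoupling} — but as written your argument gives a weaker power of $\log\log n$ than the lemma asserts.)
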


\begin{proof}
We construct a monotone coupling of $V_{k\to k_0}$ and $\hat V_{k\to k_0}$. Let $\nu_j:=\lambda^{[1]}_{j}\wedge \hat P_{j\to k_0}$ and
\begin{align*}
    V'_{j\to k_0}\sim \mathrm{Poi}(\nu_j),\quad 
    V''_{j\to k_0}\sim \mathrm{Poi}(|\lambda^{[1]}_{j}-\hat P_{j\to k_0}|),\quad \hat V'_{j\to k_0}= V'_{j\to k_0}+V''_{j\to k_0}, 
\end{align*}
where $V''_{j\to k_0}$ is conditionally independent of $V'_{j\to k_0}$. Then
\begin{equation*}
    \Prob_\mathbf{x}\left(\hat V'_{k\to k_0}\not =V'_{k\to k_0} \big|\Xi_{\mathbf{x}}\right)
    =  \Prob(V''_{k\to k_0}\geq 1\big|\Xi_{\mathbf{x}})\leq |\lambda^{[1]}_k - \hat P_{k\to k_0}|,
\end{equation*}
where the last inequality follows from $1-e^{-x}\leq x$. By Lemma \ref{tvdpf}, 
\begin{equation}\label{bos3}
   \Prob_\mathbf{x}\left(\mathbf{V}^{(k[1],n)}\not = \mathbf{\hat V}^{(k[1],n)}\big|\Xi_{\mathbf{x}}\right)\leq \sum^n_{j=k_0+1}|\lambda^{[1]}_{j}-\hat P_{j\to k_0}|.
\end{equation}
We now bound the sum in (\ref{bos3}) on the event $\mathcal{H}_{1,0}$. We first show that we can swap $a^{-1/\mu}_0$ in $\lambda^{[1]}_k$ for $(k_0/n)^{-\chi/\mu}$ at a small cost, and then proceed to bound the difference between $\lambda^{[1]}_k$ and $\hat P_{k\to k_0}$. Recalling that $\hat a_0=U^\chi_0$ and $k_0=\ceil{nU_0}$, we have
\begin{align*}
    \hat a^{-1/\mu}_0-\left(\frac{k_0}{n}\right)^{-\chi/\mu}\leq U^{-\chi/\mu}_0 \bigg[1-\left(1+\frac{1}{n}\right)^{-\chi/\mu}\bigg].
\end{align*}
Using $\chi/\mu=1-\chi$ and $(1+n^{-1})^{\chi-1}=\sum_{i\geq 0}\binom{\chi-1}{i}(1/n)^i$, we get that there is a constant $C_\mu$ such that 
\begin{align}\label{agediff}
    \hat a^{-1/\mu}_0-(k_0/n)^{-\chi/\mu}\leq C_\mu n^{-1} U^{-\chi/\mu}_0. 
\end{align}
Noting that $U_0\geq (\log \log n)^{-1}$ on the event $\mathcal{H}_{1,0}$, it follows that
\begin{equation}\label{bda0}
     \mathbbm{1}[\mathcal{H}_{1,0}]\left(\hat a^{-1/\mu}_0-(k_0/n)^{-\chi/\mu}\right)\leq C_\mu n^{-1} (\log \log n)^{1-\chi} =:\eta_n.
\end{equation}
For $k=k_0+2,...,n$, we use (\ref{bda0}) to compute
\begin{align*}
    \lambda^{[1]}_k &= \int^{(k/n)^\chi}_{((k-1)/n)^\chi}  \frac{\zeta_0}{\mu \hat a^{1/\mu}_0} y^{1/\mu-1} dy=  \zeta_0\hat a^{-1/\mu}_0 \left[\left(\frac{k}{n}\right)^{1-\chi} - \left(\frac{k-1}{n}\right)^{1-\chi}\right]\\
    &\leq \zeta_0\left(\frac{k}{n}\right)^{1-\chi} \left[1 - \left(1-\frac{1}{k}\right)^{1-\chi}\right]\left[\left(\frac{k_0}{n}\right)^{\chi-1}+ \eta_n\right].
\end{align*}
Since $(1-1/k)^\chi = \sum_{j\geq 0} \binom{1-\chi}{j}(-k)^{-j}=1-[(\mu+1)k]^{-1}+O(k^{-2})$,
\begin{align*}
     \lambda^{[1]}_k &\leq \zeta_0 \left[\frac{1}{(\mu+1)k}+O(k^{-2})\right]\left[\left(\frac{k}{k_0}\right)^{1-\chi}+ \eta_n\left(\frac{k}{n}\right)^{1-\chi}\right];
\end{align*}
and expanding the terms above we obtain
\begin{align*}
     \lambda^{[1]}_k &\leq \hat P_{k\to k_0} + \frac{\eta_n\zeta_0}{n^{1-\chi}}\frac{1}{k^\chi(\mu+1)} + c\zeta_0 k^{-1-\chi}[\eta_n n^{\chi-1}+ k^{\chi-1}_0] k^{-1-\chi},
\end{align*}
where $c:=c(\mu)$ is a constant. In the subsequent calculations, we allow the constants $c:=c(\mu)$ and $c':=c'(\mu)$ to vary from term to term. Repeating the calculation above for a lower bound on $\lambda^{[1]}_{k}$, we deduce that on the event $\mathcal{H}_{1,0}$, 
\begin{equation} \label{meandiff}
    |\lambda^{[1]}_k - \hat P_{k\to k_0}|\leq \frac{c \zeta_0\eta_n}{ n^{1-\chi}}  k^{-\chi}+c'\zeta_0 k^{\chi-1}_0 k^{-1-\chi}.
\end{equation}
On the event $\mathcal{H}_{1,0}$, $cn^{\chi-1}\eta_n\zeta_0 \sum^n_{j=k_0+2}k^{-\chi}\leq c(\mu+1)\zeta_0\eta_n$ because
\begin{align*}
    \mathbbm{1}[\mathcal{H}_{1,0}] \sum^n_{k=k_0+2} k^{-\chi}\leq  \sum^n_{k=\ceil{n(\log \log n)^{-1}}+2} k^{-\chi}\leq \int^n_{n(\log \log n)^{-1}+1} y^{-\chi} dy\leq (\mu+1)n^{1-\chi};
\end{align*}
and similarly, $c'\zeta_0k^{\chi-1}_0\sum^n_{k=k_0+1}k^{-1-\chi}\leq c'\chi^{-1}\zeta_0 n^{-1}(\log\log n)$. Hence,
\begin{align*}
  \mathbbm{1}[ \mathcal{H}_{1,0}] \sum^n_{j=k_0+2}|\lambda^{[1]}_{j}-\hat P_{j\to k_0}|\leq \frac{c \log \log n}{n}  \zeta_0.
\end{align*}
Finally, we can use (\ref{bda0}) and a similar calculation to show that
\begin{align*}
   \mathbbm{1}[ \mathcal{H}_{1,0}] |\lambda^{[1]}_{k_0+1}-\hat P_{k_0+1\to k_0}|\leq \frac{c' \log \log n}{n} \zeta_0.
\end{align*}
Thus, applying the last two displays to (\ref{bos3}) gives the desired result.
\end{proof}
%\begin{equation*}
    %\mathbbm{1}[\cap^3_{i=1}F_{1,i}\cap %\mathcal{H}_{1,0}]\left(\left(\frac{k_0}{n}\right)^{\chi/\mu}-a^{1%/\mu}_0\right)\leq \frac{(\log \log n)^\chi}{(\mu+1)n}.
%\end{equation*}

We now use Lemma \ref{firstcoupling}, \ref{secondcoupling} and \ref{thirdcoupling} to prove Lemma \ref{poissonproc}.

\begin{proof}[Proof of Lemma \ref{poissonproc}]
We show that on the event $\bigcap^3_{i=1}F_{1,i} \cap \mathcal{H}_{1,0}$, there is a coupling of $(\mathbf{Y}^{(k[1],n)}, \mathbf{V}^{(k[1],n)})$, and positive constants $C:=C(x_1,\mu)$ and $c:=c(\mu)$ such that
\begin{align*}
   &\Prob_{\mathbf{x}}(\mathbf{Y}^{(k[1],n)}\not = \mathbf{V}^{(k[1],n)}\big|\Xi_\mathbf{x})\\
    &\qquad \leq \frac{C\zeta_0(\log \log n)^{1-\chi}}{n^\gamma}+ \frac{c\zeta_0\log \log n}{n} +  \frac{\zeta^2_0(\log\log n)^{2-\chi}}{n}; \numberthis \label{toterr}
\end{align*}
and the lemma follows from taking expectation with respect to $\zeta_0$ on $(0,\infty)$, since $\E_{\mathbf{x}} [\zeta_0|U_0]=x_{k_0}$, $\E_{\mathbf{x}} [\zeta^2_0|U_0]=x_{k_0}(x_{k_0}+1)\leq \kappa(\kappa+1)$. Using $\mathbf{\hat Y}^{(k[1],n)}$ and $\mathbf{\hat V}^{(k[1],n)}$, we construct a coupling such that the conditional laws $\mathcal{L}(\mathbf{\hat Y}^{(k[1],n)}|\mathbf{Y}^{(k[1],n)})$,
\begin{gather*}
    \mathcal{L}\left(\mathbf{\hat V}^{(k[1],n)}| \mathbf{\hat Y}^{(k[1],n)}, \mathbf{Y}^{(k[1],n)}\right)=\mathcal{L}\left(\mathbf{\hat V}^{(k[1],n)}|\mathbf{\hat Y}^{(k[1],n)}\right),\\
    \mathcal{L}\left(\mathbf{V}^{(k[1],n)}|\mathbf{\hat V}^{(k[1],n)}, \mathbf{\hat Y}^{(k[1],n)}, \mathbf{Y}^{(k[1],n)}\right) =  \mathcal{L}\left(\mathbf{V}^{(k[1],n)}|\mathbf{\hat V}^{(k[1],n)}\right)
\end{gather*}
are as in Lemma \ref{firstcoupling}, \ref{secondcoupling} and \ref{thirdcoupling}. Note that given $\mathbf{\hat V}^{(k[1],n)}$, $\mathbf{V}^{(k[1],n)}$ is independent of the other two random vectors; while given $\mathbf{\hat Y}^{(k[1],n)}$, $\mathbf{\hat V}^{(k[1],n)}$ is independent of $\mathbf{Y}^{(k[1],n)}$. To prove (\ref{toterr}) using the probability bounds given in these lemmas, we note that
\begin{align*}
    &\Prob_{\mathbf{x}}\left(\mathbf{Y}^{(k[1],n)}\not = \mathbf{V}^{(k[1],n)}\big|\Xi_\mathbf{x}\right)\\
    &\leq \Prob_\mathbf{x}\left(\{\mathbf{Y}^{(k[1],n)}\not = \mathbf{\hat Y}^{(k[1],n)}\}\cup \{\mathbf{\hat Y}^{(k[1],n)}\not = \mathbf{\hat V}^{(k[1],n)}\}\cup\{ \mathbf{\hat V}^{(k[1],n)}\not = \mathbf{V}^{(k[1],n)}\}\big|\Xi_\mathbf{x}\right)\\
    &\leq \Prob_{\mathbf{x}}\left(\mathbf{Y}^{(k[1],n)}\not = \mathbf{\hat Y}^{(k[1],n)}\big|\Xi_\mathbf{x}\right)
    +\Prob_{\mathbf{x}}\left(\mathbf{\hat Y}^{(k[1],n)}\not = \mathbf{\hat V}^{(k[1],n)}\big|\Xi_\mathbf{x}\right)\\
   &\hspace{7cm} +\Prob_{\mathbf{x}}\left(\mathbf{\hat V}^{(k[1],n)}\not = \mathbf{V}^{(k[1],n)}\big|\Xi_\mathbf{x}\right),\numberthis \label{vectorineq} 
\end{align*}
where the last inequality is due to a union bound.
\end{proof}

\subsection{Proof of Lemma \ref{regularity}}\label{pfuniformd}
\begin{proof}
Recall that $  \zeta_0\sim \mathrm{Gamma}(x_{k_0},1)$ and $\hat \tau_0\sim\mathrm{Po}(  \zeta_0(\hat a^{-1/\mu}_0-1))$, where $\hat \tau_0$ is the number of type R vertices in $\partial \mathfrak{B}_1$ that are attached to the root $0\in V((\mathcal{T}_{\mathbf{x},n},0))$. On the event $\mathcal{H}_{1,0}=\{\hat a_0>(\log \log n)^{-\chi}\}$, $\hat \tau_0$ is stochastically dominated by $\tilde \tau_0\sim \mathrm{Po}(\zeta_0(\log\log n)^{1-\chi})$. Hence,
\begin{align}\label{regularbd1}
    \Prob_{\mathbf{x}}\bigg(\bigcap^2_{i=0} \mathcal{H}_{1,i}\cap \mathcal{H}^c_{1,3}\bigg)
    \leq \Prob_{\mathbf{x}}(\mathcal{H}_{1,0},\tilde \tau_0\geq (\log n)^{1/r})
    \leq \E_{\mathbf{x}}\left[\Prob_{\mathbf{x}}(\tilde \tau_0\geq (\log n)^{1/r}| \zeta_0,U_0)\right].
\end{align}
To apply Chebyshev's inequality, let $(Y)_k:=Y(Y-1)\cdots(Y-k+1)$ for non-negative integer $k$. By \cite[equation (6.10), p.\ 262]{concrete}, $Y^p = \sum^p_{k=0}\stirling{p}{k}(Y)_k$, where $\stirling{p}{k}$ is the Stirling number of the second kind (with $\stirling{0}{0}=1$ and $\stirling{p}{0}=0$ for positive integer $p$). If $Y\sim \mathrm{Po}(\theta)$, then $\E[(Y)_k]=\theta^k$. For such $Y$ and $\theta\geq 1$, taking expectation on both sides of identity gives
\begin{equation*}
    \E Y^p = \sum^p_{k=0}\stirling{p}{k}\E[(Y)_k]\leq C_p \theta^p, 
\end{equation*}
where $C_p$ is the sum of the Stirling numbers. Choose $p>\max\{r-1,7\}$. By Chebyshev's inequality and the moment bound above,
\begin{align*}
   \Prob(\tilde \tau_0\geq (\log n)^{1/r}| \zeta_0,U_0)
    \leq (\log n)^{-p/r} \E[\tilde \tau^p_0| \zeta_0,U_0]
    = C_p \zeta^p_0(\log n)^{-p/r} (\log\log n)^{p(1-\chi)}.  
\end{align*}
Applying the last display to (\ref{regularbd1}), and noting that there is a positive constant $c_p$ such that
\begin{align*}
    \E_{\mathbf{x}}[\zeta^p_0] =\E_{\mathbf{x}}\bigg[\prod^{p-1}_{\ell=0}(x_{k_0}+\ell)\bigg]\leq c_p \kappa^p
\end{align*}
because $x_i\leq \kappa$ for $i\geq 2$ proves the lemma.
\end{proof}

\subsection{Proof of Lemma \ref{extnhood}}\label{pfextnhood}
We first recall some notations introduced in Section \ref{secgenr} as they frequently appear in the proof below. The random variable $\rho[q]$ is the time we probe the type L vertex in $\partial \mathcal{B}_q$, and $L[q]=(0,1,...,1)$, $|L[q]|=q+1$, so that $k[\rho[q]]=k_{L[q]}$. Moreover, $M_{L[q]}:=\min\{k: (k/n)^\chi\geq a_{L[q]}\}$, $k^*_{L[q]}:=\min\{k_{L[q]}+1, M_{L[q]}\}$ and $\zeta_q := \mathcal{Z}_{k[\rho[q]]}{[\rho[q]]}$. 

To prepare for the intermediate coupling steps, we construct a Bernoulli point process and a discretised mixed Poisson process using the means $\hat P_{k\to k[\rho[q]]}$ given in (\ref{tmean3}) and Table~\ref{tableofmeans}.

\begin{defn}\label{interber01}
Given $k_{L[q]}$, $\hat a_{L[q]}$ and $\zeta_q$, let $\hat Y_{j\to k_{L[q]}}$, $k^*_{L[q]}\leq j\leq n$, be conditionally independent Bernoulli variables with parameter $\hat P_{j\to k_{L[q]}}$ given in (\ref{tmean3}) and Table \ref{tableofmeans}. We define this Bernoulli point process by the random vector
\begin{equation*}
    \mathbf{\hat Y}^{(k_{L[q]},n)}:=\left(\hat Y_{k^*_{L[q]}\to k_{L[q]}},\hat Y_{(k^*_{L[q]}+1)\to k_{L[q]}},...,\hat Y_{n\to k_{L[q]}}\right).
\end{equation*}
\end{defn}

\begin{defn}\label{interpoi01}
Given $k_{L[q]}$, $\hat a_{L[q]}$ and $\zeta_q$, let $\hat V_{j \to k_{L[q]}}$, $k^*_{L[q]}\leq j\leq n$, be conditionally independent Poisson random variables, each with parameters $\hat P_{j\to k[t]}$ given in (\ref{tmean3}) and in Table \ref{tableofmeans}. We define this discretised mixed Poisson point process with the random vector
\begin{equation*}
    \mathbf{\hat V}^{(k_{L[q]},n)}:=\left(\hat V_{k^*_{L[q]}\to k_{L[q]}},\hat V_{(k^*_{L[q]}+1)\to k_{L[q]}},...,\hat V_{n\to k_{L[q]}}\right).
\end{equation*}
\end{defn}

As we assume that $(G_n,k_0)$ and $(\mathcal{T}_{\mathbf{x},n},0)$ are already coupled such that $B_q(G_n,k_0),k_0)\cong (B_r(\mathcal{T}_{\mathbf{x},n},0),0)$, it is enough to condition on the following collection of random variables in the sequel, 
\begin{align}\label{collection}
   \left((k_{\bar z}, \hat k_{\bar z}, \hat a_{\bar z})_{k_{\bar z}\in \mathcal{A}_{\rho[q]-1}\cup \mathcal{P}_{\rho[q]-1}}, 
   (\theta_{\bar x}, \hat \tau_{\bar x})_{k_{\bar x}\in \mathcal{P}_{\rho[q]-1}}, (\mathcal{\tilde Z}_i[\rho[q]], \mathcal{Z}_i[\rho[q]])_{2\leq i\leq n, i\not \in \mathcal{P}_{\rho[q]-1}}\right),
\end{align}
where $\mathcal{P}_{\rho[q]-1}=V(B_{q-1}(G_n,k_0))$ and $\mathcal{A}_{\rho[q]-1}=\partial \mathfrak{B}_q$. Denote this collection by $\Xi_\mathbf{x}$, and define
\begin{equation}\label{eventJ}
    J:=\bigcap^3_{i=1} F_{\rho[q], i}\cap \bigg(\bigcap^{3}_{k=1} \mathcal{H}_{q,k} \bigg),
\end{equation}
where $\mathcal{H}_{q,k}$, $k=1,2,3$ and $F_{\rho[q], i}$, $i=1,2,3$ are as in (\ref{hqi}) and (\ref{3fL}). To prove Lemma \ref{extnhood}, we also observe that on the event $\mathcal{H}_{q,1}\cap \mathcal{H}_{q,2}$, 
\begin{align}\label{vertlab}
    k_{L[q]}\geq n(\log\log n)^{-(q+1)} - C_qn^{1-\beta_q/\chi} \quad\text{and}\quad M_{L[q]}\geq n(\log\log n)^{-(q+1)},
\end{align}
where $C_q:=C_q(x_1,\mu)$ is the positive constant in $\mathcal{H}_{q,2}$. 

We are now ready to prove the lemma. As in the case of Lemma \ref{poissonproc}, we first couple the pairs
\begin{equation*}
    \left(\mathbf{Y}^{(k[t],n)},\mathbf{\hat Y}^{(k[\rho[q]],n)}\right),\text{ }\left(\mathbf{\hat Y}^{(k[\rho[q]],n)},\mathbf{\hat V}^{(k[\rho[q]],n)}\right)\text{ and }\left(\mathbf{\hat V}^{(k[\rho[q]],n)},\mathbf{V}^{(k[\rho[q]],n)}\right)
\end{equation*}
on the event $J$, and then apply a union bound argument. 

\begin{lemma}\label{rfirstcoup}
Let $\mathbf{Y}^{(k[\rho[q]],n)}$, $\mathbf{\hat Y}^{(k[\rho[q]],n)}$, $\Xi_{\mathbf{x}}$ and $J$ be as in (\ref{adjberpoi}), Definition \ref{interber01}, (\ref{eventJ}) and (\ref{collection}). Then there is coupling of the random vectors, and positive constants $C:=C(x_1,\mu,q)$ and $c:=c(x_1,\mu)$ such that on the event $J$, 
\begin{align*}
    &\Prob_{\mathbf{x}}\left(\mathbf{Y}^{(k[\rho[q]],n)}\not = \mathbf{\hat Y}^{(k[\rho[q]],n)}\big|\Xi_{\mathbf{x}}\right)\\
    &\hspace{2cm}\leq C\zeta_q n^{-\gamma}(\log \log n)^{(1-\chi)(q+1)} +\frac{c\zeta_q(\log n)^{q/r}(\log \log n)^{(q+1)(1-\chi)}}{(\mu+1)n^{1-\chi}}.
\end{align*}
\end{lemma}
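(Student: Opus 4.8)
The proof of Lemma~\ref{rfirstcoup} follows the same template as that of Lemma~\ref{firstcoupling}, but now we must account for the fact that $\mathbf{Y}^{(k[\rho[q]],n)}$, as redefined in (\ref{adjberpoi}), has entries indexed from $k^*_{L[q]}$ rather than from $k_{L[q]}+1$, and that some of the Bernoulli means $P_{h\to k[\rho[q]]}$ are zero (when $M_{L[q]}\le k_{L[q]}$ or when $h\in V(B_q(G_n,k_0))$), while some intermediate means $\hat P_{h\to k[\rho[q]]}$ are positive. The first step is to establish, on the event $J$, a multiplicative comparison of the form
\begin{equation*}
    (1-Cn^{-\gamma})\hat P_{h\to k[\rho[q]]}\le P_{h\to k[\rho[q]]}\le (1+Cn^{-\gamma})\hat P_{h\to k[\rho[q]]}
\end{equation*}
valid for all $h\in[k^*_{L[q]},n]$ with $h\notin V(B_q(G_n,k_0))$ and $h\ge\max\{M_{L[q]},k_{L[q]}+1\}$, exactly as in (\ref{impbd}). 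This rests on two facts already available: on $F_{\rho[q],1}$ we have $S_{h,n}[\rho[q]]\approx(h/n)^\chi$ uniformly for $h\ge\ceil{\phi(n)}$, and on $F_{\rho[q],2}$ we have $B_h[\rho[q]]\approx\mathcal{Z}_h[\rho[q]]/((\mu+1)h)$; combined with the lower bound $k_{L[q]}\ge n(\log\log n)^{-(q+1)}-C_qn^{1-\beta_q/\chi}$ from (\ref{vertlab}) on $\mathcal{H}_{q,1}\cap\mathcal{H}_{q,2}$, the ratio $S_{k[\rho[q]],n}[\rho[q]]/S_{h,n}[\rho[q]]$ is within $C n^{-\chi/12}(\log\log n)^{(q+1)\chi}$ of $(k[\rho[q]]/h)^\chi$, and since $(h/k[\rho[q]])^\chi\le(\log\log n)^{(q+1)\chi}$, the error terms collapse into a single factor $(1+Cn^{-\gamma})$ for $n$ large (using $\gamma<\chi/12$).

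\textbf{Second step: the coupling.} Draw independent uniforms $U_h$, $k^*_{L[q]}\le h\le n$, and set $Y'_{h\to k[\rho[q]]}=\mathbbm{1}[U_h\le P_{h\to k[\rho[q]]}]$, $\hat Y'_{h\to k[\rho[q]]}=\mathbbm{1}[U_h\le \hat P_{h\to k[\rho[q]]}]$. By Lemma~\ref{tvdpf},
\begin{equation*}
   \Prob_{\mathbf{x}}\left(\mathbf{Y}^{(k[\rho[q]],n)}\ne\mathbf{\hat Y}^{(k[\rho[q]],n)}\big|\Xi_{\mathbf{x}}\right)\le\sum_{h=k^*_{L[q]}}^n|P_{h\to k[\rho[q]]}-\hat P_{h\to k[\rho[q]]}|.
\end{equation*}
This sum splits into three pieces. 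For $h\ge\max\{M_{L[q]},k_{L[q]}+1\}$ with $h\notin V(B_q(G_n,k_0))$, the multiplicative bound gives $|P_{h\to k[\rho[q]]}-\hat P_{h\to k[\rho[q]]}|\le Cn^{-\gamma}\hat P_{h\to k[\rho[q]]}$, and $\sum_h\hat P_{h\to k[\rho[q]]}\le\zeta_q(\log\log n)^{(q+1)(1-\chi)}$ by the same integral comparison as in (\ref{sumbd}) but using $k_{L[q]}\ge n(\log\log n)^{-(q+1)}$, yielding the first term in the claimed bound. For $h\in V(B_q(G_n,k_0))$ we have $P_{h\to k[\rho[q]]}=0$ so $|P_{h\to k[\rho[q]]}-\hat P_{h\to k[\rho[q]]}|=\hat P_{h\to k[\rho[q]]}\le c\zeta_q(\log\log n)^{(q+1)(1-\chi)}/((\mu+1)n^{1-\chi})$ for each such $h$, and by (\ref{qlogvertices}) there are at most $1+q+q^2(\log n)^{q/r}=O((\log n)^{q/r})$ of them, producing the second term. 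Finally, the indices $h\in[k^*_{L[q]},\max\{M_{L[q]},k_{L[q]}+1\})$ form a set of size $|M_{L[q]}-k_{L[q]}-1|$; since on $\mathcal{H}_{q,2}$ we have $|\hat a_{L[q]}-(k_{L[q]}/n)^\chi|\le C_qn^{-\beta_q}$, a mean value theorem estimate (as in (\ref{Rage}), using $k_{L[q]}=\Omega(n(\log\log n)^{-(q+1)})$) shows this set has $O(n^{1-\beta_q/\chi}(\log\log n)^{(q+1)(1-\chi)})$ elements, each contributing $\hat P_{h\to k[\rho[q]]}=O(\zeta_q n^{\chi-1}(\log\log n)^{(q+1)(1-\chi)})$, which is lower-order and absorbed into the constant $C$ (this is why the exponent $d$ in Lemma~\ref{extnhood} involves $\beta_q$).

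\textbf{Main obstacle.} The routine part is the multiplicative comparison; the delicate bookkeeping is controlling the ``mismatch'' indices where one mean is zero and the other positive, and verifying that the count of such indices, multiplied by the (small) individual means, stays below the target order $n^{-d}(\log\log n)^{q+1}(\log n)^{q/r}$. The key leverage points are (\ref{qlogvertices}) for the $h\in V(B_q(G_n,k_0))$ contribution and the age-closeness on $\mathcal{H}_{q,2}$ together with (\ref{vertlab}) for the $|M_{L[q]}-k_{L[q]}-1|$ contribution; both are already in hand, so no genuinely new idea is required beyond careful accounting, but one must be attentive that the factor $(\log\log n)^{(q+1)(1-\chi)}$ (rather than the $(\log\log n)^{1-\chi}$ of the base case) propagates correctly through every integral comparison. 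The analogous couplings $(\mathbf{\hat Y}^{(k[\rho[q]],n)},\mathbf{\hat V}^{(k[\rho[q]],n)})$ and $(\mathbf{\hat V}^{(k[\rho[q]],n)},\mathbf{V}^{(k[\rho[q]],n)})$, needed to finish Lemma~\ref{extnhood}, then go through exactly as in Lemmas~\ref{secondcoupling} and \ref{thirdcoupling} with the same substitutions.
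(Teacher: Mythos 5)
Your decomposition into two pieces — the multiplicative-error piece (summed over $h\in\mathcal{N}_{\rho[q]-1}$) and the piece over $h\in V(B_q(G_n,k_0))$ where $P_h=0$ — matches the paper's argument, and your bounds for those two pieces reproduce the two terms of the lemma via (\ref{rfirstb1}) and (\ref{rfirstb2}). The genuine gap is in your third piece, the indices $h\in[k^*_{L[q]},\max\{M_{L[q]},k_{L[q]}+1\})$.

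That third piece should not appear at all in this lemma. Look at Table~\ref{tableofmeans} and the definition (\ref{adjberpoi}): when $M_{L[q]}\leq k_{L[q]}$ the paper sets $P_{h\to k[\rho[q]]}=0$ \emph{and} $\hat P_{h\to k[\rho[q]]}=0$ on the shortfall indices, so $Y_h=\hat Y_h=0$ a.s.\ and there is no contribution; when $M_{L[q]}\geq k_{L[q]}+2$ the excess indices $k_{L[q]}+1\leq h\leq M_{L[q]}$ have both $P_h$ and $\hat P_h$ given by their usual formulas (\ref{tmean1}) and (\ref{tmean3}), so they are already absorbed into your pieces one and two. The mean mismatch tied to $|M_{L[q]}-k_{L[q]}|$ — zero on one side, $\hat P_h$ on the other — arises only for the $\lambda^{[\rho[q]]}_h$ versus $\hat P_h$ comparison, i.e.\ in Lemma~\ref{rthirdcoup}, which is where the exponent $\beta_q$ actually enters $d$.

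Worse, the way you estimate the phantom third piece would not in fact be lower-order. You claim the set has $O\big(n^{1-\beta_q/\chi}(\log\log n)^{(q+1)(1-\chi)}\big)$ elements, each contributing $O\big(\zeta_q n^{\chi-1}(\log\log n)^{(q+1)(1-\chi)}\big)$. Multiplying gives $O\big(\zeta_q n^{\chi-\beta_q/\chi}(\log\log n)^{2(q+1)(1-\chi)}\big)$, and since $\beta_q/\chi<\beta_1/\chi=1/12$, the exponent $\chi-\beta_q/\chi$ is strictly positive whenever $\chi>1/12$ (i.e.\ $\mu>1/11$) — a polynomially \emph{growing} quantity, not one ``absorbed into $C$.'' (The per-element bound you quote is the crude one valid over $\mathcal{A}_{\rho[q]-1}\cup\mathcal{P}_{\rho[q]-1}$ using $h^{-\chi}\leq 1$; for $h$ near $k_{L[q]}$ the sharp estimate is $O\big(\zeta_q n^{-1}(\log\log n)^{q+1}\big)$, and the size of the gap is $O(n^{1-\beta_q})$ by the mean-value theorem, not $O\big(n^{1-\beta_q/\chi}(\log\log n)^{(q+1)(1-\chi)}\big)$.) The salvageable lesson is simply that these indices need not be bounded here at all: the paper's careful choice of $\hat P_h$ in Table~\ref{tableofmeans} makes them disappear from the Bernoulli-to-Bernoulli step.
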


\begin{proof} 
We consider the scenarios $M_{L[q]}\leq k[\rho[q]]$ and $M_{L[q]}\geq k[\rho[q]]+1$ separately, starting from $M_{L[q]}\leq k[\rho[q]]$. For $M_{L[q]}\leq j\leq k[\rho[q]]$, $Y_{j \to k[\rho[q]]}=\hat Y_{j \to k[\rho[q]]}=0$ since $P_{j \to k[\rho[q]]}=\hat P_{j \to k[\rho[q]]}=0$; whereas for $k[\rho[q]]+1\leq h\leq n$, we couple $Y_{h\to k[\rho[q]]}$ and $\hat Y_{h\to k[\rho[q]]}$ as follows. Firstly, we use a similar calculation as for (\ref{impbd}) to show that there is a positive constant $C:=C(x_1,\mu,q)$ such that 
\begin{equation*}
    (1-Cn^{-\gamma})\hat P_{k\to k[\rho[q]]}\leq P_{k\to k[\rho[q]]} \leq (1+Cn^{-\gamma})\hat P_{k\to k[\rho[q]]},\quad k\in \mathcal{N}_{\rho[q]-1}\cap \{k_{\rho[q]}+1,...,n\},
\end{equation*}
on the event $J$. Next, let $U_k$, $k[\rho[q]]+1\leq k\leq n$ be independent standard uniform variables, and define
\begin{equation*}
    Y'_{k\to k[\rho[q]]}=\mathbbm{1}\left[U_k\leq P_{k\to k[\rho[q]]}\right]\quad\text{and}\quad\hat Y'_{k\to k[\rho[q]]}=\mathbbm{1}\left[U_k\leq \hat P_{k\to k[\rho[q]]}\right].
\end{equation*}
So on the event $J$, we have 
\begin{equation*}
  \Prob_\mathbf{x}\left(Y_{k\to k[\rho[q]]}\not=\hat Y_{k\to k[\rho[q]]}\big|\Xi_{\mathbf{x}}\right)\leq \Prob\left(U_k\leq |\hat P_{k\to k[\rho[q]]}-P_{k\to k[\rho[q]]}|\big|\Xi_{\mathbf{x}}\right)\leq C n^{-\gamma} \hat P_{j\to k[\rho[q]]}
\end{equation*}
for $k \in \mathcal{N}_{\rho[q]-1}$; and  
\begin{equation*}
    \Prob_\mathbf{x}\left(Y_{k\to k[\rho[q]]}\not=\hat Y_{k\to k[\rho[q]]}\big|\Xi_{\mathbf{x}}\right)\leq \Prob(U_k\leq \hat P_{k\to k[\rho[q]]}\big|\Xi_{\mathbf{x}}) \leq \hat P_{j\to k[\rho[q]]}
\end{equation*}
for $k \in \mathcal{A}_{\rho[q]-1}\cup \mathcal{P}_{\rho[q]-1}$. Using Lemma \ref{tvdpf}, we obtain
\begin{align*}
    &\Prob_{\mathbf{x}}\left(\mathbf{Y}^{(k[\rho[q]],n)}\not = \mathbf{\hat Y}^{(k[\rho[q]],n)}\big|\Xi_{\mathbf{x}}\right) \\
    &\qquad \leq Cn^{-\gamma} \sum^n_{\substack{j=k[\rho[q]]+1;\\j \in \mathcal{N}_{\rho[q]-1}}}\hat P_{j\to k[\rho[q]]}+ \sum^n_{\substack{k=k[\rho[q]]+1;\\k\in \mathcal{A}_{\rho[q]-1}\cup \mathcal{P}_{\rho[q]-1}}}\hat P_{k\to k[\rho[q]]}.\numberthis \label{rfirstb}
\end{align*}
In what follows we allow $C:=C(x_1,\mu,q)$ to vary from line to line. For the first sum above, we use $n(\log\log n)^{-(q+1)}\leq M_{L[q]}\leq k[\rho[q]]$ to obtain
\begin{align*}
   Cn^{-\gamma} \sum^n_{\substack{j=k[\rho[q]]+1;\\j\in \mathcal{N}_{\rho[q]-1}}}\hat P_{j\to k[\rho[q]]} &\leq \frac{Cn^{-\gamma}\zeta_q}{(\mu+1)\{k[\rho[q]]\}^{1-\chi}}\sum^n_{k=k[\rho[q]]+1} k^{-\chi}\\
    &\leq \frac{Cn^{-\gamma}\zeta_q}{(\mu+1)\{k[\rho[q]]\}^{1-\chi}}\int^n_{k[\rho[q]]}y^{-\chi}dy\\
    &\leq C\zeta_qn^{-\gamma}(\log \log n)^{(1-\chi)(q+1)}. \numberthis \label{rfirstb1}
\end{align*}
To bound the second sum in (\ref{rfirstb}), we note that (\ref{qlogvertices}) implies that
\begin{equation*}
    |\mathcal{A}_{\rho[q]-1}|+|\mathcal{P}_{\rho[q]-1}|=|V(B_q(G_n,k_0))|\leq 1+q+q^2(\log n)^{q/r}
\end{equation*}
on the event $\mathcal{H}_{q,3}$. So combining the last display, the lower bound on $k[\rho[q]]$ and $h^{-\chi}\leq 1$ for $h\geq 1$, we get that on the event $J$, there is a constant $c:=c(x_1,\mu,q)$ such that
\begin{align*}
   \sum_{h\in \mathcal{A}_{\rho[q]-1}\cup \mathcal{P}_{\rho[q]-1}}\hat P_{h\to k[\rho[q]]}&=\frac{\zeta_q}{(\mu+1)[k[\rho[q]]]^{1-\chi}}\sum_{h\in \mathcal{A}_{\rho[q]-1}\cup \mathcal{P}_{\rho[q]-1}}h^{-\chi} 
   \\&
   \leq \frac{c\zeta_q(\log \log n)^{(q+1)(1-\chi)}}{n^{1-\chi}}(\log n)^{q/r}.\numberthis \label{rfirstb2}
\end{align*}
Hence, applying (\ref{rfirstb1}) and (\ref{rfirstb2}) to (\ref{rfirstb}) shows that the lemma holds whenever $M_{L[q]}\leq k[\rho[q]]$. The coupling argument for $M_{L[q]}\geq k[\rho[q]]+1$ is exactly the same as above, thus omitted. 
\end{proof}

\begin{lemma}\label{rsecondcoup}
Let $\mathbf{\hat Y}^{(k[\rho[q]],n)}$, $\mathbf{\hat V}^{(k[\rho[q]],n)}$, $\Xi_{\mathbf{x}}$ and $J$ be as in  Definition \ref{interber01} and \ref{interpoi01}, (\ref{collection}) and (\ref{eventJ}). Then there is coupling of the random vectors, and a positive constant $C:=C(x_1,\mu,q)$ such that on the event $J$,  
\begin{equation*}
   \Prob_\mathbf{x}\left(\mathbf{\hat Y}^{(k[\rho[q]],n)}\not= \mathbf{\hat V}^{(k[\rho[q]],n)}\big|\Xi_{\mathbf{x}}\right) \leq \frac{C\zeta^{2}_{q}(\log \log n)^{(2-\chi)(q+1)}}{n}.
\end{equation*}
\end{lemma}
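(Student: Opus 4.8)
The plan is to follow the proof of Lemma \ref{secondcoupling} essentially verbatim, now in the embellished setting. First I would couple $\hat Y_{j\to k[\rho[q]]}$ and $\hat V_{j\to k[\rho[q]]}$ coordinatewise for $k^*_{L[q]}\leq j\leq n$: since both have mean $\hat P_{j\to k[\rho[q]]}$, the standard Poisson--Bernoulli coupling \cite[equation (1.11), p.\ 5]{lindvall2002lectures} provides, for each $j$, a coupling with
\begin{equation*}
    \Prob_\mathbf{x}\left(\hat Y_{j\to k[\rho[q]]}\not = \hat V_{j\to k[\rho[q]]}\big|\Xi_{\mathbf{x}}\right)\leq \hat P^2_{j\to k[\rho[q]]}.
\end{equation*}
Carrying out these couplings independently over $j$ and invoking Lemma \ref{tvdpf} then yields
\begin{equation*}
    \Prob_\mathbf{x}\left(\mathbf{\hat Y}^{(k[\rho[q]],n)}\not = \mathbf{\hat V}^{(k[\rho[q]],n)}\big|\Xi_{\mathbf{x}}\right)\leq \sum^n_{j=k^*_{L[q]}} \hat P^2_{j\to k[\rho[q]]}.
\end{equation*}

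It then remains to estimate the right-hand side on the event $J$. Reading off $\hat P_{j\to k[\rho[q]]}$ from (\ref{tmean3}) and Table \ref{tableofmeans}, every nonzero term has $j\geq k[\rho[q]]+1$, so that $(k[\rho[q]]/j)^\chi<1$ and hence $\hat P_{j\to k[\rho[q]]}\leq \zeta_q/((\mu+1)k[\rho[q]])$; the terms with $k^*_{L[q]}\leq j\leq k[\rho[q]]$ (which arise only when $M_{L[q]}\leq k[\rho[q]]$) are all zero. This gives
\begin{equation*}
    \sum^n_{j=k^*_{L[q]}} \hat P^2_{j\to k[\rho[q]]}\leq \frac{\zeta_q}{(\mu+1)k[\rho[q]]}\sum^n_{j=k^*_{L[q]}} \hat P_{j\to k[\rho[q]]},
\end{equation*}
and the remaining sum is precisely the one already bounded in (\ref{rfirstb1}) during the proof of Lemma \ref{rfirstcoup}: an integral comparison together with the lower bounds $k[\rho[q]]=k_{L[q]}\geq n(\log\log n)^{-(q+1)}-C_qn^{1-\beta_q/\chi}$ and $M_{L[q]}\geq n(\log\log n)^{-(q+1)}$ from (\ref{vertlab}), which hold on $\mathcal{H}_{q,1}\cap\mathcal{H}_{q,2}\supseteq J$, gives $\sum_{j}\hat P_{j\to k[\rho[q]]}\leq c\zeta_q(\log\log n)^{(q+1)(1-\chi)}$ for a constant $c:=c(x_1,\mu,q)$. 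Feeding this back in and using $1/k[\rho[q]]\leq 2(\log\log n)^{q+1}/n$ for $n$ large produces the asserted bound $C\zeta^2_q(\log\log n)^{(2-\chi)(q+1)}/n$ with $C:=C(x_1,\mu,q)$.

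I do not anticipate a real difficulty here: this is a direct transcription of Lemma \ref{secondcoupling}. The only mild care needed is in the bookkeeping of the three columns of Table \ref{tableofmeans}, to confirm that every $j$ with $\hat P_{j\to k[\rho[q]]}>0$ still satisfies $j>k[\rho[q]]$, and in quoting rather than re-deriving the sum estimate (\ref{rfirstb1}) that already appears in the proof of Lemma \ref{rfirstcoup}.
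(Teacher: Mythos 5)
Your argument is correct and follows the paper's own proof essentially line for line: the standard Poisson--Bernoulli coupling to get $\hat P_{j\to k[\rho[q]]}^2$ per coordinate, Lemma \ref{tvdpf}, the bound $\hat P_{j\to k[\rho[q]]}\leq \zeta_q/((\mu+1)k[\rho[q]])$ since $(k[\rho[q]]/j)^\chi\leq 1$ on the nonzero terms, and then the sum estimate already carried out in (\ref{rfirstb1}) together with the lower bound on $k[\rho[q]]$ from (\ref{vertlab}). The paper phrases the bookkeeping as a case split on $M_{L[q]}\leq k[\rho[q]]$ versus $M_{L[q]}\geq k[\rho[q]]+1$, while you fold the two cases together by simply noting that every index with $\hat P_{j\to k[\rho[q]]}>0$ satisfies $j\geq k[\rho[q]]+1$; this is a cosmetic difference only.
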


\begin{proof}
We again consider $M_{L[q]}\leq k[\rho[q]]$ and $M_{L[q]}\geq k[\rho[q]]+1$ separately, starting from $M_{L[q]}\leq k[\rho[q]]$. For $M_{L[q]}\leq j\leq k[\rho[q]]$, $\hat Y_{j \to k[\rho[q]]}=\hat V_{j \to k[\rho[q]]}=0$ because $P_{j \to k[\rho[q]]}=\hat P_{j \to k[\rho[q]]}=0$. Hence, we only need to couple $\hat Y_{h \to k[\rho[q]]}$ and $\hat V_{h \to k[\rho[q]]}$ for $k[\rho[q]]+1\leq h\leq n$. By the standard Poisson-Bernoulli coupling \cite[equation (1.11), p.\ 5]{lindvall2002lectures},
\begin{equation*}
   \Prob_\mathbf{x}(\hat Y_{k\to k[\rho[q]]}\not=\hat V_{k\to k[\rho[q]]}|\Xi_{\mathbf{x}})\leq \hat P^2_{k\to k[\rho[q]]}, \quad k[\rho[q]]+1\leq k\leq n.
\end{equation*}
Lemma \ref{tvdpf} and $[k[\rho[q]]/k]^\chi\leq 1$ yield
\begin{equation*}
     \Prob_\mathbf{x}\left(\mathbf{\hat Y}^{(k[\rho[q]],n)}\not= \mathbf{\hat V}^{(k[\rho[q]],n)}\big|\Xi_{\mathbf{x}}\right) \leq \sum^n_{k=k[\rho[q]]+1}\hat P^2_{k\to k[\rho[q]]} \leq \frac{\zeta_q}{(\mu+1)k[\rho[q]]} \sum^n_{k=k[\rho[q]]+1}\hat P_{k\to k[\rho[q]]},
\end{equation*}
so bounding the sum by (\ref{rfirstb1}) proves the lemma for $M_{L[q]}\leq k[\rho[q]]$. The arguments for the case $M_{L[q]}\geq k[\rho[q]]+1$ are similar, thus omitted.
\end{proof}
%\leq \frac{\mathcal{Z}^2_{X_{k[\rho[q]]}}}{(\mu+1)^2 k^{2-\chi}_{t}} \int^n_{k[\rho[q]]} y^{-\chi}dy 
\begin{lemma}\label{rthirdcoup}
Let $\mathbf{V}^{(k[\rho[q]],n)}$, $\mathbf{\hat V}^{(k[\rho[q]],n)}$, $\Xi_{\mathbf{x}}$ and $J$ be as in Definition \ref{pp01}, \ref{interpoi01}, (\ref{collection}) and (\ref{eventJ}). Then there is a coupling of the random vectors, and a positive constant $C:=C(x_1,\mu,q)$ such that on the event $J$,
\begin{align*}
   \Prob_\mathbf{x}\left(\mathbf{V}^{(k[\rho[q]],n)}\not= \mathbf{\hat V}^{(k[\rho[q]],n)}\big|\Xi_{\mathbf{x}}\right)
  \leq C\zeta_q n^{-\beta_q}(\log \log n)^{q+1}.
\end{align*}
\end{lemma}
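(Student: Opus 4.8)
Lemma \ref{rthirdcoup} compares the discretised mixed Poisson process $\mathbf{V}^{(k[\rho[q]],n)}$ with means $\lambda^{[\rho[q]]}_k$ against the process $\mathbf{\hat V}^{(k[\rho[q]],n)}$ with means $\hat P_{k\to k[\rho[q]]}$. The plan is to mimic the argument of Lemma \ref{thirdcoupling} almost verbatim, using a monotone (coupled Poisson) coupling of each pair $(V_{k\to k[\rho[q]]},\hat V_{k\to k[\rho[q]]})$, so that by Lemma \ref{tvdpf} it suffices to bound $\sum_k |\lambda^{[\rho[q]]}_k-\hat P_{k\to k[\rho[q]]}|$, where the sum runs over $k^*_{L[q]}\leq k\leq n$. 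First I would split this sum according to the three columns of Table \ref{tableofmeans}: the generic range $\max\{M_{L[q]},k_{L[q]}+1\}<k\leq n$ where both means are positive; the boundary term $k=\max\{M_{L[q]},k_{L[q]}+1\}$; and the ``gap'' indices $k^*_{L[q]}\leq k\leq \max\{M_{L[q]},k_{L[q]}+1\}$ where exactly one of the two means vanishes.

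For the generic range the calculation is identical to that in Lemma \ref{thirdcoupling}: on the event $\mathcal{H}_{q,2}$ we have $\hat a_{L[q]}\approx (k[\rho[q]]/n)^\chi$ with error $C_q n^{-\beta_q}$, so replacing $\hat a^{-1/\mu}_{L[q]}$ by $(k[\rho[q]]/n)^{-\chi/\mu}$ costs a factor $1+O(n^{-\beta_q}(\log\log n)^{(q+1)\chi})$ (the analog of (\ref{bda0})), and then expanding $(1-1/k)^{1-\chi}$ as in (\ref{meandiff}) gives
\begin{equation*}
    |\lambda^{[\rho[q]]}_k-\hat P_{k\to k[\rho[q]]}|\leq c\,\zeta_q\, n^{-\beta_q}(\log\log n)^{(q+1)\chi}\,k^{-\chi} + c'\,\zeta_q\, \{k[\rho[q]]\}^{\chi-1} k^{-1-\chi}.
\end{equation*}
Summing over $k$ using $k[\rho[q]]\geq n(\log\log n)^{-(q+1)}$ from (\ref{vertlab}) and an integral comparison (as in Lemma \ref{thirdcoupling}) produces a bound of order $\zeta_q n^{-\beta_q}(\log\log n)^{(q+1)(1+\chi-\chi)}=\zeta_q n^{-\beta_q}(\log\log n)^{q+1}$, after absorbing the $(\log\log n)$ powers; the boundary term $k=\max\{M_{L[q]},k_{L[q]}+1\}$ is handled by the same estimate applied to a single summand and is strictly smaller.

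The genuinely new contribution is the gap term. When $M_{L[q]}\leq k_{L[q]}$ the indices $M_{L[q]}\leq k\leq k_{L[q]}$ have $P_{k\to k[\rho[q]]}=\hat P_{k\to k[\rho[q]]}=0$ but $\lambda^{[\rho[q]]}_k$ as in (\ref{tmean2}); when $M_{L[q]}\geq k_{L[q]}+2$ the indices $k_{L[q]}+1\leq k\leq M_{L[q]}$ have $\lambda^{[\rho[q]]}_k=0$ but $\hat P_{k\to k[\rho[q]]}$ as in (\ref{tmean3}). In both cases the relevant quantity is $\sum |\lambda^{[\rho[q]]}_k - \hat P_{k\to k[\rho[q]]}|$ over at most $|M_{L[q]}-k_{L[q]}-1|$ indices, each summand of size $O(\zeta_q\{k[\rho[q]]\}^{-\chi}(n/k[\rho[q]])^{1-\chi}\cdot n^{-(1-\chi)}) = O(\zeta_q n^{-1}(\log\log n)^{q+1})$ after using (\ref{vertlab}). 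The key observation, which I would prove first, is that on $\mathcal{H}_{q,2}$ the discrepancy $|M_{L[q]}-(k_{L[q]}+1)|$ is small: since $|\hat a_{L[q]}-(k_{L[q]}/n)^\chi|\leq C_q n^{-\beta_q}$ and consecutive bins $((k-1)/n)^\chi,(k/n)^\chi]$ have width $\asymp n^{-1}(k/n)^{-(1-\chi)}\asymp n^{-1}(\log\log n)^{(q+1)(1-\chi)}$ near $k[\rho[q]]$ (from (\ref{Rage})-type estimates and (\ref{vertlab})), the number of bins separating $\hat a_{L[q]}$ from $(k_{L[q]}/n)^\chi$ is $O(n^{1-\beta_q}(\log\log n)^{(q+1)(1-\chi)})$. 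Multiplying the per-term size by this count gives a gap contribution of order $\zeta_q n^{-\beta_q}(\log\log n)^{O(q)}$, which is dominated by the claimed bound $C\zeta_q n^{-\beta_q}(\log\log n)^{q+1}$ once all logarithmic factors are collected (and one may enlarge the exponent of $\log\log n$ if needed, as the statement only asserts an upper bound of this form). Taking expectation is not required here since the statement is conditional on $\Xi_{\mathbf{x}}$ and $\zeta_q$ appears explicitly; combining the three pieces via the triangle inequality finishes the proof. The main obstacle is precisely this control of $|M_{L[q]}-k_{L[q]}-1|$ and verifying that the resulting gap sum stays within the stated order — everything else is a routine repetition of the $r=1$ computations in Section \ref{pfpoissonproc}.
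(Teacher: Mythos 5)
Your overall scheme — monotone Poisson--Poisson coupling on each coordinate, Lemma~\ref{tvdpf} to reduce to $\sum_k|\lambda^{[\rho[q]]}_k-\hat P_{k\to k[\rho[q]]}|$, and a three-way split following Table~\ref{tableofmeans} — is exactly what the paper does, and your treatment of the generic range $k>\max\{M_{L[q]},k_{L[q]}+1\}$ matches (\ref{coupledages})--(\ref{LPsum}). Where you diverge is on the gap indices. You estimate the gap contribution as (number of gap bins) $\times$ (per-bin size). The paper instead computes the $\lambda$-gap sum $\sum^{k[\rho[q]]}_{h=M_{L[q]}}\lambda^{[\rho[q]]}_h$ \emph{exactly} as a telescoping integral $\int_{\hat a_{L[q]}}^{(k[\rho[q]]/n)^\chi}\zeta_q(\mu\hat a^{1/\mu}_{L[q]})^{-1}y^{1/\mu-1}\,dy = \zeta_q[(k_{L[q]}/n)^{\chi/\mu}\hat a^{-1/\mu}_{L[q]}-1]$, which dispenses with any bin counting and directly invokes $|\hat a_{L[q]}-(k_{L[q]}/n)^\chi|\leq C_q n^{-\beta_q}$ to give (\ref{Lsum}); and for the $\hat P$-gap sum it uses the explicit enclosing labels $a_n\leq k_{L[q]}$ and $M_{L[q]}\leq b_n=n\hat a^{1/\chi}_{L[q]}+1$ to turn the sum into an integral $\int_{a_n}^{b_n}y^{-\chi}\,dy$ in (\ref{Psum}), which is your counting argument done more precisely. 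So your route is legitimate, just a bit less economical: the paper's telescoping observation is what saves it from ever having to bound $|M_{L[q]}-k_{L[q]}-1|$ in the $\lambda$ case.

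One arithmetic slip worth fixing: the bins near $k\asymp n(\log\log n)^{-(q+1)}$ have width $\asymp n^{-1}(\log\log n)^{(q+1)(1-\chi)}$ (wider than $n^{-1}$), so the number of bins covering a distance $C_q n^{-\beta_q}$ is $\asymp n^{1-\beta_q}(\log\log n)^{-(q+1)(1-\chi)}$ — the $\log\log n$ factor should be in the \emph{denominator}, not the numerator as you wrote. With the correct sign, your gap contribution comes out to order $\zeta_q n^{-\beta_q}(\log\log n)^{(q+1)\chi}$, which is within the claimed $\zeta_q n^{-\beta_q}(\log\log n)^{q+1}$; with your sign you would get $(\log\log n)^{(q+1)(2-\chi)}$, which is strictly larger. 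Your hedge (``enlarge the exponent if needed'') does rescue you — the statement is only used via Lemma~\ref{extnhood} and Lemma~\ref{gencoupling}, where the polynomial factor $n^{-d}$ with $d>0$ swamps any $\log\log n$ power — but as written the proposal does not quite reproduce the lemma's stated exponent, so the sign should be corrected if you want the lemma verbatim.
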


\begin{proof}
We first construct a coupling of $V_{k\to k[\rho[q]]}$ and $\hat V_{k\to k[\rho[q]]}$ that is applicable to both cases $M_{L[q]}\leq k_{L[q]}$ and $M_{L[q]}\geq k_{L[q]}+1$. Then, we bound the probability that the coupling fails for each case. For $k^*_{L[q]} +1\leq j\leq n$, let $\nu_j=~\lambda^{[\rho[q]]}_{j}\wedge \hat P_{j\to k[\rho[q]]}$,
\begin{align*}
    V'_{j\to k[\rho[q]]}\sim \mathrm{Poi}(\nu_j), \quad V''_{j\to k[\rho[q]]}\sim \mathrm{Poi}(|\lambda^{[\rho[q]]}_{j}-\hat P_{j\to k[\rho[q]]}|),
    \quad \hat V'_{j\to k[\rho[q]]}= V'_{j\to k[\rho[q]]}+V''_{j\to k[\rho[q]]},
\end{align*}
where $V''_{j\to k[\rho[q]]}$ is conditionally independent of $V'_{j\to k[\rho[q]]}$ and we use the convention that zero is a Poisson variable with mean zero. Then,
\begin{equation*}
   \Prob_\mathbf{x}(\hat V_{k\to k[\rho[q]]}\not=V_{k\to k[\rho[q]]}|\Xi_{\mathbf{x}}) \leq \Prob(V''_{k\to k[\rho[q]]}\geq 1\big|\Xi_{\mathbf{x}})\leq \left|\lambda^{[\rho[q]]}_k - \hat P_{k\to k[\rho[q]]}\right|,
\end{equation*}
So by Lemma \ref{tvdpf} and comparing the means in Table \ref{tableofmeans}, 
\begin{align*}
  &\Prob_\mathbf{x}\left(\mathbf{V}^{(k[\rho[q]],n)}\not= \mathbf{\hat V}^{(k[\rho[q]],n)}\big|\Xi_{\mathbf{x}}\right)\\
  &\leq
   \begin{cases}\numberthis \label{annoyingsum}
       \sum^{k[\rho[q]]}_{h=M_{L[q]}} \lambda^{[\rho[q]]}_h + \sum^n_{j=k[\rho[q]]+1}\left|\lambda^{[\rho[q]]}_{j}-\hat P_{j\to k[\rho[q]]}\right|, \quad M_{L[q]}\leq k_{L[q]},\\
       \sum^n_{j=k_{L[q]}+1}\left|\lambda^{[\rho[q]]}_{j}-\hat P_{j\to k[\rho[q]]}\right|,\qquad M_{L[q]}= k_{L[q]}+1,\\
       \sum^{M_{L[q]}}_{h=k[\rho[q]]+1} \hat P_{h\to k[\rho[q]]} + \sum^n_{j=M_{L[q]}}\left|\lambda^{[\rho[q]]}_{j}-\hat P_{j\to k[\rho[q]]}\right|, \quad M_{L[q]}\geq k_{L[q]}+2.
   \end{cases}
\end{align*}
We first handle the sum of absolute mean differences for all three cases in (\ref{annoyingsum}). Note that for $M_{L[q]}\leq k_{L[q]}+1$,
\begin{equation*}
    \sum^n_{j=k[\rho[q]]+1}\left|\lambda^{[\rho[q]]}_{j}-\hat P_{j\to k[\rho[q]]}\right|\leq \sum^n_{j=M_{L[q]}}\left|\lambda^{[\rho[q]]}_{j}-\hat P_{j\to k[\rho[q]]}\right|,
\end{equation*}
so it is enough to consider the sum from $M_{L[q]}$ to $n$ in all three cases. Because $|\hat a_{L(q)}-(k_{L[q]}/n)^{\chi}|\leq C_q n^{-\beta_q}$ on the event $\mathcal{H}_{q,2}$, a computation similar to that of (\ref{bda0}) gives
\begin{equation}\label{coupledages}
    \bigg|\hat a^{-\frac{1}{\mu}}_{L[q]} - \{k_{L[q]}/n\}^{-\frac{\chi}{\mu}}\bigg|\leq c_\mu n^{-\beta_q}(\log \log n)^{1+q}=:\eta_n, 
\end{equation}
where $c_\mu:=c_\mu(q,x_1,\mu)$ is a constant. Using (\ref{coupledages}) and the second inequality in (\ref{vertlab}), we can repeat the calculation of (\ref{meandiff}) to obtain
\begin{equation}\label{lambdaPbound}
    \left|\lambda^{[\rho[q]]}_{j}-\hat P_{j\to k[\rho[q]]}\right|\leq \frac{c\zeta_q\eta_n}{n^{1-\chi}} j^{-\chi} + c'\zeta_q(\log\log n)^{(1-\chi)(q+1)} j^{-2},
\end{equation}
where $c:=c(x_1,\mu)$ and $c':=c'(\mu)$. Using (\ref{vertlab}) and integral comparisons, a straightforward computation shows that on the event $J$, $\sum^n_{j=M_{L[q]}} j^{-\chi}\leq (\mu+1)n^{1-\chi}$. The second term of (\ref{lambdaPbound}) can be bounded in the same way, such that on the event $J$, $\sum^n_{j=M_{L[q]}} j^{-2}\leq n^{-1}(\log\log n)^{q+1}$. Combining the bounds, we get that for all three cases in (\ref{annoyingsum}), there is a constant $C:=C(x_1,\mu,q)$ such that
\begin{equation}\label{LPsum}
    \sum^n_{j=M_{L[q]}} \left|\lambda^{[\rho[q]]}_{j}-\hat P_{j\to k[\rho[q]]}\right| \leq C\zeta_q n^{-\beta_q}(\log\log n)^{q+1}.
\end{equation}
Next, we bound the remaining sums appearing in (\ref{annoyingsum}). When $M_{L[q]} \leq k_{L[q]}$, 
\begin{align*}
     \sum^{k[\rho[q]]}_{h=M_{L[q]}} \lambda^{[\rho[q]]}_h &= \int^{\left(\frac{k[\rho[q]]}{n}\right)^\chi}_{\hat a_{L[q]}} \frac{\zeta_q}{\mu \hat a^{1/\mu}_{L[q]}} y^{\frac{1}{\mu}-1} dy =\zeta_q \left[ \left(\frac{k_{L[q]}}{n}\right)^{\frac{\chi}{\mu}}\hat a^{-\frac{1}{\mu}}_{L[q]} -1 \right].
\end{align*}
Choose $n$ large enough such that $C_q n^{-\beta_q}(\log\log n)^{\chi(q+1)}< 1$. We use $(k_{L[q]}/n)^\chi \leq \hat a_{L[q]}\ + C_q n^{-\beta_q}$ and the first inequality in (\ref{vertlab}) to calculate
\begin{align*}
   \left(\frac{k_{L[q]}}{n}\right)^{\frac{\chi}{\mu}}\hat a^{-\frac{1}{\mu}}_{L[q]} -1 &\leq \hat a^{-\frac{1}{\mu}}_{L[q]}\left[\hat a_{L[q]} + C_q n^{-\beta_q}\right]^{\frac{1}{\mu}} -1 
  \leq \{1+C_q n^{-\beta_q}(\log\log n)^{(q+1)\chi}\}^{\frac{1}{\mu}}-1\\
    &\leq C n^{-\beta_q}(\log \log n)^{(q+1)\chi},
\end{align*}
where $C:=C(x_1,\mu,q)$ is a constant. Hence, on the event $J$,
\begin{equation}\label{Lsum}
   \sum^{k[\rho[q]]}_{h=M_{L[q]}} \lambda^{[\rho[q]]}_h\leq C\zeta_q n^{-\beta_q}(\log \log n)^{(q+1)\chi},
\end{equation}
When $M_{L[q]}\geq k_{L[q]}+2$, we need an upper bound on $M_{L[q]}$ to bound $\sum^{M_{L[q]}}_{h=k_{L[q]}+1} \hat P_{h\to k[\rho[q]]}$. Since $((M_{L[q]}-1)/n)^\chi\leq a_{L[q]}$ by definition, we have $M_{L[q]}\leq n\hat a^{1/\chi}_{L[q]} + 1$. Let $a_n:=\floor{na^{1/\chi}_{L[q]}-C_qn^{1-\beta_q/\chi}}$ and $b_n := na^{1/\chi}_{L[q]} + 1$, so that $k_{L[q]}\geq a_n$ and $M_{L[q]}\leq b_n$ on the event $J$. In addition, pick $n$ large enough so that $e_n:=(\log\log n)^{q+1}(C_q n^{-\beta_q/\chi}+n^{-1})<1$. On the event $J$, we use $\hat a^{-1/\chi}_{L[q]}\leq (\log\log n)^{q+1}$ and the first inequality in (\ref{vertlab}) to obtain
\begin{align*}
    &\sum^{M_{L[q]}}_{h=k_{L[q]}+1} \hat P_{h\to k[\rho[q]]}
    \quad \leq \frac{ \zeta_q}{(\mu+1)(k_{L[q]})^{1-\chi}}\int^{b_n}_{a_n} y^{-\chi} dy\\
    & \quad\leq \frac{ \zeta_q n^{1-\chi}\hat a^{1/\mu}_{L[q]}}{(k_{L[q]})^{1-\chi}}\left[\left(1+n^{-1}\hat a^{-1/\chi}_{L[q]}\right)^{1-\chi}-\left(1-C_q\hat a^{-1/\chi}_{L[q]}n^{-\beta_q/\chi}-n^{-1}\hat a^{-1/\chi}_{L[q]}\right)^{1-\chi}\right]\\
    & \quad\leq C\zeta_q (\log\log n)^{q+1}\left[\{1+n^{-1}(\log\log n))^{q+1}\}^{1-\chi}-\{1-e_n\}^{1-\chi}\right]\\
    &\quad \leq C' \zeta_q n^{-\beta_q/\chi}(\log\log n)^{2(q+1)}, \numberthis \label{Psum}
\end{align*}
where $C:=C(x_1,\mu)$ and $C':=C'(x_1,\mu)$. The proof then follows from applying (\ref{LPsum}), (\ref{Lsum}) and (\ref{Psum}) to (\ref{annoyingsum}).
\end{proof}

Next, we apply Lemma \ref{rfirstcoup}, \ref{rsecondcoup} and \ref{rthirdcoup} to prove Lemma \ref{extnhood}.

\begin{proof}[Proof of Lemma \ref{extnhood}]
We show that on the event $J$, there is a coupling of $\mathbf{Y}^{(k[\rho[q]],n)}$ and $\mathbf{V}^{(k[\rho[q]],n)}$, and positive constants $C:=C(x_1,\mu,q)$, $c:=c(x_1,\mu)$, $C':=C'(x_1,\mu,q)$ and $C'':=C''(x_1,\mu,q)$ such that
\begin{align*}
    &\Prob_\mathbf{x}\left(\mathbf{Y}^{(k[\rho[q]],n)}\not= \mathbf{V}^{(k[\rho[q]],n)}\big|\Xi_{\mathbf{x}}\right)\\
    &\hspace{1cm}\leq \frac{C\zeta_q(\log \log n)^{(1-\chi)(q+1)}}{n^\gamma}
    +\frac{c\zeta_q(\log \log n)^{(q+1)(1-\chi)}}{(\mu+1)n^{1-\chi}}(\log n)^{q/r}\\
    &\hspace{4cm} + \frac{C'\zeta^{2}_{q}(\log \log n)^{(2-\chi)(q+1)}}{(\mu+1)n}
    + \frac{C''\zeta_q(\log \log n)^{q+1}}{n^{\beta_q}}. \numberthis \label{Lvectorbd}
\end{align*} 
The lemma follows from taking expectation with respect to $\Xi_{\mathbf{x}}$, since on the event $J$, $\E_{\mathbf{x}}\{ \zeta_q|k_{L[q]}\}\leq \kappa+1$ and $\E_{\mathbf{x}} \{\zeta_q^2|k_{L[q]}]\} \leq  (\kappa+2)(\kappa+1)$. The coupling can be constructed exactly as for $\mathbf{Y}^{(k[1],n)}$ and $\mathbf{V}^{(k[1],n)}$, this time using Lemma \ref{rfirstcoup}, \ref{rsecondcoup} and \ref{rthirdcoup}. The bound in (\ref{Lvectorbd}) follows readily from a computation similar to (\ref{vectorineq}).
\end{proof}

%Under the coupling,
%\begin{align*}
 %   &\Prob_{\mathbf{x}}\left(\mathbf{Y}^{(k[\rho[q]],n)}\not = \mathbf{V}^{(k[\rho[q]],n)}\big|\Xi_\mathbf{x}\right)
    %\leq \Prob_{\mathbf{x}}\left(\mathbf{Y}^{(k[\rho[q]],n)}\not %= \mathbf{\hat Y}^{(k[\rho[q]],n)}\big|\Xi_\mathbf{x}\right)\\
   % &+\Prob_{\mathbf{x}}\left(\mathbf{\hat Y}^{(k[\rho[q]],n)}\not = \mathbf{\hat V}^{(k[\rho[q]],n)}\big|\Xi_\mathbf{x}\right)
   % +\Prob_{\mathbf{x}}\left(\mathbf{\hat V}^{(k[\rho[q]],n)}\not = \mathbf{V}^{(k[\rho[q]],n)}\big|\Xi_\mathbf{x}\right),
%\end{align*}
%and applying Lemma \ref{rfirstcoup}, \ref{rsecondcoup}, \ref{rthirdcoup} proves the lemma.

\section{Application to degree statistics: proofs }\label{sapp}
In this section we prove the results appearing in Section \ref{subsecapp}. We only prove Theorem \ref{uniformvert} and Proposition \ref{pmfuni}, because Theorem \ref{ancestors} is an immediate consequence of Theorem \ref{bigthm}, and the proof of Proposition \ref{ancestorpowerlaw} is similar to that of Proposition \ref{pmfuni}.

For Theorem \ref{uniformvert}, the key to improving the rate of convergence is to choose a suitable threshold $n^\psi$, and then for each fixed vertex $\ceil{n^\psi}\leq j\leq n$ in $G_n\sim \mathrm{Seq}(\mathbf{x})_n$ and $\mathbf{x}\in A_{\alpha,n}$, construct a coupling as in Lemma \ref{firstcoupling}, \ref{secondcoupling} and \ref{thirdcoupling}. The convergence rate then follows from randomising over the choices of uniform vertex $k_0$, and taking expectation with respect to $\mathbf{X}$. More intuitively, this is because the uniformly chosen vertex only needs to be large enough so that it has a small degree with high probability, in contrast to having a small $r$-neighbourhood for all $r<\infty$, as required in the proof of Theorem \ref{bigthm}.

\begin{proof}[Proof of Theorem \ref{uniformvert}]
Given $p>4$, choose $\alpha$ such that $1/2+1/p<\alpha<3/4$, and let $A_{\alpha,n}$ be as in (\ref{eventX}). In preparation for the coupling, define $\psi:=\max\{1-(1-\alpha)/8, \chi\}$ and $\gamma':=\min\{\psi, \chi(3-4\alpha)/4\}$. Let $\Xi_{\mathbf{x}}:=((\mathcal{Z}_j[1], \mathcal{Z}_j[1]), 2\leq j\leq n)$, $(S_{k,n}[1], 1\leq k\leq n)$ and the events $F_{1,i}$, $i=1,2,3$ be as in (\ref{newgamma}), (\ref{newtildegamma}), (\ref{news}) and (\ref{3f}). Furthermore, let $U_0\sim\mathrm{U}[0,1]$, $\hat a_0=U^\chi_0$ and $k_0=\ceil{nU_0}$. Define $\mathbf{Y}^{(k_0,n)}$, $\mathbf{V}^{(k_0,n)}$, $\mathbf{\hat Y}^{(k_0,n)}$ and $\mathbf{\hat V}^{(k_0,n)}$ as in Definition \ref{berproc01}, \ref{dispoi}, \ref{interber} and \ref{interpoi}.

Fixing vertex $\ceil{n^\psi}\leq j\leq n$, assume that $\mathbf{x}\in A_{\alpha,n}$ and $U_0\in ((j-1)/n, j/n]$, where the latter implies $k_0=j$. We start by coupling $\mathbf{Y}^{(j,n)}$ and $\mathbf{\hat Y}^{(j,n)}$. On the event $\bigcap^3_{i=1}F_{1,i}$, arguing the same way as for (\ref{impbd}), this time using $j>n^\psi$ and $\gamma'<\psi$, there is a positive constant $c:=c(x_1,\mu)$ such that
%\begin{equation*}
%    \frac{S_{j,n}[1]}{S_{h,n}[1]}\leq \left(\frac{j}{h}\right)^\chi + cn^{[1-\psi-(1-\alpha)/4]\chi}\quad\text{for $j+1\leq h\leq n$};
%\end{equation*}
%noting that $cn^{[1-\psi-(1-\alpha)/4]\chi}$ converges to zero as $n\to\infty$, hence implying
\begin{equation*}
   (1-cn^{-\gamma'})\hat P_{h\to j}  \leq  P_{h\to j}\leq (1+cn^{-\gamma'})\hat P_{h\to j},
\end{equation*}
where $P_{h\to j}$ and $\hat P_{h\to j}$ are as in (\ref{tmean1}) and (\ref{mean2}), with $k_0$ replaced by $j$. Hence, using the same coupling as that of Lemma \ref{firstcoupling}, on the event $\bigcap^3_{i=1}F_{1,i}$,
\begin{align} \label{improvedcoup1}
   \Prob_{\mathbf{x}}\left(\mathbf{Y}^{(j,n)}\not = \mathbf{\hat Y}^{(j,n)}|\Xi_\mathbf{x}\right)=\frac{cn^{-\gamma'}\mathcal{Z}_j[1]}{(\mu+1)j^{1-\chi}} \sum^n_{h=j+1} h^{-\chi}
   \leq \frac{cn^{1-\chi-\gamma'}\mathcal{Z}_j[1]}{j^{1-\chi}}.
\end{align}

The coupling of $\mathbf{\hat Y}^{(j,n)}$ and $\mathbf{\hat V}^{(j,n)}$ is entirely similar to that in Lemma \ref{secondcoupling}, yielding 
\begin{align}\label{improvedcoup2}
    \Prob_{\mathbf{x}}\left(\mathbf{\hat Y}^{(j,n)}\not = \mathbf{\hat V}^{(j,n)}|\Xi_\mathbf{x}\right) \leq \frac{\mathcal{Z}^2_j[1]n^{1-\chi}}{(\mu+1)j^{2-\chi}}.
\end{align}
Next, we couple $\mathbf{\hat V}^{(j,n)}$ and $\mathbf{ V}^{(j,n)}$ as in the proof of Lemma \ref{thirdcoupling}. To bound the sum of the absolute differences of the Poisson means, we apply (\ref{agediff}) and $j\geq n^\psi$ to obtain $\hat a^{-1/\mu}_0-(j/n)^{-\chi/\mu}\leq c'' n^{-\chi} j^{\chi-1}$, where $c'':=c(\mu)$ is a constant. Using the bound, we can continue much the same way for obtaining (\ref{meandiff}). Adding the absolute differences over $j+1,...,n$, we deduce that there are constants  $c':=c'(\mu)$ and $c'':=c''(\mu)$ such that
\begin{align}\label{improvedcoup3}
    \Prob_{\mathbf{x}}\left(\mathbf{V}^{(j,n)}\not =\mathbf{\hat V}^{(j,n)} |\Xi_\mathbf{x}\right) \leq \mathcal{Z}_j[1] \{c'n^{-\chi^2} j^{\chi-1} + c''j^{-2}\}.
\end{align}
Applying the arguments for proving Lemma \ref{poissonproc}, this time with the bounds (\ref{improvedcoup1}), (\ref{improvedcoup2}) and (\ref{improvedcoup3}), it follows that
\begin{align}\label{totalerror}
    \Prob_\mathbf{x}\bigg(\mathbf{Y}^{(j,n)}\not =\mathbf{V}^{(j,n)}, \bigcap^3_{i=1} F_{1,i} \bigg)\leq \frac{cn^{1-\chi-\gamma'}x_j}{j^{1-\chi}} +  \frac{x_j(x_j+1)n^{1-\chi}}{(\mu+1)j^{2-\chi}} + \frac{c'x_jn^{-\chi^2}}{j^{1-\chi}}+\frac{c''x_j}{j^2}.
\end{align}
To conclude the proof, we note that
 \begin{align}\label{kb}
      \Prob\left(\mathbf{Y}^{(k_0,n)}\not =\mathbf{V}^{(k_0,n)} \right)\leq \E\bigg[\mathbbm{1}[\mathbf{x}\in A_{\alpha, n}] \Prob_\mathbf{x}\left(\mathbf{Y}^{(k_0,n)}\not =\mathbf{V}^{(k_0,n)} \right)\bigg] + \Prob(\mathbf{x}\in A^c_{\alpha, n}),
 \end{align}
where for the first term, we write
\begin{align*}
    &\Prob_\mathbf{x}\left(\mathbf{Y}^{(k_0,n)}\not =\mathbf{V}^{(k_0,n)} \right)
    =\frac{1}{n}\sum^n_{j=1}\Prob_\mathbf{x}\left(\mathbf{Y}^{(j,n)}\not =\mathbf{V}^{(j,n)} \right)\\
    &\qquad\leq \frac{\ceil{n^\psi}-1}{n} + \frac{1}{n}\sum^n_{j=\ceil{n^\psi}}\Prob_\mathbf{x}\left(\mathbf{Y}^{(j,n)}\not =\mathbf{V}^{(j,n)} \right)\\
    &\qquad\leq n^{\psi-1} +\frac{1}{n} \sum^n_{j=\ceil{n^\psi}}\Prob_\mathbf{x}\bigg(\mathbf{Y}^{(j,n)}\not =\mathbf{V}^{(j,n)}, \bigcap^3_{i=1} F_{1,i} \bigg) + \Prob_\mathbf{x}\bigg(\bigg(\bigcap^3_{k=1} F_{1,k}\bigg)^c\bigg)\\
    &\qquad\leq n^{\psi-1} +\frac{1}{n}\sum^n_{j=\ceil{n^\psi}}\Prob_\mathbf{x}\bigg(\mathbf{Y}^{(j,n)}\not =\mathbf{V}^{(j,n)}, \bigcap^3_{i=1} F_{1,i} \bigg)+ \sum^3_{k=1}\Prob_{\mathbf{x}}( F^c_{1,k} ).
\end{align*}
Since $\mathbf{x}\in A_{\alpha, n}$, we bound $\Prob_{\mathbf{x}}(F^c_{1,k})$ using Lemma \ref{Sasymptotic}, and (\ref{bgc}) and (\ref{woboundedass}) of Lemma \ref{betagamma}. Applying the last display and (\ref{totalerror}) to (\ref{kb}), bounding $\Prob(\mathbf{x}\in A^c_{\alpha, n})$ by Lemma \ref{goodsum}, and then taking expectation with respect to $\mathbf{X}$, we obtain
\begin{align*}
    \Prob\left(\mathbf{Y}^{(k_0,n)}\not =\mathbf{V}^{(k_0,n)} \right)
    \leq C'n^{-b} + \sum^n_{j=\ceil{n^\psi}}\left\{ \frac{cn^{-\chi-\gamma'}\mu }{j^{1-\chi}} +  \frac{(\E X^2_2+\mu)n^{-\chi}}{(\mu+1)j^{2-\chi}} + \frac{c'\mu n^{-1-\chi^2}}{j^{1-\chi}}+\frac{c''n^{-1}}{j^2}\right\},
\end{align*}
where $b=\min\{\chi[p(\alpha-1/2)-1], \chi(1-\alpha)/2, 4\gamma'-\chi(4\alpha-3),\chi,\psi-1\}$ and $C':=C'(x_1,\mu,p)$. By an integral comparison, we get that the sum above is bounded by $C'n^{-\min\{\gamma',1-\chi+\chi^2\}}$, where $C':=C'(x_1,\mu)$ is a constant. Choosing $d=\min\{b,\gamma',1-\chi+\chi^2\}$ and $C=2\max\{C',C''\}$ concludes the proof.
\end{proof}

% obtaining 
 %\begin{align*}
  %   \Prob_{\mathbf{x}}\left(\mathbf{V}^{(j,n)}\not =\mathbf{\hat V}^{(j,n)} |\Xi_\mathbf{x}\right)\leq \sum^n_{h=j+1}|\xi^{[j]}_{h}-\hat P_{h\to j}|,
 %%\end{align*}
 %where $\xi^{[j]}_{h}$ is as in (\ref{tmean3}), with $\hat a_0$ and $k_0$ replaced with $\mathfrak{a}_j$ and $j$.

\begin{proof}[Proof of Proposition \ref{pmfuni}]
The probability mass function given in (\ref{degdistn}) is an exercise of integration. The steps are similar to the case where $X_1=0$ and $X_i=1$ almost surely for $i\geq 2$, and the details can be found in \textcite[Lemma 5.2]{Berger12asymptoticbehavior}. To prove (\ref{powerlaw}), we use the argument of \textcite{lodewijks2020phase} for proving their Theorem 2.6(i). \cite[Theorem 1, equation (5)]{jameson2013} implies that $k^{\mu+2}\Gamma(x+k-1)/\Gamma(x+\mu+k+1)\leq 1$ for all $x>0$ and $k\geq 1$. Hence,
\begin{align*}
    f_{\mu,k}(x)=k^{\mu+2}\frac{\Gamma(x+k-1)}{\Gamma(x+\mu+k+1)}\frac{\Gamma(x+\mu+1)}{\Gamma(x)}
\end{align*}
is dominated by $\Gamma(x+\mu+1)/\Gamma(x)$. Thus, if $\E X^{\mu+1}_2<\infty$, the dominated convergence theorem (\textcite[Theorem 1.5.8, p.\ 24]{durrett}) implies that 
\begin{equation*}
     \lim_{k\to\infty} k^{\mu+2} p_\pi(k)= (\mu+1)\int^\infty_0 \lim_{k\to\infty} f_{\mu,k}(x) d\pi(x),
\end{equation*}
and so the claim follows from $\lim_{k\to\infty} k^{\mu+2}\Gamma(x+k-1)/\Gamma(x+k+\mu+1)=1$.
\end{proof}

\section{The embellished preferential attachment graphs}\label{embmodel}
In this section we study the embellished $(\mathbf{x},n)$-sequential model, that is, the $(\mathbf{x},n)$-sequential model conditional on a finite collection of edges. Importantly, this embellished $(\mathbf{x},n)$-sequential model has an urn representation that enables us to apply the Bernoulli-Poisson coupling arguments to all vertices in the local neighbourhood of the uniform vertex in the random graph with law Seq$(\mathbf{x})_n$.

\subsection{The attachment rules}
Given $\mathbf{x}$ and $n$, let $G'_n\sim\mathrm{Seq}(\mathbf{x})_n$. To specify the edges that we condition on, let $Q_{j\to k}$ be a zero-one variable that takes value one if and only if vertex $j$ sends an outgoing edge to $k$ in $G'_n$, and $W_{k,n}$ be the in-degree of vertex $k$ in $G'_n$, so that $W_{k,n} := \sum^n_{j=k+1} Q_{j\to k}$, and $W_{k,n}=0$ if $k\geq n$. In view of the preferential attachment rules, if $Q_{k\to i}=1$ for some $1\leq i<k$, then $Q_{k\to l}=0$ for any $l\not =i$. Furthermore, let $\mathcal{V}$ be a strict subset of $V(G'_n)$, and given $\mathcal{V}$, let $\mathcal{E}$ be a set of edges where at least one end of an edge in this set is a vertex in $\mathcal{V}$. Furthermore, for each $u\in \mathcal{V}$ where $u>1$, $Q_{u\to k}=1$ for some $k<u$ and $Q_{u\to j}=0$ for all $j\not=k$, so that vertex $k$ is the recipient of the only outgoing edge from vertex $u$. We investigate how conditioning on the event 
\begin{equation*}
   \mathcal{I}:=\mathcal{I}(\mathcal{V},\mathcal{E}):= \bigcap_{\substack{\{h,\ell\}\not \in \mathcal{E};\\ h \in \mathcal{V},h<\ell\leq n } } \{Q_{\ell \to h}=0\} \cap \bigcap_{\substack{\{i,k\}\in \mathcal{E};\\ 1\leq i<k \leq n}}\{ Q_{k\to i}=1\}
\end{equation*}
changes the attachment rules of $G'_n$. Note that $\Prob_{\mathbf{x}}(\mathcal{I})>0$ because $\mathcal{E}$ does not allow vertex $u\in \mathcal{V}$ to send two outgoing edges, and the outgoing edge from $u\in \mathcal{V}$ is always directed towards some vertex $k<u$. Moreover, observe that the attachment steps involving vertex $u\in \mathcal{V}$ are deterministic on the event $\mathcal{I}$. 

To prepare for the subsequent arguments, denote by $\mathcal{V}^*:=\{ v\not \in \mathcal{V}: \{u,v\}\in \mathcal{E}\}$ the vertices that are not in $\mathcal{V}$, but are the endpoints of at least one edge in $\mathcal{E}$. To exclude trivial cases, from now on we assume that $\mathcal{V}\cup\mathcal{V}^*$ is a strict subset of $V(G_n)$. Moreover, let $v_s:=v_s(\mathcal{V})$ (resp.\ $v^*_s:=v^*_s(\mathcal{V}^*)$) be the vertex in $\mathcal{V}$ (resp.\ $\mathcal{V}^*$) that has the smallest vertex label. On the event $\mathcal{I}$, $v^*_s$ must be a recipient of at least one incoming edge from the vertices in $\mathcal{V}$, and it does not send an outgoing edge to a vertex in $\mathcal{V}$. If $v^*_s$ sends an outgoing edge to vertex $v \in \mathcal{V}$, then by the definition of $\mathcal{V}^*$, there is some vertex $u<v^*_s$ in $\mathcal{V}^*$ that receives the edge emanating from $v$. Because $\Prob_{\mathbf{x}}(\mathcal{I})>0$, $u$ and $v^*_s$ cannot both be the recipients of the incoming edges from $v$.

We impose the following assumption on the collection of edges, which greatly simplifies the upcoming computations. We refer to it as ($\triangle$). 
\begin{center}
\fbox{\begin{varwidth}{\dimexpr\textwidth-2\fboxsep-2\fboxrule\relax}
The outgoing edge sent by vertex $v\in \mathcal{V}\setminus \{v_s\}$ is received by another vertex $u\in \mathcal{V}$.
\end{varwidth}}
\end{center}
A moment's thought reveals that under this assumption, $v_s$ is the only vertex in $\mathcal{V}$ that sends an outgoing edge to $v^*_s$, and any vertex $w\in \mathcal{V}^*\setminus \{v^*_s\}$ must be sending an outgoing edge to a vertex in $\mathcal{V}$. By Lemma \ref{onlyedge}, $\mathcal{V}$ and $\mathcal{V^*}$ correspond to $\mathcal{P}_{t-1}$ and $\mathcal{A}_{t-1}$ for any $t\geq 2$ under the assumption, where $\mathcal{P}_{t-1}$ and $\mathcal{A}_{t-1}$ are respectively the sets of probed and active vertices in the breadth-first search of the $(\mathbf{x},n)$-sequential model. It follows that $v_s$ and $v^*_s$ correspond to $k_s[t]$ and $k^*_s[t]$ respectively. To visualise these observations, it is instructive to refer to Figure \ref{graphlabelling}, with the labels there replaced with the vertex labels of $G'_n$, and the edges directed. Additionally, the event $\mathcal{I}$ can be written as 
\begin{align*}
   \mathcal{I}&=\{Q_{v_s\to v^*_s}=1\} \cap \bigcap_{\substack{\{\ell,h\}\not \in \mathcal{E};\\  h\in \mathcal{V},h<\ell\leq n} } \{Q_{\ell \to h}=0\} \cap \bigcap_{\substack{\{i,k\}\in \mathcal{E}\setminus\{v^*_s,v_s\};\\ v_s\leq i<k\leq n}}\{ Q_{k\to i}=1\}\\
   &=: \{Q_{v_s\to v^*_s}=1\} \cap \mathcal{J}.\numberthis\label{eventI}
\end{align*}

We now show that conditioning on $\mathcal{I}$, $G'_n$ has a modified preferential attachment rule. Firstly, note that conditioning on $\mathcal{I}$ does not change the attachment rule for the first $v^*_s-1$ steps for constructing the $G'_n$. Next, we study how conditioning on $\mathcal{I}$ changes the rules for constructing $G'_i$ from $G'_{i-1}$ for $v^*_s<i<v_s$. Under the assumption $(\triangle)$, the edges in $\mathcal{E}\setminus \{v^*_s,v_s\}$ do not affect these attachment steps, because these edges are born after step $v_s$. Hence it is enough to consider how conditioning on the edge $\{v^*_s,v_s\}$ changes the rules of these attachment steps. The upcoming lemma is a variation of \textcite[Lemma 3.5]{ross2013}. We state the lemma in slightly greater generality, but is clearly applicable to our case by taking $\ell=v^*_s$ and $s=v_s$ in what follows. Choose any two positive integers $\ell$ and $s$, where $\ell<s<n$. The lemma below implies that given $Q_{s \to \ell}=1$ and $G'_{m-1}$, where $\ell < m < s$, we attach vertex $m$ to $\ell$ according to the same preferential attachment rule, but also include the edge $\{s,\ell\}$ in the vertex weight of $\ell$. In other words, we can think of the initial attractiveness of vertex $\ell$ as $x_{\ell}+1$ instead of $x_{\ell}$. 

\begin{lemma}\label{sizebias}
Let $Q_{m\to j}$, $G'_{m-1}$ and $W_{\ell,m}$ be as above and $T_k:=\sum^k_{i=1} x_i$. Then for $j,\ell<m<s$, 
\begin{equation}\label{sb}
    \Prob_{\mathbf{x}}(Q_{m\to j}=1|G'_{m-1},Q_{s\to \ell}=1)=\frac{W_{j,m-1}+x_{j}+\mathbbm{1}[j=\ell]}{T_{m-1}+m-1}.
\end{equation}
\end{lemma}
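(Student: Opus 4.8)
\textbf{Proof plan for Lemma \ref{sizebias}.}

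The plan is to compute the conditional probability directly from the definition, exploiting the Markov structure of the sequential construction. First I would write
\[
    \Prob_{\mathbf{x}}(Q_{m\to j}=1\mid G'_{m-1},Q_{s\to \ell}=1)
    =\frac{\Prob_{\mathbf{x}}(Q_{m\to j}=1, Q_{s\to \ell}=1\mid G'_{m-1})}{\Prob_{\mathbf{x}}(Q_{s\to \ell}=1\mid G'_{m-1})},
\]
and then condition further on the intermediate graph $G'_{s-1}$ in both numerator and denominator, using the tower property. The key observation is that, given $G'_{s-1}$, the event $\{Q_{s\to\ell}=1\}$ depends only on $G'_{s-1}$ through the weight of vertex $\ell$ at time $s-1$, with probability $(W_{\ell,s-1}+x_\ell)/(T_{s-1}+s-2)$ by the attachment rule; and given $G'_{m-1}$ with $\ell<m<s$, the event $\{Q_{m\to j}=1\}$ has probability $(W_{j,m-1}+x_j)/(T_{m-1}+m-1)$, again by the attachment rule. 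So both conditional probabilities factor through intermediate weights, and the ratio will collapse once I track how the presence of the edge $\{m,j\}$ (when $j=\ell$) feeds into $W_{\ell,s-1}$.

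The main technical step is therefore a careful bookkeeping argument: I would decompose $W_{\ell,s-1}=W_{\ell,m-1}+\mathbbm{1}[Q_{m\to\ell}=1]+\big(\text{edges received by }\ell\text{ at steps }m+1,\dots,s-1\big)$, and similarly express the total weight normalisations $T_{m-1}+m-1$, $T_{s-1}+s-2$ as telescoping quantities. Summing $\Prob_{\mathbf{x}}(Q_{s\to\ell}=1\mid G'_{s-1})$ over all graphs $G'_{s-1}$ consistent with a given value of $\{Q_{m\to j}=1\}$, and using that at each intermediate step $m<t<s$ the increment to $\ell$'s weight is a Bernoulli whose success probability is proportional to the current weight of $\ell$, the sum over the intermediate history should reduce to an expression depending only on $W_{\ell,m}$ (hence on $\mathbbm{1}[j=\ell]$). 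Concretely, the quantity $\E_{\mathbf{x}}\big[(W_{\ell,s-1}+x_\ell)/(T_{s-1}+s-2)\,\big|\,G'_m\big]$ is expected to equal $(W_{\ell,m}+x_\ell)/(T_m+m-1)$ — i.e.\ the normalised weight of $\ell$ is a martingale in the step index — which is precisely the classical P\'olya-urn fact already invoked in the proof of Theorem \ref{linebreaking}. Plugging this back, the factors $\Prob_{\mathbf{x}}(Q_{m\to j}=1\mid G'_{m-1})$ in numerator and the $(T_{m-1}+m-1)$ normalisations cancel against the denominator, leaving exactly $(W_{j,m-1}+x_j+\mathbbm{1}[j=\ell])/(T_{m-1}+m-1)$.

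The main obstacle I anticipate is justifying the martingale (or urn-embedding) identity $\E_{\mathbf{x}}\big[(W_{\ell,s-1}+x_\ell)/(T_{s-1}+s-2)\mid G'_m\big]=(W_{\ell,m}+x_\ell)/(T_m+m-1)$ cleanly, since the normalising denominator $T_{t}+t-1$ changes deterministically at each step while $W_{\ell,t}$ changes randomly; one has to check the one-step identity $\E_{\mathbf{x}}[(W_{\ell,t}+x_\ell)\mid G'_{t-1}]=(W_{\ell,t-1}+x_\ell)(T_t+t-1)/(T_{t-1}+t-2)$ and then iterate. This is exactly the content of Lemma \ref{urnembed} and the de Finetti argument used for Theorem \ref{linebreaking}, so I would simply cite that machinery rather than reprove it. An alternative, possibly cleaner route is to use the line-breaking/P\'olya-urn representation of Theorem \ref{linebreaking} from the outset: conditioning on $\{Q_{s\to\ell}=1\}$ there amounts to conditioning the $\ell$-th urn on drawing a white ball once, which by de Finetti size-biases the limiting beta variable, equivalently increments the shape parameter $x_\ell\mapsto x_\ell+1$; the attachment probability for vertex $m$ to $\ell$ then reads off directly with the $\mathbbm{1}[j=\ell]$ correction. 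I would present the direct computation as the main proof and mention the urn interpretation as a remark.
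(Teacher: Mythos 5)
Your overall plan — Bayes decomposition plus the expected future weight of $\ell$ — is the route the paper takes, but the key identity you lean on is wrong. You claim the ``classical P\'olya-urn fact''
\begin{equation*}
\E_{\mathbf{x}}\!\left[\frac{W_{\ell,s-1}+x_\ell}{T_{s-1}+s-2}\,\Big|\,G'_m\right]=\frac{W_{\ell,m}+x_\ell}{T_m+m-1},
\end{equation*}
i.e.\ that the weight of vertex $\ell$ divided by the \emph{total} weight of the graph is a martingale in the step index. This is false unless all the $x_i$ vanish. The correct one-step identity (this is what Lemma \ref{morimom} in the paper gives) is
\begin{equation*}
\E_{\mathbf{x}}[(W_{\ell,t}+x_\ell)\mid G'_{t-1}]=(W_{\ell,t-1}+x_\ell)\,\frac{T_{t-1}+t-1}{T_{t-1}+t-2},
\end{equation*}
with $T_{t-1}$ — not $T_t$ — in the numerator, whereas the ratio of consecutive total-weight normalisations is $(T_t+t-1)/(T_{t-1}+t-2)$. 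These agree only if $T_t=T_{t-1}$, i.e.\ $x_t=0$; in general the new vertex brings attractiveness $x_t$ that inflates the total weight without feeding proportionally into $\ell$'s weight, so the ratio is not a martingale. Iterating the true one-step identity gives the non-telescoping product $\prod_{h=m}^{s-2}(T_h+h)/(T_h+h-1)$, not $(T_{s-1}+s-2)/(T_m+m-1)$.

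This is not merely a cosmetic slip: if you feed your claimed martingale into your own computation you land on $T_m+m-1$ in the final denominator, whereas the lemma has $T_{m-1}+m-1$, and $T_m+m-1=T_{m-1}+x_m+m-1$, so the two differ whenever $x_m\neq 0$. The repair is exactly what the paper does: keep the product $\prod_h(T_h+h)/(T_h+h-1)$ uncollapsed, compute $\E_{\mathbf{x}}(W_{\ell,s-1}+x_\ell\mid G'_{m-1})$ and $\E_{\mathbf{x}}(W_{\ell,s-1}+x_\ell\mid G'_{m-1},Q_{m\to j}=1)$ via Lemma \ref{morimom}, and note that the two products are identical except for the single factor at $h=m-1$; that residual factor is precisely what shifts the normalisation from $T_{m-1}+m-2$ to $T_{m-1}+m-1$. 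Your alternative route via the urn representation (conditioning on $\{Q_{s\to\ell}=1\}$ size-biases the beta parameter $x_\ell\mapsto x_\ell+1$) is sound in spirit and is essentially what the paper develops later in Section 7 for the embellished model, but for this lemma the direct moment computation with Lemma \ref{morimom} is the cleaner path.
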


\begin{proof}
We adapt the proof of \cite[Lemma 3.5]{ross2013}. By the definition of conditional probability, we have
\begin{align*}
    &\Prob_{\mathbf{x}}(Q_{m\to j}=1|G'_{m-1},Q_{s\to \ell}=1)\\
    &\qquad =\frac{\Prob_{\mathbf{x}}(Q_{m\to j}=1|G'_{m-1})\Prob_{\mathbf{x}}(Q_{s\to \ell}=1|G'_{m-1}, Q_{m\to j}=1)}{\Prob_{\mathbf{x}}(Q_{s\to \ell}=1|G'_{m-1})}\numberthis \label{condprob}.
\end{align*}
We compute the probabilities above as follows.
\begin{equation*}
    \Prob_{\mathbf{x}}(Q_{m\to j}=1|G'_{m-1})=\frac{W_{j, m-1}+x_{j}}{T_{m-1}+m-2}.
\end{equation*}
This in turn implies that 
\begin{equation*}
    \Prob_{\mathbf{x}}(Q_{s\to \ell}=1|G'_{m-1})=\frac{\E_{\mathbf{x}}(W_{\ell,s-1}+x_{\ell}|G'_{m-1})}{T_{s-1}+s-2},
\end{equation*}
and
\begin{equation*}
    \Prob_{\mathbf{x}}(Q_{s\to \ell}=1|G'_{m-1}, Q_{m\to j=1} )=\frac{\E_{\mathbf{x}}(W_{\ell,s-1}+x_{\ell}|G'_{m-1}, Q_{m\to j}=1)}{T_{s-1}+s-2}.
\end{equation*}
Moreover, Lemma \ref{morimom} at the end of this subsection implies that for $\ell<m<s$,
\begin{equation*}
    \E_{\mathbf{x}}(W_{\ell,s-1}+x_{\ell}|G'_{m-1})=(W_{\ell,m-1}+x_{\ell})\prod^{s-2}_{h=m-1}\frac{T_h+h}{T_h+h-1}. 
\end{equation*}
Note that
\begin{equation*}
    \E_{\mathbf{x}}(W_{\ell,m}+x_{\ell}|G'_{m-1}, Q_{m\to j}=1)=W_{\ell,m-1}+x_{\ell}+\mathbbm{1}[j=\ell],
\end{equation*}
and so by another application of Lemma \ref{morimom},
\begin{equation*}
    \E_{\mathbf{x}}(W_{\ell,s-1}+x_{\ell}|G'_{m-1}, Q_{m\to j}=1)=(W_{\ell,m-1}+x_{\ell}+\mathbbm{1}[j=\ell])\prod^{s-2}_{h=m}\frac{T_h+h}{T_h+h-1}.
\end{equation*}
Applying these results to (\ref{condprob}) and simplifying yields
\begin{align}\label{sb1}
    \Prob_{\mathbf{x}}(Q_{m\to j}=1|G'_{m-1},Q_{s\to \ell}=1)=\frac{(W_{j,m-1}+x_j)(W_{\ell,m-1}+\mathbbm{1}[j=\ell]+x_\ell)}{(T_{m-1}+m-1)(W_{\ell,m-1}+x_\ell)},
\end{align}
and (\ref{sb1}) is equal to (\ref{sb}) by considering the cases $\ell=j$ and $\ell\not =j$ separately.
\end{proof}

Given that $m > v_s$ and $m\not \in \mathcal{V}^*\cup \mathcal{V}$, we proceed to prove that on the event $\mathcal{J}$, we attach vertex $m$ to $j\in \{1,...,m-1\}$, $j\not \in \mathcal{V}$ with probability proportional to the weight of vertex $j$. To precisely state the lemma, denote the sets of vertices and edges in $\mathcal{V}$ and $\mathcal{E}$ that are born before vertex $m$ as
\begin{align*}
    \mathcal{V}_{m}=\{u\in \mathcal{V}:u<m\}\quad \text{and}\quad
    \mathcal{E}_{m}=\{\{h,k\}\in \mathcal{E}: \max\{h,k\}<m\}.
\end{align*}
When compared to the breadth-first search of the $(\mathbf{x},n)$-sequential model, $\mathcal{V}_{m}$ and $ \mathcal{E}_{m}$ respectively correspond to $\mathcal{P}_{t,m}$ and $\mathcal{E}_{t,m}$ in (\ref{eptm}) for any $t\geq 2$. 

\begin{lemma}\label{genexcond}
Retaining the notations in Lemma \ref{sizebias}, let $\mathcal{V}$ and $\mathcal{E}$ be such that $(\triangle)$ holds, and $\mathcal{J}$ be as in (\ref{eventI}). We have for $j<m$, where $j\not \in \mathcal{V}$, $m \in \{v_s+1,...,n\}\setminus (\mathcal{V}^*\cup \mathcal{V})$,
\begin{equation}\label{geneq}
    \Prob(Q_{m\to j}=1|G'_{m-1},\mathcal{J})=\frac{W_{j,m-1}+x_j}{T_{m-1}+m-1-\sum_{k\in \mathcal{V}_{m}}x_k-|\mathcal{E}_{m}|}.
\end{equation}
\end{lemma}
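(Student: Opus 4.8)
\textbf{Proof plan for Lemma \ref{genexcond}.}

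The plan is to proceed by induction on the exploration/attachment step, iterating the conditioning result of Lemma \ref{sizebias} and tracking how each edge in $\mathcal{E}$ that is born before $m$ modifies the normalising constant. First I would observe that, under assumption $(\triangle)$, the event $\mathcal{J}$ in (\ref{eventI}) decomposes into two kinds of constraints born before step $m$: (i) for each edge $\{i,k\}\in \mathcal{E}_m\setminus\{v^*_s,v_s\}$, the event $\{Q_{k\to i}=1\}$, which forces a deterministic attachment step at time $k$ and, by Lemma \ref{sizebias} applied with $\ell=i$, $s=k$, acts as a size-bias that effectively raises the initial attractiveness of vertex $i$ by one for all intermediate steps; and (ii) for each $h\in \mathcal{V}_m$ and each $\ell$ with $h<\ell\le n$, $\{h,\ell\}\notin\mathcal{E}$, the event $\{Q_{\ell\to h}=0\}$, i.e.\ no vertex later than $h$ attaches to $h$ — so vertex $h$ is frozen at its weight $W_{h,h-1}+x_h$ and never receives further edges. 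The key identity is then that, on $\mathcal{J}$ and given $G'_{m-1}$, the total weight available to receive the outgoing edge of vertex $m$ is the full weight $T_{m-1}+m-2$ of the unconditioned model, \emph{plus} one for each edge of $\mathcal{E}_m$ (the size-bias contributions, which include the edge $\{v^*_s,v_s\}$), \emph{minus} the total weight $\sum_{k\in\mathcal{V}_m}(x_k+W_{k,m-1})$ of the probed vertices, which are excluded as recipients. Since each $k\in\mathcal{V}_m$ has exactly one outgoing edge (to another vertex, by $(\triangle)$) and its in-degree $W_{k,m-1}$ counts exactly the edges of $\mathcal{E}_m$ pointing into $k$, a bookkeeping argument shows $\sum_{k\in\mathcal{V}_m}W_{k,m-1}+(\text{number of size-bias units not already inside }\mathcal{V}_m)$ collapses so that the normaliser becomes precisely $T_{m-1}+m-1-\sum_{k\in\mathcal{V}_m}x_k-|\mathcal{E}_m|$, matching (\ref{geneq}).

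Concretely I would set up the induction as follows. For the base range $v^*_s<m<v_s$ the only relevant conditioning is on $\{v^*_s,v_s\}$, and the claim is exactly Lemma \ref{sizebias} with $\ell=v^*_s$, $s=v_s$, together with the observation that $\mathcal{V}_m=\varnothing$ and $\mathcal{E}_m=\varnothing$ (or $\mathcal{E}_m=\{v^*_s,v_s\}$ once $v_s$ is counted), so there is nothing further to check. For the inductive step at a general $m\in\{v_s+1,\dots,n\}\setminus(\mathcal{V}^*\cup\mathcal{V})$, I would write $\Prob(Q_{m\to j}=1\mid G'_{m-1},\mathcal{J})$ via Bayes' formula in terms of $\Prob(Q_{m\to j}=1\mid G'_{m-1})$ and the ratio of the probabilities of the ``future'' constraints in $\mathcal{J}$ that involve steps $>m$, exactly as in the proof of Lemma \ref{sizebias}. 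Using Lemma \ref{morimom} to compute the conditional expectations $\E_{\mathbf{x}}(W_{\ell,s-1}+x_\ell\mid \cdot)$ of the weights of the relevant recipients (the $v^*_s$-type vertices and, more generally, every vertex that still has an outstanding prescribed incoming edge), the telescoping products $\prod (T_h+h)/(T_h+h-1)$ cancel just as before, leaving a closed-form expression whose denominator I then simplify. The simplification is where the assumption $(\triangle)$ and the exact correspondence $\mathcal{V}\leftrightarrow\mathcal{P}_{t-1}$, $\mathcal{E}\leftrightarrow\mathcal{E}_{t-1}$ (via Lemma \ref{onlyedge}) do the work: every probed vertex contributes $-(x_k+W_{k,m-1})$ to the pool of eligible recipients, every prescribed edge contributes $+1$, and one checks by summing over $\mathcal{V}_m$ that $\sum_{k\in\mathcal{V}_m}(x_k+W_{k,m-1}) - |\{\text{size-bias units}\}| = \sum_{k\in\mathcal{V}_m}x_k$ up to the $|\mathcal{E}_m|$ correction, yielding the stated normaliser.

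The main obstacle I anticipate is the bookkeeping in the inductive step: one must carefully account for which vertices of $\mathcal{V}^*$ and $\mathcal{V}$ are born before $m$, which of their prescribed edges are already ``inside'' $\{1,\dots,m-1\}$ (and hence already reflected in $G'_{m-1}$), and which still lie in the future (and hence survive as conditioning events on steps $>m$). In particular the vertex $v^*_s$ plays a special role — it is the unique vertex in $\mathcal{V}^*$ receiving an edge from inside $\mathcal{V}$ (namely from $v_s$) but not sending one into $\mathcal{V}$ — so its weight must be size-biased by exactly one, while the other vertices of $\mathcal{V}^*\setminus\{v^*_s\}$ send their outgoing edge into $\mathcal{V}$ and therefore do \emph{not} get an extra unit from that edge; conflating these would give the wrong count. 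I would isolate this by proving a small bookkeeping claim: for $m>v_s$, on the event $\mathcal{J}$ the number of edges of $\mathcal{E}$ with both endpoints $<m$ equals $\sum_{k\in\mathcal{V}_m}W_{k,m-1}$ plus the contribution of edges from $\mathcal{V}^*$-vertices, and $|\mathcal{E}_m| = |\mathcal{V}_m| + (\text{edges among }\mathcal{V}_m\cup\mathcal{V}^*_m)$ in the appropriate sense; once this is pinned down the algebra in (\ref{condprob})-style manipulation collapses to (\ref{geneq}). Everything else is a direct adaptation of the proof of Lemma \ref{sizebias} and of Theorem \ref{linebreaking}, so no new probabilistic input beyond Lemma \ref{morimom} is needed.
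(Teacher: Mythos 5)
Your proposal takes essentially the same route as the paper's proof. The core step — decomposing $\mathcal{J}$ into constraints on steps $<m$ and steps $\geq m$, applying the conditional-probability/Bayes identity (as in the proof of Lemma~\ref{sizebias}), evaluating the past part to the standard preferential-attachment probability $(W_{j,m-1}+x_j)/(T_{m-1}+m-2)$, and then computing the ratio of future-constraint probabilities — is precisely how the paper argues. Your heuristic accounting for why the normaliser becomes $T_{m-1}+m-1-\sum_{k\in\mathcal{V}_m}x_k-|\mathcal{E}_m|$ (the $+1$ from the size-bias of $\{v^*_s,v_s\}$, the $-\sum x_k$ from removing probed vertices, the $-|\mathcal{E}_m|$ from the edges already pinned down) matches the paper's reading of the formula, and your flagging of the special role of $v^*_s$ versus other vertices of $\mathcal{V}^*$ is exactly the right thing to be careful about.

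Two small things to tighten. First, the ``induction on the attachment step'' framing is superfluous: the Bayes decomposition proves the identity for an arbitrary $m\in\{v_s+1,\dots,n\}\setminus(\mathcal{V}^*\cup\mathcal{V})$ in one shot, with no appeal to the result at smaller $m$. Second, your ``base range'' $v^*_s<m<v_s$ is not part of this lemma's statement at all — that range is covered by Lemma~\ref{sizebias} directly, and Lemma~\ref{genexcond} is stated only for $m>v_s$. Finally, the actual computation of the ratio
\begin{equation*}
\frac{\Prob_{\mathbf{x}}(\mathcal{J}_{\geq m}\mid\mathcal{J}_{<m},G'_{m-1})}{\Prob_{\mathbf{x}}(\mathcal{J}_{\geq m}\mid Q_{m\to j}=1,\mathcal{J}_{<m},G'_{m-1})}
\end{equation*}
is where the real bookkeeping lives: the paper compresses this to ``using the attachment rules and simplifying,'' and your plan to telescope via Lemma~\ref{morimom} is the right mechanism, but the careful verification that everything cancels to the stated normaliser is the step you would still have to execute in full.
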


Before proving the lemma, we emphasize that we exclude the attachment steps of the vertices in $\mathcal{V}^*\setminus\{v^*_s\}$ in the lemma, because under the assumption $(\triangle)$, vertex $i \in \mathcal{V}^*\setminus\{v^*_s\}$ necessarily sends an outgoing edge to a vertex in $\mathcal{V}$. Furthermore, the edge count in the normalising constant in (\ref{geneq}) is $m-1-|\mathcal{E}_m|$ instead of $m-2-|\mathcal{E}_m|$, as we need to include $\{v_s,v^*_s\}\in \mathcal{E}_m$. Let $m \in \{v_s+1,...,n\}\setminus (\mathcal{V}^*\cup \mathcal{V})$. When attaching vertex $m$ to vertex $j \in \mathcal{V}^*\setminus \{v^*_s\}$, $(\triangle)$ ensures that vertex $j$ does not receive any incoming edges from the vertices of $\mathcal{V}$, as otherwise these edges have a size-biasing effect on the initial attractiveness of vertex $j$ and in that case equation (\ref{geneq}) no longer holds.

\begin{proof}[Proof of Lemma \ref{genexcond}]
We use the definition of the conditional probability again, this time to rewrite the the left-hand side of (\ref{geneq}) in terms of probabilities that condition on the events occurring before step $m$. Firstly, let $\mathcal{J}:=\mathcal{J}_{< m}\cap \mathcal{J}_{\geq m}$, where for $m \in \{v_s+1,...,n\}\setminus (\mathcal{V}^*\cup \mathcal{V})$, 
\begin{align*}
    \mathcal{J}_{<m} := \bigcap_{\substack{ v_s\leq k<\ell<m;\\
    \{\ell,k\}\in \mathcal{E}_{m}\setminus \{v^*_s, v_s\} }}\{Q_{\ell\to k}=1\}\cap \bigcap_{\substack{v_s\leq i<h< m;\\ i\in \mathcal{V},\{i,h\}\not \in \mathcal{E}_m} }\{Q_{h\to i}=0\}
\end{align*}
and 
\begin{align*}
    \mathcal{J}_{>m}:=\bigcap_{\substack{ (m\vee k)< \ell\leq n;\\\{\ell,k\}\in \mathcal{E}\setminus \mathcal{E}_m }}\{Q_{\ell\to k}=1\}\cap \bigcap_{\substack{ m\leq h\leq n,i<h;\\ i \in \mathcal{V},\{i,h\}\not \in \mathcal{E}\setminus \mathcal{E}_m }} \{Q_{h\to i}=0\}. 
\end{align*}
Hence we have the following expression:
\begin{align*}
    &\Prob_{\mathbf{x}}(Q_{m\to j}=1|\mathcal{J},  G'_{m-1})\\
    &\qquad=\frac{\Prob_{\mathbf{x}}(Q_{m\to j}=1|\mathcal{J}_{< m}, G'_{m-1})\Prob_{\mathbf{x}}(\mathcal{J}_{\geq m}|Q_{m\to j}=1, \mathcal{J}_{< m}, G'_{m-1})}{\Prob_{\mathbf{x}}(\mathcal{J}_{\geq m}| \mathcal{J}_{< m}, G'_{m-1})}.
\end{align*}
It is straightforward to see that for $j\not \in \mathcal{V}$,
\begin{equation*}
    \Prob_{\mathbf{x}}(Q_{m\to j}=1|\mathcal{J}_{< m}, G'_{m-1})=\frac{W_{j,m-1}+x_j}{T_{m-1}+m-2}.
\end{equation*}
Using the attachment rules of the $(\mathbf{x},n)$-sequential model, and simplifying, we can compute
\begin{align*}
    \frac{\Prob_{\mathbf{x}}(\mathcal{J}_{\geq m}| \mathcal{J}_{< m}, G'_{m-1})}{\Prob_{\mathbf{x}}(\mathcal{J}_{\geq m}|Q_{m\to j}=1, \mathcal{J}_{< m}, G'_{m-1})}=\frac{T_{m-1}+m-1-\sum_{k\in \mathcal{V}_m}x_k-|\mathcal{E}_m|}{T_{m-1}+m-2}.
\end{align*}
Combining the equations above completes the proof of the lemma.
\end{proof}

The following lemma is applied in the proof of Lemma \ref{sizebias}, which is a slight modification of Lemma 4.1 in \cite{pekoz2016} and Theorem 2.1 of \cite{mori}.

\begin{lemma}\label{morimom}
Retaining the notations in Lemma \ref{sizebias}, let $k$, $\ell$ and $m$ be positive integers such that $k\leq \ell\leq  m$, then 
\begin{align*}
    \E_{\mathbf{x}}(W_{k,m}+x_k|G'_{\ell})=(W_{k,\ell} + x_{k}) \prod^{m-1}_{j=\ell}\frac{T_j+j}{T_j+j-1}.
\end{align*}
\end{lemma}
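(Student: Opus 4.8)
The plan is to prove this by a one-step conditional-expectation identity followed by an iteration (equivalently, by exhibiting an explicit martingale), exactly in the spirit of \textcite[Lemma 4.1]{pekoz2016} and \textcite[Theorem 2.1]{mori}.

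\textbf{Step 1: the one-step identity.} Fix $k$ and let $j$ be any integer with $j \ge k$. When vertex $j+1$ is attached to $G'_j$, the preferential attachment rule of the $(\mathbf{x},n)$-sequential model (with $m$ replaced by $j+1$ in the displayed attachment probability of Section \ref{sintro}) gives $\Prob_{\mathbf{x}}(Q_{j+1\to k}=1\mid G'_j)=(W_{k,j}+x_k)/(T_j+j-1)$, since the total weight in $G'_j$ is $T_j+(j-1)$: the $j-1$ outgoing edges already present plus the fitnesses $x_1,\dots,x_j$. As $W_{k,j+1}=W_{k,j}+Q_{j+1\to k}$ with $Q_{j+1\to k}\in\{0,1\}$, this yields
\[
\E_{\mathbf{x}}(W_{k,j+1}+x_k \mid G'_j)=(W_{k,j}+x_k)\left(1+\frac{1}{T_j+j-1}\right)=(W_{k,j}+x_k)\,\frac{T_j+j}{T_j+j-1}.
\]
The hypothesis $j\ge k$ is exactly what is needed for $k$ to be an admissible recipient at step $j+1$; in the induction below $j$ ranges over $\ell,\dots,m-1$, and $\ell\ge k$, so this is always in force.

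\textbf{Step 2: iteration.} Since $k\le\ell\le m$, I would induct downward using the tower property: for $\ell\le j\le m-1$,
\[
\E_{\mathbf{x}}(W_{k,j+1}+x_k\mid G'_\ell)=\E_{\mathbf{x}}\big(\E_{\mathbf{x}}(W_{k,j+1}+x_k\mid G'_j)\,\big|\,G'_\ell\big)=\frac{T_j+j}{T_j+j-1}\,\E_{\mathbf{x}}(W_{k,j}+x_k\mid G'_\ell),
\]
using Step 1 and $G'_\ell$-measurability of the deterministic factor. Starting from $\E_{\mathbf{x}}(W_{k,\ell}+x_k\mid G'_\ell)=W_{k,\ell}+x_k$ and chaining these equalities from $j=\ell$ up to $j=m-1$ gives
\[
\E_{\mathbf{x}}(W_{k,m}+x_k\mid G'_\ell)=(W_{k,\ell}+x_k)\prod_{j=\ell}^{m-1}\frac{T_j+j}{T_j+j-1},
\]
which is the claim. (Equivalently, one can check that $M_j:=(W_{k,j}+x_k)\prod_{i=\ell}^{j-1}(T_i+i-1)/(T_i+i)$, $j\ge\ell$, is a martingale for the filtration generated by $(G'_j)_{j\ge\ell}$, and the result is $\E_{\mathbf{x}}(M_m\mid G'_\ell)=M_\ell$.)

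\textbf{Obstacles.} There is no genuine difficulty here; the argument is routine. The only points needing care are bookkeeping: (i) getting the normalising constant right, namely that the total weight after step $j$ is $T_j+j-1$, so that the recursion produces the factor $(T_j+j)/(T_j+j-1)$ and not something off by one; and (ii) the degenerate case $m=\ell$, where the product is empty and both sides reduce to $W_{k,\ell}+x_k$ by $G'_\ell$-measurability. One should also note that $W_{k,\ell}$ is well defined (i.e.\ $k\le\ell$) precisely by hypothesis, so no boundary issue arises in Step 1.
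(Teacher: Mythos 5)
Your proof is correct and matches the paper's argument essentially step for step: both establish the one-step identity $\E_{\mathbf{x}}(W_{k,j+1}+x_k\mid G'_j)=(W_{k,j}+x_k)(T_j+j)/(T_j+j-1)$ directly from the attachment rule (with total weight $T_j+j-1$ in $G'_j$) and then iterate via the tower property. The martingale remark is a nice reformulation but is not an essential departure from the paper's proof.
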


\begin{proof}
Note that at attachment step $i>k$, $W_{k,i}$ either increases by exactly one or stays the same, and $W_{k,i+1}=W_{k,i}+1$ with probability proportional to $W_{k,i}+x_k$. Hence,  
\begin{align*}
     &\E_{\mathbf{x}}(W_{k,m}+x_k|G'_{m-1})\\
     &=\frac{W_{k,m-1}+x_k}{T_{m-1}+m-2}(W_{k,m-1}+1+x_k)+\left(1-\frac{W_{k,m-1}+x_k}{T_{m-1}+m-2}\right)(W_{k,m-1}+x_k)\\
     &=\frac{T_{m-1}+m-1}{T_{m-1}+m-2}(W_{k,m-1}+x_k). 
\end{align*}
The lemma follows from iterating the result above. 
\end{proof}

\subsection{Construction of the embellished model}\label{embx}
Assuming that $\mathcal{V}$ and $\mathcal{E}$ are such that ($\triangle$) holds, $\mathcal{V}\cup\mathcal{V}^*\subset V(G_n)$, and that $G_n{(\mathcal{I})}$ has the distribution of $\mathrm{Seq}(\mathbf{x})_n$ conditional on $\mathcal{I}$. Lemma \ref{sizebias} and \ref{genexcond} imply that we can construct the random variables $W_{k,n}{(\mathcal{I})}:=(W_{k,n}|\mathcal{I})$ as follows. Initially, we generate $G_{v^*_s-1}{(\mathcal{I})}$ using the usual attachment rule. At step $v^*_s$, add the vertices $v^*_s$ and $v_s$ to $G_{v^*_s-1}{(\mathcal{I})}$, such that $v^*_s$ receives an incoming edge from $v_s$. Then $v^*_s$ sends an outgoing edge to vertex $j\in \{1,...,v^*_s-1\}$, which is chosen with probability proportional to $W_{j,v^*_s-1}{(\mathcal{I})}+x_j$. After the attachment step, assign vertex $v^*_s$ with initial attractiveness $x_{v^*_s}$, and set $W_{v^*_s, v^*_s}{(\mathcal{I})}=1$. At step $v^*_s<m<v_s$, vertex $m$ sends an outgoing edge to vertex $j\in \{1,...,m-1\}$, with probability \begin{equation*}
    \frac{W_{j,m-1}{(\mathcal{I})}+x_j}{T_{m-1}+m-1},
\end{equation*}
and we equip vertex $m$ with initial attractiveness $x_m$. The step $v_s$ is completed by endowing vertex $v_s$ with initial attractiveness $x_{v_s}$. At step $m>v_s$ where $m\not\in \mathcal{V}^*\cup \mathcal{V}$, vertex $m$ sends an outgoing edge to vertex $j\in \{1,...,m-1\}\setminus \mathcal{V}_m$ with probability
\begin{equation*}
    \frac{W_{j,m-1}{(\mathcal{I})}+x_j}{T_{m-1}+m-2-\sum_{k\in \mathcal{V}_m}x_k-(|\mathcal{E}_{m}|-1)}.
\end{equation*}
At steps $m=\max\{h,i\}$, where $\{h,i\}\in \mathcal{E}\setminus \{v^*_s, v_s\}$, vertex $m$ sends an outgoing edge to vertex $\min\{h,i\}$, and the initial attractiveness of vertex $m$ is given by $x_m$.

\subsection{An urn representation of the embellished model}\label{altdefembmodel}
Using an urn argument analogous to that of Theorem \ref{linebreaking}, below we give an alternative definition of $G_n{(\mathcal{I})}$. To simplify notations, we drop $\mathbf{x}$ in the definitions of the variables below, which depend on the sequence of initial attractiveness. 

\begin{defn}[$(\mathbf{x},\mathcal{I},n)$-P\'olya urn tree]\label{tiltedpug}
Given the sequence $\mathbf{x}$, and the sets of vertices and edges $\mathcal{V}$ and $\mathcal{E}$ such that ($\triangle$) holds, and $\mathcal{V}\cup\mathcal{V}^*\subset V(G_n)$, let $\mathcal{I}$ be as in (\ref{eventI}), and $B_j(\mathcal{I})$ be conditionally independent random variables such that $B_1{(\mathcal{I})}:=1$, $B_j(\mathcal{I}):=0$ if $j\in \mathcal{V}$, and for $j\not \in \mathcal{V}$,
\begin{align*}
    B_j(\mathcal{I})\sim
    \begin{cases}
    &\mathrm{Beta}(x_j+\mathbbm{1}[j= v^*_s], T_{j-1}+j-1),\quad 2\leq j\leq v^*_s; \\
    &\mathrm{Beta}(x_j, T_{j-1}+j),\quad v^*_s<j<v_s;\\
    &\mathrm{Beta}(x_j, T_{j-1}+j-\sum_{k\in \mathcal{V}_j }x_k-|\mathcal{E}_{j}|),\quad v_s<j\leq n
    \end{cases}
\end{align*}
Furthermore, let $S_{0,n}{(\mathcal{I})}:=0$, $S_{n,n}{(\mathcal{I})}:=1$ and 
\begin{equation*}
    S_{k,n}(\mathcal{I}):=\prod^n_{i=k+1}(1-B_i(\mathcal{I}))\quad \text{for $1\leq j\leq n-1$.}
\end{equation*}
Starting with $n$ vertices and the edges in $\mathcal{E}$, we connect the vertices as follows. For $1\leq j\leq n$, let $I_j=[S_{j-1,n}{(\mathcal{I})},S_{j,n}{(\mathcal{I})})$ for $j\not \in \mathcal{V}$. Conditionally on $(S_{i,n}{(\mathcal{I})},1\leq i\leq n-1)$, we generate the variables $U_k\sim \mathrm{U}[0,S_{k-1,n}{(\mathcal{I})}]$ for $k\in \{2,...,n\}\setminus (\mathcal{V}\cup\mathcal{V}^*\setminus\{v^*_s\})$. If $j<k$ and $U_k\in I_j$, we attach vertex $k$ to vertex $j$, and we say that the resulting graph is an $(\mathbf{x},\mathcal{I},n)$-P\'olya urn graph. 
\end{defn}

\begin{thm}\label{condpolyaurnrep} 
Assume that $\mathcal{V}$ and $\mathcal{E}$ are such that $(\triangle)$ holds and $\mathcal{V}\cup\mathcal{V}^*\subset V(G_n)$. Let $\tilde G_n{(\mathcal{I})}$ be an $(\mathbf{x},\mathcal{I},n)$-P\'olya urn tree. Then $\tilde G_n{(\mathcal{I})}$ has the distribution of $\mathrm{Seq}(\mathbf{x})_n$ conditional on $\mathcal{I}$.
\end{thm}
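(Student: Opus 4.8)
The statement is a conditional analog of Theorem~\ref{linebreaking}, and the plan is to mimic the urn-embedding proof of that theorem, carrying the extra bookkeeping forced by conditioning on $\mathcal{I}$. The key input is Section~\ref{embx}, which gives an explicit sequential description of $G_n(\mathcal{I})$ with modified attachment weights; thus it suffices to show that the $(\mathbf{x},\mathcal{I},n)$-P\'olya urn tree of Definition~\ref{tiltedpug} reproduces exactly those attachment probabilities. As in the proof of Theorem~\ref{linebreaking}, the first move is to reduce to the non-deterministic attachment steps: the steps involving vertices in $\mathcal{V}\cup(\mathcal{V}^*\setminus\{v^*_s\})$ are deterministic both in $G_n(\mathcal{I})$ (by $(\triangle)$ and the definition of $\mathcal{I}$) and in the urn construction (no $U_k$ is generated for such $k$, and the edges of $\mathcal{E}$ are placed by fiat), so the two graphs agree there trivially. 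For every other vertex $m\notin \mathcal{V}\cup(\mathcal{V}^*\setminus\{v^*_s\})$, I need to verify that, conditionally on the beta variables, vertex $m$ attaches to vertex $j$ (with $j\notin\mathcal{V}$, $j<m$) with probability $\{S_{j,n}(\mathcal{I})-S_{j-1,n}(\mathcal{I})\}/S_{m-1,n}(\mathcal{I})$, and that integrating out the betas gives the weights listed in Section~\ref{embx}.

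The second move is the urn embedding. I would re-run Lemma~\ref{urnembed}: for each $j\notin\mathcal{V}$ the evolution of the ``weight of vertex $j$'' after the vertices $\{1,\dots,j\}$ have all been born is a classical P\'olya urn, where the number of black balls is the total weight of all strictly-earlier surviving vertices and the number of white balls is the initial attractiveness of $j$, possibly size-biased by one. Concretely, this is where the three cases of Definition~\ref{tiltedpug} come from: for $2\le j\le v^*_s$ the relevant normalising weight is $T_{j-1}+j-1$ except that $v^*_s$ itself carries an extra unit coming from the conditioned edge $\{v_s,v^*_s\}$ (this is exactly Lemma~\ref{sizebias} with $\ell=v^*_s$, $s=v_s$, so $B_{v^*_s}(\mathcal{I})\sim\mathrm{Beta}(x_{v^*_s}+1, T_{v^*_s-1}+v^*_s-1)$); for $v^*_s<j<v_s$ the ambient total weight is $T_{j-1}+j$ rather than $T_{j-1}+j-1$ because vertex $v^*_s$ now has attractiveness $x_{v^*_s}+1$; and for $v_s<j\le n$ the ambient total weight is reduced to $T_{j-1}+j-\sum_{k\in\mathcal{V}_j}x_k-|\mathcal{E}_j|$ because the vertices of $\mathcal{V}_j$ are no longer available recipients and $|\mathcal{E}_j|$ conditioned edges are already placed, which is precisely the normalising constant of Lemma~\ref{genexcond}. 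Applying de Finetti's theorem to each of these urns (as in the proof of Theorem~\ref{linebreaking}, via \cite[Theorem~4.7.9]{durrett}), conditionally on its limiting white-proportion $B_j(\mathcal{I})$ the draws are i.i.d.\ Bernoulli$(B_j(\mathcal{I}))$, and these limiting proportions have exactly the beta laws of Definition~\ref{tiltedpug}.

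The third move is the telescoping identity, identical in form to the end of the proof of Theorem~\ref{linebreaking}. Considering the vertices $1,\dots,m-1$ in decreasing order and using that, conditionally on $(B_j(\mathcal{I}))$, the event ``$m$ attaches to $j$'' equals ``$m$ skips $m-1,\dots,j+1$ and picks $j$'', one gets conditional probability $B_j(\mathcal{I})\prod_{i=j+1}^{m-1}(1-B_i(\mathcal{I}))$, where the product ranges only over $i\notin\mathcal{V}$ since $B_i(\mathcal{I})=0$ for $i\in\mathcal{V}$; and this equals $\{S_{j,n}(\mathcal{I})-S_{j-1,n}(\mathcal{I})\}/S_{m-1,n}(\mathcal{I})$ by the same algebra as in Theorem~\ref{linebreaking}. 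Taking expectations over the betas and using the beta-mean formula recovers the weights of Section~\ref{embx} case by case; since these coincide with the true conditional attachment probabilities of $G_n(\mathcal{I})$ established by Lemmas~\ref{sizebias} and~\ref{genexcond}, the two graph laws agree. I expect the main obstacle to be the careful accounting in the third case ($j>v_s$): one must check that the ``ambient weight'' seen by vertex $j$ at every draw is constant in the P\'olya urn (so that the urn really is classical) despite the fact that $\mathcal{V}_j$ and $\mathcal{E}_j$ both grow as more vertices are born; the point, which should be spelled out, is that each newly-born vertex in $\mathcal{V}$ together with its conditioned outgoing edge removes itself from the pool but adds its own (size-biased) weight elsewhere among $\mathcal{V}$, so the net weight available to vertices outside $\mathcal{V}$ below $j$ is unchanged, mirroring the cancellation that makes $M_k(m)-M_k(k)$ (rather than $M_k(m)$) the relevant number of draws in Lemma~\ref{urnembed}. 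The deterministic-step reduction and the $(\triangle)$-hypothesis checks are the other place where care is needed, but those are bookkeeping rather than genuine difficulty.
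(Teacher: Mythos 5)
Your proposal follows essentially the same route as the paper's proof: the paper likewise reduces to the attachment steps $m\notin\mathcal{V}\cup(\mathcal{V}^*\setminus\{v^*_s\})$, sets up the modified weight process $U'_k(m)$ as a classical P\'olya urn with parameters matching the three beta cases of Definition~\ref{tiltedpug}, and then invokes de~Finetti and the telescoping identity exactly as in Theorem~\ref{linebreaking}, with $B_j(\mathcal{I})=0$ for $j\in\mathcal{V}$ reflecting $U'_j(m)=U'_j(m-1)$. Your extra remarks on the constant-ambient-weight bookkeeping for $j>v_s$ correspond to the paper's explicit formulas for $b'$ and $q'$ preceding its proof.
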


%\begin{remark}\label{imprem}
%Given $1\leq i^*\leq n$ where $i^* \not \in \mathcal{V}$, $\mathbbm{1}[U_i\in I_{i^*}]$, $i^*+1\leq i\leq n; i\not \in \mathcal{V}\cup \mathcal{V^*}\setminus\{v^*_s\}$, are conditionally independent random variables, each taking value one with probability
%\begin{equation*}
   % \frac{S_{i^*,n}(\mathcal{I})}{S_{i-1,n}(\mathcal{I})}B_i^*(\mathcal{I}).
%\end{equation*}
%\end{remark}

The proof of Theorem \ref{condpolyaurnrep} is similar to that of Theorem \ref{linebreaking}. For the proof, we need notations analogous to those appearing in Lemma \ref{urnembed}. Let $G_{n}{(\mathcal{I})}$ and $W_{i,n}(\mathcal{I})$ be as in Section \ref{embx}. Define
\begin{equation*}
    M'_k(m)=\sum_{i\in \{1,...,k\}\setminus \mathcal{V}_k} (x_i+W_{i,m}(\mathcal{I})), \qquad U'_j(m)=M'_j(m)-M'_{j-1}(m), \quad j\not \in \mathcal{V},
\end{equation*}
and for $k \in \mathcal{V}$, $U'_k(m)=U'_k(m-1)+1$ if and only if $\{k,m\}\in \mathcal{E}$. Hence, $M'_k(m)$ is the total weight of the vertices $\{1,...,k\}\setminus\mathcal{V}_k$ after $m$ attachment steps, and $U'_j(l)$ is the weight of vertex~$j$ after $l$ steps. 

For the discussion and the proof below, assume that $m\not \in \mathcal{V}\cup \mathcal{V}^*\setminus \{v^*_s\}$ and $k\not \in \mathcal{V}$. It is clear from the construction of $G_m(\mathcal{I})$ that we can obtain an analog of Lemma~\ref{urnembed} for $U'_k(m)$. The differences here are that we have to use the adjusted total vertex weights, and vertex $m$ can only attach to a vertex that is not in $\mathcal{V}$. If $m-1\not \in \mathcal{V}$, arguing as for Lemma \ref{urnembed}, we have $U'_{m-1}(m)\sim \mathrm{Polya}(b,w;1)$, where $w=x_{m-1}+\mathbbm{1}[m=v^*_s+1]$ and
\begin{align*}
    &b=T_{m-2}+m-2\quad\text{if $m\leq v^*_s$,}\\
    &b=T_{m-2}+m-2+\mathbbm{1}[m\not =v^*_s+1]\quad\text{if $v^*_s<m< v_s$,}\\
    &b=T_{m-2}+m-2-\sum_{j\in\mathcal{V}_{m-1}}x_j-(|\mathcal{E}_{m-1}|-1)\quad \text{if $v_s<m\leq n$ and $m-1\not \in \mathcal{V}$;}
\end{align*}
with $b$ being the total weight of the vertices $\{1,...,m-1\}\setminus \mathcal{V}_{m-1}$. Let $k+1<m\leq n$. Conditional on $M'_k(m)$ and $(\{U'_j(m)=U'_j(m-1)\},j\in\{k+1,..., m-1\} \setminus \mathcal{V}_{m-1})$, the event that vertex $m$ does not attach to any vertex in $\{k+1,..., m-1\} \setminus \mathcal{V}_{m-1}$, $U'_k(m)\sim \mathrm{Polya}(b',w';q')$, where  $q'=M'_k(m)-b'-w'$ is the number of draws in a classical P\'olya urn. More specifically, $w'=x_k+\mathbbm{1}[k=v^*_s]$ and
\begin{align*}
   &b'=T_{k-1}+k-1,\quad q'=M'_k(m)-T_k-k+1\quad m\leq v^*_s;\\
   &b'=T_{k-1}+k-1+\mathbbm{1}[k>v^*_s],\quad q'=M'_k(m)-T_k-k+1-\mathbbm{1}[k\geq v^*_s],\quad v^*_s<m<v_s;
\end{align*}
and for $m>v_s$,
\begin{align*}
    &b'=T_{k-1}+k-1-\sum_{j\in\mathcal{V}_k}x_j-|\mathcal{E}_k|+\mathbbm{1}[k>v^*_s],
    \\
    &q'= M'_k(m)- T_{k}-k+1+\sum_{j\in\mathcal{V}_k}x_j+|\mathcal{E}_k|-\mathbbm{1}[k\geq v^*_s].
\end{align*}

%Note that for $b'$, we use $\mathbbm{1}[v^*_s<k<v_s]$ instead of $\mathbbm{1}[v^*_s<k]$ to avoid double counting the unit weight from edge $\{v_s,v^*_s\}$ for $k>v_s$
%$\mathbbm{1}[k>v_s]$ is to exclude $\{v_s,v^*_s\}$ from $\mathcal{E}_{m-1}$
\begin{proof}[Proof of Theorem \ref{condpolyaurnrep}]
We only consider the attachment step $m\not \in \mathcal{V}\cup \mathcal{V}^*\setminus \{v^*_s\}$, because step $h\in \mathcal{V}\cup \mathcal{V}^*\setminus \{v^*_s\}$ is deterministic under the assumption ($\triangle$). To prove the theorem, we replace $U_k(n)$ in the proof of Theorem \ref{linebreaking} by $U'_k(n)$ for $k\not \in \mathcal{V}$ and argue similarly. Note that for $j\in \mathcal{V}$ and $m\not \in \mathcal{V}\cup \mathcal{V}^*\setminus \{v^*_s\}$, $U'_j(m)=U'_j(m-1)$ is reflected by $B_j(\mathcal{I})=0$ in the construction of the $(\mathbf{x},\mathcal{I},n)$-P\'olya urn graph.
\end{proof}

With these preparations, it is now possible to prove Lemma \ref{condurnconseq} and \ref{tilted01}.

\begin{proof}[Proof of Lemma \ref{condurnconseq} and \ref{tilted01}]
Apply Theorem \ref{linebreaking} for $t=1$; while for $t\geq 2$, take $\mathcal{V}$, $\mathcal{V}^*$, $\mathcal{E}$ and $v_s$ in Theorem \ref{condpolyaurnrep} respectively as $\mathcal{P}_{t-1}$, $\mathcal{A}_{t-1}$, $\mathcal{E}_{t-1}$ and $k_s[t]$, where $\mathcal{E}_{t-1}$ is the set of edges connecting the probed and active vertices in $\mathcal{P}_{t-1}$ and $\mathcal{A}_{t-1}$, and $k_s[t]$ is the smallest vertex in $\mathcal{P}_{t-1}$. 
\end{proof}

\noindent \textbf{Acknowledgement.} The author thanks Nathan Ross for his careful reading of the manuscript and many helpful suggestions. This research is supported by an Australian Government Research Training Program scholarship, and partially by ACEMS.

\nocite{*}
\printbibliography

\end{document}